\newcommand{\R}{{\mathbb{R}}}
\newcommand{\N}{{\mathbb{N}}}
\newcommand{\de}{{\mathrm{d}}}
\newcommand{\dist}{{\mathrm{dist}}}
\newcommand{\Id}{\mathrm{Id}}
\newcommand{\setchar}[1]{\mathbf{1}_{#1}}
\newcommand{\bigtimes}{\mathop{\scalebox{2}{$\times$}}}
\newcommand{\lebesgue}{\mathcal{L}}
\newcommand{\lebesguecodimone}{\mathcal{L}^{n-1}}
\newcommand{\hd}{\mathcal{H}}
\newcommand{\hdone}{\mathcal{H}^1}
\newcommand{\hdtwo}{\mathcal{H}^2}
\newcommand{\hdndim}{\mathcal{H}^n}
\newcommand{\hdcodimone}{\mathcal{H}^{n-1}}
\newcommand{\rca}{\mathrm{rca}}
\newcommand{\fbm}{{\mathrm{fbm}}}
\newcommand{\pushforward}[2]{{{#1}_{\#}#2}}
\newcommand{\restr}{{\mbox{\LARGE$\llcorner$}}}
\newcommand{\spt}{{\mathrm{spt}}}
\newcommand{\weakstarto}{\stackrel{*}{\rightharpoonup}}
\newcommand{\Wd}[1]{\mathrm{W}_{#1}}
\newcommand{\Wdone}{\Wd{1}}
\newcommand{\AC}{\mathrm{AC}}
\newcommand{\cont}{{\mathcal{C}}}
\newcommand{\smooth}{\mathcal{C}_{c}^{\infty}}
\newcommand{\m}{{m}}
\newcommand{\reSpace}{\Gamma}
\newcommand{\reMeasure}{P_{\reSpace}}
\newcommand{\reMeasureindexed}[1]{P_{#1}}
\newcommand{\J}{{\mathcal{J}}}
\newcommand{\brTptEn}{{\mathcal{M}}}
\newcommand{\urbPlEn}{{\mathcal{E}}}
\newcommand{\MMSEn}{M_{\mathrm{P}}}
\newcommand{\urbPlMMS}{E_{\mathrm{P}}}
\newcommand{\urbPlBB}{{\mathcal E}_{\mathrm{W}}}
\newcommand{\repsilonachi}{r_{\varepsilon,a}^\chi}
\newcommand{\Bcal}{\mathcal{B}}
\newcommand{\Mcal}{\mathcal{M}}
\newcommand{\Ncal}{\mathcal{N}}
\newcommand{\numLev}{K}
\newcommand{\SigmaOpt}{{\Sigma^*}}
\newcommand{\chiOpt}{\chi^*}
\newcommand{\xiOpt}{\xi^*}
\newcommand{\keywords}[1]{\noindent\textbf{Keywords:}\enspace#1}
\newcommand{\subjclass}[1]{\bigskip\noindent\emph{2010 MSC:}\enspace#1}
\numberwithin{equation}{subsection}
\theoremstyle{plain}
\newtheorem{theorem}{Theorem}[subsection]
\newtheorem{lemma}[theorem]{Lemma}
\newtheorem{proposition}[theorem]{Proposition}
\newtheorem{corollary}[theorem]{Corollary}
\theoremstyle{definition}
\newtheorem{definition}[theorem]{Definition}
\theoremstyle{remark}
\newtheorem{remark}[theorem]{Remark}
\begin{document}

\title{Optimal micropatterns in transport networks}
\author{Alessio Brancolini\footnote{Institute for Numerical and Applied Mathematics, University of M\"unster, Einsteinstra\ss{}e 62, D-48149 M\"unster, Germany}\ \footnote{Email address: \texttt{alessio.brancolini@uni-muenster.de}} \and Benedikt Wirth\footnotemark[1]\ \footnote{Email address: \texttt{benedikt.wirth@uni-muenster.de}}}
\maketitle

\begin{abstract}
We consider two variational models for transport networks, an urban planning and a branched transport model,
in which the degree of network complexity and ramification is governed by a small parameter $\varepsilon>0$.
Smaller $\varepsilon$ leads to finer ramification patterns,
and we analyse how optimal network patterns in a particular geometry behave as $\varepsilon\to0$ by proving an energy scaling law.
This entails providing constructions of near-optimal networks
as well as proving that no other construction can do better.

The motivation of this analysis is twofold.
On the one hand, it provides a better understanding of the transport network models;
for instance, it reveals qualitative differences in the ramification patterns of urban planning and branched transport.
On the other hand, several examples of variational pattern analysis in the literature use an elegant technique based on relaxation and convex duality.
Transport networks provide a relatively simple setting to explore variations and refinements of this technique,
thereby increasing the scope of its applicability.

\bigskip\keywords{micropatterns, energy scaling laws, optimal transport, optimal networks, branched transport, irrigation, urban planning, Wasserstein distance}

\subjclass{49Q20, 49Q10, 90B10}
\end{abstract}

\section{Introduction}\label{sec:intro}
Pattern formation in physical experiments or in materials can sometimes be understood by means of so-called energy scaling laws.
Typically, such an experiment can be described by a physical energy which is minimised subject to some boundary conditions.
If the boundary conditions (or another global constraint) are incompatible with a homogeneous state,
a pattern of rapid spatial oscillations between energetically favourable states can often be observed,
which tries to recover compatibility with the boundary conditions.
While an infinitely fine oscillation could achieve full compatibility,
the physical energy usually prevents too fine oscillations via some regularising energy component
so that the resulting pattern is a balance between the objectives
of satisfying compatibility constraints and of keeping the regularising energy contribution small.

It is commonly impossible to find the truly optimal pattern, however,
if the regularising energy term has a small weight parameter $\varepsilon>0$,
one can instead try to prove how the minimum energy scales in $\varepsilon$, a so-called energy scaling law.
This involves proving a lower and an upper bound for the energy as a function of $\varepsilon$,
where both bounds differ at most by a multiplicative constant.
The benefit is that any pattern satisfying the upper bound must have optimal energy up to a constant factor
(and thus indicates how near-optimal patterns look like),
since the truly optimal energy still lies above the lower bound.
The proof of the upper bound is by constructing an appropriate pattern with the desired energy scaling,
while the proof of the lower bound, which shows that no construction can do better, is ansatz-free and thus more complicated
(however, in some cases a particular simple technique can be employed, variations of which are explored in this article).

Energy scaling laws have been derived for various systems such as martensite--austenite transformations \cite{KM92,KoMu94,KnKoOt13,ChCo14,BeGo14},
micromagnetics \cite{ChKoOt99}, intermediate states in type-I superconductors \cite{ChKoOt04,ChCoKo08},
membrane folding and blistering \cite{BeKo14,BeKo15}, or epitaxial growth \cite{GoZw14}.
Energy scaling laws may not only be used to explain observed physical patterns,
but they can also be applied to design problems \cite{KoWi14,KoWi14b},
for instance, to find efficient engineering structures
or to better understand why biological evolution has led to particular structures.
This article represents a further contribution to the application of these techniques to design problems.

The setting we shall look at is a type of network optimisation to be explained in more detail in the next two sections.
We consider two uniform measures $\mu_0,\mu_1$ in $\R^n$, of same mass and supported on hyper-squares of codimension one,
and we ask how the optimal transport network looks like to transport the mass in $\mu_0$ to $\mu_1$.
In more detail, let $A\subset\R^{n-1}$ be a hyper-square and consider
\begin{equation*}
\mu_0=\m\hdcodimone\restr(A\times\{0\})\,,\qquad
\mu_1=\m\hdcodimone\restr(A\times\{L\})
\end{equation*}
with $\m>0$ and $\hdcodimone$ the $(n-1)$-dimensional Hausdorff measure (see Figure\,\ref{fig:setting}).
Now $\mu_0$ represents an initial and $\mu_1$ the final distribution of particles,
and we seek the optimal, locally one-dimensional transport network to move all particles.
Optimality is with respect to a certain objective energy (in this article we consider a branched transport and an urban planning energy)
which effectively encodes that a lumped transportation of many particles together is cheaper than transporting each particle individually.
As a result, branched network structures as in Figure\,\ref{fig:setting} are favoured,
which balance the preference for lumping many particles together versus the restriction to evenly collect or distribute the particles on $\mu_0$ and $\mu_1$, respectively.
The degree of cost savings through lumped transportation will be described by a parameter $\varepsilon>0$,
and for $\varepsilon>0$ small, very fine network patterns will be optimal.
We will analyse the (near-)optimal patterns by proving corresponding energy scaling laws in $\varepsilon$.

\begin{figure}
\setlength{\unitlength}{\linewidth}
\begin{picture}(1,.49)
\put(0,.26){\includegraphics[height=.23\unitlength,trim=0 0 190 0,clip]{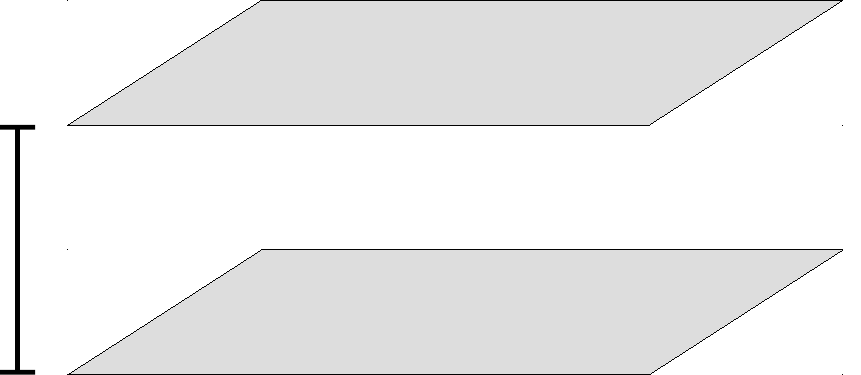}}
\put(.1,0){\includegraphics[height=.23\unitlength,trim=270 0 0 0,clip]{geometrySketch}}
\put(.015,.331){$L$}
\put(.23,.29){$\mu_0$}
\put(.23,.44){$\mu_1$}
\put(.26,.01){$\mu_0$}
\put(.26,.19){$\mu_1$}
\put(.7,.1){\includegraphics[height=.3\unitlength,angle=-90,origin=c]{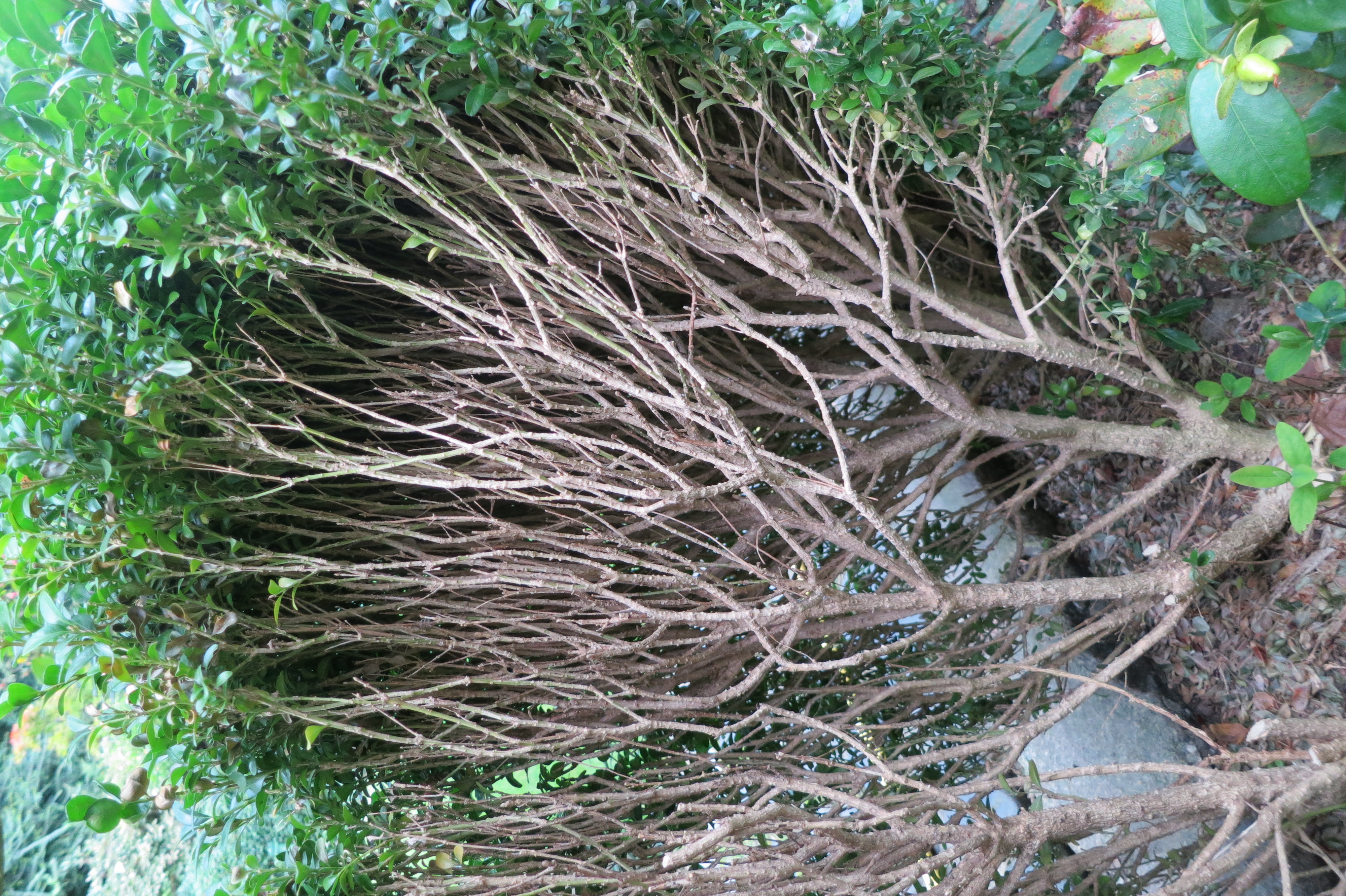}}
\end{picture}
\caption{Top left: Sketch of the considered geometry, two measures $\mu_0$ and $\mu_1$ supported on codimension-one hypersquares at distance $L$.
Bottom left: Exemplary transport network in two dimensions, transporting mass from $\mu_0$ to $\mu_1$.
Right: Photograph of a box tree exhibiting a branched network structure and a comparatively thin leaf canopy that may almost be thought of as a lower-dimensional manifold.}
\label{fig:setting}
\end{figure}

The motivation of this article and the goals pursued are multiple ones:
\begin{itemize}
\item The network models we examine and their generalisations have been used to model---among others---networks
in public transport, pipeline systems, river geometry, and biology such as blood vessel systems or water transport paths in plants
(see the extensive list of references in the monographs \cite{BeCaMo09,BuPrSoSt09}).
Due to the strong interest in these models it is worthwhile to better understand their behaviour.
Even though we consider a very special geometry for $\mu_0$ and $\mu_1$,
we will later argue that the same phenomena will be observed
for geometry variations such as varying the shape of the set $A$ or replacing $\mu_0$ by a delta distribution.
The measure support on a set of codimension one may be seen as an approximation of cases
where the extent of the initial or final mass distribution in the dominant transport direction
is considerably smaller than its transversal extent as well as the typical transport length.
Thinking of water transport to plant leaves, for instance,
a single leaf or the leaf canopy may often be abstracted as a two-dimensional manifold embedded in three-dimensional space (Figure\,\ref{fig:setting}).

\item Different network models have different mechanisms to give preference to branched network structures.
The question arises whether the models exhibit more or less the same behaviour and thus are phenomenologically equivalent
or whether one model can be told from the other just based on an observation of the network structure.
We will see that the urban planning and the branched transport model, which we consider,
share very similar features in three dimensions, but differ qualitatively in higher and lower dimensions.
Indeed, while the energy distribution in branched transport is always rather evenly spread across all branching structures except the ones at the boundary,
the major energy in urban planning is contributed by the coarsest structures in two dimensions and by the finest structures in dimensions four and higher.

\item The (locally) one-dimensional networks that we consider may be viewed as approximation of transport channels that are very thin yet have a positive width.
In that sense the models treated here can also be used to analyse structures
for the transport of a heat or a magnetic flux as examined in \cite{ChCoKo08,KoWi14}.
In mathematical terms, it is expected (and currently investigated \cite{Co14}) that for smaller and smaller fluxes,
the physical objective energy for conducting a heat or magnetic flux
$\Gamma$-converges in the right scaling against the optimal network energy.

\item The derivation of energy scaling laws is usually performed on a case to case basis,
and no generally applicable method or theory has been developed so far.
Instead, the field is still in the state of building and refining technique.
In this article we explore how far one can advance based on a very simple and transparent technique originally introduced by Kohn and M\"uller \cite{KoMu94}.
The method mainly relies on convex duality, and appropriate variations of it indeed suffice to almost comprehensively understand energy scaling in optimal transport networks.
\end{itemize}

The remainder of the introduction provides some general notation and a reference list of symbols to be used in the network models.
Section\,\ref{sec:transportModels} then introduces the network models considered in this article, including references to the relevant literature,
as well as the statement of the corresponding energy scaling laws Theorems\,\ref{thm:scalingUrbPlan} and \ref{thm:scalingBrnchTrpt}, the main results of this article.
Sections\,\ref{sec:upperBound} and \ref{sec:lowerBound} contain the proofs of the upper bounds and the lower bounds, respectively,
where both network models are treated separately.
We close with a discussion in Section\,\ref{sec:discussion}.

\subsection{Preliminaries: notation and useful notions}
Here we fix some frequently used basic notation.

\begin{itemize}
 \item \textbf{Lebesgue measure.} $\lebesgue^n$ denotes the $n$-dimensional \emph{Lebesgue measure}.

 \item \textbf{Hausdorff measure.} $\hd^r$ denotes the $r$-dimensional \emph{Hausdorff measure}.

 \item \textbf{Non-negative finite Borel measures.} $\fbm(X)$ denotes the set of \emph{non-negative finite Borel measures} on a Borel set $X\subset\R^n$. Notice that these measures are countably additive and also regular by \cite[Thm.\,2.18]{Ru87}. The corresponding total variation norm is denoted by $\|\cdot\|_\fbm$.

 \item \textbf{(Signed or vector-valued) regular countably additive measures.} $\rca(X)$ denotes the set of \emph{(signed or vector-valued) regular countably additive measures} on a Borel set $X\subset\R^n$. The corresponding total variation norm is denoted by $\|\cdot\|_\rca$.

 \item \textbf{Weak-$*$ convergence.} The weak-$*$ convergence on $\fbm(\R^n)$ or $\rca(\R^n)$ is indicated by $\weakstarto$.

 \item \textbf{Restriction of a measure to a set.} Let $(X,\Mcal,\mu)$ be a measure space and $Y\in\Mcal$. The measure $\mu\restr Y$ is the measure defined by
  \begin{displaymath}
  \mu\restr Y(A) = \mu(A \cap Y)\,.
  \end{displaymath}

 \item \textbf{Pushforward of a measure.} For a measure space $(X,\Mcal,\mu)$, a measurable space $(Y,\Ncal)$, and a measurable map $T : X \to Y$, the \emph{pushforward} of $\mu$ under $T$ is the measure $\pushforward{T}{\mu}$ on $(Y,\Ncal)$ defined by
       \begin{displaymath}
        \pushforward{T}{\mu}(B) = \mu(T^{-1}(B)) \quad \text{for all $B \in \Ncal$}.
       \end{displaymath}

 \item \textbf{Absolutely continuous functions.} $\AC(I)$ denotes the set of \emph{absolutely continuous functions} on the interval $I$.

 \item \textbf{Characteristic function of a set.} The \emph{characteristic function} of a set $A$ is defined as
       \begin{displaymath}
        \setchar{A}(x) = \begin{cases}
                          1 & x \in A,\\
                          0 & x \notin A.
                         \end{cases}
       \end{displaymath}

 \item \textbf{Dirac mass.} Let $x \in \R^n$. The \emph{Dirac mass} in $x$ is the distribution $\delta_x$ defined by
 \begin{displaymath}
 \langle \delta_x,\varphi \rangle = \varphi(x) \text{ for all } \varphi \in \smooth(\R^n)
 \end{displaymath}
 with $\smooth(\R^n)$ the compactly supported smooth functions on $\R^n$.
 Equivalently, it is the measure with $\delta_x(A)=1$ if $x\in A$ and $\delta_x(A)=0$ else.
\end{itemize}

In the upper and lower bound proofs we will furthermore employ the following abbreviations.

\begin{itemize}
 \item \textbf{Cross section.} For every $0 \leq t \leq 1$, the set $\{x_n=t\}=\{x\in\R^n\ :\ x_n=t\}$ will be called a \emph{cross-section}. We will also use analogous notations such as $\{x_n<t\}$ with the obvious meaning.

 \item \textbf{Projection on the hyperplane $\{x_n = 0\}$.} By $x'$ we mean the \emph{projection} of $x = (x_1,\ldots,x_n)$ on the hyperplane $\{x_n = 0\}$, $x'=(x_1,\ldots,x_{n-1},0)$.

 \item \textbf{Tubular neighbourhood.} $B_s(U)$ is the \emph{tubular neighbourhood} of radius $s$ of the set $U\subset\R^n$,
       \begin{displaymath}
        B_s(U) = \{x \in \R^n \ : \ \inf_{y \in U} |x - y| \leq s\}.
       \end{displaymath}

 \item $C\equiv C(n)$ identifies $C$ as a constant depending only on the dimension $n$; if the constant depends on other parameters as well, an analogous notation is used.

 \item When we write $A \lesssim B$ or $A\gtrsim B$, we mean that there exists a constant $C\equiv C(n)$ such that $A \leq C B$ or $B \leq C A$, respectively. $A\sim B$ stands for $A \lesssim B$ and $A\gtrsim B$.

 \item $\omega_n$ denotes the $n$-dimensional measure $\lebesgue^n(B_1(0))$ of the unit ball in $\R^n$.

\end{itemize}

Next we introduce briefly the \emph{Wasserstein distance} and \emph{Kantorovich-Rubinstein duality}. We refer to \cite[Chap.\,1]{Villani-Topics-Optimal-Transport} for a complete account on Wasserstein distances, spaces and their properties.

\begin{definition}[Wasserstein distance]\label{def:Wasserstein}
 Let $\mu_+,\mu_-$ be finite Borel measures with equal mass $\|\mu_+\|_\fbm=\|\mu_-\|_\fbm$. Given $p \geq 1$, the \emph{$p$\textsuperscript{th} Wasserstein distance} between $\mu_+$ and $\mu_-$ is
 \begin{displaymath}
  \Wd{p}(\mu_+,\mu_-) = \left(\inf_{\mu\in\Pi(\mu_+,\mu_-)} \int_{\R^n\times\R^n} |x - y|^p \de\mu(x,y)\right)^{\frac1p},
 \end{displaymath}
 where $\Pi(\mu_+,\mu_-) = \{\mu\in\fbm(\R^n\times\R^n) \ : \ \mu(A \times \R^n) = \mu_+(A),\ \mu(\R^n \times B) = \mu_-(B)\text{ for all Borel sets }A,B\subset\R^n\}$.
\end{definition}

We recall the following theorem, which relates the optimal transport problem with its dual formulation (see \cite[Thm.\,1.14]{Villani-Topics-Optimal-Transport}).

\begin{theorem}[Kantorovich-Rubinstein duality]\label{thm:WoneDual}
 Let $\mu_+,\mu_- \in \fbm(\R^n)$ with equal mass. Then
 \begin{displaymath}
  \Wdone(\mu_+,\mu_-) = \sup \left\{\int_{\R^n} \varphi\,\de(\mu_+-\mu_-) \ : \ \varphi:\R^n\to\R\text{ Lipschitz with constant }1\right\}.
 \end{displaymath}
\end{theorem}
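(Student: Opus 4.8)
The plan is to prove the two inequalities separately, writing $\nu=\mu_+-\mu_-$ for the (zero-mass) signed measure whose integral against $\varphi$ appears on the right-hand side. First I would dispatch the easy bound $\Wdone(\mu_+,\mu_-)\geq\sup\{\cdots\}$: for any $1$-Lipschitz $\varphi$ and any coupling $\pi\in\Pi(\mu_+,\mu_-)$, the marginal conditions give $\int_{\R^n}\varphi\,\de\nu=\int_{\R^n\times\R^n}(\varphi(x)-\varphi(y))\,\de\pi(x,y)\leq\int_{\R^n\times\R^n}|x-y|\,\de\pi$, and taking the infimum over $\pi$ and then the supremum over $\varphi$ yields the inequality.

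For the reverse inequality $\Wdone(\mu_+,\mu_-)\leq\sup\{\cdots\}$ I would first reduce to the case where $\mu_+$ and $\mu_-$ are compactly supported (which is anyway the only case used later in the paper) by an exhaustion/approximation argument, so that the problem lives on a compact metric space. The main step is then the general Kantorovich duality for the cost $c(x,y)=|x-y|$, namely $\Wdone(\mu_+,\mu_-)=\sup\{\int\varphi\,\de\mu_+ +\int\psi\,\de\mu_- :\varphi,\psi\in\contbdd(\R^n),\ \varphi(x)+\psi(y)\leq|x-y|\}$; I would obtain this either from the Fenchel--Rockafellar duality theorem applied to appropriate convex functionals on $\contbdd$, or from a minimax theorem, using tightness of a minimising sequence of plans together with Prokhorov's theorem to guarantee that the primal infimum is attained. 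Finally I would collapse the admissible pair $(\varphi,\psi)$ to a single $1$-Lipschitz function: replacing $\psi$ by the $c$-transform $\varphi^c(y)=\inf_x(|x-y|-\varphi(x))$ and then $\varphi$ by $(\varphi^c)^c$ only increases the objective while preserving admissibility, and because $c$ is a genuine metric a short calculation shows $\varphi^c$ is $1$-Lipschitz with $(\varphi^c)^c=-\varphi^c$; hence the supremum is achieved along pairs $(\phi,-\phi)$ with $\phi$ $1$-Lipschitz, which is exactly the right-hand side of the asserted identity.

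The hard part is the abstract duality step: it is ansatz-free and needs an infinite-dimensional convex-duality or minimax argument rather than an explicit construction, and one has to be careful about the non-compactness of $\R^n$ and the unboundedness of the cost $|x-y|$ — the cleanest route handles these via the reduction to compact support above together with a monotone approximation of $|x-y|$ by bounded continuous costs. Since all of this is classical, I would simply cite \cite[Thm.\,1.14]{Villani-Topics-Optimal-Transport} for the details and only spell out the $c$-transform reduction that specialises the generic Kantorovich duality to the $1$-Lipschitz formulation needed here.
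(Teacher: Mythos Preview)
Your outline is correct, but note that the paper does not actually prove this theorem: it merely recalls it as a classical fact with a citation to \cite[Thm.\,1.14]{Villani-Topics-Optimal-Transport}. So your proposal, which already ends by deferring the technical duality step to that same reference, is strictly more detailed than what the paper provides.
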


Finally, for the reader's convenience we compile here a reference list of the most important symbols with references to the corresponding definitions.

\begin{itemize}
 \item $I = [0,1]$: The unit interval.

 \item $d_\Sigma$: Urban planning transport metric (see Formula \eqref{eq:d_Sigma}).

 \item $(\reSpace,\Bcal(\reSpace),\reMeasure)$: Reference space of all particles (Definition\,\ref{def:reference_space}).

 \item $\chi$: Irrigation pattern of all particles (Definition\,\ref{def:irrigation_pattern}).

 \item $[x]_\chi$: Solidarity class of $x$ (Definition\,\ref{def:solidarity_classes}).

 \item $\mu_+^\chi, \mu_-^\chi$: Irrigating and irrigated measure (Definition \ref{def:irrigation}).

 \item $t_p(s)$ and $\xiOpt(p,s)$: Crossing time and crossing point for particle $p$ and cross-section $\{x_n = s\}$ (see Formulae \eqref{eq:crossing_time} and \eqref{eq:crossing_point}).
\end{itemize}

\section{Reminder of transport networks and energy scaling}\label{sec:transportModels}
In this section we introduce different formulations of branched transport and urban planning which prove beneficial in deriving energy scaling laws. 

\subsection{Irrigation patterns}

As a preparation we here recall the Lagrangian or pattern-based formulation of particle transport (see \cite{Maddalena-Morel-Solimini-Irrigation-Patterns}, \cite{Bernot-Caselles-Morel-Traffic-Plans}, \cite{Maddalena-Solimini-Synchronic}).

\begin{definition}[Reference space]\label{def:reference_space}
Consider a complete separable uncountable metric space $\reSpace$ endowed with the $\sigma$-algebra $\Bcal(\reSpace)$ of its Borel sets and a positive finite Borel measure $\reMeasure$ with no atoms. We refer to $(\reSpace,\Bcal(\reSpace),\reMeasure)$ as the \emph{reference space}.
\end{definition}

The reference space can be interpreted as the space of all particles that will be transported from a distribution $\mu_+$ to a distribution $\mu_-$.

\begin{remark}
Any reference space can be shown to be isomorphic to the \emph{standard space} $([0,1],\Bcal([0,1]),m\lebesgue^1 \restr [0,1])$ with $m=\reMeasure(\reSpace)$
(see \cite[Prop.\,12 or Thm.\,16 in Sec.\,5 of Chap.\,15]{Royden-Real-Analysis} or \cite[Chap.\,1]{Villani-Transport-Old-New} for a proof).
As a consequence, the following definitions and results are independent of the particular choice of the reference space,
and we will just assume it to be the standard space unless stated otherwise.
\end{remark}

\begin{definition}[Irrigation pattern]\label{def:irrigation_pattern}
Let $I = [0,1]$ and $(\reSpace,\Bcal(\reSpace),\reMeasure)$ be our reference space. An \emph{irrigation pattern} is a measurable function $\chi : \reSpace \times I \to \R^n$ such that for almost all $p\in\reSpace$ we have $\chi_p \in \AC(I)$.

A pattern $\tilde\chi$ is \emph{equivalent} to $\chi$ if the images of $\reMeasure$ through the maps $p \mapsto \chi_p, p \mapsto \tilde\chi_p$ are the same.

For intuition, $\chi_p$ can be viewed as the path of particle $p$. Its image $\chi_p(I)$ is called a \emph{fibre} and will frequently be identified with the particle $p$.
\end{definition}
Here we followed the setting recently introduced in \cite{Maddalena-Solimini-Synchronic}.

\begin{definition}[Irrigating and irrigated measure]\label{def:irrigation}
Let $\chi$ be an irrigation pattern. Let $i_0^\chi,i_1^\chi:\reSpace \to \R^n$ be defined as $i_0^\chi(p) = \chi(p,0)$ and $i_1^\chi(p) = \chi(p,1)$.
The \emph{irrigating measure} and the \emph{irrigated measure} are defined as the pushforward of $\reMeasure$ via $i_0^\chi$ and $i_1^\chi$, respectively: $\mu_+^\chi = \pushforward{(i_0^\chi)}{\reMeasure}$, $\mu_-^\chi = \pushforward{(i_1^\chi)}{\reMeasure}$.
\end{definition}

\begin{definition}[Solidarity class]\label{def:solidarity_classes}
For every $x\in\R^n$, the set of all particles flowing through $x$ is denoted $$[x]_\chi = \{q \in \reSpace \ : \ x \in \chi_q(I)\}$$
and its mass by $m_\chi(x) = \reMeasure([x]_\chi)\,.$
\end{definition}

\subsection{An urban planning model}\label{sec:introUrbPlan}
As our first model of transport networks we consider the so-called urban planning model from \cite{Brancolini-Buttazzo}.
Its original motivation is to optimise the public transport network for the daily commute of employees
from their homes (described by a spatial distribution $\mu_0\in\fbm(\R^n)$) to their workplaces (described by $\mu_1$).

The public transport network is represented by a rectifiable set $\Sigma\subset\R^n$,
and a person travelling along a path $\theta\subset\R^n$ has expenses
\begin{equation*}
a\hdone(\theta\setminus\Sigma)+b\hdone(\theta\cap\Sigma)\,,
\end{equation*}
where $a$ and $b<a$ denote the cost per travelling distance outside and on the network, respectively.
The minimum cost to get from $x\in\R^n$ to $y\in\R^n$ is thus given by
\begin{equation}\label{eq:d_Sigma}
d_\Sigma(x,y)=\inf \{a\hdone(\theta\setminus\Sigma)+b\hdone(\theta\cap\Sigma) \ : \ \theta\in C_{x,y}\}
\end{equation}
for $C_{x,y} = \{\theta : [0,1] \to \R^n \ : \ \theta \ \text{is Lipschitz with} \ \theta(0) = x, \ \theta(1) = y\}$.
The objective functional describing the cost efficiency of the network $\Sigma$ is now given as
\begin{equation*}
\urbPlBB^{\varepsilon,a,b,\mu_0,\mu_1}[\Sigma]=\Wd{d_\Sigma}(\mu_0,\mu_1)+\varepsilon\hdone(\Sigma)\,,
\end{equation*}
where $\varepsilon\hdone(\Sigma)$ models the maintenance cost for the network and the Wasserstein-type distance
\begin{equation*}
\Wd{d_\Sigma}(\mu_0,\mu_1)=\inf_{\mu\in\Pi(\mu_0,\mu_1)}\int_{\R^n\times\R^n}d_\Sigma(x,y)\,\de\mu(x,y)
\end{equation*}
with $\Pi(\mu_0,\mu_1)=\left\{\mu\in\fbm(\R^n\times\R^n)\,:\,\pushforward{\pi_1}\mu=\mu_0,\pushforward{\pi_2}\mu=\mu_1\right\}$
represents the total travel expenses of the population travelling from their homes $\mu_0$ to their workplaces $\mu_1$.
Due to $\urbPlBB^{\varepsilon,a,b,\mu_0,\mu_1} = b\urbPlBB^{\frac{\varepsilon}{b},\frac{a}{b},1,\mu_0,\mu_1}$ it actually suffices to study the case $b=1$.

In \cite{BW15} it has been shown that the urban planning model can equivalently be reformulated in terms of irrigation patterns $\chi$ as detailed below.
The optimal transport network $\SigmaOpt$ can then be extracted from the optimal irrigation pattern $\chiOpt$ via $\SigmaOpt=\{x\in\R^n\,:\,m_\chiOpt(x)>\frac\varepsilon{a-1}\}$.
The pattern formulation is advantageous to prove energy scaling laws, which is why we introduce it here.

\begin{definition}[Urban planning problem, pattern formulation]\label{def:urbPlPatternForm}
Let $(\reSpace,\Bcal(\reSpace),\reMeasure)$ be the reference space and $\chi : \reSpace \times [0,1] \to \R^n$ be an irrigation pattern. For $\varepsilon > 0$ and $a>1$, consider the cost density
\begin{equation*}
\repsilonachi(x)= \begin{cases}
                   \min\left\{1+\tfrac\varepsilon{m_\chi(x)},a\right\} & \text{ if } m_\chi(x) > 0,\\
                   a & \text{ if } m_\chi(x) = 0.
                  \end{cases}
\end{equation*}
The \emph{urban planning cost functional} $\urbPlMMS^{\varepsilon,a}$ is given by
\begin{equation*}
\urbPlMMS^{\varepsilon,a}(\chi)=\int_{\reSpace\times I}\repsilonachi(\chi_p(t))|\dot\chi_p(t)|\,\de \reMeasure(p)\,\de t\,.
\end{equation*}
For $\mu_0,\mu_1 \in \fbm(\R^n)$, the \emph{urban planning problem} is
\begin{equation*}
 \min \urbPlEn^{\varepsilon,a,\mu_0,\mu_1}[\chi],
\end{equation*}
where
\begin{displaymath}
  \urbPlEn^{\varepsilon,a,\mu_0,\mu_1}[\chi] = \begin{cases}
                                                \urbPlMMS^{\varepsilon,a}(\chi)&\text{if $\mu_+^\chi = \mu_0$ and $\mu_-^\chi = \mu_1$},\\
                                                \infty&\text{else.}
                                               \end{cases}
\end{displaymath}
\end{definition}

Existence of minimising patterns is shown in \cite{BW15} as well as the equivalence of the optimisation problems
$\min_\chi\urbPlEn^{\varepsilon,a,\mu_0,\mu_1}[\chi]=\min_\Sigma\urbPlBB^{\varepsilon,a,1,\mu_0,\mu_1}[\Sigma]$.
Note that $\varepsilon$ governs the network maintenance costs.
Thus, a large $\varepsilon$ will lead to coarse networks, while a small $\varepsilon$ allows the network to become very complex.
Our energy scaling law will be concerned with the latter case.

Finally, we will later need the following reparameterisation result from \cite{BW15}.

\begin{proposition}[Constant speed reparameterisation of patterns]\label{prop:constSpeedPatternsUrbPl}
Irrigation patterns of finite cost can be reparameterised such that $\chi_p:I\to\R^n$ is Lipschitz and $|\dot\chi_p|$ is constant for almost all $p\in\reSpace$ without changing the cost $\urbPlMMS^{\varepsilon,a}$.
\end{proposition}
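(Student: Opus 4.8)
The plan is to reparameterise each fibre separately by normalised arc length, and to observe that this operation changes neither the cost density $\repsilonachi$ nor the weighted length integral defining $\urbPlMMS^{\varepsilon,a}$ along any individual fibre. Indeed, $\repsilonachi$ depends on $\chi$ only through the quantities $m_\chi(x)=\reMeasure([x]_\chi)$, hence only through the (unordered) collection of fibre images $\{\chi_q(I)\}_{q}$, which is manifestly invariant under reparameterisation of the individual paths; and for a fixed particle the integral $\int_0^1\repsilonachi(\chi_p(t))\,|\dot\chi_p(t)|\,\de t$ is a weighted length of the curve $\chi_p$ and thus invariant under monotone reparameterisations. The only genuinely technical point is to carry out the arc-length reparameterisation in a way that is jointly measurable in $(p,t)$.

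\emph{Per-fibre reparameterisation and invariance of the data.} Work on the full-measure set of $p\in\reSpace$ with $\chi_p\in\AC(I)$. For such $p$ set $\ell_p(t)=\int_0^t|\dot\chi_p(\tau)|\,\de\tau$ and $L_p=\ell_p(1)<\infty$. By the standard arc-length parameterisation lemma for absolutely continuous curves there is a $1$-Lipschitz curve $\gamma_p:[0,L_p]\to\R^n$ with $\chi_p=\gamma_p\circ\ell_p$ and $|\dot\gamma_p|=1$ $\lebesgue^1$-a.e.; concretely one may take $\gamma_p(u)=\chi_p(\sigma_p(u))$ with $\sigma_p(u)=\inf\{t\in I:\ell_p(t)\ge u\}$. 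Define the new pattern by $\tilde\chi_p(t)=\gamma_p(L_pt)=\chi_p(\sigma_p(L_pt))$ if $L_p>0$ and $\tilde\chi_p\equiv\chi_p(0)$ if $L_p=0$. Then $\tilde\chi_p$ is $L_p$-Lipschitz with $|\dot{\tilde\chi}_p(t)|=L_p$ for a.e.\ $t$, and $\tilde\chi_p(I)=\chi_p(I)$; moreover $\ell_p$ is constant on $[\sigma_p(L_p),1]$, hence $\chi_p$ is constant there, so $\tilde\chi_p(0)=\chi_p(0)$ and $\tilde\chi_p(1)=\chi_p(1)$. Consequently $[x]_{\tilde\chi}=[x]_\chi$, hence $m_{\tilde\chi}=m_\chi$ and $r_{\varepsilon,a}^{\tilde\chi}\equiv\repsilonachi$, while $i_0^{\tilde\chi}=i_0^\chi$ and $i_1^{\tilde\chi}=i_1^\chi$, so that $\mu_+^{\tilde\chi}=\mu_+^\chi$ and $\mu_-^{\tilde\chi}=\mu_-^\chi$; in particular $\tilde\chi$ is admissible for the same boundary data.

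\emph{Invariance of the per-fibre cost.} Fix $p$ and write $\phi=\repsilonachi$, a nonnegative Borel function. Using the chain rule $|\dot\chi_p(t)|=|\dot\gamma_p(\ell_p(t))|\,\dot\ell_p(t)$ a.e.\ (with the convention that the product vanishes where $\dot\ell_p=0$), the change-of-variables formula $\int_0^1\psi(\ell_p(t))\,\dot\ell_p(t)\,\de t=\int_0^{L_p}\psi(u)\,\de u$ for monotone absolutely continuous $\ell_p$, and $|\dot\gamma_p|=1$ a.e., one gets
\[
\int_0^1\phi(\chi_p(t))\,|\dot\chi_p(t)|\,\de t=\int_0^{L_p}\phi(\gamma_p(u))\,\de u=\int_0^1\phi(\tilde\chi_p(t))\,|\dot{\tilde\chi}_p(t)|\,\de t ,
\]
the last identity by the substitution $u=L_pt$. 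Integrating this against $\de\reMeasure(p)$ and using $r_{\varepsilon,a}^{\tilde\chi}=\repsilonachi$ yields $\urbPlMMS^{\varepsilon,a}(\tilde\chi)=\urbPlMMS^{\varepsilon,a}(\chi)$.

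\emph{Joint measurability and the main obstacle.} It remains to check that $\tilde\chi:\reSpace\times I\to\R^n$ is jointly measurable, which I expect to be the only real difficulty. The map $(p,\tau)\mapsto\dot\chi_p(\tau)$ is jointly measurable as an a.e.\ pointwise limit of difference quotients of the measurable map $\chi$; hence so are $(p,t)\mapsto\ell_p(t)$ and $p\mapsto L_p$, and therefore $(p,u)\mapsto\sigma_p(u)$ is jointly measurable, since $\{\sigma_p(u)<c\}$ is expressible through the superlevel sets of $\ell_p(\cdot)$. Thus $(p,t)\mapsto\tilde\chi_p(t)=\chi(p,\sigma_p(L_pt))$ is a composition of measurable maps and hence measurable. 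The hard part is precisely that one cannot invoke the arc-length lemma abstractly but must realise $\gamma_p$ through the explicit generalised inverse $\sigma_p$ to retain measurability in $p$; a secondary bookkeeping issue is to check that the $\lebesgue^1$-null parameter sets where $|\dot\gamma_p|\neq1$ or where $\ell_p$ is locally constant do not affect the change-of-variables identity above.
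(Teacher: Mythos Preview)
Your argument is correct. The paper does not actually prove this proposition but cites it from the companion paper \cite{BW15}; your arc-length reparametrisation is precisely the standard argument one expects there, with the key observations being (i) $m_\chi(x)$ depends only on the fibre images $\chi_q(I)$, hence $r_{\varepsilon,a}^{\tilde\chi}=r_{\varepsilon,a}^{\chi}$, and (ii) the per-fibre integral is a weighted length and so is reparametrisation-invariant.

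One small remark on the measurability step: your description of the sublevel set of $\sigma_p$ is slightly off. Since $\ell_p$ is continuous and nondecreasing, the clean identity is
\[
\{\sigma_p(u)\le c\}=\{\ell_p(c)\ge u\},
\]
which directly exhibits joint measurability of $(p,u)\mapsto\sigma_p(u)$ from that of $(p,c)\mapsto\ell_p(c)$. Everything else---the endpoint verification $\tilde\chi_p(1)=\chi_p(1)$ via constancy of $\chi_p$ on $[\sigma_p(L_p),1]$, and the change-of-variables computation---is fine.
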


\subsection{Branched transport}\label{subsec:branched_transport}
The second model considered is the \emph{branched transportation problem}. In the literature, the branched transport functional is commonly parameterised by a parameter $\alpha\in[0,1)$,
which describes the economies of scale for mass transport:
The cost for transporting mass $m$ by a unit distance is $m^\alpha$, so the cost per single mass particle gets smaller the more particles are transported together.
This creates a preference for branching networks in which mass is first lumped together and then transported in bulk.
The closer $\alpha$ to one, the less pronounced is this preference and the more complex the transportation networks can become.
Since we will study the asymptotic behaviour of finer and finer networks, we will in our description replace $\alpha$ by $\varepsilon = 1-\alpha$ and consider the case of $\varepsilon$ small.
The following definition of the functional $\MMSEn^\varepsilon$ follows \cite{Bernot-Caselles-Morel-Traffic-Plans}.

\begin{definition}[Branched transport problem]\label{eqn:costDensityBrTpt}
For $0 < \varepsilon \leq 1$, the reference space $(\reSpace,\Bcal(\reSpace),\reMeasure)$, and an irrigation pattern $\chi : \reSpace \times [0,1] \to \R^n$ we consider the \emph{cost density} \begin{equation*}
s_\varepsilon^\chi(x)=\begin{cases}
                                                           m_\chi(x)^{-\varepsilon} & \text{ if } m_\chi(x) > 0,\\
                                                           \infty & \text{ if } m_\chi(x) = 0.
                                                          \end{cases}
\end{equation*}
The \emph{branched transport cost functional} associated with irrigation pattern $\chi$ is
\begin{displaymath}
\MMSEn^\varepsilon(\chi) = \int_{\reSpace \times I} s_\varepsilon^\chi(\chi_p(t)) |\dot\chi_p(t)|\,\de\reMeasure(p)\, \de t\,.
\end{displaymath}
For $\mu_0,\mu_1 \in \fbm(\R^n)$, the \emph{branched transport problem} is
\begin{equation*}
 \min \brTptEn^{\varepsilon,\mu_0,\mu_1}[\chi],
\end{equation*}
where
\begin{displaymath}
 \brTptEn^{\varepsilon,\mu_0,\mu_1}[\chi] = \begin{cases}
                                             \MMSEn^\varepsilon(\chi)&\text{if $\mu_+^\chi = \mu_0$ and $\mu_-^\chi = \mu_1$},\\
                                             \infty&\text{else.}
                                            \end{cases}
\end{displaymath}
\end{definition}

Given $\mu_0,\mu_1 \in \fbm(\R^n)$ with compact support, an optimal irrigation pattern exists \cite{Maddalena-Solimini-Transport-Distances}.

Finally, we will need the fact from \cite{BW15} that patterns can be reparameterised fibrewise without changing the cost.

\begin{proposition}[Constant speed reparameterisation of patterns]\label{prop:reparameterised_patterns_have_the_same_cost}
Irrigation patterns of finite cost can be reparameterised such that $\chi_p:I\to\R^n$ is Lipschitz and $|\dot\chi_p|$ is constant for almost all $p\in\reSpace$ without changing the cost $\MMSEn^\varepsilon$.
\end{proposition}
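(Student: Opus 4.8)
The plan is to reparameterise each fibre to constant speed by arc length and then to check that this operation changes neither the images of the fibres (hence neither the multiplicity function $m_\chi$ nor the cost density $s_\varepsilon^\chi$) nor the fibrewise line integral of $s_\varepsilon^\chi$ (hence not the cost $\MMSEn^\varepsilon$). So fix an irrigation pattern $\chi$ with $\MMSEn^\varepsilon(\chi)<\infty$. By Fubini, for $\reMeasure$-almost every $p$ the curve $\chi_p\in\AC(I)$ has finite length $L(p)=\int_0^1|\dot\chi_p(s)|\,\de s$, and $p\mapsto L(p)$ is measurable. On the set $\{L=0\}$ the curve $\chi_p$ is constant and we simply set $\tilde\chi_p\equiv\chi_p(0)$, which has constant speed $0$. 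On $\{L>0\}$ I would introduce the arc-length function $\ell_p(t)=\int_0^t|\dot\chi_p(s)|\,\de s$, which is continuous, nondecreasing, with $\ell_p(0)=0$ and $\ell_p(1)=L(p)$, together with its generalised inverse $\ell_p^{-1}(u)=\inf\{t\in I:\ell_p(t)\geq u\}$, and then define $\tilde\chi_p(t)=\chi_p\big(\ell_p^{-1}(tL(p))\big)$.

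Next I would assemble the elementary properties of this construction. If $\ell_p$ is constant on a subinterval, then $\dot\chi_p=0$ a.e. there, so $\chi_p$ is constant on that subinterval; hence $\tilde\chi_p$ does not depend on which preimage is picked, and in particular $\tilde\chi_p(0)=\chi_p(0)$ and $\tilde\chi_p(1)=\chi_p(1)$, so that $\mu_+^{\tilde\chi}=\mu_+^\chi$ and $\mu_-^{\tilde\chi}=\mu_-^\chi$; moreover $\tilde\chi_p(I)=\chi_p(I)$. A standard estimate using $\ell_p(\ell_p^{-1}(u))=u$ on $[0,L(p)]$ shows that $\tilde\chi_p$ is $L(p)$-Lipschitz, and since its image equals that of $\chi_p$ it has length $L(p)$, forcing $|\dot{\tilde\chi}_p|=L(p)$ a.e. The genuinely delicate point, and the one I expect to be the main obstacle, is the joint measurability of $(p,t)\mapsto\tilde\chi(p,t)$: this follows from measurability of $p\mapsto L(p)$ and of $(p,t)\mapsto\ell_p(t)$ together with the fact that the generalised inverse of a jointly measurable family of monotone functions is again jointly measurable, which is a routine but not entirely trivial measurable-selection argument. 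Granting this, $\tilde\chi$ is again an irrigation pattern of the required form.

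Finally I would verify the invariance of the cost. Since $\tilde\chi_p(I)=\chi_p(I)$ for $\reMeasure$-a.e. $p$, the solidarity classes $[x]_{\tilde\chi}$ and $[x]_\chi$ agree up to a $\reMeasure$-null set for every $x$, whence $m_{\tilde\chi}=m_\chi$ and therefore $s_\varepsilon^{\tilde\chi}=s_\varepsilon^\chi=:s$. It then remains to show that for $\reMeasure$-a.e. $p$ one has $\int_I s(\tilde\chi_p(t))|\dot{\tilde\chi}_p(t)|\,\de t=\int_I s(\chi_p(t))|\dot\chi_p(t)|\,\de t$, which is the classical invariance of the line integral of a nonnegative Borel integrand under reparameterisation: both sides equal the integral of $s$ along the rectifiable curve $\chi_p$ with respect to arc length, and they are matched by the change of variables $t\mapsto\ell_p^{-1}(tL(p))$ (equivalently, the area formula applied fibrewise). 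Integrating this identity in $p$ against $\reMeasure$ yields $\MMSEn^\varepsilon(\tilde\chi)=\MMSEn^\varepsilon(\chi)$, completing the proof.
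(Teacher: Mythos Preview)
The paper does not give its own proof of this proposition: it simply records it as ``the fact from \cite{BW15}'' and defers to that reference. Your argument is the standard arc-length reparameterisation proof and is essentially correct; it is precisely the kind of argument one expects the cited reference to contain.

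One small imprecision worth fixing: the sentence ``since its image equals that of $\chi_p$ it has length $L(p)$'' is not a valid justification, because the image of a curve does not determine its length (the curve could traverse parts of its image multiple times). The correct reason $|\dot{\tilde\chi}_p|=L(p)$ a.e.\ is that the length of $\tilde\chi_p$ equals the length of $\chi_p$ by the change of variables $u=\ell_p(t)$ (or the area formula), i.e.\ $\int_0^1|\dot{\tilde\chi}_p(t)|\,\de t=L(p)$, and combined with the Lipschitz bound $|\dot{\tilde\chi}_p|\leq L(p)$ this forces equality a.e. This is a cosmetic fix; the rest of your outline (preservation of fibre images, hence of $m_\chi$ and $s_\varepsilon^\chi$, followed by invariance of the line integral under monotone reparameterisation, and the measurability check) is sound.
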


\subsection{The optimal energy scaling}\label{sec:energyScaling}

Recall from Section \ref{sec:intro} that for $A \subset \R^{n-1}$ a hypersquare we will use
\begin{equation*}
\mu_0=\m\hdcodimone\restr(A\times\{0\})\,,\qquad
\mu_1=\m\hdcodimone\restr(A\times\{L\})
\end{equation*}
Therefore we set
$
 \urbPlEn^{\varepsilon,a,A,\m,L}=\urbPlEn^{\varepsilon,a,\mu_0,\mu_1}
 \ \text{and}\
 \brTptEn^{\varepsilon,A,\m,L}=\brTptEn^{\varepsilon,\mu_0,\mu_1}\,.
$
By rescaling the geometry and the mass flux one easily arrives at the following non-dimensionalisations,
\begin{align*}
\urbPlEn^{\varepsilon,a,A,\m,L}[\chi]
&=L^n\m\urbPlEn^{\frac\varepsilon{L^{n-1}\m},a,\frac1LA,1,1}[\tfrac1L\chi]\,,\\
\brTptEn^{\varepsilon,A,\m,L}[\chi]
&=(L^n\m)^{1-\varepsilon}L\brTptEn^{\varepsilon,\frac1LA,1,1}[\tfrac1L\chi]\,,
\end{align*}
so that without loss of generality we may restrict ourselves to the study of
\begin{equation*}
\mu_0=\hdcodimone\restr(A\times\{0\})\,,\qquad
\mu_1=\hdcodimone\restr(A\times\{1\})\,,
\end{equation*}
\begin{equation*}
 \urbPlEn^{\varepsilon,a,A}:=\urbPlEn^{\varepsilon,a,A,1,1}
 \qquad\text{and}\qquad
 \brTptEn^{\varepsilon,A}:=\brTptEn^{\varepsilon,A,1,1}\,.
\end{equation*}
Furthermore, let us denote the minimum energy for $\varepsilon=0$ by
\begin{equation*}
\urbPlEn^{*,a,A}=\inf_{\chi}\urbPlEn^{0,a,A}[\chi]\,,\qquad
\brTptEn^{*,A}=\inf_{\chi}\brTptEn^{0,A}[\chi]\,.
\end{equation*}
We have 
\begin{equation*}
\urbPlEn^{*,a,A}=\brTptEn^{*,A}=\Wdone(\mu_0,\mu_1)=\hdcodimone(A)
\end{equation*}
for $\Wdone(\mu_0,\mu_1)=\inf_{\mu\in\Pi(\mu_0,\mu_1)}\int_{\R^n\times\R^n}|x-y|\,\de\mu(x,y)$ the Wasserstein distance.
Indeed, our constructions in Section\,\ref{sec:upperBound} will imply $\brTptEn^{*,A},\urbPlEn^{*,a,A}\leq\Wdone(\mu_0,\mu_1)$,
while Sections\,\ref{sec:lwBndUrbPlan} and \ref{sec:lwBndBrnchTrpt} begin with calculations showing $\brTptEn^{*,A},\urbPlEn^{*,a,A}\geq\Wdone(\mu_0,\mu_1)$.
For now, the reader may simply imagine the infimum value being approached by irrigation patterns $\chi:\reSpace\times I\to\R^n$ with $\reSpace=A$ approximating $\chi_p(t)=(p_1,\ldots,p_{n-1},t)^T$, transport through a network with infinitely many vertical pipes of length 1.

For $\varepsilon>0$ the minimum achievable energy deviates from $\brTptEn^{*,A}$ and $\urbPlEn^{*,a,A}$, respectively.
We will call this deviation the excess energy and abbreviate it by
\begin{equation*}
\Delta\urbPlEn^{\varepsilon,a,A}=\min_\chi\urbPlEn^{\varepsilon,a,A}[\chi]-\urbPlEn^{*,a,A}
\qquad\text{and}\qquad
\Delta\brTptEn^{\varepsilon,A}=\min_\chi\brTptEn^{\varepsilon,A}[\chi]-\brTptEn^{*,A}\,.
\end{equation*}
The main results of this article are the following two theorems, which show how the deviation scales in $\varepsilon$.
The first theorem treats the urban planning model.

\begin{theorem}[Energy scaling for the Urban Planning]\label{thm:scalingUrbPlan}
There are constants $C_1,C_2>0$ independent of $\varepsilon,a,A$
such that for $\varepsilon<\min\{1,\hdcodimone(A)^{\frac{n+1}{n-1}}\}$ and $a>1$ we have
\begin{equation*}
C_1\hdcodimone(A)\min\{a-1,f(\varepsilon,a)\}
\leq\Delta\urbPlEn^{\varepsilon,a,A}
\leq C_2\hdcodimone(A)\min\{a-1,f(\varepsilon,a)\}\,,
\end{equation*}
where the function $f$ is given by
\begin{align*}
f(\varepsilon,a)&=\varepsilon^{\frac23}&\text{ if }n=2\,,\\
f(\varepsilon,a)&=(\sqrt a+|\log\tfrac{a-1}{\sqrt\varepsilon}|)\sqrt\varepsilon&\text{ if }n=3\,,\\
f(\varepsilon,a)&=\sqrt a(\sqrt{a-1})^{\frac{n-3}{n-1}}\varepsilon^{\frac1{n-1}}&\text{ if }n>3\,.
\end{align*}
\end{theorem}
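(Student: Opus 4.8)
The plan is to prove matching upper and lower bounds on the excess energy $\Delta\urbPlEn^{\varepsilon,a,A}$, treating the dimension-dependent regimes $n=2$, $n=3$, and $n>3$ on the same conceptual footing but with the scalings dictated by $f(\varepsilon,a)$. For the upper bound I would build an explicit near-optimal irrigation pattern by a hierarchical (self-similar) branching construction: starting from the uniform vertical transport near $\mu_1$, recursively merge bundles of fibres as one moves towards $\mu_0$, in a fixed number $\numLev$ of branching levels whose count and geometric ratios are chosen to optimise the competition between the cost density $\repsilonachi$ (which rewards lumping mass so that $m_\chi(x)$ is large, up to the cap $a$) and the extra transversal length travelled by fibres during merging. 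On each level the local building block is an elementary cell (cf.\ the \texttt{ElemCell}/\texttt{ElemLinCell} macros introduced in the preamble) whose contribution is estimated by comparing $\repsilonachi$ against the two extreme regimes $1+\varepsilon/m_\chi$ and $a$; summing the geometric series over levels and optimising the free parameters yields $\hdcodimone(A)\,f(\varepsilon,a)$ up to a constant, and the truncation at $\min\{a-1,f(\varepsilon,a)\}$ reflects the trivial competitor of straight vertical pipes (cost $\leq a\,\hdcodimone(A)$, excess $\leq(a-1)\hdcodimone(A)$).

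For the lower bound I would use the convex-duality / Kohn--Müller-type argument advertised in the introduction. The key is to localise in the transport direction $x_n$: for each cross-section $\{x_n=s\}$ let $\mu_\chi^s$ be the measure of crossing points $\xiOpt(p,s)$ weighted by $\reMeasure$, and estimate from below the ``one-level'' cost between two nearby cross-sections in terms of a Wasserstein-type quantity. Concretely, one shows that if the typical bundle mass at scale $s$ is $m$, then transporting from a near-uniform distribution at one end down to coarser lumps forces either $m$ to stay small — so the density $1+\varepsilon/m$ is bounded away from $1$ and contributes an excess $\sim\varepsilon/m$ per unit length — or $m$ to be large, which by a Poincaré/transport inequality costs $\gtrsim$ (something)$\cdot m^{2/(n-1)}$ in transversal displacement (this exponent $2/(n-1)$ is exactly what produces the dimensional split, being super-, equal-, or sub-linear according to $n\lessgtr3$). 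Integrating this local lower bound over $s\in[0,1]$ and optimising over the profile $m=m(s)$ reproduces $f(\varepsilon,a)$; the cap $a$ in $\repsilonachi$ and the competitor $(a-1)$ enter as before. The duality enters in making the ``transversal displacement costs $\gtrsim m^{2/(n-1)}$'' step ansatz-free: one tests the cost functional against a well-chosen Lipschitz (or piecewise-affine) potential $\varphi$ adapted to the dyadic cell structure, using Theorem~\ref{thm:WoneDual}, so that no assumption on the geometry of the optimal $\chi$ is needed.

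The main obstacle I anticipate is the lower bound in the borderline case $n=3$, where the exponent $2/(n-1)=1$ makes the transversal-cost term and the density-excess term scale the \emph{same} way in $m$, so the optimisation over $m(s)$ is degenerate and the extra logarithmic factor $|\log\frac{a-1}{\sqrt\varepsilon}|$ appears only after summing the marginal contributions of \emph{all} scales between the coarsest ($m\sim 1$, i.e.\ the $\sqrt a$ boundary term) and the finest ($m$ down to where $1+\varepsilon/m$ meets the cap, or where merging stops). Getting the constants in the geometric/harmonic sums to line up between the upper and lower bounds here — in particular making the dual test function resolve all $\sim|\log\sqrt\varepsilon|$ scales simultaneously with a single $1$-Lipschitz potential — is the delicate point; the secondary difficulty is bookkeeping the cap $a$ consistently through both bounds so that the $\min\{a-1,f(\varepsilon,a)\}$ truncation emerges cleanly rather than as a case distinction imposed by hand. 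The regimes $n=2$ (where the coarsest scale dominates, exponent $2/(n-1)=2>1$) and $n>3$ (exponent $<1$, finest scales dominate) are comparatively routine once the $n=3$ machinery is in place, since there the optimal profile concentrates the excess energy at one extreme scale and the sums are dominated by a single term.
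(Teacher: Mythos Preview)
Your overall strategy matches the paper's: hierarchical branching for the upper bound, and cross-section analysis combined with Wasserstein/duality estimates for the lower bound, with the trivial vertical-pipe competitor giving the $(a-1)$ truncation. Two points, however, diverge from what the paper actually does and are worth flagging.

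First, for the $n=3$ lower bound you propose a single $1$-Lipschitz potential resolving all $\sim|\log\sqrt\varepsilon|$ scales at once, and you correctly anticipate this as the hard step. The paper does \emph{not} do this. Instead it selects $\sim K$ dyadic cross-sections $s_k\in[2^{-k-1},2^{-k}]$ (Lemma~\ref{thm:crossSection3D}), bounds the pipe count $N_{s_k}$ and the off-network flux $\Delta F_{s_k}$ on at least half of them, and then telescopes the energy through the chain $\tilde\mu_0,\tilde\mu_{s_K},\ldots,\tilde\mu_{s_1},\tilde\mu_1$. The key inequality is $\Wdone(\tilde\mu_s,\tilde\mu_t)-\|\tilde\mu_s\|(s-t)\gtrsim\Wd2(\tilde\mu_s',\tilde\mu_t')^2/(s-t)$, after which the lower bound becomes a finite convex optimisation in the variables $d_k=\Wd2(\tilde\mu_{s_k}',\tilde\mu_0')$ subject to $d_k\geq d_k^*\sim\sqrt{K\varepsilon s_k/\Delta\urbPlEn}$. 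Summing the $K\sim\log$ terms produces the logarithm. Your single-potential idea may be workable but is not what is done here; the multi-scale cross-section argument is what buys the $|\log|$ factor.

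Second, and more substantively, your sketch does not separate out the mechanism producing the $\sqrt a$ term in $f(\varepsilon,a)$ for $n\geq3$. In the paper this is a distinct ``boundary contribution'' argument (Section~\ref{sec:bdyContribution}): one shows that for the optimal pattern each fibre's intersection with $\SigmaOpt$ is connected (Lemma~\ref{thm:connPath}), and then estimates the horizontal distance each particle must travel \emph{outside} the network before first entering $\SigmaOpt$, using an average-distance lemma for connected $1$-sets (Lemma~\ref{thm:avDist}). This yields $\Delta\urbPlEn\gtrsim\hdcodimone(A)\min\{a-1,\sqrt{a^2-1}^{(n-2)/(n-1)}\varepsilon^{1/(n-1)}\}$, which is then combined with the cross-section bound. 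Your cross-section/profile-optimisation argument alone will not see this, because it does not exploit the cost $a$ incurred on the initial off-network segment; without this ingredient the $\sqrt a$ dependence in $n\geq3$ is missing from the lower bound.
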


The proof is given in Sections\,\ref{sec:upBndUrbPlan} and \ref{sec:lwBndUrbPlan}.
Note that the dependence on the parameter $a$ is also identified, which plays a dominant role if either $a$ is huge or close to $1$.
The three different scaling laws for different dimensions indicate three different regimes, as will become clear in Sections\,\ref{sec:upBndUrbPlan} and \ref{sec:lwBndUrbPlan}:
\begin{itemize}
\item In 2D, the dominant contribution to the excess energy stems from the interior region
between and bounded away from $\mu_0$ and $\mu_1$.
\item In 3D, the dominant energy contribution also occurs in between the two measures,
but this time it is more evenly distributed, up to the boundary.
\item In higher dimensions, the energy scaling is dominated by energy contributions
concentrated at $\mu_0$ and $\mu_1$.
\end{itemize}

The second result is concerned with the branched transport model.
\begin{theorem}[Energy scaling for the Branched Transport]\label{thm:scalingBrnchTrpt}
There are constants $C_1,C_2,C_3,C_4>0$ independent of $\varepsilon$ and $A$
such that for $\varepsilon<\min\{\hdcodimone(A)^{\frac2{n-1}},\hdcodimone(A)^{-C_3},C_4\}$ we have
\begin{equation*}
C_1\hdcodimone(A)\varepsilon|\log\varepsilon|
\leq\Delta\brTptEn^{\varepsilon,A}
\leq C_2\hdcodimone(A)\varepsilon|\log\varepsilon|\,.
\end{equation*}
\end{theorem}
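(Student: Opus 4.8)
The plan is to establish the matching upper and lower bounds for $\Delta\brTptEn^{\varepsilon,A}$ separately, reducing as usual to the non-dimensionalised configuration with $\mu_0=\hdcodimone\restr(A\times\{0\})$, $\mu_1=\hdcodimone\restr(A\times\{1\})$. By the scaling relations collected in Section\,\ref{sec:energyScaling}, the factor $\hdcodimone(A)$ in front is forced, and the restriction $\varepsilon<\hdcodimone(A)^{2/(n-1)}$ ensures that $A$ is large enough (in units of the natural length scale $\sqrt\varepsilon$) that boundary effects do not swamp the interior contribution; the other two constraints $\varepsilon<\hdcodimone(A)^{-C_3}$ and $\varepsilon<C_4$ are the harmless smallness/largeness assumptions needed for the error terms in the two constructions and estimates to be absorbed.

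For the \textbf{upper bound} $\Delta\brTptEn^{\varepsilon,A}\leq C_2\hdcodimone(A)\varepsilon|\log\varepsilon|$, I would build an explicit self-similar branching irrigation pattern, organised in discrete generations (a "box tree" in the sense of Figure\,\ref{fig:setting}). Starting from the uniform measure on $A\times\{0\}$, at each generation one merges $2^{n-1}$ neighbouring tubes into one, halving the transverse cell size, transporting the lumped mass a vertical distance comparable to that cell size, and symmetrically splitting again near $\mu_1$. If $m_j$ is the mass carried by a single channel in generation $j$ and $\ell_j$ its length, the branched-transport cost contributed by that generation is $\sim (\text{number of channels})\cdot m_j^{1-\varepsilon}\ell_j$. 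Since $m_j^{1-\varepsilon} = m_j\cdot m_j^{-\varepsilon} = m_j e^{-\varepsilon\log m_j}$ and $m_j$ scales geometrically with $j$, the per-generation excess over the straight-pipe cost $m_j\ell_j$ is $\sim m_j\ell_j(1-m_j^{-\varepsilon})\sim m_j\ell_j\,\varepsilon|\log m_j|$; summing over the $\sim|\log\varepsilon|$ relevant generations (one stops refining once the marginal gain from further lumping no longer offsets the extra vertical detour, which happens at channel mass $\sim\varepsilon$, i.e. after $\sim|\log\varepsilon|$ generations) and noting $\sum_j m_j\ell_j(\text{\#channels})\sim\hdcodimone(A)$ per generation produces the claimed $\hdcodimone(A)\varepsilon|\log\varepsilon|$, provided one checks that the total vertical length can be kept $\sim 1$ and that the merging can be done within the slab $A\times[0,1]$. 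The bookkeeping of geometric sums and the verification that the construction genuinely has irrigating measure $\mu_0$ and irrigated measure $\mu_1$ is routine but must be done carefully.

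For the \textbf{lower bound} $\Delta\brTptEn^{\varepsilon,A}\geq C_1\hdcodimone(A)\varepsilon|\log\varepsilon|$, which is the ansatz-free and harder direction, I would follow the Kohn--M\"uller-style convex-duality scheme announced in the introduction. The key object is, for each cross-section $\{x_n=s\}$, the push-forward of $\reMeasure$ under the crossing map $p\mapsto\xiOpt(p,s)$ together with the multiplicity function $m_\chi$; one uses the elementary inequality that on any cross-section the branched cost density integrates against $|\dot\chi|$ to at least $\int m_\chi^{1-\varepsilon}$ over the transversal slice, while on the other hand Kantorovich--Rubinstein duality (Theorem\,\ref{thm:WoneDual}) bounds below the horizontal rearrangement that any such pattern must perform between consecutive dyadic scales. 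Concretely, fixing a mesoscopic length scale $\ell$ and partitioning $A$ into subcubes of side $\ell$, one shows that within a vertical window of height $\sim\ell$ the pattern must either keep the flux spread out — costing an excess $\sim\ell^{n-1}\cdot\varepsilon|\log\ell^{n-1}|$ because $m^{1-\varepsilon}\geq m(1-\varepsilon\log\frac1m)$ forces a strictly superlinear-in-mass penalty whenever the mass per channel is small — or else lump it, which forces extra horizontal transport whose $\Wdone$-cost is bounded below via the dual potential and is again $\gtrsim\ell^{n-1}$ times the appropriate power. Optimising the window scale $\ell$ over the dyadic range $\sqrt\varepsilon\lesssim\ell\lesssim 1$ and summing the $\sim|\log\varepsilon|$ contributions yields the bound. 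The main obstacle, and the place where real care is needed, is making this "spread-or-lump" dichotomy quantitative and local at every scale simultaneously without double-counting: one must choose the test potential $\varphi$ in the duality so that its contribution telescopes cleanly across scales, and one must control the fibres that cross a given window multiple times or backtrack. I expect this multiscale summation — rather than any single-scale estimate — to be the technical heart of the argument.
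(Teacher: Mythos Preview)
Your upper bound sketch is essentially the paper's argument: layered elementary cells with geometrically decreasing width, $h_k\sim w_k^2/\varepsilon$ so that the heights sum to $1$ with coarsest width $w_1\sim\sqrt\varepsilon$, linearisation $(w_k/2)^{-\varepsilon(n-1)}\leq 1+\bar c\,\varepsilon(n-1)|\log(w_k/2)|$, and termination after $K\sim|\log\varepsilon|$ layers, with horizontal-only refinement ($h_k=0$) thereafter so that $\mu_\pm^\chi$ are genuinely the Lebesgue measures.

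Your lower bound strategy, however, is \emph{not} the paper's, and as written it is not clear it closes. The paper does \emph{not} run a multi-scale argument for branched transport; that device is used only for the three-dimensional urban planning bound. Instead the paper works on a \emph{single} generic cross-section $\{x_n=s\}$ and records, for each flux value $c\in(0,\mathfrak m]$, the number $\tilde N_s(c)$ of pipes carrying exactly that flux. After a harmless renormalisation ensuring all fluxes are at most $1$ (so that $\brTptEn\geq\Wdone$ can be invoked), one obtains the entropy-type constraint $\int_0^{\mathfrak m}(-c\log c)\,\de N_s(c)\leq\Delta\tilde\brTptEn^{\varepsilon,A}/\varepsilon$ and, from Lemma\,\ref{thm:WoneBound}, the competing bound $\Delta\tilde\brTptEn^{\varepsilon,A}\gtrsim\int_0^{\mathfrak m}c^{(n+1)/(n-1)}\,\de N_s(c)$. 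The lower bound then reduces to an infinite-dimensional \emph{linear program} in the measure $N_s$, solved explicitly via Fenchel--Rockafellar duality (Lemma\,\ref{thm:convProg}) to give $\Delta\tilde\brTptEn^{\varepsilon,A}\gtrsim\mathfrak m\exp\bigl(-\tfrac{2}{(n-1)\mathfrak m}\tfrac{\Delta\tilde\brTptEn^{\varepsilon,A}}{\varepsilon}\bigr)$; the $|\log\varepsilon|$ factor emerges from solving this transcendental inequality, not from summing over dyadic scales.

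The difficulty with transplanting your multi-scale scheme from urban planning is precisely the one the paper's LP is designed to handle: in urban planning the cross-sectional excess is $\varepsilon N_s$, linear in the pipe count and independent of the fluxes, so a dichotomy ``few pipes vs.\ many pipes'' is clean. In branched transport the excess on a slice is $\varepsilon\int(-c\log c)\,\de N_s(c)$, which couples the number of pipes to their flux distribution; a pattern can have arbitrarily many pipes at a cross-section at vanishing entropy cost provided most mass sits in a few fat channels. Your ``spread-or-lump'' dichotomy therefore needs a quantitative replacement for ``spread'', and your sentence oscillates between ``optimising the window scale $\ell$'' and ``summing the $\sim|\log\varepsilon|$ contributions'', which are different mechanisms. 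A per-scale contribution of the form $\hdcodimone(A)\ell\,\varepsilon|\log\ell|$ (which is what your heuristic actually produces for a window of height $\sim\ell$) sums geometrically and does not yield the logarithm. If you want to pursue the multi-scale route, you will have to track the full flux histogram across scales, at which point you are essentially reconstructing the LP; the paper's single-cross-section reduction is the shorter path.
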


The proof is given in Sections\,\ref{sec:upBndBrnchTrpt} and \ref{sec:lwBndBrnchTrpt} and reveals that the total energy distribution follows a similar pattern as in 3D urban planning.

All above energy scalings will be obtained following variations of a general scheme
originally introduced by Kohn and M\"uller in 1992 \cite{KM92} to model martensite--austenite interfaces.
\begin{itemize}
\item The upper bound is based on a construction with unit cells arranged in hierarchical layers.
The unit cells of the layer closest to the domain boundary are the smallest,
and from layer to layer the unit cell width doubles, thereby leading to a coarsening towards the domain centre.
Each single unit cell contains a branching structure compatible with the width doubling between two layers
(in Figure\,\ref{fig:setting}, bottom left, for instance, a unit cell is a `V'-shaped structure).
The free design parameters are the unit cell widths on the coarsest and the finest level
as well as the aspect ratio of the unit cells on each level.
Now the excess energy due to $\varepsilon>0$ can be computed
for each unit cell as a function of the design parameters,
and their sum yields the total excess energy.
Minimisation for the design parameters yields the optimal construction and energy scaling.
The detailed procedure is provided in Sections\,\ref{sec:upBndUrbPlan} and \ref{sec:upBndBrnchTrpt}.
\item The technique for the lower bound exploits the fact
that the transport costs between any two measures for $\varepsilon>0$
can be bounded below by the transport costs for $\varepsilon=0$,
which can be calculated by solving a (much simpler) convex problem.
In essence, one first characterises a generic cross-section
in terms of how many interfaces or pipes it intersects at most.
The number depends on $\varepsilon$ and usually is a simple consequence of the excess energy being bounded above.
Then, the energy can be bounded below by the costs one would have for $\varepsilon=0$,
knowing the additional information about the generic cross-section.
Those costs can be obtained by solving a \emph{convex} optimisation problem due to $\varepsilon=0$.
This idea in its plainest form is exemplified in Section\,\ref{sec:lwBndUrbPlan} for dimension $n=2$,
and variants of it are applied in Section\,\ref{sec:lwBndUrbPlan} for $n\geq3$ and in Section\,\ref{sec:lwBndBrnchTrpt}.
\end{itemize}

\section{Upper bound via branching construction}\label{sec:upperBound}

In this section we will provide constructions of transport networks with the required energy scaling.
Those constructions will be composed of elementary units, all described as an irrigation pattern,
for which reason we first define several operations how multiple irrigation patterns can be combined to a single one in Section\,\ref{sec:patternGluing}.
Afterwards we define and analyse the single elementary units in Section\,\ref{sec:ElementaryCells}
as well as the optimal constructions for urban planning (Section\,\ref{sec:upBndUrbPlan}) and branched transport (Section\,\ref{sec:upBndBrnchTrpt}).

\subsection{Operations with patterns}\label{sec:patternGluing}

Our constructions will require a combination of irrigation patterns in parallel as well as in series.
Those operations are defined in Definition\,\ref{def:union_of_patterns} and \ref{def:series_of_patterns}, respectively.
Definition \ref{def:union_of_patterns} was introduced in \cite{Bernot-Caselles-Morel-Structure-Branched} and has also been given in an equivalent form in \cite[Lemma 5.5]{BeCaMo09}.
Here the particles of all constituting subpatterns are kept independent.

\begin{definition}[Union of patterns]\label{def:union_of_patterns}
Let $\chi_i : \reSpace_i \times I \to \R^n$, $i\in N\subset\N$, be a sequence of irrigation patterns with reference spaces $(\reSpace_i,\Bcal(\reSpace_i),\reMeasureindexed{\reSpace_i})$.
Suppose that $\sum_i \reMeasureindexed{\reSpace_i}(\reSpace_i) < +\infty$. Let $\reSpace$ be the disjoint union of the $\reSpace_i$,
\begin{displaymath}
 \reSpace = \coprod_{i \in N} \reSpace_i,
\end{displaymath}
endowed with the product topology and the corresponding Borel $\sigma$-algebra. Furthermore consider the measure $\reMeasure$ defined by
\begin{displaymath}
 \reMeasure(A) = \sum_{i \in N} \reMeasureindexed{\reSpace_i}(A \cap \reSpace_i).
\end{displaymath}
The irrigation pattern $\chi : \reSpace \times I \to \R^n$, defined by
\begin{displaymath}
 \chi(p,t) = \chi_i(p,t), \ \text{if} \ p \in \reSpace_i,
\end{displaymath}
is called the \emph{union} of the patterns $\chi_i$ and will be denoted by $\chi = \amalg_i \chi_i$.
\end{definition}

Next we join two patterns $\chi_1$ and $\chi_2$ in series.
Here, every particle $p$ first travels along a path defined by $\chi_1$ and then continues along a path defined by $\chi_2$.

\begin{definition}[Series of patterns]\label{def:series_of_patterns}
Let $\chi_i : \reSpace_i \times I \to \R^n$ for $i = 1,2$ be two irrigation patterns
and $T:\reSpace_1\to\reSpace_2$ an isomorphism of measure spaces such that
\begin{displaymath}
\chi_1(p,1)=\chi_2(T(p),0)\qquad\text{for $\reMeasure$-almost all }p\in\reSpace_1\,.
\end{displaymath}
Now define the irrigation pattern $\tilde\chi: \reSpace_1 \times I \to \R^n$ by
\begin{displaymath}
 \tilde\chi(p,t) = \begin{cases}
              \chi_1(p,2t) & t \in [0,\frac{1}{2}],\\
              \chi_2(T(p),2t-1) & t \in [\frac{1}{2},1].
             \end{cases}
\end{displaymath}
Furthermore, for $p\in\reSpace_1$ let $r_p:I\to I$ be a reparameterisation such that $\chi(p,t)=\tilde\chi(p,r_p(t))$ has constant speed $|\dot\chi_p|$ for almost all $p\in\reSpace_1$.
The irrigation pattern $\chi$ is called the $T$-relative \emph{series} of patterns $\chi_1$ and $\chi_2$ and is denoted by $\chi_1 \circ_T \chi_2$.
\end{definition}

\begin{remark}
A similar construction was given in \cite[Lemma 5.5]{BeCaMo09}.
Ours is a little more technical since we track the dependence on $T$,
while in \cite{BeCaMo09} the authors just require $\mu_-^{\chi_1} = \mu_+^{\chi_2}$
and from this anticipate the existence of some measure-preserving map $T:\reSpace_1\to\reSpace_2$ with $\chi_1(p,1)=\chi_2(T(p),0)$,
which is then used in the construction.
However, such a map $T$ may not exist, as shown by the example $\reSpace_1=\reSpace_2=[0,1]$ with $\reMeasure=\lebesgue^1\restr[0,1]$, $\chi_1(p,1)=2p\,\mathrm{mod}\,1$, $\chi_2(p,0)=p$.
The result in \cite{BeCaMo09} certainly stays valid, but its proof will require to relax the transport map $T$ between particles to a transport plan.
To circumvent those difficulties we shall simply always specify the map $T$ explicitly.
\end{remark}

\begin{remark}\label{rem:cellOpProps}
It is straightforward to verify the following elementary properties,
in which $\J$ either stands for the urban planning energy $\urbPlMMS^{\varepsilon,a}$ or the branched transport energy $\MMSEn^\varepsilon$,
\begin{itemize}
 \item $\J(\chi_1\amalg\chi_2) \leq \J(\chi_1) + \J(\chi_2)$,
 \item $\J(\chi_1 \circ_T \chi_2) \leq \J(\chi_1) + \J(\chi_2)$,
 \item $\mu_+^{\chi_1 \amalg \chi_2} = \mu_+^{\chi_1}+\mu_+^{\chi_2}$, $\mu_-^{\chi_1 \amalg \chi_2} = \mu_-^{\chi_1}+\mu_-^{\chi_2}$,
 \item $\mu_+^{\chi_1 \circ_T \chi_2} = \mu_+^{\chi_1}$, $\mu_-^{\chi_1 \circ_T \chi_2} = \mu_-^{\chi_2}$,
\end{itemize}
where the second inequality uses Propositions \ref{prop:constSpeedPatternsUrbPl} and \ref{prop:reparameterised_patterns_have_the_same_cost} and
where both inequalities become equalities if the patterns do not overlap.
\end{remark}

\subsection{Elementary cells}\label{sec:ElementaryCells}

We introduce here two elementary irrigation patterns which we will use to build more complicated ones.
They are illustrated in Figure\,\ref{fig:elementary_cell_in_2D_and_3D}.

\begin{definition}[Elementary cell in $n$D]
Let $\reSpace_{-1}=[-1,0]$, $\reSpace_{+1}=(0,1]$, and define for a multiindex $j\in\{-1,1\}^{n-1}$ the unit square
\begin{equation*}
\reSpace_j=\bigtimes_{i=1}^{n-1}\reSpace_{j_i}
\end{equation*}
in the orthant defined by $j$.
Its midpoint shall be denoted by $m_j=\frac12j\in\R^{n-1}$.

The \emph{elementary cell} with base point $x=(x',x_n)$, width $w$, height $h$, and flux $f$
is defined as the triple $(\reSpace_f,\reMeasureindexed{\reSpace_f},\chi_{x,w,h,f}^{\mathrm E})$
for the reference space $\reSpace_f=[-1,1]^{n-1}=\bigcup_{j\in\{-1,+1\}^{n-1}}\reSpace_j$, the reference measure $\reMeasureindexed{\reSpace_f}=f\lebesguecodimone\restr\reSpace_f$,
and the irrigation pattern $\chi_{x,w,h,f}^{\mathrm E} : \reSpace_f \times I \to \R^n$,
\begin{displaymath}
\chi_{x,w,h,f}^{\mathrm E}(p,t) = (x' + t \tfrac w2m_j, x_n + t h) \quad \text{for}\ p \in \reSpace_j.
\end{displaymath}
The \emph{initial} and \emph{final measure} of the elementary cell are
\begin{displaymath}
 \mu_+^{\chi_{x,w,h,f}^{\mathrm E}} = 2^{n-1}f\delta_{x},
 \qquad
 \mu_-^{\chi_{x,w,h,f}^{\mathrm E}} = f\sum_{j \in \{-1,1\}^{n-1}} \delta_{(x'+\tfrac w2m_j,x_n+h)}.
\end{displaymath}
\end{definition}

\begin{figure}
\centering
 \setlength{\unitlength}{.76\linewidth}
 \begin{picture}(1.00,0.25)
  \put(0.0,0.00){\includegraphics[scale=0.14]{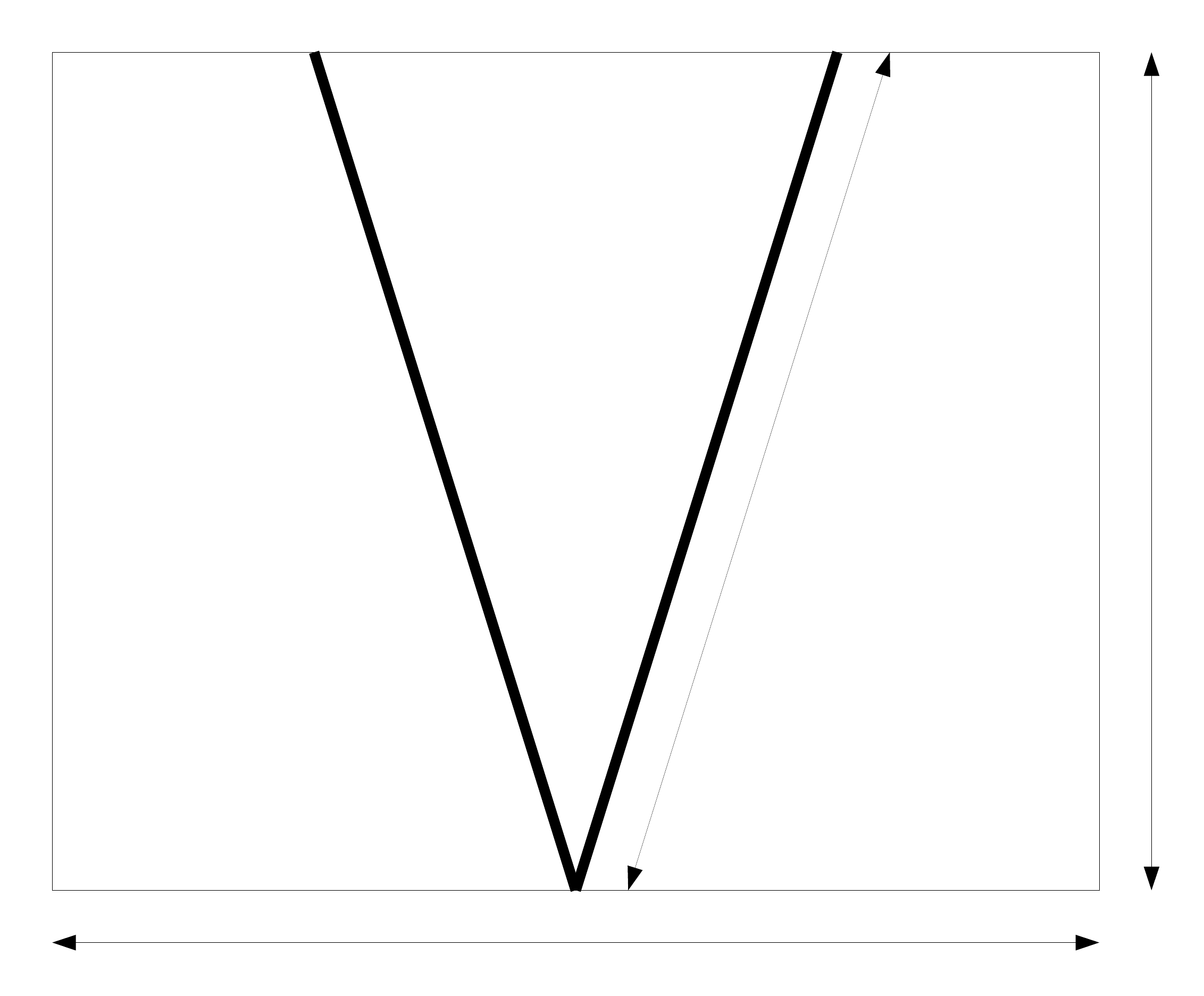}\includegraphics[scale=0.14]{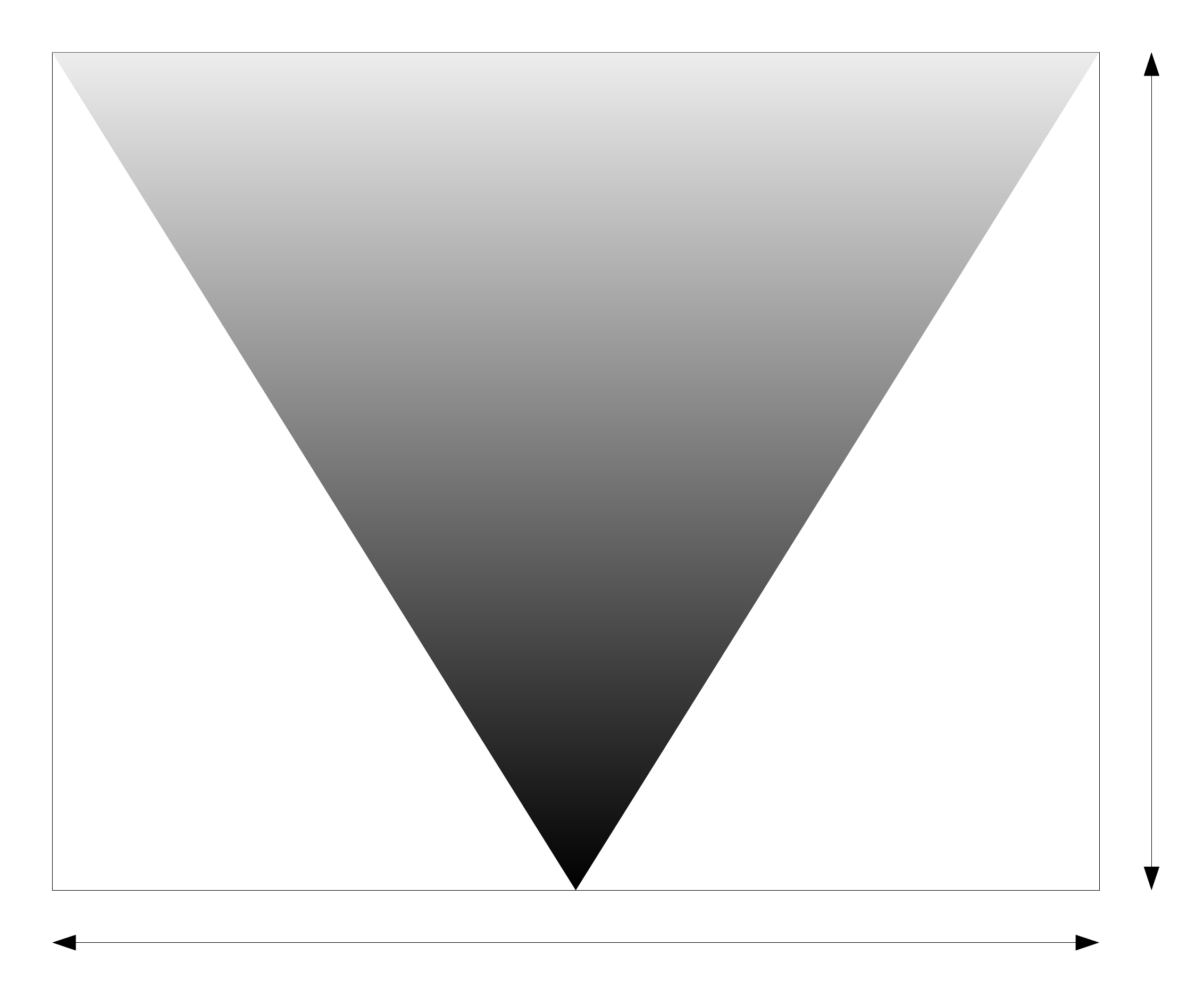}\includegraphics[scale=0.157]{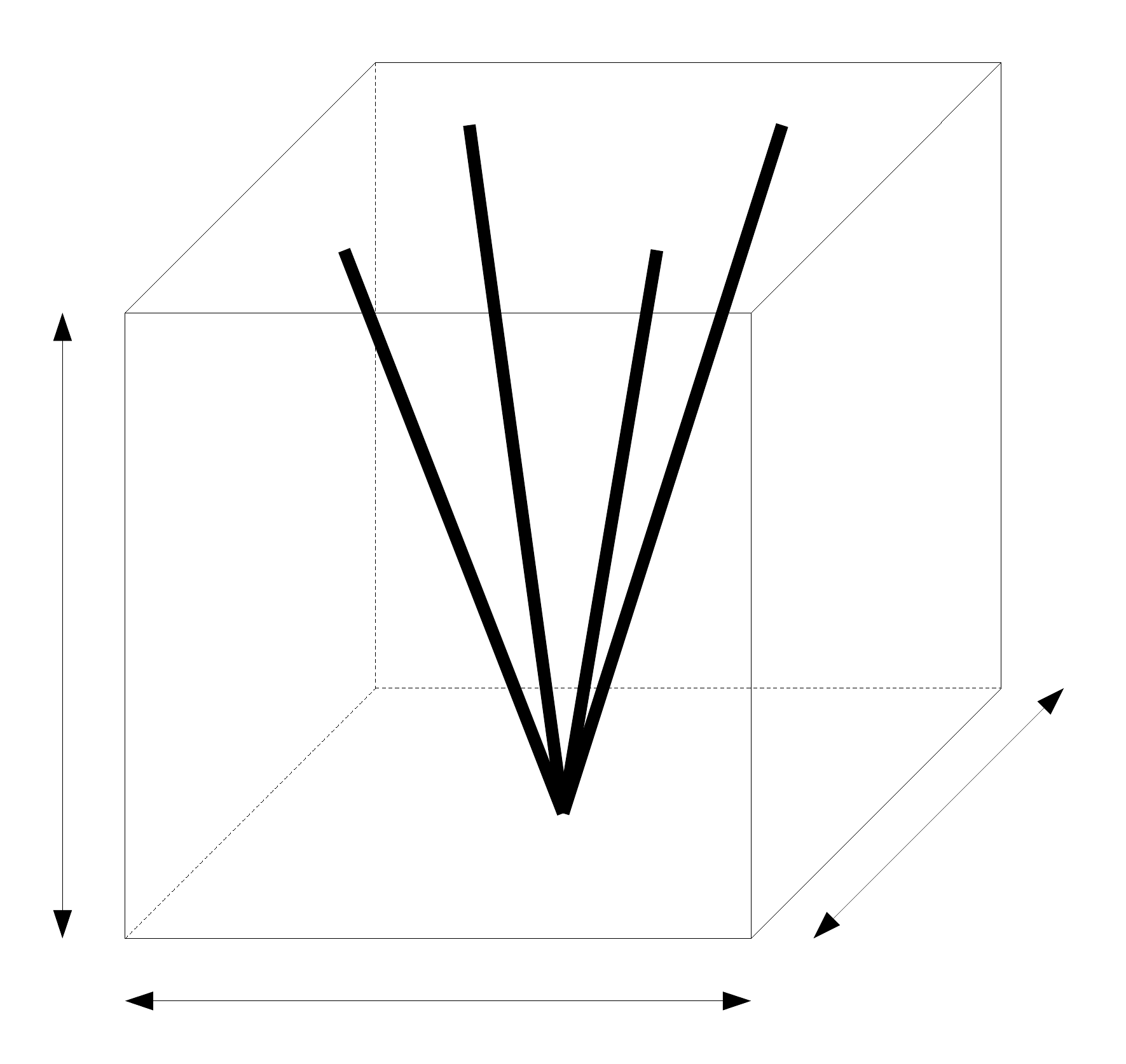}\includegraphics[scale=0.157]{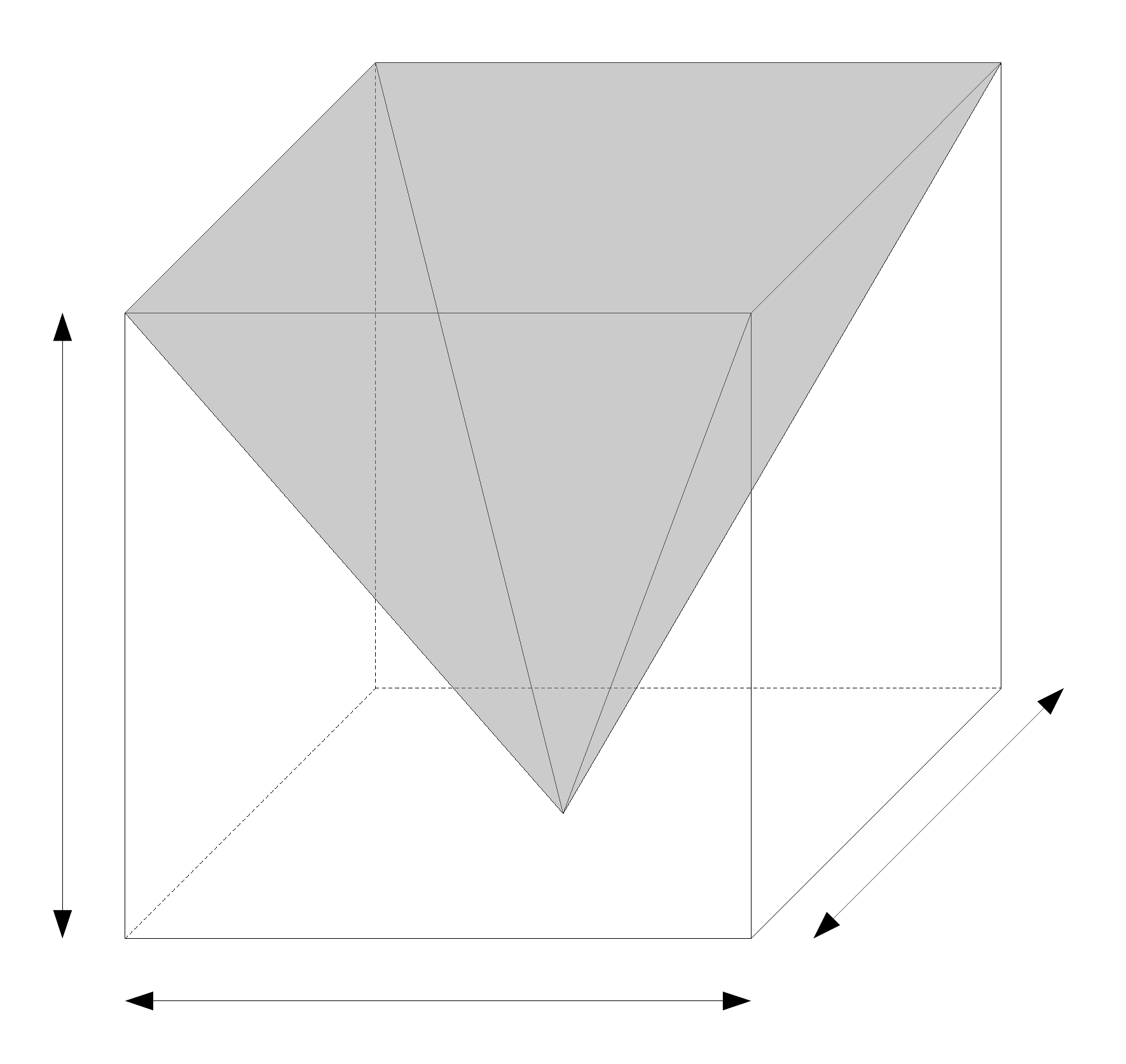}}
  \put(0.115,-0.01){$w$}
  \put(0.380,-0.01){$w$}
  \put(0.615,-0.01){$w$}
  \put(0.845,-0.01){$w$}
  \put(0.730,0.040){$w$}
  \put(0.965,0.040){$w$}
  \put(0.255,0.10){$h$}
  \put(0.76,0.08){$h$}
  \put(0.52,0.10){$h$}
  \put(0.528,0.06){$h$}
  \put(0.175,0.10){$l$}
 \end{picture}
 \caption{Elementary and Wasserstein cells in 2D and 3D.}\label{fig:elementary_cell_in_2D_and_3D}
\end{figure}

Note that the image of $\chi_{x,w,h,f}^{\mathrm E}$ consists of $2^{n-1}$ branches with length
\begin{equation}\label{eqn:tubeLength}
 l = \sqrt{(n-1)\left(\tfrac{w}{4}\right)^2+h^2} = \sqrt{\tfrac{n-1}{16}w^2+h^2}
\end{equation}
and is fully contained in
\begin{displaymath}
 E = x+\left(\left[-\tfrac{w}{2},\tfrac{w}{2}\right]^{n-1} \times [0,h]\right).
\end{displaymath}
The initial and final measure of the elementary cell are concentrated on the bottom and top boundary of $E$, respectively.

\begin{lemma}[Elementary cell cost]\label{lem:upper_bound_for_elementary_cell_urban_planning_cost_in_nD}
The urban planning and branched transport energy of an elementary cell satisfy
\begin{align*}
 \urbPlMMS^{\varepsilon,a}(\chi_{x,w,h,f}^{\mathrm E}) &= 2^{n-1}\min\{af,f+\varepsilon\}l \\
 \MMSEn^{\varepsilon}(\chi_{x,w,h,f}^{\mathrm E}) &= 2^{n-1}f^{1-\varepsilon}l
\end{align*}
for $l$ from \eqref{eqn:tubeLength}.
\end{lemma}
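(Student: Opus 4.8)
The plan is to compute each of the two cost functionals directly from its definition by exploiting the explicit structure of the elementary cell pattern $\chi_{x,w,h,f}^{\mathrm E}$. The key observation is that this pattern is supported on $2^{n-1}$ disjoint straight branches, one in each orthant-square $\reSpace_j$, so the integrals over $\reSpace_f\times I$ decompose into a sum of $2^{n-1}$ identical contributions (identical by symmetry, since each branch has the same length $l$ and the same mass profile along it). Thus it suffices to analyse a single branch, say the one corresponding to some fixed $j$, and multiply by $2^{n-1}$.

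\medskip

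First I would determine the multiplicity function $m_\chi$ along a branch. For a fixed $j$, the fibres emanating from $\reSpace_j$ all pass through the point $\chi^{\mathrm E}_{x,w,h,f}(p,t)=(x'+t\tfrac w2 m_j, x_n+th)$, but two distinct orthant branches only meet at the common base point $x$ (reached at $t=0$). Hence for a point $y=\chi^{\mathrm E}_{x,w,h,f}(p,t)$ with $t>0$ lying in the branch of orthant $j$, we have $[y]_\chi=\reSpace_j$ up to a $\reMeasure$-null set, so $m_\chi(y)=\reMeasureindexed{\reSpace_f}(\reSpace_j)=f\,\lebesguecodimone(\reSpace_j)=f$ (since each $\reSpace_j$ is a unit cube of $(n-1)$-volume $1$). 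The base point $x$ itself is a single point and thus $\reMeasure$-negligible in the $t$-integration, so it does not affect the integrals. Therefore the cost density is constant along (the relevant part of) each branch: $r^\chi_{\varepsilon,a}\equiv\min\{1+\tfrac\varepsilon f,a\}$ for urban planning and $s^\chi_\varepsilon\equiv f^{-\varepsilon}$ for branched transport.

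\medskip

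Next I would carry out the one-branch integral. Using constant speed along each fibre: for $p\in\reSpace_j$ the curve $t\mapsto\chi_p(t)$ traces a segment of Euclidean length $l=\sqrt{\tfrac{n-1}{16}w^2+h^2}$ from \eqref{eqn:tubeLength}, so $\int_I|\dot\chi_p(t)|\,\de t=l$ (this is parameterisation-independent, which is why the constant-speed reparameterisation of Propositions \ref{prop:constSpeedPatternsUrbPl} and \ref{prop:reparameterised_patterns_have_the_same_cost} is harmless). Combining with the constant cost density and integrating over $p\in\reSpace_j$ (which has $\reMeasureindexed{\reSpace_f}$-measure $f$) gives, for a single branch, $\min\{1+\tfrac\varepsilon f,a\}\cdot f\cdot l=\min\{f+\varepsilon,af\}\,l$ in the urban planning case and $f^{-\varepsilon}\cdot f\cdot l=f^{1-\varepsilon}l$ in the branched transport case. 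Summing the $2^{n-1}$ identical branch contributions yields $\urbPlMMS^{\varepsilon,a}(\chi_{x,w,h,f}^{\mathrm E})=2^{n-1}\min\{af,f+\varepsilon\}l$ and $\MMSEn^{\varepsilon}(\chi_{x,w,h,f}^{\mathrm E})=2^{n-1}f^{1-\varepsilon}l$, as claimed.

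\medskip

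The only genuinely delicate point — and the one I would treat carefully rather than wave away — is the justification that $m_\chi(y)=f$ for a.e.\ point on each branch, i.e.\ that no spurious coincidences between fibres inflate the multiplicity and that the measure-zero overlap at the base point (and the possibility that different branches partially coincide when $w=0$, a degenerate case presumably excluded or harmless since then the density is the same anyway) contributes nothing to the integral. This is elementary here because the branches are explicit line segments in distinct orthants, but it is the step where the clean final formula actually comes from, so it deserves an explicit sentence. Everything else is a routine substitution.
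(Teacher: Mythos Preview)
Your proposal is correct and follows exactly the approach the paper takes: the paper's proof is a single sentence stating that one inserts the pattern into the definitions and uses $m_{\chi_{x,w,h,f}^{\mathrm E}}(\tilde x)=f$ along each branch. You have simply spelled out in detail the decomposition into $2^{n-1}$ branches, the computation of $m_\chi$, and the treatment of the base point, all of which the paper leaves implicit.
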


\begin{proof}
This is a straightforward calculation based on inserting $\chi_{x,w,h,f}^{\mathrm E}$ and $\reMeasureindexed{\reSpace_f}$ into Definition\,\ref{def:urbPlPatternForm} and Definition\,\ref{eqn:costDensityBrTpt}, using that the mass through each point $\tilde x$ of a branch is given by $m_{\chi_{x,w,h,f}^{\mathrm E}}(\tilde x)=f$.
\end{proof}

\begin{definition}[Wasserstein cell in $n$D]
The \emph{Wasserstein cell} with base point $x=(x',x_n)$, width $w$, height $h$, and flux $f$
is defined as the triple $(\reSpace_f,\reMeasureindexed{\reSpace_f},\chi_{x,w,h,f}^{\mathrm W})$ for $\reSpace_f,\reMeasureindexed{\reSpace_f}$ as in the previous definition
and the irrigation pattern $\chi_{x,w,h,f}^{\mathrm W} : \reSpace_f \times I \to \R^n$,
\begin{displaymath}
 \chi_{x,w,h,f}^{\mathrm W}(p,t) = \left(x'+t\tfrac w2p,x_n+th\right).
\end{displaymath}
The \emph{initial} and \emph{final measure} of the Wasserstein cell are
\begin{displaymath}
 \mu_+^{\chi_{x,w,h,f}^{\mathrm W}} = 2^{n-1}f\delta_{x},
 \qquad
 \mu_-^{\chi_{x,w,h,f}^{\mathrm W}} = f(\tfrac w2)^{1-n}\hdcodimone\restr\left(x+([-\tfrac w2,\tfrac w2]^{n-1}\times\{h\})\right).
\end{displaymath}
\end{definition}

Just like the elementary cell, the Wasserstein cell is fully contained in $E$.

\begin{lemma}[Wasserstein cell urban planning cost]\label{lem:upper_bound_for_wasserstein_cell_in_3D}
The urban planning energy of a Wasserstein cell satisfies
\begin{displaymath}
 \urbPlMMS^{\varepsilon,a}(\chi_{x,w,h,f}^{\mathrm W}) \leq
 \begin{cases}
 \min\{af,f+\varepsilon\}w&\text{if }n=2\text{ and }h=0,\\
 2^{n-1}af\sqrt{\tfrac{(n-1)}4w^2+h^2}&\text{else.}
 \end{cases}
\end{displaymath}
\end{lemma}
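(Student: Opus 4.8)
The plan is to split the claimed bound into its two cases, the generic case (``else'') being essentially immediate and the degenerate case $n=2$, $h=0$ requiring one explicit but elementary computation.

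\emph{The generic case.} Here I would simply discard any savings coming from the cost density: by the definition of $\repsilonachi$ in Definition~\ref{def:urbPlPatternForm} one always has $\repsilonachi\le a$, so
\begin{equation*}
\urbPlMMS^{\varepsilon,a}(\chi_{x,w,h,f}^{\mathrm W})
\le a\int_{\reSpace_f\times I}\bigl|\dot\chi_{x,w,h,f}^{\mathrm W}(p,t)\bigr|\,\de\reMeasureindexed{\reSpace_f}(p)\,\de t.
\end{equation*}
Since $\dot\chi_{x,w,h,f}^{\mathrm W}(p,t)=(\tfrac w2p,h)$ does not depend on $t$, the $t$-integral of $|\dot\chi^{\mathrm W}_p|$ over $I$ equals $\sqrt{\tfrac{w^2}{4}|p|^2+h^2}$, which for $p\in\reSpace_f=[-1,1]^{n-1}$ is bounded by $\sqrt{\tfrac{n-1}{4}w^2+h^2}$. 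Integrating this constant bound against $\reMeasureindexed{\reSpace_f}$, whose total mass is $2^{n-1}f$, gives precisely $2^{n-1}af\sqrt{\tfrac{n-1}{4}w^2+h^2}$.

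\emph{The case $n=2$, $h=0$.} Now the fibres are the nested segments $[x,\,x+(\pm s,0)]$, $0\le s\le\tfrac w2$, and the first step is to read off the multiplicity along them: a point $x+(s,0)$ with $0<s\le\tfrac w2$ lies on the fibre of $p\in[-1,1]$ exactly when $p\ge\tfrac{2s}{w}$ (and symmetrically on the other side), whence $m_\chi(x+(s,0))=f\bigl(1-\tfrac{2s}{w}\bigr)$ and $\repsilonachi(x+(s,0))=\min\{1+\tfrac{\varepsilon}{f(1-2s/w)},a\}$. Inserting $|\dot\chi^{\mathrm W}_p|=\tfrac w2|p|$ into $\urbPlMMS^{\varepsilon,a}$, using the $p\mapsto-p$ symmetry, substituting $s=t\tfrac w2p$ in the $t$-integral, applying Fubini, and finally substituting $\tau=1-\tfrac{2s}{w}$ collapses the energy to the one-dimensional integral
\begin{equation*}
\urbPlMMS^{\varepsilon,a}(\chi_{x,w,0,f}^{\mathrm W})
=fw\int_0^1\min\bigl\{\tau+\tfrac\varepsilon f,\ a\tau\bigr\}\,\de\tau.
\end{equation*}
Estimating the integrand once by $a\tau$ and once by $\tau+\tfrac\varepsilon f$ yields $\urbPlMMS^{\varepsilon,a}\le\tfrac12afw\le afw$ and $\urbPlMMS^{\varepsilon,a}\le\tfrac12fw+\varepsilon w\le(f+\varepsilon)w$, which together are the claimed bound $\min\{af,f+\varepsilon\}w$.

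I expect the only genuinely fiddly part to be the degenerate case: correctly identifying the solidarity classes $[x+(s,0)]_{\chi^{\mathrm W}}$ and then carrying the integral through the chain of substitutions to the clean form above; once that form is reached, the two one-line estimates finish the argument, and the generic case needs nothing beyond $\repsilonachi\le a$ together with $|p|\le\sqrt{n-1}$ on $\reSpace_f$.
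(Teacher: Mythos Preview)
Your proof is correct and follows essentially the same approach as the paper. The only cosmetic difference is that in the generic case the paper observes $m_{\chi}(\tilde x)=0$ for $\tilde x\neq x$ so that $\repsilonachi\equiv a$ exactly (away from the base point), whereas you use the trivial bound $\repsilonachi\le a$; both lead to the same estimate, and your treatment of the degenerate case $n=2$, $h=0$ just spells out in detail what the paper summarises as ``follows analogously.''
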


\begin{proof}
If $n\neq2$ or $h\neq0$ we have $m_{\chi_{x,w,h,f}^{\mathrm W}}(\tilde x)=0$ for all $\tilde x\neq x$.
Using this in Definition\,\ref{def:urbPlPatternForm} we obtain
\begin{multline*}
  \urbPlMMS^{\varepsilon,a}(\chi_{x,w,h,f}^{\mathrm W}) = \int_{\reSpace_f} \int_0^1 \repsilonachi(\chi_{x,w,h,f}^{\mathrm W}(p,t))|\dot\chi_{x,w,h,f}^{\mathrm W}(p,t)|\,\de t\,\de \reMeasureindexed{\reSpace_f}(p)\\
  =a\int_{\reSpace_f}\sqrt{\tfrac{w^2}4|p|^2+h^2}\,\de \reMeasureindexed{\reSpace_f}(p)
  \leq 2^{n-1}af\sqrt{\tfrac{(n-1)}4w^2+h^2}.
\end{multline*}
The case $n=2$, $h=0$ follows analogously, using $m_{\chi_{x,w,h,f}^{\mathrm W}}(\tilde x)=f(1-\frac{|\tilde x_1-x_1|}{w/2})$ for any $\tilde x\in(x_1-\tfrac w2,x_1+\tfrac w2)\times\{x_2\}$.
\end{proof}

\subsection{Upper bound for optimal urban planning energy $\urbPlEn^{\varepsilon,a,A}$}\label{sec:upBndUrbPlan}
To derive an upper bound we have to construct an irrigation pattern with the desired energy.
It turns out that branched transport in dimension $n=2$, $3$, and higher requires slightly different constructions,
however, the basic approach is the same for all of them.
In particular, we will employ a pattern as illustrated in Figures\,\ref{fig:ansatz2d} and \ref{fig:ansatz3d}.
It is symmetric about $x_n=\frac12$ so that it suffices to consider the upper half.
It consists of $\numLev$ layers, where the $k$\textsuperscript{th} layer just represents an array of $N_k^{n-1}$ identical elementary cells.
At the top boundary an additional layer of Wasserstein cells is added.

\begin{figure}
 \begin{center}
  \setlength{\unitlength}{110mm}
  \begin{picture}(1.00,0.75)
   \put(0.00,0.00){\includegraphics[scale=0.10]{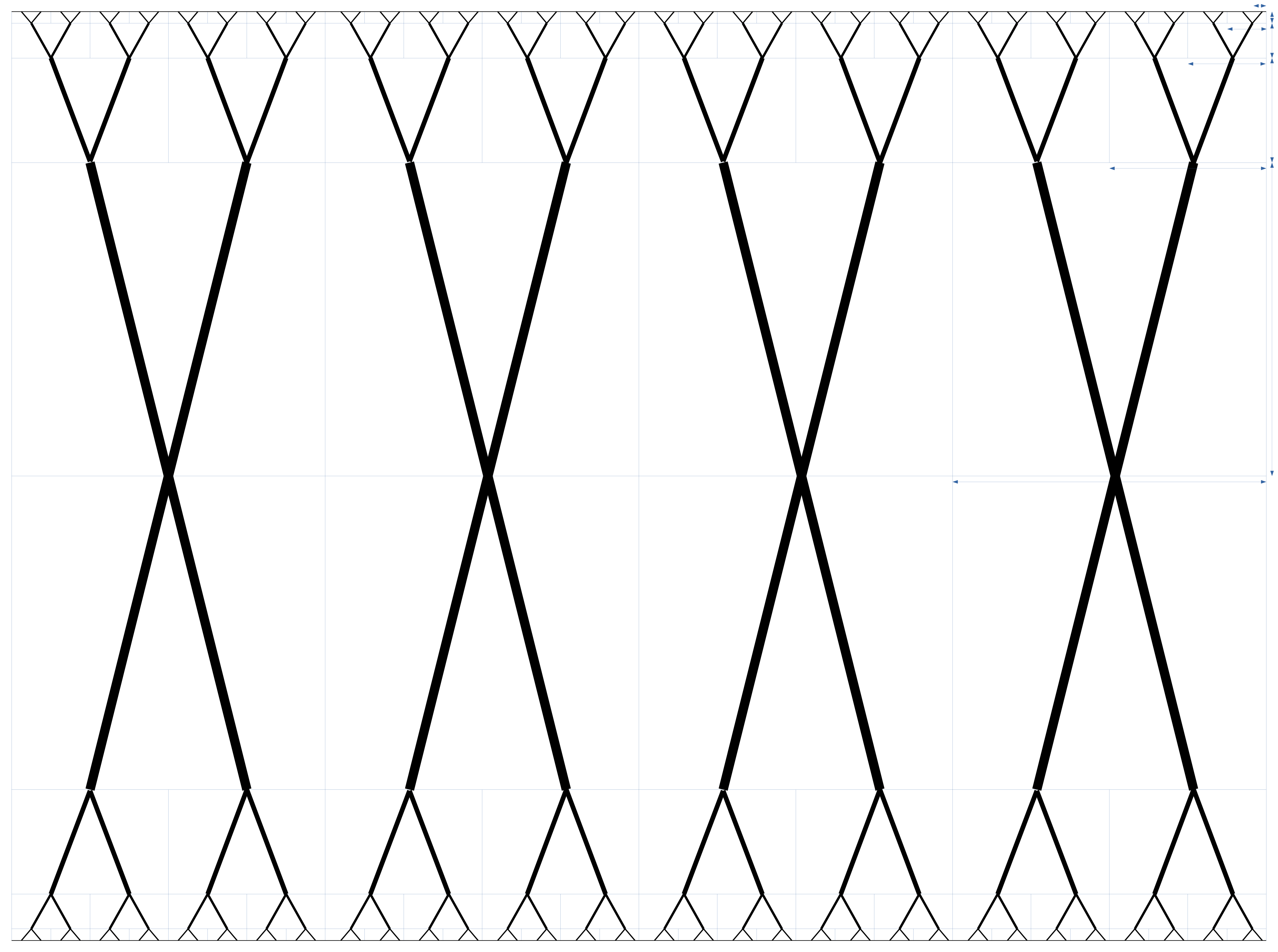}}
   \put(0.50,-0.02){$\mu_0$}
   \put(0.50,0.75){$\mu_1$}
   \put(0.92,0.35){$w_1$}
   \put(1.00,0.498){$h_1$}
   \put(0.95,0.59){$w_2$}
   \put(1.00,0.655){$h_2$}
   \put(-0.06,0.49){$\chi_1$}
   \put(-0.05,0.615){\rotatebox{-90}{$\underbrace{\hspace*{17.5ex}}_{\ }$}}
   \put(-0.06,0.65){$\chi_2$}
   \put(-0.05,0.695){\rotatebox{-90}{$\underbrace{\hspace*{5.5ex}}_{\ }$}}
   \put(-0.06,0.71){$\chi_3$}
   \put(-0.02,0.705){$\{$}
   \put(1.00,0.705){$h_3$}
  \end{picture}
  \caption{Illustrative sketch of the construction ansatz in two dimensions.
  The thin lines indicate the boundaries of the elementary cells.}\label{fig:ansatz2d}
 \end{center}
\end{figure}

\begin{figure}
 \begin{center}
  \setlength{\unitlength}{.6\linewidth}
  \begin{picture}(1.00,1.33)
   \put(0.00,0.00){\includegraphics[height=1.326\unitlength]{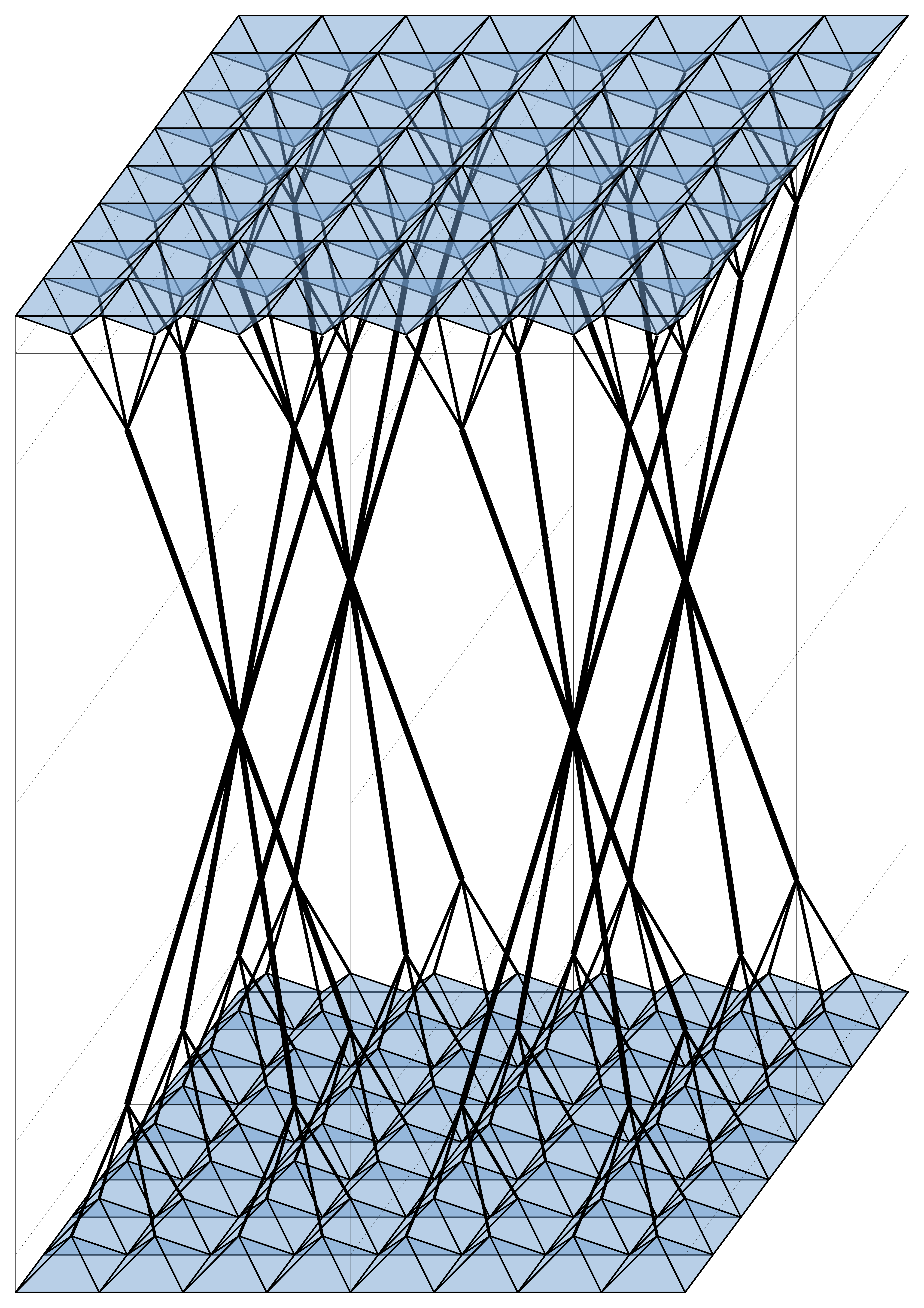}}
   \put(0.38,-0.01){$\mu_0$}
   \put(0.6,1.325){$\mu_1$}
   \put(-0.022,0.7){$\chi_1$}
   \put(-0.022,.9){$\chi_2$}
   \put(-0.022,.98){$\chi_3$}
  \end{picture}
  \caption{Illustrative sketch of the construction ansatz in three dimensions.
  The thin lines indicate the boundaries of the elementary cells, the shaded regions represent Wasserstein cells.}\label{fig:ansatz3d}
 \end{center}
\end{figure}

In more detail, without loss of generality we assume the hypersquare $A$, whose sidelength is $\ell=\sqrt[n-1]{\hdcodimone(A)}$, to be given by $$A=[0,\ell]^{n-1}\,.$$
Let $x_{k,i}$, $w_k$, $h_k$, and $f_k$ denote the base point, width, height, and flux associated with the $i$\textsuperscript{th} elementary cell in the $k$\textsuperscript{th} layer,
where $i\in\{1,\ldots,N_k\}^{n-1}$ is a multiindex.
We choose
\begin{equation}\label{eqn:parametersConstructionUrbPl}
\begin{gathered}
w_k=2^{1-k}w_1,\quad
N_k=\tfrac{\ell}{w_k},\quad
f_k=\left(\tfrac{w_k}2\right)^{n-1},\quad\\
h_k=cw_k^\alpha\varepsilon^\beta a^\gamma(a-1)^\delta,\quad
x_{k,i}=\left((i_1-\tfrac{1}{2})w_k,\ldots,(i_{n-1}-\tfrac12)w_k,\tfrac{1}{2} + \sum_{j=1}^{k-1} h_j\right)
\end{gathered}
\end{equation}
for the coarsest cell width $w_1$ and some constants $c,\alpha,\beta,\gamma,\delta\in\R$ to be determined.
Now we define the irrigation pattern describing the $k$\textsuperscript{th} layer as
\begin{equation*}
 \chi_k = \coprod_{i\in\{1,\ldots,N_k\}^{n-1}} \chi_{x_{k,i},w_k,h_k,f_k}^{\mathrm E}
\end{equation*}
and the top layer of Wasserstein cells as
\begin{equation*}
 \chi_{\numLev+1} = \coprod_{i\in\{1,\ldots,N_k\}^{n-1}} \chi_{x_{{\numLev+1},i},w_{\numLev+1},H,f_{\numLev+1}}^{\mathrm W}
\end{equation*}
for some $K\in\N$, $H\in\R$.
Note that for $k=1,\ldots,\numLev+1$, the reference space of $\chi_k$ is the disjoint union $\reSpace_k=\coprod_{i\in\{1,\ldots,N_k\}^{n-1}}\reSpace_{f_k}^i$,
where $\reSpace_{f_k}^i$ just denotes a copy of $\reSpace_{f_k}$.
Now it is straightforward to check that the function
$$T_k:\reSpace_k\to\R^{n-1}\,,\quad\reSpace_{f_k}^i\ni p\mapsto x_{k,i}+\tfrac w2p$$
represents a measure-preserving map from the original reference space $\reSpace_k$ onto $(A,\Bcal(A),\lebesgue^{n-1}\restr A)$.
Hence, we can identify particles in $\reSpace_k$ with particles in $A$ via the map $T_k$ and thus may take $(A,\Bcal(A),\lebesgue^{n-1}\restr A)$ as the reference space of $\chi_k$.
This simplifies the composition of those patterns, as now we have
$$\chi_k(p,1)=\chi_{k+1}(p,0)\qquad\text{for almost all }p\in A$$
so that we may define the series of all layers in the upper half as
\begin{equation*}
\chi_u=((\ldots((\chi_1\circ_\Id\chi_2)\circ_\Id\chi_3)\circ_\Id\ldots)\circ_\Id\chi_{\numLev+1})\,,
\end{equation*}
using Definition \ref{def:series_of_patterns} with the identity map $\Id:A\to A$.
The irrigation pattern $\chi_l$ of the lower half is constructed in the analogous way,
and the full irrigation pattern is defined as
\begin{equation*}
\chi=\chi_l\circ_\Id\chi_u.
\end{equation*}

It turns out that the constraint of the construction having total height $1$ already fixes the width of the coarsest elementary cells.
Indeed, since the heights of all layers have to add up to $1$ we require
\begin{multline}\label{eqn:heightConstraint}
1=2\left(\sum_{k=1}^\numLev h_k+H\right)
=2c(2w_1)^\alpha\varepsilon^\beta a^\gamma(a-1)^\delta\sum_{k=1}^\numLev2^{-k\alpha}+2H
=2cw_1^\alpha\varepsilon^\beta a^\gamma(a-1)^\delta\frac{1-2^{-\numLev\alpha}}{1-2^{-\alpha}}+2H.
\end{multline}
Assuming $1-2H$ to still be of order $1$ it transpires that we should roughly have $w_1\sim\varepsilon^{-\frac\beta\alpha}a^{-\frac\gamma\alpha}(a-1)^{-\frac\delta\alpha}$.
Based on these heuristics we simply choose
\begin{equation}\label{eqn:coarseWidth}
w_1=\tilde c\varepsilon^{-\frac\beta\alpha}a^{-\frac\gamma\alpha}(a-1)^{-\frac\delta\alpha}\quad\text{for }\tilde c\in(\tfrac12,1],
\end{equation}
where $\tilde c$ accounts for the fact that the total total width $\ell$ of the geometry has to be an integer multiple of $w_1$,
that is, $\tilde c$ is determined such that this holds true.
Of course, this requires in particular $w_1\leq\ell$, which is why for all our constructions we will have the constraint
\begin{equation}\label{eqn:epsConstraint}
\varepsilon\leq a^{-\frac\gamma\beta}(a-1)^{-\frac\delta\beta}\hdcodimone(A)^{-\frac\alpha{\beta(n-1)}}.
\end{equation}
The constant $c$ is now obtained from \eqref{eqn:heightConstraint} as
\begin{equation}\label{eqn:constantC}
c=\frac{1-2H}{2\tilde c^\alpha}\frac{1-2^{-\alpha}}{1-2^{-\numLev\alpha}}\in[\tfrac{1-2^{-\alpha}}4,2^{\alpha-1}].
\end{equation}
Here the possible range of $c$ is based on the conditions
\begin{equation}\label{eqn:feasibilityConditions}
\numLev\geq1,\qquad H\leq\tfrac14,\qquad \alpha>0,
\end{equation}
which will be ensured in all our constructions.

We next have to estimate the energy of $\chi$.
By the symmetry of the construction and Remark\,\ref{rem:cellOpProps} we have
\begin{align*}
\urbPlEn^{\varepsilon,a,A}[\chi]
=2\urbPlMMS^{\varepsilon,a}[\chi_u]
&=2\Bigg[\sum_{k=1}^{\numLev}\sum_{i\in\{1,\ldots,N_k\}^{n-1}}\hspace*{-2ex}\urbPlMMS^{\varepsilon,a}[\chi_{x_{k,i},w_k,h_k,f_k}^{\mathrm E}]
+\sum_{i\in\{1,\ldots,N_{\numLev+1}\}^{n-1}}\hspace*{-2ex}\urbPlMMS^{\varepsilon,a}[\chi_{x_{{\numLev+1},i},w_{\numLev+1},H,f_{\numLev+1}}^{\mathrm W}]\Bigg].
\end{align*}
On the other hand, using $2\left(\sum_{k = 1}^{\numLev}h_k+H\right)=1$ and $N_k=\frac\ell{w_k}$, the energy value $\urbPlEn^{*,a,A}$ can be written as
\begin{displaymath}
 \urbPlEn^{*,a,A}=\Wdone(\mu_0,\mu_1) = \hdcodimone(A) = 2\left(\sum_{k = 1}^{\numLev} N_k^{n-1} w_k^{n-1} h_k+N_{\numLev+1}^{n-1}w_{\numLev+1}^{n-1}H\right).
\end{displaymath}
Thus we obtain
\begin{align}
 \Delta\urbPlEn^{\varepsilon,a,A}
 &\leq\urbPlEn^{\varepsilon,a,A}[\chi]-\urbPlEn^{*,a,A}\nonumber\\
 &\leq2\Bigg[\sum_{k=1}^{\numLev}\sum_{i\in\{1,\ldots,N_k\}^{n-1}}\left(\urbPlMMS^{\varepsilon,a}[\chi_{x_{k,i},w_k,h_k,f_k}^{\mathrm E}]-w_k^{n-1}h_k\right)\nonumber\\
 &\quad+\sum_{i\in\{1,\ldots,N_{\numLev+1}\}^{n-1}}\left(\urbPlMMS^{\varepsilon,a}[\chi_{x_{{\numLev+1},i},w_{\numLev+1},H,f_{\numLev+1}}^{\mathrm W}]-w_{\numLev+1}^{n-1}H\right)\Bigg],\label{eqn:generalConstructionEstimate}
\end{align}
in which now Lemmas\,\ref{lem:upper_bound_for_elementary_cell_urban_planning_cost_in_nD} and \ref{lem:upper_bound_for_wasserstein_cell_in_3D} can be used.

\subsubsection{Upper bound in 2D}

The first results, Theorem \ref{thm:upper_bound_for_urban_planning_minus_wasserstein_in_2D} and Corollary \ref{cor:upper_bound_for_min_urban_planning_minus_wasserstein_in_2D}, are concerned with the two-dimensional situation.

\begin{theorem}[Partial upper bound on $\Delta\urbPlEn^{\varepsilon,a,A}$ in 2D]\label{thm:upper_bound_for_urban_planning_minus_wasserstein_in_2D}
Using the notation from Section \ref{sec:introUrbPlan}, there exists $C>0$ independent of $\varepsilon,a,A$ such that for all $\varepsilon<\min\{1,\hdone(A)^3\}$ we have
\begin{displaymath}
 \Delta\urbPlEn^{\varepsilon,a,A} \leq \hdone(A) C \varepsilon^{\frac{2}{3}}.
\end{displaymath}
\end{theorem}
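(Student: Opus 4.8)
The plan is to instantiate the hierarchical branching construction of Section~\ref{sec:upBndUrbPlan} in the case $n=2$ and to calibrate the free exponents in \eqref{eqn:parametersConstructionUrbPl}. Since the asserted bound is uniform in $a$, I would set $\gamma=\delta=0$, fix any $\alpha\in(1,2)$ (say $\alpha=\tfrac32$) and take $\beta=-\tfrac\alpha3$, together with $H=0$ and $\numLev$ the smallest integer with $2^{-\numLev}w_1\le\varepsilon^{2/3}$. By \eqref{eqn:coarseWidth} this gives $w_1\sim\varepsilon^{1/3}$, so the admissibility constraint \eqref{eqn:epsConstraint} reduces to $\varepsilon\lesssim\hdone(A)^3$ and is implied by the standing hypothesis; moreover $h_1\sim1$ and $c\sim1$ by \eqref{eqn:constantC}, $\numLev\sim|\log\varepsilon|\ge1$, and $w_{\numLev+1}=2^{-\numLev}w_1\sim\varepsilon^{2/3}$.

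For the elementary-cell layers I would start from \eqref{eqn:generalConstructionEstimate}, insert Lemma~\ref{lem:upper_bound_for_elementary_cell_urban_planning_cost_in_nD}, and use $f_k=\tfrac{w_k}2$ and $\min\{af_k,f_k+\varepsilon\}\le f_k+\varepsilon$, so that the excess of one layer-$k$ cell is
\[
2(f_k+\varepsilon)l_k-w_kh_k=2f_k(l_k-h_k)+2\varepsilon l_k .
\]
With $l_k=\sqrt{w_k^2/16+h_k^2}$ one has $l_k-h_k=\dfrac{w_k^2/16}{l_k+h_k}\le\dfrac{w_k^2}{16h_k}$, and $l_k\lesssim h_k$ because $h_k/w_k=cw_k^{\alpha-1}\varepsilon^{-\alpha/3}\gtrsim1$ (using $w_k\gtrsim\varepsilon^{2/3}$ and $\alpha<2$). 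Multiplying by the $N_k=\hdone(A)/w_k$ cells in layer $k$, the layer-$k$ excess is $\lesssim\hdone(A)\,w_k^2/h_k+\varepsilon\,\hdone(A)\,h_k/w_k$. Since $w_k=2^{1-k}w_1$ and $h_k=cw_k^{\alpha}\varepsilon^{-\alpha/3}$, these are geometric in $k$ with ratios $2^{-(2-\alpha)}$ and $2^{-(\alpha-1)}$, both lying in $(\tfrac12,1)$ precisely because $1<\alpha<2$, hence dominated by their $k=1$ values. Summing yields a total elementary-cell excess $\lesssim\hdone(A)\big(w_1^2/h_1+\varepsilon h_1/w_1\big)\lesssim\hdone(A)\big(w_1^2+\varepsilon/w_1\big)\sim\hdone(A)\varepsilon^{2/3}$ by $w_1\sim\varepsilon^{1/3}$.

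For the top layer of (degenerate) Wasserstein cells, Lemma~\ref{lem:upper_bound_for_wasserstein_cell_in_3D} in the case $n=2$, $h=0$ bounds the cost of one cell by $\min\{af_{\numLev+1},f_{\numLev+1}+\varepsilon\}w_{\numLev+1}\le(\tfrac{w_{\numLev+1}}2+\varepsilon)w_{\numLev+1}$, with nothing to subtract since $w_{\numLev+1}^{n-1}H=0$; multiplying by $N_{\numLev+1}=\hdone(A)/w_{\numLev+1}$ gives $\lesssim\hdone(A)(w_{\numLev+1}+\varepsilon)\lesssim\hdone(A)\varepsilon^{2/3}$ by the choice of $\numLev$ and $\varepsilon<1$. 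One also checks that these cells deliver exactly $\mu_1$ (resp.\ $\mu_0$) at height $1$ (resp.\ $0$). Adding the two contributions and the symmetric factor $2$ in \eqref{eqn:generalConstructionEstimate} gives $\Delta\urbPlEn^{\varepsilon,a,A}\lesssim\hdone(A)\varepsilon^{2/3}$ with a constant depending only on the fixed $\alpha$, i.e.\ an absolute constant, which is the claim.

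The main obstacle is the calibration of the aspect-ratio exponent $\alpha$: it must lie strictly between $1$ and $2$ so that both competing sums — the branching term $\hdone(A)w_k^2/h_k\propto w_k^{2-\alpha}$ and the maintenance term $\varepsilon\hdone(A)h_k/w_k\propto w_k^{\alpha-1}$ — are genuine convergent geometric series controlled by their coarsest level. At either endpoint one of them degenerates into a sum of $\numLev\sim|\log\varepsilon|$ equal terms, producing a spurious logarithmic factor that would destroy the $\varepsilon^{2/3}$ rate (tellingly, this logarithm is exactly what survives in the $n=3$ analogue). The remaining work — the Taylor estimate for $l_k$, the admissibility bookkeeping ($w_1\le\hdone(A)$, $\numLev\ge1$, $c\sim1$), and the summation of the two geometric series — is routine.
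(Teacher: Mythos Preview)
Your proposal is correct and follows essentially the same approach as the paper: the same hierarchical construction with $H=0$, $\gamma=\delta=0$, $\alpha=\tfrac32$, $\beta=-\tfrac12$, leading to $w_1\sim\varepsilon^{1/3}$ and the two geometric sums each contributing $\hdone(A)\varepsilon^{2/3}$. The only cosmetic difference is your stopping level: you halt the refinement at $w_{\numLev+1}\sim\varepsilon^{2/3}$, whereas the paper continues down to $w_{\numLev+1}\sim\varepsilon$ (so that $h_k\gtrsim w_k$ all the way); both choices give the same final bound since the Wasserstein layer contributes $\hdone(A)(w_{\numLev+1}+\varepsilon)\lesssim\hdone(A)\varepsilon^{2/3}$ either way, and your observation that any $\alpha\in(1,2)$ works (with the endpoints producing a spurious $|\log\varepsilon|$) is a useful remark the paper leaves implicit.
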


Before proving Theorem \ref{thm:upper_bound_for_urban_planning_minus_wasserstein_in_2D} let us note that it directly implies the following corollary.

\begin{corollary}[Upper bound on $\Delta\urbPlEn^{\varepsilon,a,A}$ in 2D]\label{cor:upper_bound_for_min_urban_planning_minus_wasserstein_in_2D}
For the same $C$ as before, for all $\varepsilon<\min\{1,\hdone(A)^3\}$ we have
\begin{displaymath}
 \Delta\urbPlEn^{\varepsilon,a,A} \leq \hdone(A) \min\{a-1,C\varepsilon^{\frac{2}{3}}\}.
\end{displaymath}
\end{corollary}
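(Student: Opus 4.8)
The strategy is simply to intersect the scaling estimate of Theorem~\ref{thm:upper_bound_for_urban_planning_minus_wasserstein_in_2D} with a second, essentially trivial, upper bound $\Delta\urbPlEn^{\varepsilon,a,A}\le(a-1)\hdone(A)$. Indeed, for any two upper bounds $P,Q$ on the same quantity one automatically obtains the bound $\min\{P,Q\}$, and here $\min\{(a-1)\hdone(A),\,C\varepsilon^{2/3}\hdone(A)\}=\hdone(A)\min\{a-1,C\varepsilon^{2/3}\}$ is precisely the claimed right-hand side.

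To obtain the bound $(a-1)\hdone(A)$ I would exhibit the coarsest possible admissible pattern, namely straight parallel pipes: take the reference space to be $A\subset\R$ itself with $\reMeasure=\lebesgue^1\restr A$, and set $\chi_p(t)=(p,t)$ for $p\in A$ and $t\in I=[0,1]$. This $\chi$ is Lipschitz and already of constant (unit) speed on every fibre, and by Definition~\ref{def:irrigation} one has $\mu_+^\chi=\mu_0$ and $\mu_-^\chi=\mu_1$, so $\chi$ is admissible. Distinct fibres are disjoint vertical segments, so for every point on the pattern the solidarity class is a single point and $m_\chi(\chi_p(t))=\reMeasure(\{p\})=0$; hence $\repsilonachi(\chi_p(t))\equiv a$ and, from Definition~\ref{def:urbPlPatternForm}, $\urbPlMMS^{\varepsilon,a}(\chi)=a\,\hdone(A)$, because the integrand is the constant $a$ and $\reMeasure\times\lebesgue^1$ has total mass $\hdone(A)$. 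Since $\urbPlEn^{*,a,A}=\Wdone(\mu_0,\mu_1)=\hdone(A)$ as recalled in Section~\ref{sec:energyScaling}, this yields $\Delta\urbPlEn^{\varepsilon,a,A}\le\urbPlEn^{\varepsilon,a,A}[\chi]-\urbPlEn^{*,a,A}=(a-1)\hdone(A)$.

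Combining this with Theorem~\ref{thm:upper_bound_for_urban_planning_minus_wasserstein_in_2D} (valid in the stated range $\varepsilon<\min\{1,\hdone(A)^3\}$) completes the proof. There is no genuine obstacle here: the only point needing verification is the admissibility and the cost of the straight-pipe pattern, both of which are immediate from the relevant definitions. Note that the $(a-1)\hdone(A)$ bound is exactly the one that takes over when $a$ is close to $1$, the regime in which the branching construction behind Theorem~\ref{thm:upper_bound_for_urban_planning_minus_wasserstein_in_2D} no longer pays off.
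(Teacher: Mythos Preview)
Your proof is correct and follows essentially the same approach as the paper: combine Theorem~\ref{thm:upper_bound_for_urban_planning_minus_wasserstein_in_2D} with the trivial bound $\Delta\urbPlEn^{\varepsilon,a,A}\le(a-1)\hdone(A)$, the latter obtained by evaluating the cost of the straight-pipe pattern $\chi(p,t)=(p,t)$ on $\reSpace=A$ with $\reMeasure=\lebesgue^1\restr A$.
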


\begin{proof}
This follows immediately from Theorem \ref{thm:upper_bound_for_urban_planning_minus_wasserstein_in_2D} and the fact that
\begin{displaymath}
 \Delta\urbPlEn^{\varepsilon,a,A}= \min_\chi \urbPlEn^{\varepsilon,a,A}[\chi] - \Wdone(\mu_0,\mu_1) \leq (a-1) \Wdone(\mu_0,\mu_1)=\hdone(A)(a-1),
\end{displaymath}
where we have used $\urbPlEn^{\varepsilon,a,A}[\chi] = a\Wdone(\mu_0,\mu_1)$
for the irrigation pattern $\chi : \Gamma \times I \to \R^2$, $\chi(p,t) = (p,t)$,
with reference space $\Gamma = A$ and reference measure $\reMeasure = \lebesgue^1\restr A$.
\end{proof}

\begin{proof}[Proof of Theorem \ref{thm:upper_bound_for_urban_planning_minus_wasserstein_in_2D}]
In the two-dimensional construction we set the height of the top layer of Wasserstein cells to zero,
$$H=0.$$
Using Lemmas\,\ref{lem:upper_bound_for_elementary_cell_urban_planning_cost_in_nD} and \ref{lem:upper_bound_for_wasserstein_cell_in_3D} in \eqref{eqn:generalConstructionEstimate} we thus obtain
\begin{align*}
 \Delta\urbPlEn^{\varepsilon,a,A}
 &\leq2\left[\sum_{k=1}^{\numLev}N_k\left(2(f_k+\varepsilon)\sqrt{\tfrac{w_k^2}{16}+h_k^2}-w_kh_k\right)
  +N_{\numLev+1}(f_{\numLev+1}+\varepsilon)w_{\numLev+1}\right]\\
 &=\hdone(A)2\left[\sum_{k=1}^{\numLev}h_k\left((1+\tfrac{2\varepsilon}{w_k})\sqrt{1+\tfrac{w_k^2}{16h_k^2}}-1\right)
  +(\tfrac{w_{\numLev+1}}2+\varepsilon)\right]\\
 &\leq\hdone(A)2\left[\sum_{k=1}^{\numLev}\left(\tfrac{w_k^2}{32h_k}+\varepsilon\tfrac{h_k}{w_k}(2+\tfrac{w_k^2}{16h_k^2})\right)
  +(\tfrac{w_{\numLev+1}}2+\varepsilon)\right],
\end{align*}
where we have used $\sqrt{1+z}\leq1+\frac z2$.
The parameter $a$ does not occur explicitly in this estimate so that we expect
\begin{equation*}
\gamma=0,\qquad\delta=0
\end{equation*}
to be optimal.
Now intuitively, the mass flux in each elementary cell should have a vertical component at most comparable to the horizontal one,
so we will continue layering only as long as $2cw_k\leq h_k$ or equivalently, assuming $\alpha>1$, as long as $2^{2-k}w_1\geq\varepsilon^{\frac\beta{1-\alpha}}$. Thus we set
\begin{equation*}
\numLev=\left\lfloor2+\log_2(w_1\varepsilon^{\frac\beta{\alpha-1}})\right\rfloor,
\end{equation*}
where the $\lfloor\cdot\rfloor$ denotes the integer part (note that we will have to verify a posteriori that $\numLev\geq1$).
As a result, $\tfrac{w_k^2}{16h_k^2}\leq\tfrac1{64c^2}$ for all $k\leq K$ as well as $w_{\numLev+1}<\varepsilon^{\frac\beta{1-\alpha}}$, and the excess energy is estimated as
\begin{align*}
 \Delta\urbPlEn^{\varepsilon,a,A}
 &\leq\hdone(A)\left[\sum_{k=1}^{\numLev}\left(\tfrac{w_k^2}{16h_k}+\varepsilon\tfrac{h_k}{w_k}(4+\tfrac1{32c^2})\right)
  +(\varepsilon^{\frac\beta{1-\alpha}}+2\varepsilon)\right]\\
 &=\hdone(A)\left[\sum_{k=1}^{\numLev}\left(\tfrac1{16c}w_k^{2-\alpha}\varepsilon^{-\beta}+(4c+\tfrac1{32c})w_k^{\alpha-1}\varepsilon^{1+\beta}\right)
  +(\varepsilon^{\frac\beta{1-\alpha}}+2\varepsilon)\right]\\
 &=\hdone(A)\left[\sqrt2(\tfrac3{32c}+4c)w_1^{\frac12}\varepsilon^{\frac12}\sum_{k=1}^{\numLev}2^{-\frac k2}+3\varepsilon\right]\\
 &\leq\hdone(A)\left[\tfrac{2}{1-\sqrt2}(\tfrac3{32c}+4c)w_1^{\frac12}\varepsilon^{\frac12}+3\varepsilon\right],
\end{align*}
where in the third step we chose the optimal exponents
\begin{equation*}
\alpha=\frac32,\qquad\beta=-\frac12
\end{equation*}
to minimise the summands and in the last step we exploited the boundedness of the geometric series.
Note that for this choice $w_1$ turns into $w_1=\tilde c\varepsilon^{\frac13}$ and the feasibility conditions \eqref{eqn:feasibilityConditions} are indeed fulfilled,
while constraint \eqref{eqn:epsConstraint} becomes $\varepsilon\leq\hdone(A)^3$.
Thus we finally arrive at $\Delta\urbPlEn^{\varepsilon,a,A}\leq C\hdone(A)\varepsilon^{\frac23}$ for some $C>0$, the desired result.
\end{proof}

\subsubsection{Upper bound in 3D}

In three dimensions there are three different parameter regimes as indicated by Theorems \ref{thm:upper_bound_for_urban_planning_minus_wasserstein_in_3D} and \ref{thm:upper_bound_for_urban_planning_minus_wasserstein_in_3D_for_big_a} as well as Corollary \ref{cor:upper_bound_for_min_urban_planning_minus_wasserstein_in_3D}.

\begin{theorem}[Upper bound on $\Delta\urbPlEn^{\varepsilon,a,A}$ in 3D for small $a$]\label{thm:upper_bound_for_urban_planning_minus_wasserstein_in_3D}
Using the notation from Section \ref{sec:introUrbPlan}, there exists a constant $C\geq0$ such that for all $a\leq2$ and $\varepsilon\leq\min\{\frac{a^2(a-1)^2}{64},\hdtwo(A)^2\}$ we have
\begin{equation*}
 \Delta\urbPlEn^{\varepsilon,a,A} \leq \hdtwo(A)\sqrt\varepsilon C\left[\left|\log_2\left(\frac{\sqrt\varepsilon}{a(a-1)}\right)\right|+a(a-1)+1\right].
\end{equation*}
\end{theorem}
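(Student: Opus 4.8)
The plan is to run the hierarchical construction of Section~\ref{sec:upBndUrbPlan} in dimension $n=3$ with the exponent choice
\[
\alpha=2,\qquad \beta=-\tfrac12,\qquad \gamma=\delta=0,
\]
so that $h_k=c\,w_k^2\varepsilon^{-1/2}$ and, by \eqref{eqn:coarseWidth}, $w_1=\tilde c\,\varepsilon^{1/4}$; the number $\numLev$ of elementary layers is taken as the largest index $k\geq1$ with $w_k\geq 2\sqrt\varepsilon/\sqrt{a-1}$, and the capping Wasserstein layer is given the height $H=w_{\numLev+1}/\sqrt{2(a^2-1)}$. Then $c$ is fixed by \eqref{eqn:constantC} and automatically lies in $[\tfrac3{16},2]$, and \eqref{eqn:epsConstraint} specialises to the hypothesis $\varepsilon\leq\hdtwo(A)^2$. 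It remains to estimate the right-hand side of \eqref{eqn:generalConstructionEstimate}, which splits into $\numLev$ elementary-layer terms and one Wasserstein-layer term.

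By Lemma~\ref{lem:upper_bound_for_elementary_cell_urban_planning_cost_in_nD} together with the identities $N_k^2w_k^2=\hdtwo(A)$ and $f_k=w_k^2/4$, the $k$-th elementary-layer term equals
\[
\hdtwo(A)\left[\min\Bigl\{a,\,1+\tfrac{4\varepsilon}{w_k^2}\Bigr\}\sqrt{\tfrac{w_k^2}{8}+h_k^2}-h_k\right].
\]
For $k\leq\numLev$ one has $w_k^2\geq 4\varepsilon/(a-1)\geq4\varepsilon$, so the minimum equals $1+4\varepsilon/w_k^2$; inserting $h_k=c\,w_k^2\varepsilon^{-1/2}$, extracting $h_k$ from the square root and using $\sqrt{1+x}\leq1+x/2$ and $4\varepsilon/w_k^2\leq1$, the term is bounded by $\hdtwo(A)\sqrt\varepsilon\,(4c+\tfrac1{8c})\leq C\hdtwo(A)\sqrt\varepsilon$ with a universal $C$ because $c\in[\tfrac3{16},2]$. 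Hence the elementary layers contribute at most $C\numLev\hdtwo(A)\sqrt\varepsilon$. In the degenerate case where the natural index drops below $1$, so $\numLev$ is clamped to $1$ and $w_1<2\sqrt\varepsilon/\sqrt{a-1}$ (which forces $a-1\lesssim\sqrt\varepsilon$), one instead bounds that single layer using $\min\{\cdot\}\leq a$ by $\hdtwo(A)\bigl[(a-1)h_1+a\,w_1^2/(16h_1)\bigr]\lesssim\hdtwo(A)\sqrt\varepsilon$, exploiting that $h_1$ is of order one and $w_1^2=\tilde c^2\sqrt\varepsilon$.

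For the Wasserstein layer, Lemma~\ref{lem:upper_bound_for_wasserstein_cell_in_3D} with $n=3$ and $N_{\numLev+1}^2w_{\numLev+1}^2=\hdtwo(A)$ bound its term by $\hdtwo(A)\bigl[a\sqrt{w_{\numLev+1}^2/2+H^2}-H\bigr]$, which for the chosen $H$ equals $\hdtwo(A)\,w_{\numLev+1}\sqrt{(a^2-1)/2}\lesssim\hdtwo(A)\sqrt\varepsilon$ by $w_{\numLev+1}\leq 2\sqrt\varepsilon/\sqrt{a-1}$ and $a\leq2$. One must also verify the feasibility conditions \eqref{eqn:feasibilityConditions}: $\alpha>0$ and $\numLev\geq1$ hold by construction, and $H\leq\tfrac14$ reduces, via $a^2-1\geq2(a-1)$ and the bound on $w_{\numLev+1}$, to $\varepsilon\leq(a-1)^2/16$ --- this is exactly where the hypothesis $a\leq2$ enters, as it turns $\varepsilon\leq a^2(a-1)^2/64$ into precisely this inequality. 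Finally $\numLev\leq 1+\log_2\bigl(w_1\sqrt{a-1}/(2\sqrt\varepsilon)\bigr)$, and an elementary comparison of $\tfrac12\log_2\tfrac{a-1}{\sqrt\varepsilon}$ with $\log_2\tfrac{a(a-1)}{\sqrt\varepsilon}$ (again using $\varepsilon\leq a^2(a-1)^2/64$ and $a\leq2$, which in particular makes $\bigl|\log_2(\sqrt\varepsilon/(a(a-1)))\bigr|\geq3$) gives $\numLev+1\lesssim\bigl|\log_2(\sqrt\varepsilon/(a(a-1)))\bigr|+1$ in all cases. Summing all contributions and inserting the factor $2$ from \eqref{eqn:generalConstructionEstimate} yields
\[
\Delta\urbPlEn^{\varepsilon,a,A}\;\lesssim\;\hdtwo(A)\sqrt\varepsilon\,(\numLev+1)\;\lesssim\;\hdtwo(A)\sqrt\varepsilon\Bigl(\bigl|\log_2(\sqrt\varepsilon/(a(a-1)))\bigr|+1\Bigr),
\]
which is even stronger than the asserted bound, whose extra summand $a(a-1)$ is harmless.

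The principal difficulty is not any individual estimate but the uniform bookkeeping around the transition width $2\sqrt\varepsilon/\sqrt{a-1}$: one must guarantee that, for the chosen $\numLev$, every elementary layer satisfies $w_k^2\geq 4\varepsilon/(a-1)$ so that the favourable factor $1+4\varepsilon/w_k^2$ (and not $a$) is available, must treat separately the degenerate case in which $w_1$ already lies below that width, and must confirm that the exponent choice keeps both $c$ and $H$ in their admissible ranges --- the last point being precisely where $a\leq2$ is needed. Once these are settled, only the two explicit single-variable estimates above remain.
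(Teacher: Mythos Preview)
Your proof is correct and follows essentially the same hierarchical construction as the paper, with the same exponents $\alpha=2$, $\beta=-\tfrac12$, $\gamma=\delta=0$; the only differences are cosmetic---you cut off $\numLev$ at the threshold where $1+4\varepsilon/w_k^2$ meets $a$ (the paper balances $\sqrt{a(a-1)}\,w_{\numLev+1}$ against $\sqrt\varepsilon$) and you pick $H$ as the exact minimiser of $a\sqrt{w_{\numLev+1}^2/2+H^2}-H$ rather than of $(a-1)H+a\,w_{\numLev+1}^2/(16H)$, both choices being equivalent up to constants for $a\leq2$. Your use of $4\varepsilon/w_k^2\leq a-1\leq1$ to absorb the cross term directly (instead of the paper's separate geometric-series estimate) is what lets you drop the harmless $a(a-1)$ summand.
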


\begin{theorem}[Upper bound on $\Delta\urbPlEn^{\varepsilon,a,A}$ in 3D for large $a$]\label{thm:upper_bound_for_urban_planning_minus_wasserstein_in_3D_for_big_a}
There exists a constant $C\geq0$ such that for all $\varepsilon \leq \min\{1,\hdtwo(A)^2\}$ we have
\begin{equation*}
 \Delta\urbPlEn^{\varepsilon,a,A} \leq \hdtwo(A)\sqrt\varepsilon C\left[\left|\log_2(\sqrt\varepsilon)\right| + \sqrt{a} + 1\right].
\end{equation*}
\end{theorem}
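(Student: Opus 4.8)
The plan is to run the branching construction of Section~\ref{sec:upBndUrbPlan}, but to let the layer heights depart from the single power law of \eqref{eqn:parametersConstructionUrbPl} and to split the elementary cells into a \emph{bulk} hierarchy of ``tall'' cells in the interior and a \emph{boundary} hierarchy of nearly diagonal cells accumulating at $\mu_0$ and $\mu_1$, still capped by a layer of Wasserstein cells at the very top. In the bulk I keep $\alpha=2$, $\beta=-\tfrac12$, $\gamma=\delta=0$, so that $w_1\sim\varepsilon^{1/4}$ by \eqref{eqn:coarseWidth} and $h_k\sim w_k^2\varepsilon^{-1/2}$; with these choices the height identity \eqref{eqn:heightConstraint} holds with a constant $c$ of order one as in \eqref{eqn:constantC}, essentially the whole vertical budget being spent by the bulk. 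I run the bulk layers $k=1,\dots,K_1$ down to width $w_{K_1}\sim\sqrt\varepsilon$, so that $K_1\sim\log_2(w_1/\sqrt\varepsilon)\sim\tfrac12|\log_2\sqrt\varepsilon|$.

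For a bulk layer I would insert Lemma~\ref{lem:upper_bound_for_elementary_cell_urban_planning_cost_in_nD} (with $n=3$, $f_k=(w_k/2)^2$) into \eqref{eqn:generalConstructionEstimate} and bound $l_k$ via $\sqrt{1+z}\le1+\tfrac z2$ exactly as in the proof of Theorem~\ref{thm:upper_bound_for_urban_planning_minus_wasserstein_in_2D}: the excess of the layer is $\lesssim\hdtwo(A)\bigl(\tfrac{\varepsilon h_k}{f_k}+\tfrac{w_k^2}{h_k}\bigr)$, and since $w_k\ge\sqrt\varepsilon$ the factor $1+\varepsilon/f_k$ is $O(1)$ while the choice $h_k\sim w_k^2\varepsilon^{-1/2}$ balances the two summands to $\lesssim\hdtwo(A)\sqrt\varepsilon$. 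Summing over the $K_1$ bulk layers produces a contribution $\lesssim\hdtwo(A)\sqrt\varepsilon\,|\log_2\sqrt\varepsilon|$, the logarithmic term in the assertion.

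The $\sqrt a$ term must come from the finest structures. Once $w_k<\sqrt\varepsilon$ the factor $1+\varepsilon/f_k=1+4\varepsilon/w_k^2$ is no longer bounded and the optimal aspect ratio changes: minimising the per-layer bound of Lemma~\ref{lem:upper_bound_for_elementary_cell_urban_planning_cost_in_nD} over $h_k$ gives excess $\sim\hdtwo(A)\,w_k\sqrt{\mu_k^2-1}$ with $\mu_k=\min\{a,1+4\varepsilon/w_k^2\}$, attained for $h_k\sim w_k/\mu_k$, so these cells are roughly diagonal and use only an $O(\sqrt\varepsilon)$ share of the total height. I would therefore append boundary layers $k=K_1+1,\dots,\numLev$ down to $w_{\numLev+1}\sim\sqrt\varepsilon/\sqrt a$; here $\mu_k\sim\varepsilon/w_k^2$ while $w_k\gtrsim\sqrt{\varepsilon/a}$ and $\mu_k\sim a$ afterwards, so the per-layer excess is $\sim\hdtwo(A)\,\varepsilon/w_k$, respectively $\sim\hdtwo(A)\,a\,w_k$, and in either sub-range the geometric sum over the dyadic widths is controlled by its smallest-width term, i.e.\ $\lesssim\hdtwo(A)\sqrt a\sqrt\varepsilon$. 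The Wasserstein cap at width $w_{\numLev+1}\sim\sqrt\varepsilon/\sqrt a$ (with $H$ of the same order, or $H=0$) contributes, by Lemma~\ref{lem:upper_bound_for_wasserstein_cell_in_3D}, at most $\hdtwo(A)\bigl(a\sqrt{\tfrac12 w_{\numLev+1}^2+H^2}-H\bigr)\lesssim\hdtwo(A)\,a\,w_{\numLev+1}\sim\hdtwo(A)\sqrt a\sqrt\varepsilon$. Adding up, $\Delta\urbPlEn^{\varepsilon,a,A}\lesssim\hdtwo(A)\sqrt\varepsilon\bigl(|\log_2\sqrt\varepsilon|+\sqrt a+1\bigr)$, the ``$+1$'' absorbing the remaining $O(\hdtwo(A)\sqrt\varepsilon)$ terms ($\sum_k w_k$ from the boundary cells, the $O(\varepsilon)$ contributions of the topmost cells, and so on). The only restrictions used are $w_1\le\ell=\sqrt{\hdtwo(A)}$, i.e.\ $\varepsilon\lesssim\hdtwo(A)^2$, and $\varepsilon\le1$ (so that $K_1\ge1$ and the logarithm is meaningful), which is exactly $\varepsilon\le\min\{1,\hdtwo(A)^2\}$; the feasibility conditions \eqref{eqn:feasibilityConditions} hold as $\alpha=2>0$, $H\le\tfrac14$, $\numLev\ge1$.

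I expect the main obstacle to be the bookkeeping around the boundary block. One has to check that abandoning the single power law of \eqref{eqn:parametersConstructionUrbPl} is harmless — the boundary heights sum to $O(\sqrt\varepsilon)$, hence do not perturb \eqref{eqn:heightConstraint} — and, most importantly, that in each of the two regimes ($\mu_k=1+4\varepsilon/w_k^2$ and $\mu_k=a$) the dyadic sum really collapses to $O(\sqrt a\sqrt\varepsilon)$ rather than $O(a\sqrt\varepsilon)$; this collapse, which relies precisely on the boundary cells being diagonal instead of tall, is the conceptual crux of the estimate. Finally one must line up the last boundary layer with the Wasserstein cap so that the cap cost $\sim a\,w_{\numLev+1}$ stays within budget, which is why the construction is stopped exactly at width $w_{\numLev+1}\sim\sqrt\varepsilon/\sqrt a$.
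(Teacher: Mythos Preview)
Your proposal is correct and follows essentially the same architecture as the paper: a bulk hierarchy with $\alpha=2$, $\beta=-\tfrac12$ down to width $\sqrt\varepsilon$ (yielding the $|\log_2\sqrt\varepsilon|$ term), then a boundary hierarchy down to width $\sim\sqrt{\varepsilon/a}$ (yielding the $\sqrt a$ term), with a Wasserstein cap of the same scaling. The one geometric difference is that the paper takes the boundary cells to be \emph{flat}, setting $h_k=0$ for $k>K_S$ (so the network continues branching horizontally inside the plane $\{x_3=1\}$) and also $H=0$, whereas you keep $h_k>0$ and optimise to get nearly diagonal cells with $h_k\sim w_k/\sqrt{\mu_k^2-1}$. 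Both choices give per-layer excess $\sim\hdtwo(A)\,\varepsilon/w_k$ in the relevant range and hence the same geometric sum $\sim\hdtwo(A)\sqrt{a\varepsilon}$; your version is a touch more work on the height constraint (you must absorb the $O(\sqrt\varepsilon)$ boundary heights into $c$), while the paper's flat cells make \eqref{eqn:heightConstraint} trivially satisfied by the bulk alone.

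One small slip to fix when you write it out: in the second boundary sub-range (where $\mu_k\sim a$ and the per-layer excess is $\sim\hdtwo(A)\,a\,w_k$) the geometric sum is controlled by its \emph{largest}-width term, not its smallest-width term; since that largest width is the crossover $\sim\sqrt{\varepsilon/a}$, the conclusion $\lesssim\hdtwo(A)\sqrt{a\varepsilon}$ is unaffected.
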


Theorems \ref{thm:upper_bound_for_urban_planning_minus_wasserstein_in_3D} and \ref{thm:upper_bound_for_urban_planning_minus_wasserstein_in_3D_for_big_a} as well as a third parameter regime can be summarised in the following corollary.

\begin{corollary}[Upper bound on $\Delta\urbPlEn^{\varepsilon,a,A}$ in 3D]\label{cor:upper_bound_for_min_urban_planning_minus_wasserstein_in_3D}
There exists $C>0$ independent of $a,\varepsilon,A$, such that for all $\varepsilon \leq \min(1,\hdtwo(A)^2)$ we have
\begin{displaymath}
 \Delta\urbPlEn^{\varepsilon,a,A} \leq  \hdtwo(A) \min\left\{a-1,C\sqrt\varepsilon(|\log_2\tfrac{\sqrt\varepsilon}{a-1}|+\sqrt a)\right\}.
\end{displaymath}
\end{corollary}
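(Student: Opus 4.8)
The plan is to derive Corollary~\ref{cor:upper_bound_for_min_urban_planning_minus_wasserstein_in_3D} by combining the two preceding theorems with a third, easy parameter regime and then simplifying the resulting bound. First I would observe, exactly as in the proof of Corollary~\ref{cor:upper_bound_for_min_urban_planning_minus_wasserstein_in_2D}, that the trivial competitor $\chi(p,t)=(p',t)$ on $\reSpace=A$ with $\reMeasure=\lebesgue^{n-1}\restr A$ yields $\urbPlEn^{\varepsilon,a,A}[\chi]=a\Wdone(\mu_0,\mu_1)=a\hdtwo(A)$, hence the universal bound $\Delta\urbPlEn^{\varepsilon,a,A}\le(a-1)\hdtwo(A)$. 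Thus it remains to show the other term, namely $\Delta\urbPlEn^{\varepsilon,a,A}\le C\hdtwo(A)\sqrt\varepsilon(|\log_2\tfrac{\sqrt\varepsilon}{a-1}|+\sqrt a)$, under the stated hypothesis $\varepsilon\le\min\{1,\hdtwo(A)^2\}$, and this I would split into the regimes $a\le2$ and $a>2$.

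For $a>2$ the estimate follows directly from Theorem~\ref{thm:upper_bound_for_urban_planning_minus_wasserstein_in_3D_for_big_a}: there one has $\Delta\urbPlEn^{\varepsilon,a,A}\le C\hdtwo(A)\sqrt\varepsilon(|\log_2\sqrt\varepsilon|+\sqrt a+1)$, and since $a>2$ gives $a-1>1$, we have $|\log_2\tfrac{\sqrt\varepsilon}{a-1}|\ge|\log_2\sqrt\varepsilon|-\log_2(a-1)$; a short argument (distinguishing whether $\log_2(a-1)\le\tfrac12|\log_2\sqrt\varepsilon|$ or not, and absorbing the $+1$ into $\sqrt a\ge\sqrt2$) shows $|\log_2\sqrt\varepsilon|+\sqrt a+1\lesssim|\log_2\tfrac{\sqrt\varepsilon}{a-1}|+\sqrt a$, possibly at the cost of enlarging $C$. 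For $1<a\le2$, Theorem~\ref{thm:upper_bound_for_urban_planning_minus_wasserstein_in_3D} applies whenever $\varepsilon\le\tfrac{a^2(a-1)^2}{64}$, and on that range $a(a-1)\le2$ is bounded and $\log_2\tfrac{\sqrt\varepsilon}{a(a-1)}$ differs from $\log_2\tfrac{\sqrt\varepsilon}{a-1}$ only by the bounded quantity $\log_2 a\in(0,1]$, so the theorem's bound is $\lesssim\hdtwo(A)\sqrt\varepsilon(|\log_2\tfrac{\sqrt\varepsilon}{a-1}|+1)\le\hdtwo(A)\sqrt\varepsilon(|\log_2\tfrac{\sqrt\varepsilon}{a-1}|+\sqrt a)$, as desired.

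The one genuinely new piece of work — the ``third parameter regime'' alluded to in the statement — is the range $1<a\le2$ with $\tfrac{a^2(a-1)^2}{64}<\varepsilon\le\min\{1,\hdtwo(A)^2\}$, where Theorem~\ref{thm:upper_bound_for_urban_planning_minus_wasserstein_in_3D} is not available. Here I would fall back on the trivial bound $\Delta\urbPlEn^{\varepsilon,a,A}\le(a-1)\hdtwo(A)$ and check that in this regime $(a-1)$ is already dominated by $C\sqrt\varepsilon(|\log_2\tfrac{\sqrt\varepsilon}{a-1}|+\sqrt a)$: indeed $\varepsilon>\tfrac{a^2(a-1)^2}{64}\ge\tfrac{(a-1)^2}{64}$ forces $a-1<8\sqrt\varepsilon$, so $(a-1)\hdtwo(A)\le8\hdtwo(A)\sqrt\varepsilon\le8\hdtwo(A)\sqrt\varepsilon(|\log_2\tfrac{\sqrt\varepsilon}{a-1}|+\sqrt a)$ provided the bracket is $\ge1$, which holds since $\sqrt a\ge1$. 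Taking the maximum of the constants from the three cases and then using that the claimed bound is the minimum of $(a-1)\hdtwo(A)$ and the logarithmic term (the former being available unconditionally) completes the proof. The main obstacle is purely bookkeeping: making sure the constants and the logarithmic manipulations are handled uniformly in $a$ across the three regimes, in particular the harmless absorption of the additive $+1$ and of the differences between $\log_2\tfrac{\sqrt\varepsilon}{a-1}$, $\log_2\tfrac{\sqrt\varepsilon}{a(a-1)}$, and $\log_2\sqrt\varepsilon$; there is no new construction or estimate needed beyond the two theorems and the trivial competitor.
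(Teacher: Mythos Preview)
Your proof is correct and follows essentially the same approach as the paper: combine the trivial competitor bound $\Delta\urbPlEn^{\varepsilon,a,A}\le(a-1)\hdtwo(A)$ with Theorems~\ref{thm:upper_bound_for_urban_planning_minus_wasserstein_in_3D} and \ref{thm:upper_bound_for_urban_planning_minus_wasserstein_in_3D_for_big_a}, then do a case analysis on $a$ (and, for small $a$, on whether $\varepsilon\le a^2(a-1)^2/64$) to reduce the resulting expressions to the target form. The only organisational difference is that the paper first packages everything as $\min\{a-1,CT_1,CT_2\}$ and then simplifies, whereas you treat each regime separately; your $a>2$ step is also slightly overcomplicated, since for $\varepsilon\le1$ and $a>2$ one has directly $|\log_2\tfrac{\sqrt\varepsilon}{a-1}|=|\log_2\sqrt\varepsilon|+\log_2(a-1)\ge|\log_2\sqrt\varepsilon|$, making the case split on $\log_2(a-1)$ unnecessary.
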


\begin{proof}
Again, as in  Corollary \ref{cor:upper_bound_for_min_urban_planning_minus_wasserstein_in_2D} we have
\begin{displaymath}
 \Delta\urbPlEn^{\varepsilon,a,A}= \min_\chi \urbPlEn^{\varepsilon,a,A}[\chi] - \Wdone(\mu_0,\mu_1) \leq (a-1) \Wdone(\mu_0,\mu_1)=\hdtwo(A)(a-1)
\end{displaymath}
due to $\urbPlEn^{\varepsilon,a,A}[\chi] = a\Wdone(\mu_0,\mu_1)$
for the irrigation pattern $\chi : \Gamma \times I \to \R^2$, $\chi(p,t) = (p,t)$,
with reference space $\Gamma = A$ and reference measure $\reMeasure = \lebesgue^2\restr A$.

Now let $\varepsilon\leq \min\{1,\hdtwo(A)^2\}$.
Without loss of generality we may assume the constant $C$ in Theorems \ref{thm:upper_bound_for_urban_planning_minus_wasserstein_in_3D} and \ref{thm:upper_bound_for_urban_planning_minus_wasserstein_in_3D_for_big_a} to be the same and to be greater than $2$.
Let us abbreviate
\begin{align*}
 T_1&=\sqrt\varepsilon\left[\left|\log_2\left(\tfrac{\sqrt\varepsilon}{a(a-1)}\right)\right|+a(a-1)+1\right],\\
 T_2&=\sqrt\varepsilon\left[\left|\log_2(\sqrt\varepsilon)\right| + \sqrt{a} + 1\right].
\end{align*}
If $a>2$, then it can easily be checked that $CT_1>CT_2$.
Likewise, if $\varepsilon>\frac{a^2(a-1)^2}{64}$ it is straightforward to see $CT_1>a-1$.
Thus, $\Delta\urbPlEn^{\varepsilon,a,A} \leq \hdtwo(A)\min\{a-1,CT_1,CT_2\}$ does not only hold under the conditions of Theorem \ref{thm:upper_bound_for_urban_planning_minus_wasserstein_in_3D}, but even independently of $a$ and $\varepsilon$.
Furthermore, we trivially have $CT_1\leq 4C\sqrt\varepsilon\frac14(|\log_2\tfrac{\sqrt\varepsilon}{a(a-1)}|+a(a-1)+1)$ and $CT_2\leq 4C\sqrt\varepsilon(|\log_2\sqrt\varepsilon|+\sqrt a)$ due to $a>1$,
so it remains to show
\begin{equation*}
\min\left\{\tfrac14\left(\left|\log_2\tfrac{\sqrt\varepsilon}{a(a-1)}\right|+a(a-1)+1\right),|\log_2\sqrt\varepsilon|+\sqrt a\right\}\leq |\log_2\tfrac{\sqrt\varepsilon}{a-1}|+\sqrt a.
\end{equation*}
For $a\geq2$ we have $|\log_2\sqrt\varepsilon|\leq|\log_2\tfrac{\sqrt\varepsilon}{a-1}|$ so that the above inequality holds true.
For $a\in(1,2)$ on the other hand we have
\begin{equation*}
\left|\log_2\tfrac{\sqrt\varepsilon}{a(a-1)}\right|+a(a-1)+1
\leq\left|\log_2\tfrac{\sqrt\varepsilon}{a-1}\right|+\log_2 a+a(a-1)+1
\leq4\left|\log_2\tfrac{\sqrt\varepsilon}{a-1}\right|+4\sqrt a
\end{equation*}
so that the inequality again holds true, which proves the desired inequality in all cases.
\end{proof}

\begin{proof}[Proof of Theorem \ref{thm:upper_bound_for_urban_planning_minus_wasserstein_in_3D}]
Again we just  appeal to Lemmas\,\ref{lem:upper_bound_for_elementary_cell_urban_planning_cost_in_nD} and \ref{lem:upper_bound_for_wasserstein_cell_in_3D} in \eqref{eqn:generalConstructionEstimate} to obtain
\begin{align*}
\Delta\urbPlEn^{\varepsilon,a,A}
&\leq2\Bigg[\sum_{k=1}^{\numLev}N_k^2\left(4(f_k+\varepsilon)\sqrt{\tfrac{w_k^2}{8}+h_k^2}-w_k^2h_k\right)
+N_{\numLev+1}^2\left(4af_{\numLev+1}\sqrt{\tfrac{w_{\numLev+1}^2}{8}+H^2}-w_{\numLev+1}^2H\right)\Bigg]\\
&=\hdtwo(A)2\left[\sum_{k=1}^{\numLev}h_k\left((1+\tfrac{4\varepsilon}{w_k^2})\sqrt{1+\tfrac{w_k^2}{8h_k^2}}-1\right)
+H\left(a\sqrt{1+\tfrac{w_{\numLev+1}^2}{8H^2}}-1\right)\right]\\
&\leq\hdtwo(A)2\left[\sum_{k=1}^{\numLev}\left(\tfrac{w_k^2}{16h_k}+\varepsilon\tfrac{h_k}{w_k^2}(4+\tfrac{w_k^2}{4h_k^2})\right)
+\left(H(a-1)+a\tfrac{w_{\numLev+1}^2}{16H}\right)\right]
\end{align*}
using $\sqrt{1+z}\leq1+\frac z2$.
We now choose the minimising
\begin{equation*}
H=\sqrt{\frac a{a-1}}\frac{w_{\numLev+1}}4
\end{equation*}
(note that we still need to verify $H\leq\frac14$), thus arriving at
\begin{align*}
\Delta\urbPlEn^{\varepsilon,a,A}
&\leq\hdtwo(A)2\left[\sum_{k=1}^{\numLev}\left(\tfrac{w_k^2}{16h_k}+\varepsilon\tfrac{h_k}{w_k^2}(4+\tfrac{w_k^2}{4h_k^2})\right)
+\sqrt{a(a-1)}\tfrac{w_{\numLev+1}}2\right]\\
&=\hdtwo(A)\left[\sum_{k=1}^{\numLev}\left(\tfrac{1}{8c}w_k^{2-\alpha}\varepsilon^{-\beta}+w_k^{\alpha-2}\varepsilon^{1+\beta}(8c+\tfrac{1}{2c}w_k^{2-2\alpha}\varepsilon^{-2\beta})\right)
+\sqrt{a(a-1)}w_{\numLev+1}\right]\\
&=\hdtwo(A)\left[\sqrt\varepsilon\sum_{k=1}^{\numLev}\left(\tfrac{1}{8c}+8c+\tfrac{1}{2c}w_k^{-2}\varepsilon\right)
+\sqrt{a(a-1)}w_{\numLev+1}\right],
\end{align*}
where we have chosen
\begin{equation*}
\gamma=0,\qquad\delta=0,\qquad\alpha=2,\qquad\beta=-\frac12
\end{equation*}
to balance all terms optimally.
Recall that due to \eqref{eqn:coarseWidth} and \eqref{eqn:epsConstraint} this leads to $w_1=\tilde c\varepsilon^{\frac14}$ as well as the constraint $\varepsilon\leq\hdtwo(A)^2$.

Note that we still have to specify $\numLev$.
The intuition here is the following.
Assuming that $w_k^2$ stays larger than $\varepsilon$, the sum evaluates to a constant times $\numLev$.
One might expect $\numLev$ to increase at most logarithmically in $\varepsilon$ and $a-1$, which is not much worse than having a constant upper bound for $\numLev$.
Thus, the second term may become as large as $\sqrt\varepsilon$ without polluting the energy estimate.
Therefore, we will choose $\numLev$ such that $w_{\numLev+1}\sim\sqrt{\frac{\varepsilon}{a(a-1)}}$, that is,
\begin{equation*}
\numLev=\left\lfloor\log_2\tfrac{\sqrt{a(a-1)}}{\varepsilon^{1/4}}\right\rfloor,\quad\text{and thus }w_{\numLev+1}=\frac{\tilde c}{\hat c}\sqrt{\frac\varepsilon{a(a-1)}},\,H=\frac{\tilde c\sqrt\varepsilon}{4\hat c(a-1)},
\end{equation*}
where again $\lfloor\cdot\rfloor$ denotes the integer part and where $\hat c=\tfrac{\varepsilon^{1/4}}{\sqrt{a(a-1)}}2^{\numLev}\in(\frac12,1]$.
Note that our assumptions on $\varepsilon$ ensure $\numLev\geq1$ and $H\leq\frac18$ so that condition \eqref{eqn:feasibilityConditions} is fulfilled.

Inserting our parameter choices in the energy estimate, we have
\begin{align*}
\Delta\urbPlEn^{\varepsilon,a,A}
&\leq\hdtwo(A)\sqrt\varepsilon\left[\left(\tfrac{1}{8c}+8c\right)\numLev+\sum_{k=1}^{\numLev}\tfrac{1}{2c}w_k^{-2}\varepsilon
+\tfrac{\tilde c}{\hat c}\right]\\
&\leq\hdtwo(A)\sqrt\varepsilon\left[\left(\tfrac{4}{45}+\tfrac{45}4\right)\tfrac12\left|\log_2\tfrac{\varepsilon^{1/2}}{a(a-1)}\right|+\tfrac{2}{c\tilde c^2}\sqrt\varepsilon\sum_{k=1}^{\numLev}4^{-k}
+\tfrac{\tilde c}{\hat c}\right]\\
&\leq\hdtwo(A)\sqrt\varepsilon C\left[\left|\log_2\tfrac{\varepsilon^{1/2}}{a(a-1)}\right|+a(a-1)+1\right]
\end{align*}
for some constant $C>0$, where we have used the condition on $\varepsilon$ and the bounds on $c,\tilde c,\hat c$.
\end{proof}

\begin{proof}[Proof of Theorem \ref{thm:upper_bound_for_urban_planning_minus_wasserstein_in_3D_for_big_a}]
This construction is intended for large values of $a$ so that unlike in the previous construction,
the mass is not discharged from the network at distance $H$ from the top boundary so as to move the last piece on its own.
Instead, the network of pipes goes all the way to the top within ${K_S}$ layers
and then even continues branching inside the boundary plane before discharging the mass.

We use the same definitions \eqref{eqn:parametersConstructionUrbPl} as before, only in layers ${K_S}+1$ through $\numLev$ we use
\begin{equation*}
h_k=0,\quad k={K_S}+1,\ldots,\numLev,\qquad H=0.
\end{equation*}
Applying Lemmas\,\ref{lem:upper_bound_for_elementary_cell_urban_planning_cost_in_nD} and \ref{lem:upper_bound_for_wasserstein_cell_in_3D} in \eqref{eqn:generalConstructionEstimate} we now obtain the energy estimate
\begin{align*}
\Delta\urbPlEn^{\varepsilon,a,A}
&\leq2\Bigg[\sum_{k=1}^{{K_S}}N_k^2\left(4(f_k+\varepsilon)\sqrt{\tfrac{w_k^2}{8}+h_k^2}-w_k^2h_k\right)
+\sum_{k={K_S}+1}^{\numLev}N_k^24(f_k+\varepsilon)\tfrac{w_k}{\sqrt8}
+N_{\numLev+1}^24af_{\numLev+1}\tfrac{w_{\numLev+1}}{\sqrt8}\Bigg]\\
&\leq\hdtwo(A)2\left[\sum_{k=1}^{{K_S}}\left(\tfrac{w_k^2}{16h_k}+\varepsilon\tfrac{h_k}{w_k^2}(4+\tfrac{w_k^2}{4h_k^2})\right)
+\sum_{k={K_S}+1}^{\numLev}(\tfrac{w_k}{\sqrt8}+\tfrac{4\varepsilon}{\sqrt8w_k})
+a\tfrac{w_{\numLev+1}}{\sqrt8}\right]\\
&=\hdtwo(A)\left[\sqrt\varepsilon\sum_{k=1}^{{K_S}}\left(\tfrac{1}{8c}+8c+\tfrac{1}{2c}w_k^{-2}\varepsilon\right)
+\sum_{k={K_S}+1}^{\numLev}(\tfrac{w_k}{\sqrt2}+\tfrac{4\varepsilon}{\sqrt2w_k})
+a\tfrac{w_{\numLev+1}}{\sqrt2}\right]
\end{align*}
for $\alpha=2$, $\beta=-\frac12$, $\gamma=0$, and $\delta=0$ as before.
Note that due to $h_k=0$ for $k>K_S$ the constant $c$ is here defined by \eqref{eqn:constantC} with $H=0$ and $\numLev$ replaced by $K_S$,
which does not enlarge its range, though, as long as we ensure $K_S\geq1$.
Again we still need to determine ${K_S}$ and $\numLev$, and here the heuristics are as follows.
Writing the energy contribution from the top two layers as
\begin{equation*}
\tfrac{w_{\numLev}}{\sqrt2}+\tfrac{4\varepsilon}{\sqrt2w_{\numLev}}+a\tfrac{w_{\numLev}}{2\sqrt2},
\end{equation*}
we see that the first term can be neglected in view of the third.
The minimising $w_\numLev$ thus satisfies $w_{\numLev}\sim\sqrt{\varepsilon/a}$.
Likewise, for the summands of the second sum we would prefer to have the minimising value $w_k\sim2\sqrt\varepsilon$.
This is not possible for all summands, of course, since the $w_k$ have to decrease down to $w_{\numLev}\sim\sqrt{\varepsilon/a}$,
but at least we can require $w_k\in[\sqrt\varepsilon,\sqrt{\varepsilon/a}]$ for all $k>{K_S}$ and $w_{K_S}\sim\sqrt\varepsilon$.
Thus, recalling $w_1=\tilde c\varepsilon^{\frac14}$ from \eqref{eqn:coarseWidth} we need to choose
\begin{equation*}
{K_S}=\left\lfloor\log_2(4\varepsilon^{-\frac14})\right\rfloor,\quad
\numLev={K_S}+\left\lfloor\log_2\sqrt a\right\rfloor
\end{equation*}
such that (using $\hat c=\frac{\varepsilon^{1/4}}42^{{K_S}}\in(\frac12,1]$ and $\bar c=2^{\numLev}/(2^{K_S}\sqrt a)\in(\frac12,1]$) we indeed have
\begin{equation*}
w_{K_S}=\tfrac{\tilde c}{4\hat c}\sqrt\varepsilon,\quad
w_{\numLev}=\tfrac{\tilde c}{4\hat c\bar c}\sqrt{\tfrac\varepsilon a}.
\end{equation*}
This way the energy estimate turns into
\begin{align*}
\Delta\urbPlEn^{\varepsilon,a,A}
&\leq\hdtwo(A)\left[\left(\tfrac{1}{8c}+8c\right)K\sqrt\varepsilon+\sum_{k=1}^{{K_S}}\tfrac{1}{2c}w_k^{-2}\varepsilon^{\frac32}
+\sum_{k=1}^{\numLev-{K_S}}(\tfrac{w_{K_S}}{2^k4\sqrt2}+\tfrac{16\varepsilon2^k}{\sqrt2w_{K_S}})
+\tfrac{\tilde c}{8\sqrt2\hat c\bar c}\sqrt{a\varepsilon}\right]\\
&=\hdtwo(A)\sqrt\varepsilon\left[\left(\tfrac{1}{8c}+8c\right)K+\tfrac{2}{c\tilde c^2}\sqrt\varepsilon\sum_{k=1}^{{K_S}}4^{-k}
+\sum_{k=1}^{\numLev-{K_S}}(\tfrac{\tilde c}{16\hat c2^k\sqrt2}+\tfrac{64\hat c2^k}{\sqrt2\tilde c})
+\tfrac{\tilde c}{8\sqrt2\hat c\bar c}\sqrt{a}\right]\\
&\leq\hdtwo(A)\sqrt\varepsilon\tfrac C2\left[K+\sqrt\varepsilon+2^{\numLev-{K_S}}+\sqrt{a}\right]\\
&\leq\hdtwo(A)\sqrt\varepsilon C\left[|\log_2\sqrt\varepsilon|+1+\sqrt{a}\right]\,,
\end{align*}
where all occurring constants have been subsumed in a large enough constant $C>0$.
\end{proof}

\subsubsection{Upper bound in $n$D for $n>3$}
The following two theorems treat the regimes of small $a$ and of large $a$, respectively.

\begin{theorem}[Upper bound on $\Delta\urbPlEn^{\varepsilon,a,A}$ in $n$D for small $a$]\label{thm:upBndUrbPlanHighDSmallA}
Using the notation from Section \ref{sec:introUrbPlan}, there exists a constant $C\geq0$ such that
for all $a\leq2$, $\varepsilon\leq\min\{1,\frac{\sqrt{a(a-1)}^{n+1}}{\sqrt2^{n^2-1}},\sqrt{\frac1{(2(n-1))^{n-1}}\frac{(a-1)^{n+1}}{a^{n-3}}},\hdcodimone(A)^{\frac{n+1}{n-1}}\}$ we have
\begin{equation*}
 \Delta\urbPlEn^{\varepsilon,a,A} \leq \hdcodimone(A)C\varepsilon^{\frac1{n-1}}\sqrt{a(a-1)}^{\frac{n-3}{n-1}}.
\end{equation*}
\end{theorem}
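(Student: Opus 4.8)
The plan is to run the hierarchical construction of Section~\ref{sec:upBndUrbPlan} with the parameters \eqref{eqn:parametersConstructionUrbPl}, so that only the exponents $\alpha,\beta,\gamma,\delta$, the number of layers $\numLev$, and the Wasserstein height $H$ remain to be chosen. First I would insert the elementary-cell cost from Lemma~\ref{lem:upper_bound_for_elementary_cell_urban_planning_cost_in_nD}, estimated crudely by $\urbPlMMS^{\varepsilon,a}(\chi_{x,w,h,f}^{\mathrm E})\le2^{n-1}(f+\varepsilon)l$, together with the second case of Lemma~\ref{lem:upper_bound_for_wasserstein_cell_in_3D} (applicable since $n>3$), into the general estimate \eqref{eqn:generalConstructionEstimate}. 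Using $N_k^{n-1}w_k^{n-1}=\hdcodimone(A)$, $f_k=(w_k/2)^{n-1}$, and $\varepsilon/f_k=2^{n-1}\varepsilon\,w_k^{1-n}$ this collapses to
\begin{equation*}
\Delta\urbPlEn^{\varepsilon,a,A}\le2\hdcodimone(A)\Bigg[\sum_{k=1}^{\numLev}\Bigl(\bigl(1+\tfrac{\varepsilon}{f_k}\bigr)\sqrt{\tfrac{n-1}{16}w_k^2+h_k^2}-h_k\Bigr)+\Bigl(a\sqrt{\tfrac{n-1}{4}w_{\numLev+1}^2+H^2}-H\Bigr)\Bigg],
\end{equation*}
and after using $\sqrt{1+z}\le1+\tfrac z2$ the $k$-th elementary layer is bounded by $C(n)\hdcodimone(A)\bigl(\tfrac{w_k^2}{h_k}+\tfrac{\varepsilon h_k}{w_k^{n-1}}+\tfrac{\varepsilon}{f_k}\tfrac{w_k^2}{h_k}\bigr)$ and the top layer by $C(n)\hdcodimone(A)\bigl((a-1)H+a\tfrac{w_{\numLev+1}^2}{H}\bigr)$.

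Next I would choose $\gamma=\delta=0$, $\alpha=\tfrac{n+1}2$, $\beta=-\tfrac12$, i.e. $h_k=c\,w_k^{(n+1)/2}\varepsilon^{-1/2}$, which equates $\tfrac{w_k^2}{h_k}$ and $\tfrac{\varepsilon h_k}{w_k^{n-1}}$ (both of order $\sqrt\varepsilon\,w_k^{(3-n)/2}$), and $H$ proportional to $\sqrt{\tfrac a{a-1}}\,w_{\numLev+1}$ so as to minimise $(a-1)H+C(n)a\,w_{\numLev+1}^2/H$, which gives a top-layer contribution $\lesssim\hdcodimone(A)\sqrt{a(a-1)}\,w_{\numLev+1}$. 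With this choice \eqref{eqn:coarseWidth} gives $w_1=\tilde c\,\varepsilon^{1/(n+1)}$ and \eqref{eqn:epsConstraint} becomes $\varepsilon\le\hdcodimone(A)^{\frac{n+1}{n-1}}$, the last hypothesis. The crucial observation is that for $n>3$ the exponent $(3-n)/2$ is negative, so along $w_k=2^{1-k}w_1$ the numbers $w_k^{(3-n)/2}$ form an increasing geometric-type sequence and $\sum_{k=1}^{\numLev}w_k^{(3-n)/2}\le C(n)\,w_{\numLev+1}^{(3-n)/2}$; hence, once $\varepsilon/f_k$ is known to be bounded (see below),
\begin{equation*}
\Delta\urbPlEn^{\varepsilon,a,A}\le C(n)\,\hdcodimone(A)\Bigl(\sqrt\varepsilon\,w_{\numLev+1}^{(3-n)/2}+\sqrt{a(a-1)}\,w_{\numLev+1}\Bigr).
\end{equation*}
These two terms balance at $w_{\numLev+1}\sim\bigl(\varepsilon/(a(a-1))\bigr)^{1/(n-1)}$, so I would set $\numLev=\bigl\lfloor\log_2\bigl((a(a-1))^{1/(n-1)}\varepsilon^{-2/(n^2-1)}\bigr)\bigr\rfloor$ (up to a bounded shift absorbing $\tilde c$), which via $w_{\numLev+1}=2^{-\numLev}w_1$ produces $w_{\numLev+1}\sim\varepsilon^{1/(n-1)}(a(a-1))^{-1/(n-1)}$ and therefore
\begin{equation*}
\Delta\urbPlEn^{\varepsilon,a,A}\le C(n)\,\hdcodimone(A)\,\varepsilon^{1/(n-1)}(a(a-1))^{\frac{n-3}{2(n-1)}}=C(n)\,\hdcodimone(A)\,\varepsilon^{1/(n-1)}\sqrt{a(a-1)}^{\frac{n-3}{n-1}},
\end{equation*}
as claimed.

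It then remains to verify the feasibility conditions \eqref{eqn:feasibilityConditions} and the boundedness of $\varepsilon/f_k$ used above. The smallest flux is $f_\numLev=(w_\numLev/2)^{n-1}\sim w_{\numLev+1}^{n-1}\sim\varepsilon/(a(a-1))$, so $\varepsilon/f_k\le\varepsilon/f_\numLev\sim a(a-1)\le2$ since $a\le2$; this also makes the cross term $\tfrac{\varepsilon}{f_k}\tfrac{w_k^2}{h_k}$ subordinate to $\tfrac{w_k^2}{h_k}$, as was used above. The condition $\alpha>0$ is trivial, and a short computation identifies $\numLev\ge1$ (up to constants) with $\varepsilon\le\frac{\sqrt{a(a-1)}^{n+1}}{\sqrt2^{n^2-1}}$ and $H\le\tfrac14$ with $\varepsilon\le\sqrt{\frac1{(2(n-1))^{n-1}}\frac{(a-1)^{n+1}}{a^{n-3}}}$, which are precisely the first two hypotheses; $\varepsilon\le1$ serves only to absorb constants.

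The computations themselves are routine; the genuine difficulty is the bookkeeping. Unlike in the planar and three-dimensional constructions, here the per-layer excess \emph{grows} towards the finest layer, so one must argue that the sum is governed by its single finest term rather than by the layer count $\numLev$, while simultaneously keeping $\numLev$ large enough that the Wasserstein cap is negligible and small enough that $\varepsilon/f_k$ --- and with it the cross term --- stays bounded. Matching each smallness assumption on $\varepsilon$ to exactly the feasibility condition it enforces ($\numLev\ge1$, $H\le\tfrac14$, $w_1\le\ell$) is the step demanding the most care.
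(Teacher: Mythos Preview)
Your proposal is correct and follows essentially the same approach as the paper: the same hierarchical construction with the same exponents $\alpha=\tfrac{n+1}2$, $\beta=-\tfrac12$, $\gamma=\delta=0$, the same optimised Wasserstein height $H\sim\sqrt{\tfrac a{a-1}}\,w_{\numLev+1}$, the same choice of $\numLev$ balancing the geometric sum against the top layer, and the same feasibility check (your bound on $\varepsilon/f_k$ is exactly the paper's bound on $w_k^2/h_k^2$, since both equal $\varepsilon\,w_k^{1-n}$ up to constants).
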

\begin{theorem}[Upper bound on $\Delta\urbPlEn^{\varepsilon,a,A}$ in $n$D for large $a$]\label{thm:upBndUrbPlanHighDLargeA}
There exists a constant $C\geq0$ such that
for all $a\geq2$, $\varepsilon\leq\min\{1,\hdcodimone(A)^{\frac{n+1}{n-1}}\}$ we have
\begin{equation*}
 \Delta\urbPlEn^{\varepsilon,a,A} \leq \hdcodimone(A)C\varepsilon^{\frac1{n-1}}\sqrt{a(a-1)}^{\frac{n-2}{n-1}}.
\end{equation*}
\end{theorem}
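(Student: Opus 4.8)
The plan is to reuse the construction from the proof of Theorem~\ref{thm:upper_bound_for_urban_planning_minus_wasserstein_in_3D_for_big_a}, adapted to general dimension $n>3$: the branching network reaches the boundary hyperplane within $K_S$ \emph{tapered} layers ($h_k>0$), then keeps branching \emph{flatly} inside that hyperplane through layers $K_S+1,\dots,\numLev$ ($h_k=0$), and finally discharges the mass through a flat Wasserstein layer ($H=0$). Concretely I would use the parameters \eqref{eqn:parametersConstructionUrbPl} with
\[
\gamma=\delta=0,\qquad \alpha=\tfrac{n+1}{2},\qquad \beta=-\tfrac12,
\]
so that $h_k=c\,w_k^{(n+1)/2}\varepsilon^{-1/2}$ for $k\le K_S$ and, by \eqref{eqn:coarseWidth}, $w_1=\tilde c\,\varepsilon^{1/(n+1)}$, while $h_k=0$ for $K_S<k\le\numLev$ and $H=0$; the constant $c$ is then fixed by \eqref{eqn:constantC} with $\numLev$ replaced by $K_S$. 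With these exponents the admissibility constraint \eqref{eqn:epsConstraint} becomes exactly $\varepsilon\le\hdcodimone(A)^{(n+1)/(n-1)}$, one of the hypotheses.

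The first step is to insert the cell costs from Lemmas~\ref{lem:upper_bound_for_elementary_cell_urban_planning_cost_in_nD} and \ref{lem:upper_bound_for_wasserstein_cell_in_3D} into \eqref{eqn:generalConstructionEstimate}. Using $N_k^{n-1}2^{n-1}f_k=\hdcodimone(A)$ and $N_k^{n-1}w_k^{n-1}h_k=\hdcodimone(A)h_k$, the $k$-th tapered layer contributes
\[
\hdcodimone(A)\Bigl[\bigl(1+2^{n-1}\varepsilon w_k^{-(n-1)}\bigr)\sqrt{\tfrac{n-1}{16}w_k^2+h_k^2}-h_k\Bigr],
\]
which I would estimate with $\sqrt{1+z}\le1+\tfrac z2$ (a posteriori $h_k\gtrsim w_k$ for $k\le K_S$, so this is not wasteful); the three resulting terms scale like $\varepsilon^{1/2}w_k^{(3-n)/2}$, $\varepsilon^{1/2}w_k^{(3-n)/2}$ and $\varepsilon^{3/2}w_k^{-(3n-5)/2}$, each of which is \emph{increasing} towards $k=K_S$ when $n>3$, so the corresponding sums are geometric series dominated by their last term. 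Each flat layer, after bounding $\min\{af_k,f_k+\varepsilon\}\le f_k+\varepsilon$, contributes at most $\hdcodimone(A)\tfrac{\sqrt{n-1}}{4}\bigl(w_k+2^{n-1}\varepsilon w_k^{2-n}\bigr)$, whose $\sum w_k$ part is geometric (dominated by $k=K_S+1$) and whose $\sum\varepsilon w_k^{2-n}$ part is geometric and dominated by $k=\numLev$; the top Wasserstein layer contributes $\hdcodimone(A)\,a\,\tfrac{\sqrt{n-1}}{2}\,w_{\numLev+1}$.

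Then I would fix the truncation levels to balance these contributions: $K_S$ so that $w_{K_S}\sim\varepsilon^{1/(n-1)}$ — the smallest width for which $h_{K_S}\gtrsim w_{K_S}$, so the square-root estimate is still tight there — and $\numLev$ so that $w_{\numLev}\sim(\varepsilon/a)^{1/(n-1)}$, i.e.\ $\numLev-K_S=\lfloor\tfrac1{n-1}\log_2 a\rfloor$, the width balancing $\varepsilon w_{\numLev}^{2-n}$ against $a\,w_{\numLev}$. Substituting, every surviving term is $\lesssim\hdcodimone(A)\varepsilon^{1/(n-1)}$ except the $\varepsilon w_{\numLev}^{2-n}$ and $a\,w_{\numLev+1}$ terms, which are $\sim\hdcodimone(A)a^{(n-2)/(n-1)}\varepsilon^{1/(n-1)}$; since $a\ge2$ gives $a^{(n-2)/(n-1)}\sim\sqrt{a(a-1)}^{(n-2)/(n-1)}\ge1$, this dominates and yields the claim. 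The feasibility conditions \eqref{eqn:feasibilityConditions} hold: $\alpha=(n+1)/2>0$, $H=0\le\tfrac14$, and $\numLev\ge K_S\ge1$ since $w_1=\tilde c\,\varepsilon^{1/(n+1)}\ge\varepsilon^{1/(n-1)}$ for $\varepsilon\le1$ and $\numLev-K_S\ge0$ for $a\ge2$.

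I expect the main obstacle to be the bookkeeping of the geometric series — in particular verifying that the ``third-order'' remainder $\sum_k\varepsilon^{3/2}w_k^{-(3n-5)/2}$ from the square-root expansion still only contributes at order $\varepsilon^{1/(n-1)}$ — together with the small-$K_S$ edge cases (when $\varepsilon$ is close to $1$, or $a$ lies in the intermediate range $2\le a\le2^{n-1}$ so that $\numLev-K_S$ may vanish); in the latter one discharges directly after the tapered layers, and since the resulting overshoot is only by an $n$-dependent factor it is harmless, being absorbed into the constant $C=C(n)$.
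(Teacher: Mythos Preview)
Your argument is correct, but it follows a genuinely different route from the paper's. The paper does \emph{not} generalise the three-dimensional big-$a$ construction with its flat in-boundary layers; instead it keeps the single-stage construction of Theorem~\ref{thm:upBndUrbPlanHighDSmallA} (tapered layers followed by a Wasserstein cell of positive height $H=\sqrt{\tfrac{n-1}{32}\tfrac{a}{a-1}}\,w_{\numLev+1}$) and absorbs the $a$-dependence into the \emph{heights} by choosing $\gamma=\delta=\tfrac14$. This shifts the coarsest width to $w_1\sim(\varepsilon/\sqrt{a(a-1)})^{1/(n+1)}$ and the finest to $w_{\numLev}\sim(\varepsilon/\sqrt{a(a-1)})^{1/(n-1)}$; the point of the paper's analysis is that for large $a$ the ratio $w_k^2/h_k^2$ near $k=\numLev$ is no longer small, so the previously subdominant term in \eqref{eqn:urbPlEst4D} must be balanced together with the others, and this fixes the new exponents.

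Your approach instead keeps $\gamma=\delta=0$ and splits the refinement into two phases at $w_{K_S}\sim\varepsilon^{1/(n-1)}$ and $w_{\numLev}\sim(\varepsilon/a)^{1/(n-1)}$, exactly mirroring the 3D argument. This has the appeal of a uniform construction template across dimensions (the ``flat layers inside the boundary'' trick always handles large $a$), and the bookkeeping you outline --- all geometric sums dominated by their last term since $n>3$ makes every relevant exponent of $w_k$ negative, then $a^{(n-2)/(n-1)}\sim\sqrt{a(a-1)}^{(n-2)/(n-1)}$ for $a\ge2$ --- goes through cleanly. The paper's approach buys a structurally simpler construction (no flat layers, single $\numLev$) at the price of a less transparent choice of exponents; yours buys conceptual continuity with the 3D proof at the price of an extra truncation parameter $K_S$ and the edge-case checks you mention.
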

Both theorems can be summarised in the following corollary.
\begin{corollary}[Upper bound on $\Delta\urbPlEn^{\varepsilon,a,A}$ in $n$D]
There exists a constant $C\geq0$ such that
for all $\varepsilon\leq\min\{1,\hdcodimone(A)^{\frac{n+1}{n-1}}\}$ we have
\begin{equation*}
 \Delta\urbPlEn^{\varepsilon,a,A} \leq \hdcodimone(A)\min\left\{a-1,C\varepsilon^{\frac1{n-1}}\sqrt a\sqrt{a-1}^{\frac{n-3}{n-1}}\right\}.
\end{equation*}
\end{corollary}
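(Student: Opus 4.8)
The plan is to deduce this corollary from the two preceding theorems, Theorems~\ref{thm:upBndUrbPlanHighDSmallA} and \ref{thm:upBndUrbPlanHighDLargeA}, in exactly the same spirit as Corollary~\ref{cor:upper_bound_for_min_urban_planning_minus_wasserstein_in_3D} was deduced from its two companion theorems. First I would record the trivial bound $\Delta\urbPlEn^{\varepsilon,a,A}\leq\hdcodimone(A)(a-1)$, obtained by testing with the straight vertical pattern $\chi(p,t)=(p,t)$ on $\reSpace=A$, $\reMeasure=\lebesgue^{n-1}\restr A$, for which $\urbPlEn^{\varepsilon,a,A}[\chi]=a\,\Wdone(\mu_0,\mu_1)=a\,\hdcodimone(A)$, so that the excess over $\urbPlEn^{*,a,A}=\hdcodimone(A)$ is exactly $(a-1)\hdcodimone(A)$. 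This already accounts for the first entry in the minimum and holds for all $a>1$ and all admissible $\varepsilon$ without further conditions.

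Next I would combine the two theorems. For $a\leq2$ Theorem~\ref{thm:upBndUrbPlanHighDSmallA} gives the bound $C\varepsilon^{\frac1{n-1}}\sqrt{a(a-1)}^{\frac{n-3}{n-1}}$, which since $a\sim1$ in that range is comparable to $C\varepsilon^{\frac1{n-1}}\sqrt a\,\sqrt{a-1}^{\frac{n-3}{n-1}}$ up to a dimensional constant; for $a\geq2$ Theorem~\ref{thm:upBndUrbPlanHighDLargeA} gives $C\varepsilon^{\frac1{n-1}}\sqrt{a(a-1)}^{\frac{n-2}{n-1}}$, and here $\sqrt{a-1}^{\frac{n-2}{n-1}}\leq\sqrt{a-1}\cdot\sqrt{a-1}^{-\frac1{n-1}}\le\sqrt a\cdot\sqrt{a-1}^{\frac{n-3}{n-1}}$ once one notes $\sqrt{a-1}^{\frac{n-2}{n-1}}=\sqrt{a-1}\cdot\sqrt{a-1}^{-\frac1{n-1}}$ and $\sqrt a\geq1$; thus in both regimes the right-hand side is bounded by $C'\hdcodimone(A)\,\varepsilon^{\frac1{n-1}}\sqrt a\,\sqrt{a-1}^{\frac{n-3}{n-1}}$ for a suitably enlarged constant. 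Taking the minimum with the trivial bound then yields the claimed inequality. As in Corollary~\ref{cor:upper_bound_for_min_urban_planning_minus_wasserstein_in_3D}, one also checks that wherever the hypotheses of Theorem~\ref{thm:upBndUrbPlanHighDSmallA} fail but $\varepsilon\le\min\{1,\hdcodimone(A)^{\frac{n+1}{n-1}}\}$ still holds (for instance when $\varepsilon$ exceeds the smaller thresholds involving $a$ and $n$), the term $C\varepsilon^{\frac1{n-1}}\sqrt a\,\sqrt{a-1}^{\frac{n-3}{n-1}}$ already exceeds $a-1$, so the minimum is attained by the trivial bound and the stated inequality remains valid with no extra hypotheses beyond $\varepsilon\le\min\{1,\hdcodimone(A)^{\frac{n+1}{n-1}}\}$.

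The only mildly delicate point is the last verification: one must confront each of the auxiliary smallness conditions $\varepsilon\le\frac{\sqrt{a(a-1)}^{n+1}}{\sqrt2^{n^2-1}}$ and $\varepsilon\le\sqrt{\frac1{(2(n-1))^{n-1}}\frac{(a-1)^{n+1}}{a^{n-3}}}$ from Theorem~\ref{thm:upBndUrbPlanHighDSmallA} and show that its negation forces $C\varepsilon^{\frac1{n-1}}\sqrt a\,\sqrt{a-1}^{\frac{n-3}{n-1}}\gtrsim a-1$. Raising the violated inequality to the power $\frac1{n-1}$ and rearranging produces exactly a lower bound on $\varepsilon^{\frac1{n-1}}$ of the form $\gtrsim(a-1)\big/(\sqrt a\,\sqrt{a-1}^{\frac{n-3}{n-1}})$, which is the desired comparison; this is the one place where the specific exponent bookkeeping matters, but it is purely arithmetic. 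Everything else is a direct assembly of the two theorems and the trivial bound, with all $n$-dependent factors absorbed into a single constant $C$.
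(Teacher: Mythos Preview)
Your proposal follows essentially the same route as the paper's proof: establish the trivial bound $(a-1)\hdcodimone(A)$ via the straight vertical pattern, apply Theorem~\ref{thm:upBndUrbPlanHighDLargeA} for $a\geq2$ and Theorem~\ref{thm:upBndUrbPlanHighDSmallA} for $a<2$, and then verify that whenever the auxiliary smallness conditions on $\varepsilon$ in the latter theorem are violated one already has $C\varepsilon^{\frac1{n-1}}\sqrt a\,\sqrt{a-1}^{\frac{n-3}{n-1}}\geq a-1$, so that the minimum is attained by the trivial bound. One small slip: in the $a\geq2$ comparison you drop the factor $\sqrt a^{\frac{n-2}{n-1}}$ from the left-hand side; the clean way to see $\sqrt{a(a-1)}^{\frac{n-2}{n-1}}\leq\sqrt a\,\sqrt{a-1}^{\frac{n-3}{n-1}}$ is to divide through and reduce to $(a-1)^{\frac1{n-1}}\leq a^{\frac1{n-1}}$.
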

\begin{proof}
The inequality $\Delta\urbPlEn^{\varepsilon,a,A} \leq \hdcodimone(A)(a-1)$ follows as for the two- and three-dimensional case.
Now let $a\geq1$ and $\varepsilon\leq\min\{1,\hdcodimone(A)^{\frac{n+1}{n-1}}\}$ be given.
Without loss of generality we may assume the constants $C$ in Theorems\,\ref{thm:upBndUrbPlanHighDSmallA} and \ref{thm:upBndUrbPlanHighDLargeA} to be identical
and to be larger than $\tilde C^{\frac{-1}{n-1}}$ for $\tilde C=\min\{{\sqrt2}^{1-n^2},{\sqrt{8(n-1)}}^{1-n}\}$.
If $a\geq2$, Theorem\,\ref{thm:upBndUrbPlanHighDLargeA} implies
$$\Delta\urbPlEn^{\varepsilon,a,A}\leq \hdcodimone(A)C\varepsilon^{\frac1{n-1}}\sqrt{a(a-1)}^{\frac{n-2}{n-1}} \leq \hdcodimone(A)C\varepsilon^{\frac1{n-1}}\sqrt a\sqrt{a-1}^{\frac{n-3}{n-1}}$$
so that we are done.
If $a<2$ and $\varepsilon$ satisfies the conditions in Theorem\,\ref{thm:upBndUrbPlanHighDSmallA}, then this theorem implies 
$$\Delta\urbPlEn^{\varepsilon,a,A}\leq \hdcodimone(A)C\varepsilon^{\frac1{n-1}}\sqrt{a(a-1)}^{\frac{n-3}{n-1}} \leq \hdcodimone(A)C\varepsilon^{\frac1{n-1}}\sqrt a\sqrt{a-1}^{\frac{n-3}{n-1}},$$
again finishing the proof.
Finally, if $a<2$, but the conditions on $\varepsilon$ from Theorem\,\ref{thm:upBndUrbPlanHighDSmallA} are violated,
then in particular we have $\varepsilon>\tilde C\sqrt{a(a-1)}^{n+1}$.
However, this implies $C\varepsilon^{\frac1{n-1}}\sqrt a\sqrt{a-1}^{\frac{n-3}{n-1}}\geq(a-1)$
so that the inequality to be proven reduces to $\Delta\urbPlEn^{\varepsilon,a,A} \leq \hdcodimone(A)(a-1)$,
which concludes the proof.
\end{proof}

\begin{proof}[Proof of Theorem\,\ref{thm:upBndUrbPlanHighDSmallA}]
In analogy to the proof of Theorem \ref{thm:upper_bound_for_urban_planning_minus_wasserstein_in_3D} we obtain
\begin{align}
\Delta\urbPlEn^{\varepsilon,a,A}
&\leq2\Bigg[\sum_{k=1}^{\numLev}N_k^{n-1}\left(2^{n-1}(f_k+\varepsilon)\sqrt{\tfrac{n-1}{16}w_k^2+h_k^2}-w_k^{n-1}h_k\right)\nonumber\\
&\hspace{10ex}+N_{\numLev+1}^{n-1}\left(2^{n-1}af_{\numLev+1}\sqrt{\tfrac{n-1}{16}w_{\numLev+1}^2+H^2}-w_{\numLev+1}^{n-1}H\right)\Bigg]\nonumber\\
&\leq\hdcodimone(A)2\Bigg[\sum_{k=1}^{\numLev}\left(\tfrac{(n-1)w_k^2}{32h_k}+\varepsilon\tfrac{h_k}{w_k^{n-1}}2^{n-1}(1+\tfrac{(n-1)w_k^2}{32h_k^2})\right)
+\left(H(a-1)+a\tfrac{(n-1)w_{\numLev+1}^2}{32H}\right)\Bigg]\nonumber\\
&=\hdcodimone(A)2\Bigg[\sum_{k=1}^{\numLev}\Big(\tfrac{(n-1)w_k^{2-\alpha}}{32c}a^{-\gamma}(a-1)^{-\delta}\varepsilon^{-\beta}+2^{n-1}cw_k^{\alpha-n+1}a^\gamma(a-1)^\delta\varepsilon^{1+\beta}(1+\tfrac{(n-1)w_k^2}{32h_k^2})\Big)\nonumber\\
&\hspace{10ex}+\sqrt{\tfrac{n-1}{8}a(a-1)}w_{\numLev+1}\Bigg]\,,\label{eqn:urbPlEst4D}
\end{align}
where we have chosen the minimising
\begin{equation*}
H=\sqrt{\tfrac{n-1}{32}\tfrac{a}{a-1}}w_{\numLev+1}
\end{equation*}
(for which we still have to verify $H\leq\frac14$).
Next we choose the exponents
\begin{equation*}
\alpha=\tfrac{n+1}2,\qquad
\beta=-\tfrac12,\qquad
\gamma=\delta=0
\end{equation*}
to balance all the terms in the sum, resulting in
\begin{equation*}
\Delta\urbPlEn^{\varepsilon,a,A}
\leq\hdcodimone(A)2\Bigg[\sum_{k=1}^{\numLev}w_k^{\frac{3-n}2}\sqrt\varepsilon\left(\tfrac{(n-1)}{32c}+2^{n-1}c(1+\tfrac{(n-1)w_k^2}{32h_k^2})\right)+\sqrt{\tfrac{n-1}{8}a(a-1)}w_{\numLev+1}\Bigg].
\end{equation*}
Note that by \eqref{eqn:coarseWidth} and \eqref{eqn:epsConstraint} this results in $w_1=\tilde c\sqrt[n+1]\varepsilon$ as well as the constraint $\varepsilon\leq\hdcodimone(A)^{\frac{n+1}{n-1}}$.

To fix $\numLev$ the basic idea is as follows.
Let us assume $h_k\geq w_k$ so that the term of the form $\frac{w_k^2}{h_k^2}$ can be neglected in our estimate.
In the above sum the $w_k$ occur with negative powers so that the sum evaluates to $\sim\sqrt\varepsilon w_{\numLev+1}^{\frac{3-n}2}$.
Balancing this with the term of the form $\sqrt{a(a-1)}w_{\numLev+1}$ we should have $w_{\numLev+1}\sim\sqrt{n-1}{\frac\varepsilon{a(a-1)}}$.
Thus we choose
\begin{equation*}
\numLev=\left\lfloor\log_2\left((a(a-1))^{\frac1{n-1}}\varepsilon^{-\frac2{n^2-1}}\right)\right\rfloor,
\end{equation*}
which due to our assumptions on $\varepsilon$ is strictly positive and thus admissible (the square brackets again denote the integer part).

With this choice (and using $\hat c=(a(a-1))^{-\frac1{n-1}}\varepsilon^{\frac2{n^2-1}}2^K\in(\frac12,1]$) we have
\begin{equation*}
w_{\numLev+1}=\frac{w_1}{2^\numLev}=\frac{\tilde c}{\hat c}\left(\frac\varepsilon{a(a-1)}\right)^{\frac1{n-1}}\text{ and }
H=\frac{\tilde c}{\hat c}\sqrt{\tfrac{n-1}{32}\tfrac{a}{a-1}}\left(\frac\varepsilon{a(a-1)}\right)^{\frac1{n-1}},
\end{equation*}
which indeed satisfies $H\leq\frac14$ due to our constraints on $\varepsilon$.
Thus, all conditions \eqref{eqn:feasibilityConditions} are fulfilled.

Note that $\frac{w_k^2}{h_k^2}\leq\frac{w_{\numLev+1}^2}{h_{\numLev+1}^2}=\frac1{c^2}(\frac{\hat c}{\tilde c})^{n-1}a(a-1)\leq\frac{2^n}{c^2}$
so that the energy estimate becomes
\begin{align*}
\Delta\urbPlEn^{\varepsilon,a,A}
&\leq\hdcodimone(A)2\Bigg[\sum_{k=1}^{\numLev}w_k^{\frac{3-n}2}\sqrt\varepsilon\left(\tfrac{(n-1)}{32c}+2^{n-1}c(1+\tfrac{(n-1)2^{n-5}}{c^2})\right)
+\sqrt{\tfrac{n-1}{8}a(a-1)}w_{\numLev+1}\Bigg]\\
&\leq\hdcodimone(A)\tfrac C2\left[w_{\numLev+1}^{\frac{3-n}2}\sqrt\varepsilon+\sqrt{a(a-1)}w_{\numLev+1}\right]\\
&\leq\hdcodimone(A)C\varepsilon^{\frac1{n-1}}(a(a-1))^{\frac{n-3}{2(n-1)}}
\end{align*}
for some $C>0$.
\end{proof}

\begin{proof}[Proof of Theorem\,\ref{thm:upBndUrbPlanHighDLargeA}]
For large $a$ the previous energy estimate is no longer correct since it is based on $h_k\gtrsim w_k$, which the construction will not always satisfy for large $a$.
This indicates that the term involving $\frac{w_k^2}{h_k^2}$ in \eqref{eqn:urbPlEst4D} will become dominant.
As before, the summands in \eqref{eqn:urbPlEst4D} will be negligible except for the $\numLev$\textsuperscript{th} term so that the energy behaves up to constant factors like
\begin{multline*}
\Delta\urbPlEn^{\varepsilon,a,A}
\sim\hdcodimone(A)\Bigg[\Big(w_\numLev^{2-\alpha}a^{-\gamma}(a-1)^{-\delta}\varepsilon^{-\beta}
+w_\numLev^{\alpha-n+1}a^\gamma(a-1)^\delta\varepsilon^{1+\beta}(1+\tfrac{w_\numLev^2}{h_\numLev^2})\Big)
+\sqrt{a(a-1)}w_{\numLev}\Bigg].
\end{multline*}
Let us assume the first term to be negligible.
If we choose optimal parameters, the remaining three terms should balance,
that is, we should have $1\sim\frac{w_\numLev^2}{h_\numLev^2}$ as well as $w_\numLev^{\alpha-n+1}a^\gamma(a-1)^\delta\varepsilon^{1+\beta}\sim\sqrt{a(a-1)}w_{\numLev}$.
The former results in $w_\numLev\sim h_\numLev$ or equivalently
$$w_\numLev\sim a^{\frac\gamma{1-\alpha}}(a-1)^{\frac\delta{1-\alpha}}\varepsilon^{\frac\beta{1-\alpha}}$$
while the latter is equivalent to
$$w_\numLev\sim\left(a^{\frac12-\gamma}(a-1)^{\frac12-\delta}\varepsilon^{-1-\beta}\right)^{\frac1{\alpha-n}}.$$
Equating both scalings and picking $\alpha$ as in the previous proof we obtain the exponents
\begin{equation*}
\alpha=\tfrac{n+1}2,\qquad
\beta=-\tfrac12,\qquad
\gamma=\delta=\tfrac14.
\end{equation*}
The above scaling of $w_\numLev$ together with $w_\numLev=w_12^{1-\numLev}=\tilde c\big(\frac\varepsilon{\sqrt{a(a-1)}}\big)^{\frac1{n+1}}2^{1-\numLev}$ now suggests to choose
\begin{equation*}
\numLev=\left\lfloor\log_2\left(2\tilde c\sqrt{n-1}(a(a-1))^{\frac1{n^2-1}}\varepsilon^{-\frac{2}{n^2-1}}\right)\right\rfloor\geq1.
\end{equation*}
For some constant $\hat c\in(\frac12,1]$ this then results in
\begin{equation*}
w_{\numLev+1}=\tfrac{1}{2\hat c\sqrt{n-1}}\Big(\tfrac\varepsilon{\sqrt{a(a-1)}}\Big)^{\frac1{n-1}}
\text{ and }
H=\tfrac{1}{\hat c\sqrt{128}}\sqrt{\tfrac a{a-1}}\Big(\tfrac\varepsilon{\sqrt{a(a-1)}}\Big)^{\frac1{n-1}}\leq\tfrac14,
\end{equation*}
satisfying \eqref{eqn:feasibilityConditions}.
Finally, the energy estimate \eqref{eqn:urbPlEst4D} turns into
\begin{equation*}
\Delta\urbPlEn^{\varepsilon,a,A}
\leq\hdcodimone(A)C\sqrt{a(a-1)}^{\frac{n-2}{n-1}}\varepsilon^{\frac1{n-1}}
\end{equation*}
for some constant $C>0$.
\end{proof}

\begin{remark}
It is a straightforward exercise to show that in the proof of Theorem\,\ref{thm:upBndUrbPlanHighDSmallA} we could have equally well used any exponent $\alpha\in(2,n-1)$
as well as $\beta=\frac{1-\alpha}{n-1}$, $\gamma=\delta=\frac{\alpha-1}{n-1}-\frac12$, and $\numLev\sim\log_2\left((a(a-1))^{\frac{n+1}{2\alpha(n-1)}}\varepsilon^{-\frac1{\alpha(n-1)}}\right)$.
Likewise, in the proof of Theorem\,\ref{thm:upBndUrbPlanHighDLargeA} one could also choose
$\alpha\in(2,n-1)$, $\beta=\frac{1-\alpha}{n-1}$, $\gamma=\delta=-\frac\beta2$, and $\numLev\sim\log_2\left((a(a-1))^{\frac1{2\alpha(n-1)}}\varepsilon^{-\frac1{\alpha(n-1)}}\right)$.
This tells us that the energy scaling does not give highly precise information about how optimal geometries and irrigation patterns look like,
since there are multiple rather different geometries with the same energy scaling.
\end{remark}

\subsection{Upper bound for optimal branched transport energy $\brTptEn^{\varepsilon,A}$}\label{sec:upBndBrnchTrpt}
Here the construction of an irrigation pattern with the desired energy scaling is similar to the urban planning case,
however, this time the mass cannot travel outside the pipe network.
This implies that no Wasserstein cells can be used and instead the pipe network has to refine infinitely.
Again we will assume $A=[0,\ell]^{n-1}$ without loss of generality.

\begin{theorem}[Upper bound on $\Delta\brTptEn^{\varepsilon,A}$]
Using the notation from Section \ref{subsec:branched_transport}, there exists $C>0$ independent of $\varepsilon,A$ such that for all $\varepsilon<\min\{\frac1{2(n-1)},\hdcodimone(A)^{\frac2{n-1}}\}$ we have
\begin{displaymath}
 \Delta\brTptEn^{\varepsilon,A} \leq \hdcodimone(A) C \varepsilon|\log\varepsilon|.
\end{displaymath}
\end{theorem}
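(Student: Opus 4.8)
The plan is to mimic the layered branching construction of Section~\ref{sec:upBndUrbPlan}, but with \emph{infinitely} many layers and no Wasserstein top layer: for branched transport a fibre along which $m_\chi$ vanishes has infinite cost, so the pipe network must subdivide all the way down to $\mu_1$. Concretely I would use the parameters of \eqref{eqn:parametersConstructionUrbPl} on $A=[0,\ell]^{n-1}$ with $\ell=\hdcodimone(A)^{1/(n-1)}$, dropping the $a$-dependence ($\gamma=\delta=0$) and taking $\alpha=1$, $\beta=-\tfrac12$, so $w_k=2^{1-k}w_1$, $N_k=\ell/w_k$, $f_k=(w_k/2)^{n-1}$, $h_k=cw_k\varepsilon^{-1/2}$, with $w_1=\tilde c\sqrt\varepsilon$ as in \eqref{eqn:coarseWidth} (so that the constraint \eqref{eqn:epsConstraint} becomes $\varepsilon\le\hdcodimone(A)^{2/(n-1)}$) and $c$ fixed by $2\sum_{k\ge1}h_k=1$; since $\sum_k w_k=2w_1$ this forces $c=\tfrac1{4\tilde c}\sim1$. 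Each layer is $\chi_k=\coprod_i\chi_{x_{k,i},w_k,h_k,f_k}^{\mathrm E}$, the layers are glued in the (infinite) series $\chi_u=\chi_1\circ_\Id\chi_2\circ_\Id\cdots$ after the identification $T_k$ of reference spaces from Section~\ref{sec:upBndUrbPlan} (equivalently, $\chi_u(p,\cdot)$ is the concatenated path of $p\in A$), and $\chi=\chi_l\circ_\Id\chi_u$ with $\chi_l$ the reflection of $\chi_u$ across $\{x_n=\tfrac12\}$.

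First I would verify admissibility. In layer $k$ a fibre runs along one branch of length $l_k=\sqrt{\tfrac{n-1}{16}w_k^2+h_k^2}\le h_k+\tfrac{\sqrt{n-1}}4w_k$, and $\sum_k l_k\le\sum_k h_k+\tfrac{\sqrt{n-1}}4\sum_k w_k<\infty$, so after the constant-speed reparameterisation of Proposition~\ref{prop:reparameterised_patterns_have_the_same_cost} every $\chi_p$ is Lipschitz; the nested dyadic cells along a fibre shrink to a point, so $\chi_p(1)=(\phi(p),1)$ for a map $\phi\colon A\to A$. Writing $\phi$ as the a.e.\ limit of the step maps sending $p$ to the centre of the level-$(m+1)$ cell it enters after layer $m$, one checks that the pushforwards of $\lebesgue^{n-1}\restr A$ under these step maps are the uniform atomic measures on finer and finer dyadic grids of $A$, which converge weak-$*$ to $\lebesgue^{n-1}\restr A$; hence $\pushforward{\phi}{(\lebesgue^{n-1}\restr A)}=\lebesgue^{n-1}\restr A$ and therefore $\mu_+^\chi=\mu_0$, $\mu_-^\chi=\mu_1$. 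The hypothesis $\varepsilon<\tfrac1{2(n-1)}$ is used only to keep the exponent $1-(n-1)\varepsilon$ positive and bounded away from $0$, which makes all geometric series below converge with sum bounded in terms of $n$.

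For the energy, (the countable version of) Remark~\ref{rem:cellOpProps} and Lemma~\ref{lem:upper_bound_for_elementary_cell_urban_planning_cost_in_nD}, together with $N_k^{n-1}2^{n-1}f_k=\hdcodimone(A)$, $f_k^{1-\varepsilon}=f_k(w_k/2)^{-(n-1)\varepsilon}$ and $\brTptEn^{*,A}=\hdcodimone(A)=2\hdcodimone(A)\sum_k h_k$, give
\begin{equation*}
\Delta\brTptEn^{\varepsilon,A}\le\brTptEn^{\varepsilon,A}[\chi]-\brTptEn^{*,A}\le2\hdcodimone(A)\sum_{k\ge1}\bigl((w_k/2)^{-(n-1)\varepsilon}l_k-h_k\bigr).
\end{equation*}
Bounding $l_k\le h_k+\tfrac{n-1}{32}w_k^2/h_k$ via $\sqrt{1+z}\le1+\tfrac z2$, I split the $k$-th summand into a \emph{deviation} term $h_k\bigl((w_k/2)^{-(n-1)\varepsilon}-1\bigr)$ and a \emph{branching} term $(w_k/2)^{-(n-1)\varepsilon}\tfrac{n-1}{32}w_k^2/h_k$, and sum each (a geometric series of ratio $2^{-(1-(n-1)\varepsilon)}$) in closed form. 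The branching series is a bounded-in-$n$ multiple of $w_1^{2-(n-1)\varepsilon}=\varepsilon\,\tilde c^{2-(n-1)\varepsilon}\,\varepsilon^{-(n-1)\varepsilon/2}$, and $\varepsilon^{-(n-1)\varepsilon/2}=e^{(n-1)\varepsilon|\log\varepsilon|/2}\le e^{(n-1)/(2e)}$ since $\varepsilon|\log\varepsilon|\le\tfrac1e$, so it is $\le C\varepsilon$. The deviation series equals $\tfrac12\bigl(\tfrac{(2/w_1)^{(n-1)\varepsilon}}{2-2^{(n-1)\varepsilon}}-1\bigr)$; here $2-2^{(n-1)\varepsilon}\ge2-\sqrt2$ and $2^{(n-1)\varepsilon}-1\le C\varepsilon$, while $\log(2/w_1)=\log(2/\tilde c)+\tfrac12|\log\varepsilon|\le C|\log\varepsilon|$ makes the exponent $(n-1)\varepsilon\log(2/w_1)\le C\varepsilon|\log\varepsilon|$ bounded, whence $(2/w_1)^{(n-1)\varepsilon}=e^{(n-1)\varepsilon\log(2/w_1)}\le1+C\varepsilon|\log\varepsilon|$ and the deviation series is $\le C\varepsilon|\log\varepsilon|$. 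Adding the two parts yields $\Delta\brTptEn^{\varepsilon,A}\le C\hdcodimone(A)\varepsilon|\log\varepsilon|$.

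The $|\log\varepsilon|$ is intrinsic here: the branching cost scales like $w_1^2$ and the deviation cost like $\varepsilon\log(1/w_1)$, and the balance $w_1\sim\sqrt\varepsilon$ turns both into $\varepsilon|\log\varepsilon|$. I expect the only genuine difficulty to be making the countably infinite series of patterns precise and confirming $\mu_-^\chi=\mu_1$ exactly (the measure-preserving limit map $\phi$); the estimates themselves are routine provided one keeps track of the fact that every exponential appearing, namely $(2/w_1)^{(n-1)\varepsilon}$, $2^{(n-1)\varepsilon}$ and $\varepsilon^{-(n-1)\varepsilon/2}$, stays bounded on the admissible range of $\varepsilon$, legitimising the linearisations $e^x-1\lesssim x$.
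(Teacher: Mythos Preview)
Your argument is correct, but your construction differs from the paper's in one interesting respect. You take $\alpha=1$, $\beta=-\tfrac12$, so that $h_k=c\,w_k\varepsilon^{-1/2}$: every layer has the same (large) aspect ratio $h_k/w_k\sim\varepsilon^{-1/2}$, and you branch through infinitely many positive-height layers all the way to $\{x_n=1\}$. The paper instead chooses $\alpha=2$, $\beta=-1$, so that $h_k=c\,w_k^2\varepsilon^{-1}$ for $k\le K$ and $h_k=0$ for $k>K$, where $K\sim\tfrac12\log_2(1/\varepsilon)$ is selected so that $w_K\sim\varepsilon$ and $h_K\sim w_K$; beyond level $K$ the branching continues horizontally inside the boundary plane. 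Both choices give the same coarse width $w_1\sim\sqrt\varepsilon$ and the same final bound.

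What each buys: your $\alpha=1$ construction avoids the two-regime split altogether, so the estimate collapses to two geometric series (your ``deviation'' and ``branching'' sums) with no separate treatment of the in-plane tail; this is tidier. On the other hand, the paper's $\alpha=2$ choice keeps $h_k\gtrsim w_k$ on the positive-height layers, matching the heuristic used throughout Section~\ref{sec:upperBound} and making the branched-transport construction a direct analogue of the urban-planning ones. One incidental simplification you could note: with your cell identification $T_k$ the endpoint map $\phi$ is actually the identity (the centre of the level-$k$ cell containing $p$ converges to $p$), so $\mu_-^{\chi}=\mu_1$ follows without the weak-$*$ argument.
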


\begin{proof}
Again, the width, number, and flux of elementary cells in the $k$\textsuperscript{th} layer are specified as
\begin{equation*}
w_k=2^{1-k}w_1,\quad
N_k=\tfrac{\ell}{w_k},\quad
f_k=\left(\tfrac{w_k}2\right)^{n-1}.
\end{equation*}
Furthermore, letting $i\in\{1,\ldots,N_k\}^{n-1}$ a multiindex, the height of an elementary cell and the $i$\textsuperscript{th} base point shall be
\begin{equation*}
h_k=\begin{cases}cw_k^\alpha\varepsilon^\beta,&k\leq\numLev,\\0,&k>\numLev,\end{cases}\quad
x_{k,i}=\left((i_1-\tfrac{1}{2})w_k,\ldots,(i_{n-1}-\tfrac12)w_k,\tfrac{1}{2} + \sum_{j=1}^{k-1} h_j\right)
\end{equation*}
for some constants $c,\alpha,\beta\in\R$, $K\in\N$ to be determined.
As in urban planning, the irrigation pattern of the $k$\textsuperscript{th} layer is given by
\begin{equation*}
 \chi_k = \coprod_{i\in\{1,\ldots,N_k\}^{n-1}} \chi_{x_{k,i},w_k,h_k,f_k}^{\mathrm E},
\end{equation*}
and these patterns are again compatible to be composed in series according to
\begin{equation*}
\chi_u=((\ldots((\chi_1\circ_\Id\chi_2)\circ_\Id\ldots\circ_\Id\chi_\numLev)\circ_\Id\chi_{\numLev+1})\circ_\Id\chi_{\numLev+2})\circ_\Id\ldots.
\end{equation*}
In contrast to urban planning this is an infinite series of patterns, but it is straightforward to see that it is actually well-defined (using, for example, that the lengths of the fibres converge).
As in the urban planning case, the full irrigation pattern is then defined as
\begin{equation*}
\chi=\chi_l\circ_\Id\chi_u,
\end{equation*}
where $\chi_l$ describes the lower half and is constructed analogously to $\chi_u$.

Since the total height of the construction is $1$ we have
\begin{equation*}
1=2\sum_{k=1}^\infty h_k
=2\sum_{k=1}^\numLev cw_k^\alpha\varepsilon^\beta
=2c(2w_1)^\alpha\varepsilon^\beta\sum_{k=1}^\numLev2^{-k\alpha}
=2cw_1^\alpha\varepsilon^\beta\frac{1-2^{-\numLev\alpha}}{1-2^{-\alpha}},
\end{equation*}
which implies that we should choose
\begin{equation*}
w_1=\tilde c\varepsilon^{-\frac\beta\alpha},\qquad
c=\tfrac1{2\tilde c^\alpha}\tfrac{1-2^{-\alpha}}{1-2^{-\numLev\alpha}}\in[\tfrac{1-2^{-\alpha}}2,2^{\alpha-1}],
\end{equation*}
where we have used $\numLev\geq1$, $\alpha>0$ (which will be verified later)
and where $\tilde c\in(\tfrac12,1]$ is such that the total geometry width $\ell$ is a multiple of $w_1$.
Note that this requires $\varepsilon\leq\ell^{-\frac\alpha\beta}=\hdcodimone(A)^{-\frac\alpha{\beta(n-1)}}$.

Next let us specify $\numLev$.
The idea behind taking $h_k=0$ for all $k>\numLev$ is to separate all elementary cell layers
into those for which a substantial vertical mass flux takes place and those for which the vertical mass flux can be neglected compared to the horizontal one.
Thus we should have $h_k\gtrsim w_k$ for $k\leq\numLev$ and $h_\numLev\sim w_\numLev$ or equivalently $w_\numLev\sim\varepsilon^{\frac\beta{1-\alpha}}$.
Due to $w_\numLev=w_12^{1-\numLev}=\tilde c\varepsilon^{-\frac\beta\alpha}2^{1-\numLev}$ this suggest to fix
\begin{equation*}
\numLev=1+\left\lfloor\log_2\left(2\tilde c\varepsilon^{\frac\beta{\alpha(\alpha-1)}}\right)\right\rfloor
\text{ and thus }
w_\numLev=\tfrac1{2\hat c}\varepsilon^{-\frac\beta{\alpha-1}}
\end{equation*}
for some $\hat c\in(\frac12,1]$, where we have $\numLev\geq1$ as long as $\alpha>1$ and $\beta\leq0$.

It remains to specify $\alpha$ and $\beta$ and to estimate the energy of $\chi$.
As in the urban planning case we exploit the fact
\begin{displaymath}
 \brTptEn^{*,A}=\hdone(A) = 2\sum_{k = 1}^{\numLev} N_k^{n-1} w_k^{n-1} h_k
\end{displaymath}
as well as Remark\,\ref{rem:cellOpProps} and Lemma\,\ref{lem:upper_bound_for_elementary_cell_urban_planning_cost_in_nD} to obtain
\begin{align*}
\Delta\brTptEn^{\varepsilon,A}
&=2\MMSEn^{\varepsilon}[\chi_u]-\brTptEn^{*,A}\\
&=2\bigg[\sum_{k=1}^{\numLev}\sum_{i\in\{1,\ldots,N_k\}^{n-1}}\left(\MMSEn^{\varepsilon}[\chi_{x_{k,i},w_k,h_k,f_k}^{\mathrm E}]-w_k^{n-1}h_k\right)
+\sum_{k=\numLev+1}^{\infty}\sum_{i\in\{1,\ldots,N_k\}^{n-1}}\MMSEn^{\varepsilon}[\chi_{x_{k,i},w_k,h_k,f_k}^{\mathrm E}]\bigg]\\
&\leq2\bigg[\sum_{k=1}^{\numLev}N_k^{n-1}\left(2^{n-1}(\tfrac{w_k}2)^{(n-1)(1-\varepsilon)}\sqrt{\tfrac{n-1}{16}w_k^2+h_k^2}-w_k^{n-1}h_k\right)\\&\hspace{20ex}
+\sum_{k=\numLev+1}^{\infty}N_k^{n-1}2^{n-1}(\tfrac{w_k}2)^{(n-1)(1-\varepsilon)}\tfrac{\sqrt{n-1}}{4}w_k\bigg]\\
&\leq2\hdcodimone(A)\bigg[\sum_{k=1}^{\numLev}h_k\left((\tfrac{w_k}2)^{-\varepsilon(n-1)}\sqrt{\tfrac{n-1}{16}\tfrac{w_k^2}{h_k^2}+1}-1\right)
+\sum_{k=\numLev+1}^{\infty}(\tfrac{w_k}2)^{-\varepsilon(n-1)}\tfrac{\sqrt{n-1}}{4}w_k\bigg].
\end{align*}
Note that we can estimate
\begin{multline*}
\log\left((\tfrac{w_k}2)^{-\varepsilon(n-1)}\right)
\leq\log\left((\tfrac{w_\numLev}2)^{-\varepsilon(n-1)}\right)
\leq(n-1)\varepsilon\left(\log(4\hat c)+|\tfrac\beta{\alpha-1}||\log\varepsilon|\right)
\leq\log4+(n-1)|\tfrac\beta{\alpha-1}|.
\end{multline*}
Abbreviating the right-hand side as $Z$, we now employ the inequalities
\begin{equation*}
\sqrt{1+z}\leq1+\tfrac z2\;\forall z\geq0
\qquad\text{ and }\qquad
e^{\tilde z}\leq1+\bar c\tilde z\;\forall\tilde z\in[0,Z]
\end{equation*}
for $\bar c=\frac{e^Z-1}{Z}$.
Using $z=\tfrac{n-1}{16}\tfrac{w_k^2}{h_k^2}$ and $\tilde z=\log\left((\tfrac{w_k}2)^{-\varepsilon(n-1)}\right)$, the energy estimate turns into
\begin{align*}
\Delta\brTptEn^{\varepsilon,A}
&\leq2\hdcodimone(A)\bigg[\sum_{k=1}^{\numLev}h_k\left(\left(1-\bar c\varepsilon(n-1)\log(\tfrac{w_k}2)\right)\left(\tfrac{n-1}{32}\tfrac{w_k^2}{h_k^2}+1\right)-1\right)
+\tfrac{\sqrt{n-1}}{4}\sum_{k=\numLev+1}^{\infty}(\tfrac{w_k}2)^{1-\varepsilon(n-1)}\bigg]\\
&\leq2\hdcodimone(A)\bigg[\bar c\varepsilon(n-1)\left|\log\tfrac{w_K}2\right|\sum_{k=1}^{\numLev}h_k
+\left(\tfrac{n-1}{32}+Z\right)\sum_{k=1}^{\numLev}\tfrac{w_k^2}{h_k}
+\tfrac{\sqrt{n-1}}{4}\sum_{k=1}^{\infty}(\tfrac{w_\numLev}{2^k})^{1-\varepsilon(n-1)}\bigg]\\
&=2\hdcodimone(A)\bigg[\tfrac{\bar c\varepsilon(n-1)}2\left|\log\tfrac1{4\tilde c}\varepsilon^{-\frac\beta{\alpha-1}}\right|
+\left(\tfrac{n-1}{32}+Z\right)(2w_1)^{2-\alpha}\varepsilon^{-\beta}\sum_{k=1}^{\numLev}2^{-k(2-\alpha)}\\
&\hspace{10ex}+\tfrac{\sqrt{n-1}}{4}\left(\tfrac1{2\tilde c}\varepsilon^{-\frac\beta{\alpha-1}}\right)^{1-\varepsilon(n-1)}\tfrac1{2^{1-\varepsilon(n-1)}-1}\bigg].
\end{align*}
Choosing
\begin{equation*}
\alpha=2,\qquad\beta=-1,
\end{equation*}
and letting $\tilde C,C>0$ denote constants independent of $\varepsilon,A$, this turns into
\begin{align*}
\Delta\brTptEn^{\varepsilon,A}
&\leq\hdcodimone(A)\tilde C\varepsilon[|\log\varepsilon|+K+\varepsilon^{-(n-1)\varepsilon}]
\leq\hdcodimone(A)C\varepsilon|\log\varepsilon|
\end{align*}
as desired.
\end{proof}

\section{Lower bound based on convex analysis}\label{sec:lowerBound}
In this section we prove the lower bounds from Theorems\,\ref{thm:scalingUrbPlan} and \ref{thm:scalingBrnchTrpt}.
The technique employed is based on the one briefly explained in the introduction and heavily exploits convex duality.
In particular, we will several times need the following convex duality bound on the Wasserstein distance.

\subsection{Bound on Wasserstein distance}

\begin{lemma}\label{thm:WoneBound}
Given $s,t\in\R$, let $\mu=g\hdcodimone\restr\{x_n=s\}$ for some measurable function $g:\{x_n=s\}\to[0,\m]$
and $\nu$ be a non-negative discrete measure with $\spt\nu\subset\{x_n=t\}$ and $\hd^0(\spt\nu)=N$.
Then there is a positive constant $C\equiv C(n)$ such that
\begin{equation*}
\Wdone(\mu,\nu)\geq|t-s|\|\nu\|_\fbm+C\|\nu\|_\fbm\min\{\tfrac{R^2}{|t-s|},R\}
\end{equation*}
for $R=\sqrt[n-1]{\frac{\|\nu\|_\fbm}{\m N\omega_{n-1}}}$
the radius of a single disk if $\mu$ were a uniform measure of density $\m$ on $N$ disks of equal size.
\end{lemma}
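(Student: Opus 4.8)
The plan is to bound $\Wdone(\mu,\nu)$ from below by a direct estimate on transport plans, exploiting crucially that $\nu$ is atomic (Kantorovich--Rubinstein duality, Theorem~\ref{thm:WoneDual}, is not needed). Write $\nu=\sum_{i=1}^N m_i\delta_{p_i}$ with $p_i\in\{x_n=t\}$ and $m_i>0$. For \emph{any} $T\in\Pi(\mu,\nu)$, atomicity of $\nu$ forces a decomposition $T=\sum_{i=1}^N T_i$, where $T_i$ has second marginal $m_i\delta_{p_i}$ and first marginal $\mu_i:=\pushforward{\pi_1}{T_i}$; the $\mu_i$ are non-negative with $\sum_i\mu_i=\mu$, so each $\mu_i\le\mu$, hence $\mu_i$ is concentrated on $\{x_n=s\}$ with density at most $\m$ and $\|\mu_i\|_\fbm=m_i$. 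Since no step below uses optimality of $T$, the resulting bound passes to the infimum over $\Pi(\mu,\nu)$ and thus holds for $\Wdone(\mu,\nu)$ (the case $\Pi(\mu,\nu)=\emptyset$ being trivial; we may also assume $|t-s|>0$, the degenerate case $s=t$ being analogous and in fact simpler).

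First I would record the elementary inequality $\sqrt{a^2+r^2}\ge a+c_0\min\{\tfrac{r^2}{a},r\}$, valid for all $a>0$, $r\ge0$ with a universal constant $c_0>0$ (proved by distinguishing $r\le a$ from $r>a$). Applying it with $a=|t-s|$ and $r=|x'-p_i'|$ inside $\int|x-y|\,\de T_i=\int|x-p_i|\,\de\mu_i(x)$, where $|x-p_i|^2=|x'-p_i'|^2+|t-s|^2$ for $x\in\{x_n=s\}$, and summing over $i$, one obtains
\begin{equation*}
\Wdone(\mu,\nu)\ \ge\ |t-s|\,\|\nu\|_\fbm\ +\ c_0\sum_{i=1}^N\int\min\bigl\{\tfrac{|x'-p_i'|^2}{|t-s|},\,|x'-p_i'|\bigr\}\,\de\mu_i(x).
\end{equation*}
It thus remains to bound the sum below by $C\|\nu\|_\fbm\min\{R^2/|t-s|,R\}$.

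For fixed $i$ the integrand is a non-negative, radially increasing function of $|x'-p_i'|$, while $\mu_i$ has density at most $\m$ and total mass $m_i$; by the bathtub (layer-cake) principle the $i$-th integral is minimised by taking $\mu_i$ equal to $\m$ times the indicator of the $(n-1)$-dimensional ball of radius $\rho_i:=(m_i/(\m\omega_{n-1}))^{1/(n-1)}$ in $\{x_n=s\}$ centred over $p_i$. A short computation in polar coordinates, distinguishing $\rho_i\le|t-s|$ from $\rho_i>|t-s|$, then shows that in both cases this integral is at least $\tfrac{n-1}{n+1}\,m_i\min\{\rho_i^2/|t-s|,\rho_i\}$. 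Substituting $m_i=\m\omega_{n-1}\rho_i^{n-1}$ and $v_i:=\rho_i^{n-1}$ — so that $\sum_i v_i=NR^{n-1}$ by the definition of $R$ — and using $\m\omega_{n-1}NR^{n-1}=\|\nu\|_\fbm$, the claim reduces to the purely algebraic statement
\begin{equation*}
\sum_{i=1}^N h(v_i)\ \ge\ c_n\,N\,h(R^{n-1}),\qquad h(v):=\min\bigl\{\tfrac{v^{(n+1)/(n-1)}}{|t-s|},\,v^{n/(n-1)}\bigr\},
\end{equation*}
valid for all $v_i\ge0$ with $N^{-1}\sum_i v_i=R^{n-1}$, upon noting $h(R^{n-1})=R^{n-1}\min\{R^2/|t-s|,R\}$.

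The main obstacle is this last inequality: $h$ is the \emph{minimum} of the two convex functions $v\mapsto v^{(n+1)/(n-1)}/|t-s|$ and $v\mapsto v^{n/(n-1)}$ (both with exponent $>1$ when $n\ge2$), so $h$ is not convex and Jensen does not apply to $h$ itself. I would circumvent this by splitting $\{1,\dots,N\}$ into $A=\{i:v_i\le|t-s|^{n-1}\}$ and $B=\{i:v_i>|t-s|^{n-1}\}$, the index sets on which the first, respectively the second, branch of the minimum is active, applying Jensen to the relevant convex branch separately on $A$ and on $B$, and then distinguishing whether $\sum_{i\in A}v_i\ge\tfrac12\sum_i v_i$ or $\sum_{i\in B}v_i>\tfrac12\sum_i v_i$. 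In the first case the $A$-contribution alone is $\ge2^{-q_1}N\,(R^{n-1})^{q_1}/|t-s|\ge2^{-q_1}N\,h(R^{n-1})$, using $|A|\le N$, the exponent $q_1=\tfrac{n+1}{n-1}>1$, and $h(R^{n-1})\le(R^{n-1})^{q_1}/|t-s|$; in the second case the $B$-contribution is $\ge2^{-q_2}N\,h(R^{n-1})$ with $q_2=\tfrac{n}{n-1}$. Since $q_1,q_2\le3$ for $n\ge2$, the algebraic inequality holds with $c_n=2^{-(n+1)/(n-1)}$; tracing this back through the bathtub estimate and the pointwise inequality yields the asserted bound with $C=\tfrac{n-1}{n+1}\,c_0\,2^{-(n+1)/(n-1)}\equiv C(n)$.
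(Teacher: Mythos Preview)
Your proof is correct and takes a genuinely different route from the paper's argument. The paper proceeds via Kantorovich--Rubinstein duality (Theorem~\ref{thm:WoneDual}) with the single test function $\psi(x)=\dist(x,\spt\nu)$, applies the layer-cake formula, and then uses the crude volume bound $\mu(B_{t+r}(\spt\nu))\le \m N\omega_{n-1}\big(\sqrt{(t+r)^2-t^2}\big)^{n-1}$; a change of variable and splitting into the two cases $R\le 2t$, $R>2t$ then yields explicit constants. Your argument instead works on the primal side: you decompose an arbitrary transport plan along the atoms of $\nu$, reduce each piece by the bathtub principle to the radially symmetric competitor, and then handle the non-convex $\min$ of two convex power functions by a splitting-and-Jensen argument on the two branches.

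Both arguments are short and elementary. The paper's duality approach is slicker in that the test function $\dist(\cdot,\spt\nu)$ automatically aggregates all $N$ atoms at once, so no separate algebraic inequality on the $v_i$ is needed and the constants come out more explicitly. Your approach avoids duality entirely and makes the role of the density bound $g\le\m$ very transparent through the bathtub step; it also gives a per-atom lower bound, which can be useful if one later wants finer information (as indeed happens in the branched-transport lower bound in Section~\ref{sec:lwBndBrnchTrpt}, where the paper revisits the argument atom by atom). The price you pay is the extra case analysis to circumvent the non-convexity of $h$; this works cleanly here because both branches are power functions with exponent $>1$, so Jensen on each branch separately, combined with $|A|,|B|\le N$, suffices.
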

\begin{proof}
If $\mu$ and $\nu$ have different mass, $\Wdone(\mu,\nu)=\infty$ and there is nothing to prove,
so we assume $\|\mu\|_{\fbm}=\|\nu\|_{\fbm}$.
Also, without loss of generality we shall assume $s=0$ and $t>0$.

Let us define the test function
\begin{equation*}
\psi(x)=\dist(x,\spt\nu)\,.
\end{equation*}
The dual formulation of the Wasserstein distance (Theorem\,\ref{thm:WoneDual}) yields
\begin{multline*}
\Wdone(\mu,\nu)-t\|\nu\|_\fbm
\geq\int_{\R^n}\psi\,\de(\mu-\nu)-t\|\nu\|_\fbm
=\int_{\R^n}\dist(x,\spt\nu)-t\,\de\mu
=\int_0^\infty\mu(\R^n\setminus B_{t+r}(\spt\nu))\,\de r\,,
\end{multline*}
where we employed the layer cake formula.
Note that the integrand is bounded below by
\begin{equation*}
\mu(\R^n)-\mu(B_{t+r}(\spt\nu))
\geq\max\left\{0,\|\nu\|_{\fbm}-\m N\omega_{n-1}\left(\sqrt{(t+r)^2-t^2}\right)^{n-1}\right\}
=:M(r)\,.
\end{equation*}
Introducing the new variable $\rho=\sqrt{(t+r)^2-t^2}$
and the maximum radius $R=\left(\frac{\|\nu\|_\fbm}{\m N\omega_{n-1}}\right)^{\frac1{n-1}}$ we thus obtain the estimate
\begin{multline*}
\Wdone(\mu,\nu)-t\|\nu\|_\fbm
\geq\int_0^\infty M(r)\,\de r
=\int_0^R\frac{\|\nu\|_{\fbm}-\m N\omega_{n-1}\rho^{n-1}}{\sqrt{1+(\tfrac t\rho)^2}}\,\de\rho
=\m N\omega_{n-1}\int_0^R\frac{R^{n-1}-\rho^{n-1}}{\sqrt{1+(\tfrac t\rho)^2}}\,\de\rho\,.
\end{multline*}
Now if $R\leq2t$, we have $\sqrt{1+(\frac t\rho)^2}\leq\sqrt5\frac t\rho$ for all $\rho\leq R$ so that
\begin{equation*}
\Wdone(\mu,\nu)-t\|\nu\|_\fbm
\geq\frac{\m N\omega_{n-1}}{\sqrt5t}\int_0^RR^{n-1}\rho-\rho^{n}\,\de\rho
=\frac{n-1}{\sqrt5(2n+2)}\|\nu\|_\fbm\frac{R^2}t\,,
\end{equation*}
while for $R>2t$ we obtain
\begin{equation*}
\Wdone(\mu,\nu)-t\|\nu\|_\fbm
\geq\m N\omega_{n-1}\int_{R/2}^R\frac{R^{n-1}-\rho^{n-1}}{\sqrt2}\,\de\rho
=\frac{1+(n-2)2^{n-1}}{2^n\sqrt2n}\|\nu\|_\fbm R\,.
\end{equation*}
\end{proof}

We are now in a situation to proceed with the lower bound proofs.

\subsection{Lower bound for $\urbPlEn^{\varepsilon,a,A}$}\label{sec:lwBndUrbPlan}
In this section we prove the lower bound from Theorem\,\ref{thm:scalingUrbPlan}.
We will treat the different dimensions separately, as different phenomena occur in different dimensions.
In two dimensions, the bulk of the excess energy is contributed from the centre of the domain $A\times[0,1]$,
while in dimension greater than three, the dominant part of the excess energy stems from boundary layers at $x_n=0$ and $x_n=1$.
The three-dimensional situation represents an intermediate case, where the dominant energy contribution is distributed all over $A\times[0,1]$.
In all cases the proofs follow variations of the same theme, namely the classical technique from \cite{KoMu94}.
In two dimensions, where the energy is concentrated near the centre,
we will bound the excess energy from below using the relaxed energy between $\mu_0,\mu_1$ and a generic cross-section $\{x_2=t\}$ with $t\sim\frac12$.
In three dimensions, where the energy is distributed over different layers, we will employ the same technique, only now considering various cross-sections.
In higher dimensions, where the energy is concentrated near the boundary, we will apply the technique to a cross-section near the boundary.
The estimates require lower bounds of the urban planning cost in terms of the Wasserstein distance.
In detail, for two finite Borel measures $\mu_+,\mu_-$ of equal mass and an optimal connecting network $\SigmaOpt$ we will use the bound
\begin{equation}\label{eqn:W1bound}
\min_\chi\urbPlEn^{\varepsilon,a,\mu_+,\mu_-}[\chi]
=\urbPlEn^{\varepsilon,a,\mu_+,\mu_-}[\SigmaOpt]
\geq\Wd{d_\SigmaOpt}(\mu_+,\mu_-)
\geq\Wdone(\mu_+,\mu_-)\,.
\end{equation}

In the following we denote by $\SigmaOpt$ and $\chiOpt$ the optimal network and irrigation pattern for $\urbPlEn^{\varepsilon,a,A}$, respectively,
which are related as detailed in Section\,\ref{sec:introUrbPlan}.
Furthermore, we use the following type of pattern reparameterisation.
For $s\in(0,1)$, $p\in\reSpace$ we define the \emph{section crossing time} and the \emph{section crossing point},
\begin{align}
t_p(s)&=\min\left\{t\in I\,:\,\chiOpt_p(t)\in\{x_n=s\}\right\}\,,\label{eq:crossing_time}\\
\xiOpt(p,s)&=\chiOpt_p(t_p(s))\,,\label{eq:crossing_point}
\end{align}
to be the time when particle $p$ reaches the cross-section $\{x_n=s\}$ for the first time
and its position at that time, respectively.
Both the above are well-defined since for almost all $p\in\reSpace$, $\chiOpt_p\in\AC(I)$ and $\chiOpt_p(t)\in\{x_n=t\}$ for $t=0,1$.
If $\chiOpt_p$ visits each cross-section just once, $\xiOpt_p=\xiOpt(p,\cdot)$ is indeed a reparameterisation of $\chiOpt_p(I)$, else it just reparameterises part of $\chiOpt_p(I)$.
In addition we set
\begin{align*}
\reSpace_s&=\{p\in\reSpace\,:\,\xiOpt_p(s)\in\SigmaOpt\}\,,\\
\Delta F_s&=\reMeasure(\reSpace\setminus\reSpace_s)\,,\\
\mu_s&=\pushforward{\xiOpt(\cdot,s)}{(\reMeasure\restr\reSpace_s)}\,,\\
N_s&=\hd^0(\spt\mu_s)
\end{align*}
to be the set of particles travelling through the network $\SigmaOpt$,
the amount of particles bypassing the network,
the mass distribution inside the network,
and the number of network pipes,
all at height $s$.
Note that $\reSpace_s$ is well-defined up to a null set, that $N_s$ might possibly be infinite, and that $\|\mu_s\|_\fbm = \reMeasure(\reSpace)-\Delta F_s$.

Bounds on $N_s$ and $\Delta F_s$ can be established via the following estimate.
First note that
\begin{align}
\int_I r_{\varepsilon,a}^{\chiOpt}(\chiOpt(p,t))|\dot\chiOpt(p,t)|\,\de t
 &\geq \hdone(\chiOpt_p(I))+\int_{\{\chiOpt_p(I) \cap \SigmaOpt\}} \frac{\varepsilon}{m_{\chiOpt}(x)}\,\de\hdone(x) + \int_{\{\chiOpt_p(I) \setminus \SigmaOpt\}} a-1\,\de\hdone(x)\nonumber\\
  &\geq 1+\varepsilon\int_{\{\chiOpt_p(I) \cap \SigmaOpt\}} \frac{1}{m_{\chiOpt}(x)}\,\de\hdone(x) + (a-1)\int_{\{\xiOpt_p(I) \setminus \SigmaOpt\}} \,\de\hdone(x)\,.\label{eq:inequality_one}
\end{align}
Note also that
\begin{multline}\label{eq:inequality_two}
\int_{\reSpace}\int_{\{\chiOpt_p(I) \cap \SigmaOpt\}} \frac{1}{m_{\chiOpt}(x)}\,\de\hdone(x)\,\de \reMeasure(p)
=\int_\SigmaOpt \int_{\{p \in \reSpace \ : \ x \in \chiOpt_p(I)\}} \frac{1}{m_{\chiOpt}(x)} \,\de \reMeasure(p) \,\de \hdone(x)\\
=\int_\SigmaOpt \frac{m_{\chiOpt}(x)}{m_{\chiOpt}(x)} \,\de \hdone(x)
=\hdone(\SigmaOpt)
\geq\int_0^1N_s\,\de s.
\end{multline}
We also have
\begin{multline}\label{eq:inequality_three}
\int_\reSpace\int_{\{\xiOpt_p(I) \setminus \SigmaOpt\}} \,\de\hdone(x)\,\de \reMeasure(p)
\geq\int_\reSpace\int_0^1\setchar{\R^n\setminus\SigmaOpt}(\xiOpt_p(s)) \,\de s\,\de \reMeasure(p)\\
=\int_0^1\int_{\reSpace\setminus\reSpace_s}\,\de \reMeasure(p) \,\de s
=\int_0^1\Delta F_s\,\de s\,.
\end{multline}
Integrating inequality \eqref{eq:inequality_one} over $\reSpace$ and using inequalities \eqref{eq:inequality_two} and \eqref{eq:inequality_three} we thus obtain
\begin{equation*}
\urbPlEn^{\varepsilon,a,A}[\chiOpt]
\geq\reMeasure(\reSpace)+\int_0^1\varepsilon N_s+(a-1)\Delta F_s\,\de s\,.
\end{equation*}

\subsubsection{Lower bound in 2D}
The lower bound derived here actually holds in any dimension, only it is not sharp for $n>2$.
Therefore we will keep the more general notation $n$ for the dimension.

The proof proceeds in the earlier described way:
we first state some properties of a generic cross-section
and then solve the relaxed problem below and above that cross-section, exploiting the cross-section properties.
The cross-section properties are bounds in terms of the excess energy $\Delta \urbPlEn^{\varepsilon,a,A}$.

\begin{lemma}\label{thm:crossSection2D}
There is a generic $s\in(0,1)$ such that
\begin{equation*}
N_s\leq\tfrac{\Delta \urbPlEn^{\varepsilon,a,A}}\varepsilon
\quad\text{and}\quad
\Delta F_s\leq\tfrac{\Delta \urbPlEn^{\varepsilon,a,A}}{a-1}\,.
\end{equation*}
\end{lemma}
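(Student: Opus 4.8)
The plan is to read the two bounds directly off the energy estimate established immediately before the statement, by a one-line averaging (Markov-type) argument. Recall that integrating \eqref{eq:inequality_one} over $\reSpace$ and substituting \eqref{eq:inequality_two} and \eqref{eq:inequality_three} yielded
\[
\urbPlEn^{\varepsilon,a,A}[\chiOpt]\geq\reMeasure(\reSpace)+\int_0^1\varepsilon N_s+(a-1)\Delta F_s\,\de s\,.
\]
The first step is to notice that the free term is precisely the zero-parameter energy: since $\chiOpt$ satisfies the marginal constraints, $\reMeasure(\reSpace)=\|\mu_0\|_\fbm=\hdcodimone(A)=\urbPlEn^{*,a,A}$. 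Subtracting $\urbPlEn^{*,a,A}$ from both sides turns the estimate into a bound on the excess energy,
\[
\Delta\urbPlEn^{\varepsilon,a,A}=\urbPlEn^{\varepsilon,a,A}[\chiOpt]-\urbPlEn^{*,a,A}\geq\int_0^1\varepsilon N_s+(a-1)\Delta F_s\,\de s\,.
\]

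The second step is the standard device for producing a single cross-section satisfying both inequalities at once. Since $N_s\geq0$ and $\Delta F_s\geq0$, the integrand dominates $\max\{\varepsilon N_s,(a-1)\Delta F_s\}$ pointwise, so $\int_0^1\max\{\varepsilon N_s,(a-1)\Delta F_s\}\,\de s\leq\Delta\urbPlEn^{\varepsilon,a,A}$. Consequently the set of heights $s\in(0,1)$ for which $\max\{\varepsilon N_s,(a-1)\Delta F_s\}\leq\Delta\urbPlEn^{\varepsilon,a,A}$ has positive Lebesgue measure, because otherwise the strictly larger values would force the integral above $\Delta\urbPlEn^{\varepsilon,a,A}$. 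Every $s$ in that set gives $N_s\leq\Delta\urbPlEn^{\varepsilon,a,A}/\varepsilon$ and $\Delta F_s\leq\Delta\urbPlEn^{\varepsilon,a,A}/(a-1)$ simultaneously; and since the set is non-null we may additionally choose $s$ outside the Lebesgue-null set of "bad" heights on which $\reSpace_s$, $\mu_s$ and $N_s$ are not well defined — this is the content of "generic" in the statement.

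I do not expect any genuine obstacle: the whole argument rests on the energy inequality preceding the lemma, which is already proved, together with the elementary observation $\max\{x,y\}\leq x+y$ for $x,y\geq0$, which is what allows one and the same good set of cross-sections to serve both bounds. The only points requiring a moment's care are the identification $\reMeasure(\reSpace)=\urbPlEn^{*,a,A}$ (so that the estimate is really about the excess energy $\Delta\urbPlEn^{\varepsilon,a,A}$) and the remark that the good set of $s$ has positive measure, so that the exceptional measure-zero heights can be avoided.
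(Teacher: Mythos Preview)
Your proposal is correct and follows essentially the same approach as the paper: both use the preceding estimate $\Delta\urbPlEn^{\varepsilon,a,A}\geq\int_0^1\varepsilon N_s+(a-1)\Delta F_s\,\de s$, bound the integrand below by $\max\{\varepsilon N_s,(a-1)\Delta F_s\}$, and conclude (you directly via an averaging argument, the paper equivalently by contradiction) that a positive-measure set of cross-sections satisfies both bounds.
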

\begin{proof}
For a proof by contradiction, assume that almost all cross-sections $s\in(0,1)$ satisfy
$N_s>\tfrac{\Delta \urbPlEn^{\varepsilon,a,A}}\varepsilon$
or $\Delta F_s>\tfrac{\Delta \urbPlEn^{\varepsilon,a,A}}{a-1}$.
Then
\begin{align*}
\urbPlEn^{\varepsilon,a,A}[\chiOpt]
&\geq\reMeasure(\reSpace)+\int_0^1\varepsilon N_s+(a-1)\Delta F_s\,\de s\\
&=\hdone(A)+\int_0^1\varepsilon N_s+(a-1)\Delta F_s\,\de s\\
&\geq \urbPlEn^{*,a,A}+\int_0^1 \max\{\varepsilon N_s,(a-1)\Delta F_s\}\de s\\
&>\urbPlEn^{*,a,A}+\Delta \urbPlEn^{\varepsilon,a,A}\,,
\end{align*}
the desired contradiction.
\end{proof}

\begin{proposition}
For $\varepsilon\leq1$ and $a>1$ we have
\begin{displaymath}
 \Delta \urbPlEn^{\varepsilon,a,A} \gtrsim \hdcodimone(A)\min\{\varepsilon^{\frac2{n+1}},a-1\}\,.
\end{displaymath}
\end{proposition}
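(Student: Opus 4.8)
\emph{Proposed proof.} The plan is to combine Lemma~\ref{thm:crossSection2D} (a generic cross-section crossing few pipes) with the Wasserstein lower bound of Lemma~\ref{thm:WoneBound}. First I would dispose of the trivial regime: if $\Delta\urbPlEn^{\varepsilon,a,A}\geq\tfrac12\hdcodimone(A)\min\{\varepsilon^{\frac2{n+1}},a-1\}$ there is nothing to prove, so from now on assume the reverse inequality. Pick the generic $s\in(0,1)$ of Lemma~\ref{thm:crossSection2D}, so that $N_s\leq\Delta\urbPlEn^{\varepsilon,a,A}/\varepsilon<\infty$ and $\Delta F_s\leq\Delta\urbPlEn^{\varepsilon,a,A}/(a-1)<\tfrac12\hdcodimone(A)$. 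Consequently $\|\mu_s\|_\fbm=\reMeasure(\reSpace)-\Delta F_s>\tfrac12\hdcodimone(A)$, and $N_s\geq1$ (if $N_s=0$ then $\Delta F_s=\reMeasure(\reSpace)=\hdcodimone(A)$, a contradiction); thus $\mu_s$ is a nonzero discrete measure with finitely many atoms, concentrated on the pipe points $\SigmaOpt\cap\{x_n=s\}$.

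Next I would set up the geometric decomposition of the cost along this cross-section. Put $\mu_0^s=\pushforward{(i_0^{\chiOpt})}{(\reMeasure\restr\reSpace_s)}$ and $\mu_1^s=\pushforward{(i_1^{\chiOpt})}{(\reMeasure\restr\reSpace_s)}$. Since $\reMeasure\restr\reSpace_s\leq\reMeasure$ and $\pushforward{(i_0^{\chiOpt})}{\reMeasure}=\mu_0$, $\pushforward{(i_1^{\chiOpt})}{\reMeasure}=\mu_1$, the measures $\mu_0^s,\mu_1^s$ are supported on $A\times\{0\}$ resp.\ $A\times\{1\}$, have density at most $1$ there, and have mass $\|\mu_s\|_\fbm$. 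Using that the cost density $r_{\varepsilon,a}^{\chiOpt}$ is at least $1$, splitting each fibre $p\in\reSpace_s$ at its first hitting time $t_p(s)$ of $\{x_n=s\}$ (see \eqref{eq:crossing_time}, \eqref{eq:crossing_point}), and keeping only the unit vertical displacement of the fibres $p\notin\reSpace_s$,
\[
\urbPlEn^{\varepsilon,a,A}[\chiOpt]\geq\int_{\reSpace\times I}|\dot\chiOpt_p(t)|\,\de\reMeasure(p)\,\de t\geq\int_{\reSpace_s}\big(|\chiOpt_p(0)-\xiOpt(p,s)|+|\xiOpt(p,s)-\chiOpt_p(1)|\big)\de\reMeasure(p)+\Delta F_s.
\]
The map $p\mapsto(\chiOpt_p(0),\xiOpt(p,s))$ is an admissible transport plan from $\mu_0^s$ to $\mu_s$ and $p\mapsto(\xiOpt(p,s),\chiOpt_p(1))$ one from $\mu_s$ to $\mu_1^s$, so the right-hand side is bounded below by $\Wdone(\mu_0^s,\mu_s)+\Wdone(\mu_s,\mu_1^s)+\Delta F_s$.

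Then I would feed this into Lemma~\ref{thm:WoneBound} with $\m=1$ and $N=N_s$, so that $R=(\|\mu_s\|_\fbm/(N_s\omega_{n-1}))^{1/(n-1)}$: it gives $\Wdone(\mu_0^s,\mu_s)\geq s\|\mu_s\|_\fbm$ and $\Wdone(\mu_s,\mu_1^s)\geq(1-s)\|\mu_s\|_\fbm+C\|\mu_s\|_\fbm\min\{R^2/(1-s),R\}$. Adding these, the linear parts sum to $\|\mu_s\|_\fbm$, which together with the leftover $\Delta F_s$ reconstitutes $\hdcodimone(A)=\urbPlEn^{*,a,A}$, leaving
\[
\Delta\urbPlEn^{\varepsilon,a,A}\geq C\|\mu_s\|_\fbm\min\{R^2/(1-s),R\}\geq\tfrac C2\hdcodimone(A)\min\{R^2,R\},
\]
where the last step uses $1-s\leq1$ and $\|\mu_s\|_\fbm\geq\tfrac12\hdcodimone(A)$. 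Inserting $R\geq(\hdcodimone(A)\varepsilon/(2\omega_{n-1}\Delta\urbPlEn^{\varepsilon,a,A}))^{1/(n-1)}$ from $N_s\leq\Delta\urbPlEn^{\varepsilon,a,A}/\varepsilon$, the case $R\geq1$ gives $\Delta\urbPlEn^{\varepsilon,a,A}\gtrsim\hdcodimone(A)\geq\hdcodimone(A)\min\{\varepsilon^{\frac2{n+1}},a-1\}$ since $\varepsilon\leq1$, while in the case $R<1$ the resulting inequality $\Delta\urbPlEn^{\varepsilon,a,A}\gtrsim\hdcodimone(A)\big(\hdcodimone(A)\varepsilon/\Delta\urbPlEn^{\varepsilon,a,A}\big)^{2/(n-1)}$ rearranges to $\Delta\urbPlEn^{\varepsilon,a,A}\gtrsim\hdcodimone(A)\varepsilon^{\frac2{n+1}}$, the exponent coming out as $\frac{2/(n-1)}{1+2/(n-1)}=\frac2{n+1}$; in either case the claim follows.

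The change-of-variables identification of the two couplings and the final rearrangement are routine. The point that needs care is the bookkeeping of the bypassing mass $\Delta F_s$: one must truncate to $\mu_0^s,\mu_1^s$ (not $\mu_0,\mu_1$) so that the Wasserstein distances are finite, must recover the mass defect $\Delta F_s$ from the vertical extent of the fibres in $\reSpace\setminus\reSpace_s$, and must observe that the linear Wasserstein terms plus $\Delta F_s$ match $\urbPlEn^{*,a,A}$ exactly, so that after subtracting the baseline no negative term survives and the excess energy is genuinely controlled by the concentration term $\|\mu_s\|_\fbm R^2$.
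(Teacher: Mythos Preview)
Your proof is correct and follows essentially the same approach as the paper's: pick the generic cross-section from Lemma~\ref{thm:crossSection2D}, split the cost into the parts before and after the cross-section (plus the bypassing mass), bound below by Wasserstein distances, and apply Lemma~\ref{thm:WoneBound}. The only cosmetic differences are that you dispose of the small-$(a-1)$ regime up front (so that $\|\mu_s\|_\fbm\geq\tfrac12\hdcodimone(A)$ is available immediately) and apply the concentration estimate of Lemma~\ref{thm:WoneBound} on only one side, whereas the paper applies it on both and absorbs the $\Delta F_s$ bookkeeping into the final case analysis; neither affects the scaling.
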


\begin{proof}
Let $s\in(0,1)$ be the generic cross-section of the previous lemma, and let
\begin{align}
\tilde\mu_0&=\pushforward{\chiOpt(\cdot,0)}{(\reMeasure\restr\reSpace_s)}\,,\label{eqn:measureLow}\\
\tilde\mu_1&=\pushforward{\chiOpt(\cdot,1)}{(\reMeasure\restr\reSpace_s)}\label{eqn:measureHigh}
\end{align}
be the irrigating and the irrigated measure, respectively, of all particles in $\reSpace_s$.
Abbreviating $R=\sqrt[n-1]{\frac{\|\mu_s\|_\fbm}{N_s\omega_{n-1}}}$, we have (see Remark \ref{rem:cellOpProps})
\begin{align*}
\urbPlEn^{\varepsilon,a,A}[\chiOpt]
&= \urbPlEn^{\varepsilon,a,\tilde\mu_0,\mu_s}[\chiOpt|_{\reSpace_s\times[0,s]}]+\urbPlEn^{\varepsilon,a,\mu_s,\tilde\mu_1}[\chiOpt|_{\reSpace_s\times[s,1]}]+\urbPlEn^{\varepsilon,a,\mu_0-\tilde\mu_0,\mu_1-\tilde\mu_1}[\chiOpt|_{(\reSpace\setminus\reSpace_s)\times I}]\\
&\geq \Wdone(\tilde\mu_0,\mu_s)+\Wdone(\mu_s,\tilde\mu_1)+\Wdone(\mu_0-\tilde\mu_0,\mu_1-\tilde\mu_1)\\
&\geq s\|\mu_s\|_\fbm+C\|\mu_s\|_\fbm\min\{\tfrac{R^2}{s},R\}
+(1-s)\|\mu_s\|_\fbm+C\|\mu_s\|_\fbm\min\{\tfrac{R^2}{1-s},R\}+\|\mu_0-\tilde\mu_0\|_\fbm\\
&=\|\mu_0\|_\fbm+CR\|\mu_s\|_\fbm\left(\min\{\tfrac{R}{s},1\}+\min\{\tfrac{R}{1-s},1\}\right)\\
&\geq \urbPlEn^{*,a,A}+CR\|\mu_s\|_\fbm\min\{2R,1\}\,,
\end{align*}
where we have employed \eqref{eqn:W1bound} and Lemma\,\ref{thm:WoneBound}.
Now the two cases $R>\frac12$ and $R\leq\frac12$ are treated separately.
For $R>\frac12$, the above inequality turns into
\begin{equation*}
\Delta \urbPlEn^{\varepsilon,a,A}
\geq\tfrac C2\|\mu_s\|_\fbm
=\tfrac C2(\hdcodimone(A)-\Delta F_s)
\geq\tfrac C2\hdcodimone(A)-\tfrac{C}{2(a-1)}\Delta \urbPlEn^{\varepsilon,a,A}
\end{equation*}
by virtue of Lemma\,\ref{thm:crossSection2D},
which implies
\begin{equation*}
\Delta \urbPlEn^{\varepsilon,a,A}\geq\hdcodimone(A)/(\tfrac2C+\tfrac1{a-1})\gtrsim\min\{1,a-1\}\hdcodimone(A)\,.
\end{equation*}
If $R\leq\tfrac12$, the above inequality turns into
\begin{equation*}
\Delta \urbPlEn^{\varepsilon,a,A}\geq 2CR^2\|\mu_s\|_\fbm = 2C\|\mu_s\|_\fbm^{1+\frac2{n-1}}\left(\tfrac1{N_s\omega_{n-1}}\right)^{\frac2{n-1}}
\gtrsim\|\mu_s\|_\fbm^{1+\frac2{n-1}}(\tfrac\varepsilon{\Delta \urbPlEn^{\varepsilon,a,A}})^{\frac2{n-1}}\,,
\end{equation*}
which can be solved for $\Delta \urbPlEn^{\varepsilon,a,A}$ to yield
\begin{equation*}
\Delta \urbPlEn^{\varepsilon,a,A}
\gtrsim\|\mu_s\|_\fbm\varepsilon^{\frac2{n+1}}
\geq\hdcodimone(A)\varepsilon^{\frac2{n+1}}-\tfrac1{a-1}\Delta \urbPlEn^{\varepsilon,a,A}\varepsilon^{\frac2{n+1}}
\end{equation*}
by Lemma\,\ref{thm:crossSection2D}.
This finally implies
\begin{equation*}
\Delta \urbPlEn^{\varepsilon,a,A}
\gtrsim\hdcodimone(A)/(\varepsilon^{-\frac2{n+1}}+\tfrac1{a-1})
\gtrsim\hdcodimone(A)\min\{\varepsilon^{\frac2{n+1}},a-1\}\,.
\end{equation*}
\end{proof}

\subsubsection{Lower bound in 3D}
The previous estimate in three dimensions becomes $\Delta \urbPlEn^{\varepsilon,a,A}\gtrsim\hdtwo(A)\min\{\sqrt\varepsilon,a-1\}$.
However, the upper bound and its proof suggest that we are missing an additional factor $|\log\varepsilon|$,
which comes from the fact that---when the optimal network $\SigmaOpt$ refines from the centre $x_3=\frac12$ to the boundary---every refinement layer contributes the same excess energy.
Therefore, it no longer suffices to consider a single cross-section and the excess energy induced by it,
but the previous argument has to be refined to take into account multiple cross-sections.
The following lemma characterises the different cross-sections.

\begin{lemma}\label{thm:crossSection3D}
For any $K>0$ there are $s_k\in[2^{-k-1},2^{-k}]$, $k=1,\ldots,2K$, with $s_k-s_{k+1}>2^{-k-2}$ and such that at least half of the $s_k$ satisfy
\begin{equation*}
N_{s_k}\leq4\frac{\Delta \urbPlEn^{\varepsilon,a,A}}{K\varepsilon{s_k}}
\qquad\text{and}\qquad
\Delta F_{s_k}\leq4\frac{\Delta \urbPlEn^{\varepsilon,a,A}}{K(a-1)s_k}\,.
\end{equation*}
\end{lemma}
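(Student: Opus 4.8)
The plan is to run a dyadic pigeonhole argument on the integrated energy estimate obtained earlier. Starting from the (unnumbered) bound $\urbPlEn^{\varepsilon,a,A}[\chiOpt]\geq\reMeasure(\reSpace)+\int_0^1\varepsilon N_s+(a-1)\Delta F_s\,\de s$ derived just before Lemma~\ref{thm:crossSection2D}, using $\reMeasure(\reSpace)=\hdcodimone(A)=\urbPlEn^{*,a,A}$ and the optimality of $\chiOpt$, one gets $\int_0^1 f(s)\,\de s\leq\Delta\urbPlEn^{\varepsilon,a,A}$ where $f(s):=\varepsilon N_s+(a-1)\Delta F_s\geq0$. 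From here the argument is: distribute the budget $\Delta\urbPlEn^{\varepsilon,a,A}$ over $2K$ dyadic slabs, discard the slabs carrying too much of it, and inside each surviving slab pick a point where $f$ does not exceed its slab average.

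First I would fix the dyadic slabs $J_k:=[\tfrac34\,2^{-k},\,2^{-k}]$ for $k=1,\dots,2K$; these satisfy $J_k\subset[2^{-k-1},2^{-k}]$, are pairwise disjoint, and have length $|J_k|=2^{-k-2}$. Set $\Psi_k:=\int_{J_k}f\,\de s$, so $\sum_{k=1}^{2K}\Psi_k\leq\int_0^1 f\,\de s\leq\Delta\urbPlEn^{\varepsilon,a,A}$. Call $k$ \emph{bad} if $\Psi_k>\Delta\urbPlEn^{\varepsilon,a,A}/K$: if there were $K$ or more bad indices, their $\Psi_k$ alone would sum to more than $\Delta\urbPlEn^{\varepsilon,a,A}$, a contradiction, so at most $K-1$ indices are bad and at least $K+1$ of the $2K$ indices satisfy $\Psi_k\leq\Delta\urbPlEn^{\varepsilon,a,A}/K$; in particular at least half of them do.

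For every index $k$ (good or bad) I would choose $s_k\in J_k$ with $s_k>\tfrac34\,2^{-k}$; since $s_{k+1}\in J_{k+1}$ forces $s_{k+1}\leq2^{-k-1}$, this gives $s_k-s_{k+1}>\tfrac34\,2^{-k}-2^{-k-1}=2^{-k-2}$ for every $k$, so the separation requirement holds for all consecutive pairs irrespective of which are good. For a good index $k$ I would moreover take $s_k$ in the set $G_k:=\{s\in J_k:\ f(s)\leq|J_k|^{-1}\Psi_k\}$, i.e.\ where $f$ does not exceed its average over $J_k$; this set has positive measure (otherwise $f$ would strictly exceed its own average a.e.\ on $J_k$), hence it meets $(\tfrac34\,2^{-k},2^{-k}]$ and such a choice is possible. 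For this $s_k$ one computes $s_k f(s_k)\leq s_k|J_k|^{-1}\Psi_k\leq 2^{-k}\cdot 2^{k+2}\cdot\Delta\urbPlEn^{\varepsilon,a,A}/K=4\,\Delta\urbPlEn^{\varepsilon,a,A}/K$, and since $f(s_k)\geq\varepsilon N_{s_k}$ and $f(s_k)\geq(a-1)\Delta F_{s_k}$ separately, dividing by $\varepsilon s_k$ resp.\ $(a-1)s_k$ yields exactly $N_{s_k}\leq 4\Delta\urbPlEn^{\varepsilon,a,A}/(K\varepsilon s_k)$ and $\Delta F_{s_k}\leq 4\Delta\urbPlEn^{\varepsilon,a,A}/(K(a-1)s_k)$.

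The argument is elementary and there is no serious obstacle; the two points that need care are (i) producing the constant $4$ exactly, which hinges on thresholding each slab at the \emph{average} of $f$ rather than at twice the average (a plain Markov inequality would cost a spurious factor) and on the ratio $\sup_{J_k}s\,/\,|J_k|=2^{-k}/2^{-k-2}=4$; and (ii) enforcing $s_k-s_{k+1}>2^{-k-2}$ simultaneously for good and bad indices, which is precisely why the $s_k$ are confined to the upper quarter $J_k$ of the dyadic interval and kept strictly to the right of its left endpoint. Measurability of $s\mapsto N_s$ and $s\mapsto\Delta F_s$, already implicit in the integral bound above, is used without further comment.
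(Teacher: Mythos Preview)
Your proof is correct and follows essentially the same dyadic pigeonhole argument as the paper. The paper works with the full dyadic intervals $[2^{-k-1},2^{-k}]$ and declares $k$ bad when the violating set fills at least half of that interval (then integrates $4/(Ks)$ over the upper quarter to force $|J|<K$), whereas you restrict to the upper quarters $J_k=[\tfrac34\,2^{-k},2^{-k}]$ from the outset and threshold on the slab integral $\Psi_k$; your variant is a bit cleaner in that it makes the separation $s_k-s_{k+1}>2^{-k-2}$ automatic, a point the paper leaves implicit.
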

\begin{proof}
Consider the dyadic intervals $J_k=[2^{-k-1},2^{-k}]$, $k=1,\ldots,2K$,
and denote by $J\subset\{1,\ldots,2K\}$ the set of indices $k$ for which
$N_s>4\frac{\Delta \urbPlEn^{\varepsilon,a,A}}{K\varepsilon s}$
or $\Delta F_s>4\frac{\Delta \urbPlEn^{\varepsilon,a,A}}{K(a-1)s}$
for no less than half the $s\in J_k$.
Then we estimate
\begin{equation*}
\Delta \urbPlEn^{\varepsilon,a,A}
\geq\sum_{k=1}^{2K}\int_{J_k}\varepsilon N_s+(a-1)\Delta F_s\,\de s
>\sum_{k\in J}\int_{3\cdot2^{-k-2}}^{2^{-k}}4\tfrac{\Delta \urbPlEn^{\varepsilon,a,A}}{Ks}\,\de s
=\frac{|J|}K4\log\tfrac43\Delta \urbPlEn^{\varepsilon,a,A}\,,
\end{equation*}
analogously to Lemma\,\ref{thm:crossSection2D}.
This can only be satisfied if the number $|J|$ of elements in $J$ is less than $K$.
Thus we can pick at least $K$ cross-sections $s_k\in J_k$ with the desired property.
\end{proof}

\begin{proposition}
In $n=3$ dimensions, for $\varepsilon<1$ and $a>1$ we have
\begin{displaymath}
 \Delta \urbPlEn^{\varepsilon,a,A} \gtrsim\hdtwo(A)\min\left\{\sqrt\varepsilon\left(\sqrt a+\left|\log\tfrac{a-1}{\sqrt\varepsilon}\right|\right),a-1\right\}\,.
\end{displaymath}
\end{proposition}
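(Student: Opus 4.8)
The plan is to run the Kohn--M\"uller argument simultaneously at $\sim K$ cross-sections, where $K$ is chosen of order $\sqrt a+\bigl|\log\tfrac{a-1}{\sqrt\varepsilon}\bigr|$, truncated so that $K\lesssim\bigl|\log\sqrt\varepsilon\bigr|$; the leftover regime (where the truncation is active, i.e.\ $a$ very large) is covered by the trivial estimate $\Delta\urbPlEn^{\varepsilon,a,A}\le\hdtwo(A)(a-1)$ together with a coarse-network bound $\gtrsim\hdtwo(A)\min\{1,a-1\}$ obtained exactly as the ``$R>\tfrac12$'' case of the two-dimensional proposition, and this is also what ultimately forces the outer $\min\{\cdot,a-1\}$. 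I would apply Lemma~\ref{thm:crossSection3D} with this $K$ to get at least $K$ ``good'' cross-sections $\sigma_1>\dots>\sigma_m$ ($m\ge K$) in distinct dyadic intervals, with $\sigma_{j+1}<\tfrac34\sigma_j$, $N_{\sigma_j}\le 4\Delta\urbPlEn^{\varepsilon,a,A}/(K\varepsilon\sigma_j)$ and $\Delta F_{\sigma_j}\le 4\Delta\urbPlEn^{\varepsilon,a,A}/(K(a-1)\sigma_j)$. If some $\Delta F_{\sigma_j}>\tfrac12\hdtwo(A)$ one reads off $\Delta\urbPlEn^{\varepsilon,a,A}\gtrsim K(a-1)\sigma_j\hdtwo(A)$; otherwise $\|\mu_{\sigma_j}\|_\fbm\ge\tfrac12\hdtwo(A)$ for all $j$. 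For each such $j$ I would split $\chiOpt$ as in the two-dimensional proposition into the network particles $\reSpace_{\sigma_j}$ (which move $\tilde\mu_0^{\sigma_j}\!\to\!\mu_{\sigma_j}\!\to\!\tilde\mu_1^{\sigma_j}$, cf.\ \eqref{eqn:measureLow}--\eqref{eqn:measureHigh}) and the bypassing ones, and combine \eqref{eqn:W1bound} with Lemma~\ref{thm:WoneBound} (bounded-density side a restriction of $\mu_0$, discrete side $\mu_{\sigma_j}$, distance $\sigma_j$, $R_j=\sqrt{\|\mu_{\sigma_j}\|_\fbm/(\pi N_{\sigma_j})}$). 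This shows that the cost of $\reSpace_{\sigma_j}$ in the slab $\{x_n\le\sigma_j\}$ exceeds its ideal value $\sigma_j\|\mu_{\sigma_j}\|_\fbm$ by $\gtrsim R_j\|\mu_{\sigma_j}\|_\fbm\min\{R_j/\sigma_j,1\}$, which when $R_j\le\sigma_j$ is $\gtrsim\|\mu_{\sigma_j}\|_\fbm^2/(N_{\sigma_j}\sigma_j)\gtrsim K\varepsilon\hdtwo(A)^2/\Delta\urbPlEn^{\varepsilon,a,A}$, and when $R_j>\sigma_j$ feeds the coarse-network alternative.

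The hard part --- and the step I expect to be the main obstacle --- is turning these $\sim K$ bounds, which all concern the \emph{overlapping} slabs $\{x_n\le\sigma_j\}$, into one \emph{additive} estimate $\Delta\urbPlEn^{\varepsilon,a,A}\gtrsim\sum_j(\text{excess at }\sigma_j)\gtrsim K\cdot K\varepsilon\hdtwo(A)^2/\Delta\urbPlEn^{\varepsilon,a,A}$; solving for $\Delta\urbPlEn^{\varepsilon,a,A}$ then yields the $\sqrt\varepsilon\,\bigl|\log\tfrac{a-1}{\sqrt\varepsilon}\bigr|\,\hdtwo(A)$ term. The plan is to decompose $\urbPlMMS^{\varepsilon,a}(\chiOpt)$ along the series structure (Remark~\ref{rem:cellOpProps}) into the contributions of the \emph{disjoint} layers $\{\sigma_{j+1}<x_n<\sigma_j\}$ and to show that a single layer already carries excess $\gtrsim R_j^2\|\mu_{\sigma_j}\|_\fbm/\sigma_j$: between heights $\sigma_{j+1}$ and $\sigma_j$ the number of distinct network locations drops from $N_{\sigma_{j+1}}$ to $N_{\sigma_j}$, so on a positive fraction of the transported mass a horizontal displacement comparable to $R_{\sigma_j}$ is forced \emph{inside} this layer (of thickness $\sim\sigma_j$), which, plugged into the dual/layer-cake computation underlying Lemma~\ref{thm:WoneBound} localised to the layer, produces the claimed excess. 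The subtlety is that the rearrangement forced by the drop $N_{\sigma_{j+1}}\!\to\!N_{\sigma_j}$ could in principle occur in a lower layer; this must either be excluded or circumvented by a telescoping argument estimating the excess accumulated in $\{x_n\le\sigma_j\}$ minus that in $\{x_n\le\sigma_{j+1}\}$, the latter controlled either by the budget genuinely available there or by $\sum_{j'>j}(\text{excess at }\sigma_{j'})$. One also has to handle an uneven distribution of the coarsening across the dyadic layers, for which the maintenance term $\int_0^1\varepsilon N_s\,\de s$ of the inequality $\urbPlEn^{\varepsilon,a,A}[\chiOpt]\ge\reMeasure(\reSpace)+\int_0^1\varepsilon N_s+(a-1)\Delta F_s\,\de s$ must be invoked to penalise layers in which the network stays too fine.

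Finally, the $\sqrt a$ summand is invisible to \eqref{eqn:W1bound} (which discards the weight $a$) and has to come from a separate analysis of a thin boundary layer $\{x_n\le\delta\}$ with $\delta\sim\sqrt{\varepsilon/a}$, using the finer inequality \eqref{eq:inequality_one}, which retains the factor $a-1$ for transport outside $\SigmaOpt$ and $\varepsilon/m_{\chiOpt}$ inside: the mass collected into $N_\delta$ filaments just above the boundary must spread to the uniform $\mu_0$ within height $\le\delta$, and doing so outside the network costs $a$ per unit length while doing it inside forces $m_{\chiOpt}\le\varepsilon/(a-1)$ and hence a large $\varepsilon/m_{\chiOpt}$-contribution; optimising this trade-off gives an extra $\gtrsim\sqrt\varepsilon\sqrt a\,\hdtwo(A)$. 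Adding this to the multiscale estimate and taking the minimum with the coarse-network bound $\gtrsim\hdtwo(A)\min\{1,a-1\}$ yields $\Delta\urbPlEn^{\varepsilon,a,A}\gtrsim\hdtwo(A)\min\{\sqrt\varepsilon(\sqrt a+|\log\tfrac{a-1}{\sqrt\varepsilon}|),a-1\}$.
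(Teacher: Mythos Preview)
Your overall architecture is right: multi-scale cross-sections via Lemma~\ref{thm:crossSection3D}, per-level Wasserstein excess from Lemma~\ref{thm:WoneBound}, and a separate boundary contribution for the $\sqrt a$ term. You also correctly put your finger on the real difficulty: the per-level bounds all live on \emph{overlapping} slabs $\{x_n\le\sigma_j\}$ and must be made additive. But your proposed resolution of this step is not the paper's, and as written it does not close.

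The paper does \emph{not} try to localise the coarsening argument to each dyadic layer, nor does it telescope the slab excesses $\{x_n\le\sigma_j\}$ against one another. Instead it proceeds as follows. First, it fixes a \emph{single} subset $\bar\reSpace\subset\reSpace$ of particles that (i) pass through $\SigmaOpt$ at \emph{every} selected cross-section and (ii) have horizontal displacement at most $s_k-s_{k+1}$ between consecutive cross-sections; the sets $\Lambda_k$ of particles violating (ii) are shown to have small total mass by a direct path-length estimate. Restricting to $\bar\reSpace$ lets one decompose the energy as a genuine sum over the \emph{disjoint} layers $\{s_{k+1}<x_n<s_k\}$, and the layer excess is bounded below by $\Wdone(\tilde\mu_{s_k},\tilde\mu_{s_{k+1}})-\|\tilde\mu_{s_k}\|_\fbm(s_k-s_{k+1})$. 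The crucial new ingredient is that condition (ii) forces $|x'-y'|\le|x_3-y_3|$ on the support of any admissible coupling, which upgrades this $\Wdone$-excess to $\frac{\Wd2(\tilde\mu_{s_k}',\tilde\mu_{s_{k+1}}')^2}{4(s_k-s_{k+1})}$, i.e.\ a \emph{squared} $\Wd2$-distance of the horizontal projections. Now the triangle inequality for $\Wd2$ gives $\Wd2(\tilde\mu_{s_k}',\tilde\mu_{s_{k+1}}')\ge d_k-d_{k+1}$ with $d_k:=\Wd2(\tilde\mu_{s_k}',\tilde\mu_0')$, and Lemma~\ref{thm:WoneBound} together with Jensen gives $d_k\ge d_k^*\sim\sqrt{K\varepsilon s_k}\,\hdtwo(A)/\sqrt{\Delta\urbPlEn^{\varepsilon,a,A}}$. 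The sum $\sum_k(d_k-d_{k+1})^2/(s_k-s_{k+1})$ is then minimised as a convex problem in the $d_k$ (with the minimum at $d_k=d_k^*$), yielding $\gtrsim K^2\varepsilon\hdtwo(A)^2/\Delta\urbPlEn^{\varepsilon,a,A}$, from which the inequality follows. Your proposal lacks both the $\Lambda_k$-restriction and the $\Wdone\to\Wd2^2$ conversion; without them the ``coarsening-in-a-layer'' argument you sketch cannot be made rigorous, precisely for the reason you yourself name (the coarsening may occur elsewhere), and the maintenance term does not salvage it since the $N_s$-information is already fully consumed by Lemma~\ref{thm:crossSection3D}.

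Two smaller discrepancies. The paper does not choose $K$ a priori of order $\sqrt a+|\log\tfrac{a-1}{\sqrt\varepsilon}|$; instead it fixes $K$ so that $2^{-2K}\sim\tfrac{a}{a-1}\Delta\urbPlEn^{\varepsilon,a,A}/\hdtwo(A)$, which makes the argument self-closing. And the $\sqrt a$ contribution is not obtained from a thin-layer trade-off as you suggest, but from a separate ``boundary contribution'' proposition (Section~\ref{sec:bdyContribution}) that uses a regularity result ($\chiOpt_p(I)\cap\SigmaOpt$ connected) together with an estimate on the average distance of a set of given volume to a connected set of given length; this yields $\Delta\urbPlEn^{\varepsilon,a,A}\gtrsim\hdtwo(A)\min\{a-1,\sqrt{a^2-1}^{1/2}\varepsilon^{1/2}\}$ in $n=3$, which is then combined with the multiscale bound.
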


\begin{proof}
Let us assume
\begin{equation*}
\Delta \urbPlEn^{\varepsilon,a,A}\leq2^{-13}\hdtwo(A)\min\{1,a-1\}\,.
\end{equation*}
We now consider cross-sections $\{x_3=s_k\}$, $k=1,\ldots,K$,
where $\frac12\geq s_1>\ldots>s_K\geq2^{-2K-1}$ denote the $K$ selected cross-sections from the previous lemma and $K\geq1$ is chosen such that
\begin{equation*}
128\frac a{a-1}\frac{\Delta \urbPlEn^{\varepsilon,a,A}}{\hdtwo(A)}\in[2^{-2K-2},2^{-2K-1}]\,,
\end{equation*}
which is possible since $\Delta\urbPlEn^{\varepsilon,a,A}\leq2^{-13}\hdtwo(A)\min\{a-1,1\}$.
The total amount of particles bypassing $\SigmaOpt$ on at least one of the cross-sections $\{x_3=s_k\}$, $k=1,\ldots,K$, can be bounded above by
\begin{equation*}
\sum_{k=1}^K\Delta F_{s_k}
\leq\frac{4\Delta \urbPlEn^{\varepsilon,a,A}}{K(a-1)}\sum_{k=1}^K\frac1{s_k}
\leq\frac{16\Delta \urbPlEn^{\varepsilon,a,A}}{K(a-1)s_K}
\leq\frac18\hdtwo(A)\,.
\end{equation*}
Likewise, we can bound the set $\Lambda_k\subset\reSpace$ of particles that between the cross-sections $\{x_3=s_{k+1}\}$ and $\{x_3=s_k\}$ travel horizontally by more than $s_k-s_{k+1}$.
Indeed, we have
\begin{multline*}
\urbPlEn^{\varepsilon,a,A}[\chiOpt]
=\int_{\reSpace\times I}r^{\varepsilon,a}_{\chiOpt}(\chiOpt(p,t))|\dot\chiOpt(p,t)|\,\de t\,\de\reMeasure(p)
\geq\int_\reSpace\hdone(\chiOpt_p(I))\,\de\reMeasure(p)\\
\geq\int_\reSpace1+(\sqrt2-1)(s_k-s_{k+1})\setchar{\Lambda_k}(p)\,\de\reMeasure(p)
=\reMeasure(\reSpace)+(\sqrt2-1)(s_k-s_{k+1})\reMeasure(\Lambda_k)
\end{multline*}
so that $\Delta\urbPlEn^{\varepsilon,a,A}=\urbPlEn^{\varepsilon,a,A}[\chiOpt]-\reMeasure(\reSpace)\geq(\sqrt2-1)(s_k-s_{k+1})\reMeasure(\Lambda_k)$.
Thus,
\begin{equation*}
\sum_{k=1}^K\reMeasure(\Lambda_k)
\leq\frac{\Delta \urbPlEn^{\varepsilon,a,A}}{\sqrt2-1}\sum_{k=1}^K\frac1{s_k-s_{k+1}}
\leq\frac{8\Delta \urbPlEn^{\varepsilon,a,A}}{(\sqrt2-1)s_K}
\leq\frac{\hdtwo(A)}4\,.
\end{equation*}
In the following, we will only consider particles that on each cross-section flow through $\SigmaOpt$
\emph{and} that between any $\{x_3=s_{k+1}\}$ and $\{x_3=s_k\}$ travel horizontally by no more than $s_k-s_{k+1}$,
that is, particles in
\begin{equation*}
\bar\reSpace=\bigcap_{k=1}^K(\reSpace_{s_k}\setminus\Lambda_k)
\qquad\text{with}\quad
\reMeasure(\bar\reSpace)\geq\reMeasure(\reSpace)-\sum_{k=1}^K\Delta F_{s_k}-\sum_{k=1}^K\reMeasure(\Lambda_k)\geq\frac58\hdtwo(A)\,.
\end{equation*}

Now, for $s\in[0,1]$ let
\begin{equation*}
\tilde\mu_s=\pushforward{\xiOpt(\cdot,s)}{(\reMeasure\restr\bar\reSpace)}
\end{equation*}
be the mass flux distribution on $\{x_3=s\}$ of all above selected particles.
We have $\|\tilde\mu_0\|_\fbm=\|\tilde\mu_1\|_\fbm=\|\tilde\mu_{s_1}\|_\fbm=\ldots=\|\tilde\mu_{s_K}\|_\fbm\geq\frac58\hdtwo(A)$.
The total urban planning cost can now be estimated as follows (see Remark \ref{rem:cellOpProps}),
\begin{align*}
\urbPlEn^{\varepsilon,a,A}[\chiOpt]
&\geq\inf_\chi\urbPlEn^{\varepsilon,a,\mu_0-\tilde\mu_{0},\mu_1-\tilde\mu_{1}}[\chi]
+\inf_\chi\urbPlEn^{\varepsilon,a,\tilde\mu_{0},\tilde\mu_{s_K}}[\chi]
+\sum_{k=1}^{K-1}\inf_\chi\urbPlEn^{\varepsilon,a,\tilde\mu_{s_{k+1}},\tilde\mu_{s_k}}[\chi]
+\inf_\chi\urbPlEn^{\varepsilon,a,\tilde\mu_{s_1},\tilde\mu_{1}}[\chi]\\
&\geq\Wdone(\mu_0-\tilde\mu_{0},\mu_1-\tilde\mu_{1})
+\Wdone(\tilde\mu_{0},\tilde\mu_{s_K})
+\sum_{k=1}^{K-1}\Wdone(\tilde\mu_{s_{k+1}},\tilde\mu_{s_k})
+\Wdone(\tilde\mu_{s_1},\tilde\mu_{1})
\end{align*}
so that the excess energy is bounded below by
\begin{align*}
\Delta \urbPlEn^{\varepsilon,a,A}
&=\urbPlEn^{\varepsilon,a,A}[\chiOpt]-\hdtwo(A)\\
&=\urbPlEn^{\varepsilon,a,A}[\chiOpt]-\|\mu_0-\tilde\mu_0\|_\fbm-\|\tilde\mu_0\|_\fbm\left[s_K+\sum_{k=1}^{K-1}(s_k-s_{k+1})+1-s_1\right]\\
&\geq\Wdone(\tilde\mu_{s_K},\tilde\mu_0)-\|\tilde\mu_{s_K}\|s_K+\sum_{k=1}^{K-1}\left(\Wdone(\tilde\mu_{s_k},\tilde\mu_{s_{k+1}})-\|\tilde\mu_{s_k}\|(s_k-s_{k+1})\right)\,,
\end{align*}
where in the last step we used $\Wdone(\mu_0-\tilde\mu_{0},\mu_1-\tilde\mu_{1})\geq\|\mu_0-\tilde\mu_0\|_\fbm$ as well as $\Wdone(\tilde\mu_{s_1},\tilde\mu_{1})\geq\|\tilde\mu_0\|_\fbm(1-s_1)$.
Now let $(\cdot)'$ denote the projection onto $\R^2$, that is,
\begin{equation*}
(\cdot)':\R^3\to\R^2,\,x'=(x_1,x_2)
\qquad\text{and}\qquad
\mu'=\pushforward{(\cdot)'}{\mu}
\end{equation*}
for $\mu\in\fbm(\R^3)$.
Let us define also
\begin{displaymath}
\tilde\Pi(\tilde\mu_s,\tilde\mu_t) = \left\{\mu \in \fbm(\R^3\times\R^3) \ : \ \begin{aligned}
                                                                                &\mu(A \times \R^3) = \tilde\mu_s(A),\, \mu(\R^3 \times B) = \tilde\mu_t(B) \ \text{for all}\ A,B \in \Bcal(\R^3)\\
                                                                                &\spt \mu \subseteq \{(x,y) \in \R^3 \times \R^3 \ : \ |x'-y'| \leq |x_3-y_3|\}
                                                                               \end{aligned}\right\}.
\end{displaymath}
Using the inequality
\begin{displaymath}
 \sqrt{\alpha^2+\beta^2}-\beta \geq \frac{\alpha^2}{4\beta} \qquad \text{for all} \ \frac{\alpha}{\beta} \in [0,\sqrt{8}]
\end{displaymath}
and the notation from Definition\,\ref{def:Wasserstein} we get
\begin{align*}
\Wdone(\tilde\mu_{s},\tilde\mu_{t})-\|\tilde\mu_{s}\|(s-t)
&=\inf_{\mu\in\tilde\Pi(\tilde\mu_{s},\tilde\mu_{t})} \int_{\R^3\times\R^3} |x - y|-(s-t) \,\de\mu(x,y)\\
&=\inf_{\mu\in\tilde\Pi(\tilde\mu_{s},\tilde\mu_{t})} \int_{\R^3\times\R^3}\sqrt{|x'-y'|^2+(s-t)^2}-(s-t) \,\de\mu(x,y)\\
&\geq\inf_{\mu\in\tilde\Pi(\tilde\mu_{s},\tilde\mu_{t})} \int_{\R^3\times\R^3}\frac{|x'-y'|^2}{4(s-t)} \,\de\mu(x,y)\\
&\geq\frac{\Wd2(\tilde\mu_{s}',\tilde\mu_{t}')^2}{4(s-t)}
\end{align*}
for any $s,t\in[0,1]$.
Thus, the previous estimate becomes
\begin{align*}
\Delta \urbPlEn^{\varepsilon,a,A}
&\geq\sum_{k=1}^{K-1}\frac{{\Wd2}(\tilde\mu_{s_k}',\tilde\mu_{s_{k+1}}')^2}{4(s_k-s_{k+1})}+\frac{{\Wd2}(\tilde\mu_{s_K}',\tilde\mu_0')^2}{4s_K}\\
&\geq\sum_{k=1}^{K-1}\frac{\left[{\Wd2}(\tilde\mu_{s_k}',\tilde\mu_0')-{\Wd2}(\tilde\mu_0',\tilde\mu_{s_{k+1}}')\right]^2}{4(s_k-s_{k+1})}+\frac{{\Wd2}(\tilde\mu_{s_K}',\tilde\mu_0')^2}{4s_K}\,,
\end{align*}
where we have used the triangle inequality.
Now Jensen's inequality, Lemma\,\ref{thm:WoneBound}, and Lemma\,\ref{thm:crossSection3D} imply
\begin{equation*}
{\Wd2}(\tilde\mu_{s_k}',\tilde\mu_0')^2
\geq\left(\frac{\Wdone(\tilde\mu_{s_k}',\tilde\mu_0')}{\sqrt{\|\tilde\mu_{s_k}'\|_\fbm}}\right)^2
\geq C^2\frac{\|\tilde\mu_{s_k}'\|_\fbm^2}{\omega_2N_{s_k}}
\geq\frac{25C^2K\hdtwo(A)^2\varepsilon s_k}{256\omega_2\Delta\urbPlEn^{\varepsilon,a,A}}=:(d_k^*)^2\,,
\end{equation*}
thus the bound on $\Delta\urbPlEn^{\varepsilon,a,A}$ can be expressed as
\begin{equation*}
\Delta\urbPlEn^{\varepsilon,a,A}
\geq\inf_{d_k\geq d_k^*,\,k=1,\ldots,K}\sum_{k=1}^{K-1}\frac{\left[d_k-d_{k+1}\right]^2}{4(s_k-s_{k+1})}+\frac{d_K^2}{4s_K}\,.
\end{equation*}
Using then first order optimality conditions one can check that this convex optimisation problem is minimised by the choice $d_k=d_k^*$, $k=1,\ldots,K$, so that
\begin{multline*}
\Delta \urbPlEn^{\varepsilon,a,A}
\geq\sum_{k=1}^{K-1}\frac{\left[d_k^*-d_{k+1}^*\right]^2}{4(s_k-s_{k+1})}+\frac{(d_K^*)^2}{4s_K}
\sim\frac{K\hdtwo(A)^2\varepsilon}{\Delta\urbPlEn^{\varepsilon,a,A}}\left[\sum_{k=1}^{K-1}\frac{(\sqrt{s_k}-\sqrt{s_{k+1}})^2}{s_k-s_{k+1}}+1\right]\\
=\frac{K\hdtwo(A)^2\varepsilon}{\Delta\urbPlEn^{\varepsilon,a,A}}\left[\sum_{k=1}^{K-1}\frac{1-\sqrt{\frac{s_{k+1}}{s_k}}}{1+\sqrt{\frac{s_{k+1}}{s_k}}}+1\right]
\gtrsim\frac{\hdtwo(A)^2\varepsilon}{\Delta \urbPlEn^{\varepsilon,a,A}}\left|\log\left(128\frac a{a-1}\frac{\Delta \urbPlEn^{\varepsilon,a,A}}{\hdtwo(A)}\right)\right|^2
\end{multline*}
using $2^{-K}\geq128\frac a{a-1}\frac{\Delta \urbPlEn^{\varepsilon,a,A}}{\hdtwo(A)}$.
Abbreviating $z=128\frac a{a-1}\frac{\Delta \urbPlEn^{\varepsilon,a,A}}{\hdtwo(A)}$, this inequality can be transformed into $\frac{z^2}{|\log z|^2}\gtrsim\big(\frac a{a-1}\big)^2\varepsilon$,
which can be solved for $z$ to yield $z\gtrsim\frac a{a-1}\sqrt\varepsilon\max\left\{1,|\log\frac{a\sqrt\varepsilon}{a-1}|\right\}$.
Thus,
\begin{equation*}
\Delta \urbPlEn^{\varepsilon,a,A}\gtrsim\hdtwo(A)\sqrt\varepsilon\max\left\{1,\left|\log\tfrac{a\sqrt\varepsilon}{a-1}\right|\right\}\,.
\end{equation*}
This bound was derived under the assumption that $\Delta \urbPlEn^{\varepsilon,a,A}\leq 2^{-13}\hdtwo(A)\min\{1,a-1\}$
so that together we obtain
\begin{equation*}
\Delta \urbPlEn^{\varepsilon,a,A}\gtrsim\hdtwo(A)\min\left\{\sqrt\varepsilon\left(1+\left|\log\tfrac{a-1}{\sqrt\varepsilon}\right|\right),a-1,1\right\}\,.
\end{equation*}
In addition we will derive the bound
$$\Delta \urbPlEn^{\varepsilon,a,A}\gtrsim\hdcodimone(A)\min\{\sqrt{a^2-1}^{\frac{n-2}{n-1}}\varepsilon^{\frac1{n-1}},a-1\}=\hdtwo(A)\min\{\sqrt\varepsilon\sqrt[4]{a^2-1},a-1\}$$
in Section\,\ref{sec:bdyContribution}.
Now assume, the minimum in the former bound were achieved by $1$, that is, $a-1>1$ as well as $\sqrt\varepsilon\left(1+\left|\log\tfrac{a-1}{\sqrt\varepsilon}\right|\right)>1$,
and the estimate reads $\Delta \urbPlEn^{\varepsilon,a,A}\gtrsim\hdtwo(A)$.
As long as $\sqrt\varepsilon\sqrt[4]{a^2-1}<1$, the latter bound does not overrule this estimate.
However, this can only happen if $\varepsilon\sim1$ and $a-1\sim1$ so that actually $\Delta \urbPlEn^{\varepsilon,a,A}\gtrsim\hdtwo(A)(a-1)$.
Thus, the $1$ in the former bound may be neglected, and we finally arrive at
\begin{align*}
\Delta \urbPlEn^{\varepsilon,a,A}
&\gtrsim\hdtwo(A)\max\left\{\min\left\{\sqrt\varepsilon\left(1+\left|\log\tfrac{a-1}{\sqrt\varepsilon}\right|\right\},a-1\right\},\min\left\{\sqrt\varepsilon\sqrt[4]{a^2-1},a-1\right\}\right\}\\
&\gtrsim\hdtwo(A)\min\left\{\sqrt\varepsilon\left(1+\left|\log\tfrac{a-1}{\sqrt\varepsilon}\right|+\sqrt[4]{a^2-1}\right),a-1\right\}\\
&\gtrsim\hdtwo(A)\min\left\{\sqrt\varepsilon\left(\sqrt a+\left|\log\tfrac{a-1}{\sqrt\varepsilon}\right|\right),a-1\right\}\,.
\end{align*}
\end{proof}

\subsubsection{Lower bound in $n$D for $n>3$}
In two dimensions we bounded the excess energy from below using the relaxed energy between $\mu_0$, $\mu_1$, and a generic cross-section $\{x_2=s\}$ with $s\sim\frac12$.
In three dimensions we employed the same technique, only now considering various cross-sections.
In higher dimensions, where the energy will be concentrated near the boundary, we apply the technique to a cross-section near the boundary.
The next lemma is a generalisation of Lemma\,\ref{thm:crossSection2D}.
\begin{lemma}
Let $T\in(0,1)$. There is a generic $s\in(0,T)$ such that
\begin{equation*}
N_s\leq\frac{\Delta \urbPlEn^{\varepsilon,a,A}}{\varepsilon T}
\qquad\text{and}\qquad
\Delta F_s\leq\frac{\Delta \urbPlEn^{\varepsilon,a,A}}{(a-1)T}\,.
\end{equation*}
\end{lemma}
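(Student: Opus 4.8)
The plan is to mimic the proof of Lemma~\ref{thm:crossSection2D} almost verbatim, the only new ingredient being the restriction of the relevant integral to the subinterval $(0,T)$ together with the compensating factor $1/T$ in the thresholds. First I would recall the integrated lower bound on the urban planning cost already established above, namely
\[
\urbPlEn^{\varepsilon,a,A}[\chiOpt]\geq\reMeasure(\reSpace)+\int_0^1\varepsilon N_s+(a-1)\Delta F_s\,\de s\,,
\]
obtained by integrating \eqref{eq:inequality_one} over $\reSpace$ and invoking \eqref{eq:inequality_two} and \eqref{eq:inequality_three}; recall also $\reMeasure(\reSpace)=\hdcodimone(A)=\urbPlEn^{*,a,A}$.

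Then I would argue by contradiction: suppose that for almost every $s\in(0,T)$ at least one of $N_s>\tfrac{\Delta\urbPlEn^{\varepsilon,a,A}}{\varepsilon T}$ or $\Delta F_s>\tfrac{\Delta\urbPlEn^{\varepsilon,a,A}}{(a-1)T}$ holds. Then $\max\{\varepsilon N_s,(a-1)\Delta F_s\}>\tfrac{\Delta\urbPlEn^{\varepsilon,a,A}}{T}$ for almost every $s\in(0,T)$, and since the integrand $\varepsilon N_s+(a-1)\Delta F_s$ is non-negative on all of $(0,1)$ and bounded below by this maximum on $(0,T)$,
\[
\int_0^1\varepsilon N_s+(a-1)\Delta F_s\,\de s\geq\int_0^T\max\{\varepsilon N_s,(a-1)\Delta F_s\}\,\de s>T\cdot\tfrac{\Delta\urbPlEn^{\varepsilon,a,A}}{T}=\Delta\urbPlEn^{\varepsilon,a,A}\,.
\]
Combining this with the displayed estimate gives $\urbPlEn^{\varepsilon,a,A}[\chiOpt]>\urbPlEn^{*,a,A}+\Delta\urbPlEn^{\varepsilon,a,A}$, contradicting the definition $\Delta\urbPlEn^{\varepsilon,a,A}=\urbPlEn^{\varepsilon,a,A}[\chiOpt]-\urbPlEn^{*,a,A}$. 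Hence the two assumed inequalities cannot both fail on a full-measure subset of $(0,T)$, so there is a generic $s\in(0,T)$ satisfying both desired bounds simultaneously.

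There is essentially no obstacle here; the statement is a book-keeping variant of Lemma~\ref{thm:crossSection2D}. The only point requiring a line of care is that $T<1$ forces the thresholds to be inflated by $1/T$, so that the energy contribution of the window $(0,T)$ alone still exceeds $\Delta\urbPlEn^{\varepsilon,a,A}$; this is exactly what the factor $1/T$ in the statement provides, and no estimate of the energy outside $(0,T)$ is needed because the integrand is non-negative there.
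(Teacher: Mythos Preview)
Your proof is correct and follows essentially the same contradiction argument as the paper, which also restricts the integral to $(0,T)$ and uses the inflated thresholds to derive $\Delta\urbPlEn^{\varepsilon,a,A}>\Delta\urbPlEn^{\varepsilon,a,A}$. The paper's version is simply more terse, writing the contradiction in a single line.
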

\begin{proof}
If almost all $s\in(0,T)$ satisfied $N_s>\frac{\Delta \urbPlEn^{\varepsilon,a,A}}{\varepsilon T}$ or $\Delta F_s>\frac{\Delta \urbPlEn^{\varepsilon,a,A}}{(a-1)T}$,
then we had $\Delta \urbPlEn^{\varepsilon,a,A}\geq\int_0^T\varepsilon N_s+(a-1)\Delta F_s\,\de t>\Delta \urbPlEn^{\varepsilon,a,A}$, a contradiction.
\end{proof}

\begin{proposition}
For $\varepsilon<1$ and $a>1$ we have
\begin{displaymath}
 \Delta \urbPlEn^{\varepsilon,a,A}\gtrsim\hdtwo(A)\min\left\{\sqrt a\sqrt{a-1}^{\frac{n-3}{n-1}}\varepsilon^{\frac1{n-1}},a-1\right\}\,.
\end{displaymath}
\end{proposition}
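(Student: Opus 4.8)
The plan is to mimic the two-dimensional centre argument, but with the cross-section now placed in a thin boundary layer $\{0<x_n<T\}$ whose height $T$ is fixed self-consistently in terms of the (a priori unknown) excess energy, and then to combine the resulting estimate with the boundary-layer bound $\Delta\urbPlEn^{\varepsilon,a,A}\gtrsim\hdcodimone(A)\min\{\sqrt{a^2-1}^{\frac{n-2}{n-1}}\varepsilon^{\frac1{n-1}},a-1\}$ established in Section\,\ref{sec:bdyContribution}.

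First I would dispose of the trivial regime: if $\Delta\urbPlEn^{\varepsilon,a,A}\geq\tfrac18\hdcodimone(A)(a-1)$, then $\Delta\urbPlEn^{\varepsilon,a,A}\gtrsim\hdcodimone(A)(a-1)$ and we are done; moreover $\Delta\urbPlEn^{\varepsilon,a,A}>0$ by the Section\,\ref{sec:bdyContribution} bound. So assume $0<\Delta\urbPlEn^{\varepsilon,a,A}<\tfrac18\hdcodimone(A)(a-1)$ and put $T=8\,\Delta\urbPlEn^{\varepsilon,a,A}/(\hdcodimone(A)(a-1))\in(0,1)$. By the cross-section lemma preceding this proposition there is a generic $s\in(0,T)$ with $N_s\leq\Delta\urbPlEn^{\varepsilon,a,A}/(\varepsilon T)$ and $\Delta F_s\leq\Delta\urbPlEn^{\varepsilon,a,A}/((a-1)T)=\tfrac18\hdcodimone(A)$, so in particular $\|\mu_s\|_\fbm=\hdcodimone(A)-\Delta F_s\geq\tfrac78\hdcodimone(A)$.

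Next, exactly as in the two- and three-dimensional propositions, I would split $\chiOpt$ at $\{x_n=s\}$, set $\tilde\mu_0=\pushforward{\chiOpt(\cdot,0)}{(\reMeasure\restr\reSpace_s)}$ and $\tilde\mu_1=\pushforward{\chiOpt(\cdot,1)}{(\reMeasure\restr\reSpace_s)}$, and use \eqref{eqn:W1bound} together with Remark\,\ref{rem:cellOpProps} to get
\begin{equation*}
\urbPlEn^{\varepsilon,a,A}[\chiOpt]\geq\Wdone(\tilde\mu_0,\mu_s)+\Wdone(\mu_s,\tilde\mu_1)+\Wdone(\mu_0-\tilde\mu_0,\mu_1-\tilde\mu_1).
\end{equation*}
Estimating the first term by Lemma\,\ref{thm:WoneBound} (with $\m=1$ and $R=\sqrt[n-1]{\|\mu_s\|_\fbm/(N_s\omega_{n-1})}$) and the remaining two trivially by $(1-s)\|\mu_s\|_\fbm$ and $\|\mu_0-\tilde\mu_0\|_\fbm$, and using $\urbPlEn^{*,a,A}=\|\mu_0\|_\fbm=\|\mu_s\|_\fbm+\|\mu_0-\tilde\mu_0\|_\fbm$, everything collapses to
\begin{equation*}
\Delta\urbPlEn^{\varepsilon,a,A}\gtrsim\hdcodimone(A)\min\bigl\{\tfrac{R^2}{s},R\bigr\}\geq\hdcodimone(A)\min\bigl\{\tfrac{R^2}{T},R\bigr\}.
\end{equation*}
From $N_s\leq\Delta\urbPlEn^{\varepsilon,a,A}/(\varepsilon T)$, $\|\mu_s\|_\fbm\gtrsim\hdcodimone(A)$ and the value of $T$ one obtains $R\gtrsim(\varepsilon/(a-1))^{\frac1{n-1}}$ and $R^2/T\sim R^2\hdcodimone(A)(a-1)/\Delta\urbPlEn^{\varepsilon,a,A}$; distinguishing $R\geq T$ from $R<T$ then gives $\Delta\urbPlEn^{\varepsilon,a,A}\gtrsim\hdcodimone(A)R$ and $\Delta\urbPlEn^{\varepsilon,a,A}\gtrsim\hdcodimone(A)\sqrt{a-1}\,R$ respectively, so in every case
\begin{equation*}
\Delta\urbPlEn^{\varepsilon,a,A}\gtrsim\hdcodimone(A)\min\bigl\{(a-1)^{-\frac1{n-1}},(a-1)^{\frac{n-3}{2(n-1)}}\bigr\}\,\varepsilon^{\frac1{n-1}}.
\end{equation*}

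Finally I would combine this with the Section\,\ref{sec:bdyContribution} bound. For $a\leq2$ the estimate above reads $\Delta\urbPlEn^{\varepsilon,a,A}\gtrsim\hdcodimone(A)(a-1)^{\frac{n-3}{2(n-1)}}\varepsilon^{\frac1{n-1}}\gtrsim\hdcodimone(A)\sqrt a\sqrt{a-1}^{\frac{n-3}{n-1}}\varepsilon^{\frac1{n-1}}$ since $\sqrt a\sim1$, whereas for $a\geq2$ one checks $\sqrt{a^2-1}^{\frac{n-2}{n-1}}\sim a^{\frac{n-2}{n-1}}\gtrsim\sqrt a\,(a-1)^{\frac{n-3}{2(n-1)}}$, so the Section\,\ref{sec:bdyContribution} bound already gives the claim; taking the maximum of the two estimates and folding in the trivial regime yields $\Delta\urbPlEn^{\varepsilon,a,A}\gtrsim\hdcodimone(A)\min\{a-1,\sqrt a\sqrt{a-1}^{\frac{n-3}{n-1}}\varepsilon^{\frac1{n-1}}\}$. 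The step I expect to be the main obstacle is the self-consistent choice of $T$: it must be $\leq1$, keep $\Delta F_s$ below a fixed fraction of $\hdcodimone(A)$ so that $\|\mu_s\|_\fbm\sim\hdcodimone(A)$, and place $\min\{R^2/T,R\}$ at exactly the scale balancing the maintenance cost $\varepsilon N_s$; getting the $a$-exponents right is what forces both the case split $a\lessgtr2$ and the appeal to the separate estimate of Section\,\ref{sec:bdyContribution}, after which only the routine check that the two bounds jointly dominate $\sqrt a\sqrt{a-1}^{\frac{n-3}{n-1}}\varepsilon^{\frac1{n-1}}$ remains.
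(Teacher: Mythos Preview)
Your proposal is correct and follows essentially the same approach as the paper: the paper also assumes $\Delta\urbPlEn^{\varepsilon,a,A}\lesssim\hdcodimone(A)(a-1)$, chooses $T\sim\Delta\urbPlEn^{\varepsilon,a,A}/(\hdcodimone(A)(a-1))$, applies the cross-section lemma on $(0,T)$ together with Lemma\,\ref{thm:WoneBound} to obtain $\Delta\urbPlEn^{\varepsilon,a,A}\gtrsim\hdcodimone(A)R\min\{R/T,1\}$, and distinguishes the same two cases (yielding exponents $(a-1)^{-1/(n-1)}$ and $(a-1)^{(n-3)/(2(n-1))}$) before combining with the Section\,\ref{sec:bdyContribution} bound via the case split $a-1\lessgtr1$. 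The only differences are the specific constants in the choice of $T$ and the threshold for the trivial regime, which are immaterial.
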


\begin{proof}
The lower bound proof now is as follows (note how the procedure parallels that of the case $n=2$).
Assume $\Delta \urbPlEn^{\varepsilon,a,A}\leq\frac14\hdcodimone(A)(a-1)$ and pick $$T=\frac{2\Delta \urbPlEn^{\varepsilon,a,A}}{\hdcodimone(A)(a-1)}\,.$$
Let $s\in(0,T)$ be the cross-section from the previous lemma.
Let $\tilde\mu_0,\tilde\mu_1$ be the irrigating and irrigated measure of all particles in $\reSpace_s$ according to \eqref{eqn:measureLow} and \eqref{eqn:measureHigh}.
Now
\begin{align*}
\urbPlEn^{\varepsilon,a,A}[\chiOpt]
&\geq\inf_\chi\urbPlEn^{\varepsilon,a,\tilde\mu_0,\mu_s}[\chi]+\inf_\chi\urbPlEn^{\varepsilon,a,\mu_s,\tilde\mu_1}[\chi]+\Wdone(\mu_0-\tilde\mu_0,\mu_1-\tilde\mu_1)\\
&\geq \Wdone(\tilde\mu_0,\mu_s)+\Wdone(\mu_s,\tilde\mu_1)+\Wdone(\mu_0-\tilde\mu_0,\mu_1-\tilde\mu_1)\\
&\geq s\|\mu_s\|_\fbm+C\|\mu_s\|_\fbm\min\{\tfrac{R^2}{s},R\}+(1-s)\|\mu_s\|_\fbm+\|\mu_0-\tilde\mu_0\|_\fbm\\
&\geq \urbPlEn^{*,a,A}+CR\|\mu_s\|_\fbm\min\{\tfrac{R}{T},1\}\,,
\end{align*}
where we have employed Lemma\,\ref{thm:WoneBound} with $R=\sqrt[n-1]{\frac{\|\mu_s\|_\fbm}{N_s\omega_{n-1}}}$.
By virtue of the previous lemma, our assumption on $\Delta \urbPlEn^{\varepsilon,a,A}$, and our choice of $T$, $\|\mu_s\|_\fbm=\hdcodimone(A)-\Delta F_s\geq\frac{\hdcodimone(A)}2$ and $N_s\leq\frac{\hdcodimone(A)(a-1)}{2\varepsilon}$
so that $R\geq\sqrt[n-1]{\frac{\varepsilon}{\omega_{n-1}(a-1)}}$.
Inserting everything into the above estimate,
\begin{equation}\label{eq:above_inquality}
\Delta \urbPlEn^{\varepsilon,a,A}
\geq C\tfrac{\hdcodimone(A)}2\sqrt[n-1]{\tfrac{\varepsilon}{\omega_{n-1}(a-1)}}\min\left\{\tfrac{\hdcodimone(A)}{2\sqrt[n-1]{\omega_{n-1}}\Delta \urbPlEn^{\varepsilon,a,A}}\varepsilon^{\frac1{n-1}}(a-1)^{\frac{n-2}{n-1}},1\right\}\,.
\end{equation}
Now we consider two cases.
\begin{itemize}
\item If $\Delta \urbPlEn^{\varepsilon,a,A}\leq\tfrac{\hdcodimone(A)}{2\sqrt[n-1]{\omega_{n-1}}}\varepsilon^{\frac1{n-1}}(a-1)^{\frac{n-2}{n-1}}$, inequality \eqref{eq:above_inquality} yields
\begin{equation*}
\Delta \urbPlEn^{\varepsilon,a,A}
\gtrsim\hdcodimone(A)\sqrt[n-1]{\tfrac{\varepsilon}{a-1}}\,.
\end{equation*}
Note that this is only possible for $a-1\gtrsim1$.
\item If $\Delta \urbPlEn^{\varepsilon,a,A}>\tfrac{\hdcodimone(A)}{2\sqrt[n-1]{\omega_{n-1}}}\varepsilon^{\frac1{n-1}}(a-1)^{\frac{n-2}{n-1}}$, inequality \eqref{eq:above_inquality} can be solved for $\Delta \urbPlEn^{\varepsilon,a,A}$ to give
\begin{equation*}
\Delta \urbPlEn^{\varepsilon,a,A}
\gtrsim\hdcodimone(A)\varepsilon^{\frac1{n-1}}\sqrt{a-1}^{\frac{n-3}{n-1}}\,.
\end{equation*}
\end{itemize}
Both cases can be summarised as
\begin{equation*}
\Delta \urbPlEn^{\varepsilon,a,A}
\gtrsim\hdcodimone(A)\varepsilon^{\frac1{n-1}}
\cdot\begin{cases}
(a-1)^{-\frac1{n-1}}&\text{if }a-1>1,\\
\sqrt{a-1}^{\frac{n-3}{n-1}}&\text{else.}
\end{cases}
\end{equation*}
This bound was derived under the assumption that $\Delta \urbPlEn^{\varepsilon,a,A}\lesssim\hdcodimone(A)(a-1)$.
Together with the bound $\Delta \urbPlEn^{\varepsilon,a,A}\gtrsim\hdcodimone(A)\min\{\sqrt{a^2-1}^{\frac{n-2}{n-1}}\varepsilon^{\frac1{n-1}},a-1\}$ derived in Section\,\ref{sec:bdyContribution}, the estimates can be combined into
\begin{equation*}
\Delta \urbPlEn^{\varepsilon,a,A}\gtrsim\hdtwo(A)\min\left\{\sqrt a\sqrt{a-1}^{\frac{n-3}{n-1}}\varepsilon^{\frac1{n-1}},a-1\right\}\,,
\end{equation*}
as can be easily checked for the different cases $a-1\ll1$, $a-1\sim1$, and $a-1\gg1$.
\end{proof}

\subsubsection{Boundary contribution}\label{sec:bdyContribution}
Here we estimate the energy associated with the particles travelling from the boundary to the network $\SigmaOpt$.
The estimate holds for all $n\geq3$.
We need a few preparatory lemmas, the first of which is a refinement of \cite[Thm. 3.16]{Buttazzo-Oudet-Stepanov}.
\begin{lemma}\label{thm:avDist}
Let $S\subset\R^n$ be connected with $\hdone(S)=l$ and $K\subset\R^n$ with $\hdndim(K)=V$.
Then there is a positive constant $C\equiv C(n)$ such that
\begin{equation*}
r_{S,K}:=\frac1V\int_K\dist(x,S)\,\de\hdndim(x)\geq C\min\left\{V^{\frac1n},\left(\tfrac Vl\right)^{\frac1{n-1}}\right\}\,,
\end{equation*}
where in the case $V = 0$ the normalised integral is to be interpreted as $r_{S,K} = 0$.
\end{lemma}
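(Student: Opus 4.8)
I would begin by disposing of trivialities and fixing a convenient setting. If $V=0$ the asserted inequality holds by the stated convention; so assume $V>0$. Since $S$ is connected with $\hdone(S)=l<\infty$ we have $\diam S\le l$, hence $S$ is bounded, and passing to $\bar S$ changes neither $\dist(\cdot,S)$ nor, for a connected set of finite length, the value of $\hdone$, so we may assume $S$ compact. The engine of the proof is the tube estimate
\[
\hdndim\bigl(B_\rho(S)\bigr)\le c(n)\,\bigl(l\,\rho^{\,n-1}+\rho^{\,n}\bigr)\qquad\text{for every }\rho>0,
\]
which is classical for continua of finite length and is exactly the mechanism underlying \cite[Thm.\,3.16]{Buttazzo-Oudet-Stepanov}. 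I would prove it by the standard parametrisation of finite-length continua: a compact connected set with $\hdone(S)=l$ is the image of a $1$-Lipschitz curve $\gamma\colon[0,2l]\to S$; splitting $[0,2l]$ into $\lceil 2l/\rho\rceil$ subintervals of length at most $\rho$ exhibits $S$ as a union of $\lceil 2l/\rho\rceil$ sets of diameter at most $\rho$, so $B_\rho(S)$ is covered by $\lceil 2l/\rho\rceil$ balls of radius $2\rho$ centred on $S$; summing their volumes and using $\lceil 2l/\rho\rceil\le 2l/\rho+1$ gives the estimate with $c(n)=2^{\,n+1}\omega_n$ (which we may further replace by $\max\{c(n),1\}$ without loss).

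Granting this, the bound on $r_{S,K}$ follows from the layer--cake formula. Since $\dist(\cdot,S)\ge0$ and, by the paper's convention for $B_\rho$, one has $\{x\in K:\dist(x,S)>\rho\}=K\setminus B_\rho(S)$,
\[
\int_K\dist(x,S)\,\de\hdndim(x)=\int_0^\infty\hdndim\bigl(K\setminus B_\rho(S)\bigr)\,\de\rho,
\]
and, using $\hdndim=\lebesgue^n$ on $\R^n$ together with the tube estimate, $\hdndim(K\setminus B_\rho(S))\ge V-\hdndim(B_\rho(S))\ge V-c(n)\bigl(l\rho^{n-1}+\rho^n\bigr)$. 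I would then discard all $\rho>\rho_0$ and pick $\rho_0$ so small that $c(n)\bigl(l\rho_0^{n-1}+\rho_0^n\bigr)\le V/2$; on $[0,\rho_0]$ the integrand is then at least $V/2$, whence $\int_K\dist(x,S)\,\de\hdndim\ge V\rho_0/2$ and $r_{S,K}\ge\rho_0/2$. Taking $\rho_0=\tfrac1{4c(n)}\min\{V^{1/n},(V/l)^{1/(n-1)}\}$ makes both $c(n)l\rho_0^{n-1}\le V/4$ and $c(n)\rho_0^n\le V/4$ (here $c(n)\ge1$ is used), which yields $r_{S,K}\ge\tfrac1{8c(n)}\min\{V^{1/n},(V/l)^{1/(n-1)}\}$, i.e.\ the claim with $C(n)=1/(8c(n))$.

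The only step that is not a routine computation is the tube estimate, that is, the fact that a connected set of $\hdone$-measure $l$ can be covered by $O(l/\rho)$ balls of radius $\rho$; I expect this to be the point requiring the most care, handled either via the Lipschitz parametrisation of finite-length continua quoted above, or more elementarily by showing that a maximal $\rho$-separated subset of a connected set of length $l$ has at most $2l/\rho+1$ points (in tandem with the closure reduction, so that "connected of finite length" is genuinely a continuum). Once the tube estimate is available, the two-regime shape of the bound is automatic: when $l\lesssim V^{1/n}$ the relevant scale is $\rho_0\sim V^{1/n}$ and $S$ plays no role on that scale, producing the $V^{1/n}$ term; when $l\gtrsim V^{1/n}$ the relevant scale is $\rho_0\sim(V/l)^{1/(n-1)}$ and $S$ behaves like a curve, producing the $(V/l)^{1/(n-1)}$ term.
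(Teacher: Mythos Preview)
Your proposal is correct and takes a genuinely different, cleaner route than the paper.

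The paper covers space by a cube grid of side $\delta\sim(V/l)^{1/(n-1)}$, bounds the number of cubes meeting $S$ via a cited lemma, and then splits into three cases. The case $l\lesssim V^{1/n}$ is handled by a separate compactness lemma (proved using Go\l ab's theorem together with a tube-volume bound cited from another paper); the remaining two cases compare the mass of $K$ lying in the first and second cube-neighbourhoods of $S$ against the mass outside. You instead isolate the tube estimate $\hdndim(B_\rho(S))\le c(n)(l\rho^{n-1}+\rho^n)$ as the single nontrivial input, feed it into the layer-cake identity, and choose one threshold $\rho_0$; this treats both regimes uniformly, with no case split and no auxiliary compactness argument, the two terms in the $\min$ arising simply from the two monomials $l\rho^{n-1}$ and $\rho^n$. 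The paper's approach has the same engine underneath---its cube-count lemma and the volume bound used inside the compactness argument are both avatars of the tube estimate---but packages it less directly. Your alternative route via a maximal $\rho$-separated subset is also a clean way to get the covering number straight from connectedness, and it sidesteps the passage to $\bar S$, whose justification (that $\hdone(\bar S)=\hdone(S)$ for connected $S$ of finite length) is true but not entirely immediate.
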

\begin{proof}
Let us pick $\delta=\zeta\left(\tfrac Vl\right)^{\frac1{n-1}}$ for some constant $\zeta\equiv\zeta(n)\in(0,1)$ to be specified later.
Now we cover $K$ and the surrounding space by a grid of cubes with side length $\delta$.
In \cite[Lem. 3.17]{Buttazzo-Oudet-Stepanov} the authors prove that $S$ intersects at most $k=(2^n+1)\frac l\delta+(2^n+1)$ cubes
by noting that one needs a curve segment of at least length $\delta$ to intersect any union of $2^n+1$ cubes.
Those $k$ cubes together have a volume of
\begin{equation*}
V_S=k\delta^n=(2^n+1)\zeta^{n-1}V+(2^n+1)\zeta^n\left(\tfrac Vl\right)^{\frac n{n-1}}\,.
\end{equation*}
We shall call a cube a neighbour of $S$ if it is a neighbour of a cube intersecting $S$,
and we shall call a cube a second neighbour if it is adjacent to a neighbour of $S$.
We distinguish three cases:
\begin{itemize}
\item If $(2^n+1)\zeta^{n-1}V\leq(2^n+1)\zeta^n\left(\tfrac Vl\right)^{\frac n{n-1}}$
or equivalently $l\leq \zeta^{1-\frac1n}V^{\frac1n}$,
then the average distance $r_{S,K}$ of points in $K$ to $S$ is greater than $V^{\frac1n}$ up to a constant factor.
Indeed, it is clear that for $\xi\geq0$ we have $r_{\xi S,\xi K}=\xi r_{S,K}$.
Furthermore,
\begin{equation}\label{eq:r_1}
r_1:=\inf_{\substack{\tilde K\subset\R^n\,,\ \hdndim(\tilde K)=1\\\tilde S\subset\R^n\text{ connected},\ \hdone(\tilde S)\leq\zeta^{1-\frac1n}}}\int_{\tilde K}\dist(x,\tilde S)\,\de\hdndim(x)>0
\end{equation}
so that $r_{S,K}=V^{\frac1n}r_{V^{-\frac1n}S,V^{-\frac1n}K}\geq V^{\frac1n}r_1$. The positivity of $r_1$ will be proved in Lemma \ref{lem:positivity_of_r_1} below.

\item If $l>\zeta^{1-\frac1n}V^{\frac1n}$ and if the intersected cubes, the neighbours, and the second neighbours contain less than $\frac V2$ of $K$,
then at least $\frac V2$ of $K$ is at distance larger than $2\delta$ from $S$ so that $r_{S,K}\geq (\frac{V}{2}(2\delta)+\frac{V}{2}0)/V = \delta$.

\item Now let $l>\zeta^{1-\frac1n}V^{\frac1n}$ and let the intersected cubes and first and second neighbours contain more than $\frac V2$ of $K$.
      The total number of intersecting cubes and neighbours is less than $k 3^n$, where $3^n$ is the number of neighbours a cube has (including the cube itself).
      Thus, the second neighbours actually intersect $K$ on a volume of more than
      \begin{equation*}
       \tfrac V2-k3^n\delta^n
       =\tfrac V2-3^nV_S
       \geq\tfrac V2-3^n\cdot2(2^n+1)\zeta^{n-1}V
       \geq\tfrac V4
      \end{equation*}
      if we choose $\zeta$ small enough.
      Thus, $r_{S,K}\geq\frac\delta4$.
\end{itemize}
\end{proof}

\begin{remark}
Intuitively it is clear how in the previous lemma $S$ and $K$ have to be arranged to give the smallest $r_{S,K}$;
$S$ should be chosen as a straight line of length $l$ and $K=B_r(S)$, where $r$ is such that $\hdndim(K)=V$.
If $S$ is very short, then $K$ is almost a ball, and the average distance to $S$ behaves like the average distance to the ball centre, which scales like $V^{\frac1n}$.
If $S$ is very long, then $K$ is almost a cylinder, and the average distance to its midaxis scales like $(\frac Vl)^{\frac1{n-1}}$.
\end{remark}

\begin{lemma}\label{lem:positivity_of_r_1}
Let $r_1$ be given by equation \eqref{eq:r_1}, then $r_1 > 0$.
\end{lemma}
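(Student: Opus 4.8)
The plan is to bound the infimum in \eqref{eq:r_1} below by a strictly positive constant depending only on $n$, by combining a uniform volume estimate for tubular neighbourhoods of short connected sets with the layer-cake formula — essentially the same tools already used in the proof of Lemma~\ref{thm:avDist} and of Lemma~\ref{thm:WoneBound}.

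First I would record the geometric estimate that there is a constant $C_1\equiv C_1(n)$ with
\begin{equation*}
\hdndim\bigl(B_t(\tilde S)\bigr)\le C_1\,(l\,t^{n-1}+t^n)\qquad\text{for every connected }\tilde S\subset\R^n,\ l:=\hdone(\tilde S),\ t>0.
\end{equation*}
This is precisely the cube-counting argument already invoked in the proof of Lemma~\ref{thm:avDist}: covering $\R^n$ by a grid of cubes of side $t$, the set $\tilde S$ meets at most $(2^n+1)(l/t+1)$ of them by \cite[Lem.\,3.17]{Buttazzo-Oudet-Stepanov} (a curve segment of length at least $t$ is needed to meet any union of $2^n+1$ grid cubes), and a point at distance $\le t$ from $\tilde S$ lies, in the sup-norm, within $t$ of a cube met by $\tilde S$, hence in that cube or one of its $3^n-1$ grid neighbours; therefore $B_t(\tilde S)$ is covered by at most $3^n(2^n+1)(l/t+1)$ cubes of volume $t^n$, which gives the claim with $C_1=3^n(2^n+1)$.

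Next, for an admissible pair $(\tilde K,\tilde S)$ in \eqref{eq:r_1} we have $l\le\zeta^{1-\frac1n}\le1$, so $\hdndim(B_t(\tilde S))\le C_1(t^{n-1}+t^n)$ for all $t>0$. I would then fix $t_0\equiv t_0(n)\in(0,1]$ small enough that $C_1(t_0^{n-1}+t_0^n)\le\tfrac12$; for every $t\le t_0$ this forces
\begin{equation*}
\hdndim\bigl(\{x\in\tilde K:\dist(x,\tilde S)>t\}\bigr)\ge\hdndim(\tilde K)-\hdndim\bigl(B_t(\tilde S)\bigr)\ge 1-\tfrac12=\tfrac12 .
\end{equation*}
Finally the layer-cake formula applied to the non-negative function $x\mapsto\dist(x,\tilde S)$ on the finite measure space $(\tilde K,\hdndim\restr\tilde K)$ yields
\begin{equation*}
\int_{\tilde K}\dist(x,\tilde S)\,\de\hdndim(x)=\int_0^\infty\hdndim\bigl(\{x\in\tilde K:\dist(x,\tilde S)>t\}\bigr)\,\de t\ge\int_0^{t_0}\tfrac12\,\de t=\tfrac{t_0}{2},
\end{equation*}
and since $t_0$ depends only on $n$ and the bound is uniform over all competitors, taking the infimum gives $r_1\ge t_0/2>0$.

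The only genuinely delicate point is the uniform tubular-neighbourhood bound, and specifically that it tends to $0$ as $t\to0$ uniformly over all connected sets of length $\le\zeta^{1-\frac1n}$: the crude inclusion $\tilde S\subset\overline{B}_l(x_0)$ (valid since $\diam\tilde S\le\hdone(\tilde S)$) is useless here because $\hdndim\bigl(B_t(\overline{B}_l(x_0))\bigr)$ stays bounded away from $0$, so one really needs the cube-counting estimate — but that is exactly the tool already available from the proof of Lemma~\ref{thm:avDist}. Everything else is the routine Kohn--M\"uller layer-cake computation, so I do not anticipate any further obstacle.
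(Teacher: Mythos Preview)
Your argument is correct, and it takes a genuinely different route from the paper's own proof. The paper argues by contradiction: it takes minimising sequences $(\tilde K_i,\tilde S_i)$, invokes Go\l{}\k{a}b's theorem to extract a Hausdorff limit $S$ of the $\tilde S_i$ with $\hdone(S)\le\zeta^{1-\frac1n}$, and then shows $\inf_{\tilde K}F(\tilde K,S)=0$ is impossible by applying a tubular-neighbourhood volume bound (cited from \cite{Brancolini-Solimini-Fractal}) to the single limit set $S$. You instead apply the tubular-neighbourhood estimate \emph{uniformly} over all competitors from the outset, using exactly the cube-counting already quoted in the proof of Lemma~\ref{thm:avDist}, and then the layer-cake computation gives an explicit lower bound $r_1\ge t_0/2$. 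Your approach is more elementary---it avoids the compactness machinery of Go\l{}\k{a}b's theorem entirely---and more constructive, since it produces a quantitative constant depending only on $n$; the paper's route is slightly more conceptual but relies on a heavier tool and yields only a qualitative $r_1>0$.
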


\begin{proof}
For a contradiction assume there exist sequences $\tilde K_i,\tilde S_i$ with $\hdndim(\tilde K_i) = 1, \hdone(\tilde S_i) \leq \zeta^{1-\frac{1}{n}}$ such that
\begin{displaymath}
 F(\tilde K_i, \tilde S_i) = \int_{\tilde K_i} \dist(x,\tilde S_i) \,\de \hdndim(x) \to 0
\end{displaymath}
as $i\to\infty$, where without loss of generality we may assume the $\tilde K_i,\tilde S_i$ to be uniformly bounded.
Thanks to Go\l ab's Theorem we may assume that, up to a subsequence, $\tilde S_i \to S$ in the Hausdorff sense and that $\hdone(S) \leq \zeta^{1-\frac{1}{n}}$. Moreover,
\begin{equation}\label{eq:inf_F(K,S)=0}
 \inf_{\tilde K, \hdndim(\tilde K) = 1} F(\tilde K, S) = 0,
\end{equation}
since
\begin{multline*}
 F(\tilde K_i,S) = \int_{\tilde K_i} \dist(x,S) \,\de\hdndim(x) \geq \int_{\tilde K_i} \dist(x,\tilde S_i) - \dist(S,\tilde S_i) \,\de \hdndim(x)\\
 = F(\tilde K_i, \tilde S_i) - \dist(S,\tilde S_i)\hdndim(\tilde K_i) \to 0\,.
\end{multline*}
This leads to a contradiction. In fact, let $K \subseteq \R^n$ with $\hdndim(K) = 1$ and set $K_\delta = \{x \in K \ : \dist(x,S) > \delta\}$ as well as $S_\delta = \{x \in \R^n \ : \ \dist(x,S) \leq \delta\}$. Then,
\begin{displaymath}
 F(K,S) \geq \delta\hdndim(K_\delta) = \delta(1-\hdndim(K\setminus K_\delta)) \geq \delta(1-\hdone(S_\delta))\,.
\end{displaymath}
Now \cite[Lemma 4.1]{Brancolini-Solimini-Fractal} implies $\hdndim(S_\delta) \leq C\hdone(S)\delta^{n-1}$ for some constant $C\equiv C(n)$
so that we can choose $\delta > 0$ such that $\hdndim(S_\delta) < \frac{1}{2}$.
Then $F(K,S) \geq \delta(1-\frac{1}{2}) > 0$ independent of $K$, which contradicts equation \eqref{eq:inf_F(K,S)=0}.
\end{proof}

The next lemma is a regularity result for the optimal irrigation pattern $\chiOpt$ and corresponding network $\SigmaOpt$.
\begin{lemma}\label{thm:connPath}
There are an optimal irrigation pattern $\chiOpt$ and corresponding network $\SigmaOpt$ such that $\SigmaOpt$ is symmetric across $\{x_n=\frac12\}$
and $\chiOpt_p(I)\cap\SigmaOpt$ is connected for almost all $p\in\reSpace$.
\end{lemma}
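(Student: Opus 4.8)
The plan is to produce $\chiOpt$ and $\SigmaOpt$ in two stages: first enforce the reflection symmetry across $\{x_n=\frac12\}$, then refine the choice within the symmetric optima so that $\chiOpt_p(I)\cap\SigmaOpt$ becomes connected for almost every $p$.

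\emph{Symmetry.} Let $\sigma(x',x_n)=(x',1-x_n)$; this isometry satisfies $\sigma_\#\mu_0=\mu_1$, $\sigma_\#\mu_1=\mu_0$, and since $\repsilonachi$ depends on $\chi$ only through $m_\chi$, while $m_{\chi^\sigma}(\sigma(x))=m_\chi(x)$ for the time-reversed reflection $\chi^\sigma_p(t):=\sigma(\chi_p(1-t))$, the assignment $\chi\mapsto\chi^\sigma$ sends optimal patterns to optimal patterns. I would first reduce to the case that for almost every $p$ the coordinate $t\mapsto(\chiOpt_p(t))_n$ is nondecreasing; this monotone-fibre property of optima for our parallel-hyperplane geometry I would establish separately. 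For a monotone optimal $\chi$ every fibre meets $\{x_n=\frac12\}$ in exactly one point, so with $t_p=t_p(\frac12)$ and $\chi^{(1)}_p=\chi_p|_{[0,t_p]}$, $\chi^{(2)}_p=\chi_p|_{[t_p,1]}$ (each reparametrised to $I$) one has $m_{\chi^{(1)}}=m_\chi$ on $\{x_n<\frac12\}$ and $m_{\chi^{(2)}}=m_\chi$ on $\{x_n>\frac12\}$, whence $\urbPlMMS^{\varepsilon,a}(\chi^{(1)})+\urbPlMMS^{\varepsilon,a}(\chi^{(2)})=\urbPlMMS^{\varepsilon,a}(\chi)$. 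Choosing $i$ with $\urbPlMMS^{\varepsilon,a}(\chi^{(i)})\le\frac12\urbPlMMS^{\varepsilon,a}(\chi)$ and using that $\sigma$ fixes the midsection pointwise — so that $\chi^{(i)}$ and $(\chi^{(i)})^\sigma$ can be glued along the identity map — I set $\tilde\chi=\chi^{(i)}\circ_{\Id}(\chi^{(i)})^\sigma$ (or $(\chi^{(i)})^\sigma\circ_{\Id}\chi^{(i)}$ if $i=2$). By Remark \ref{rem:cellOpProps} its cost is at most $2\urbPlMMS^{\varepsilon,a}(\chi^{(i)})\le\urbPlMMS^{\varepsilon,a}(\chi)$, so $\tilde\chi$ is optimal, still has monotone fibres, transports $\mu_0$ to $\mu_1$, and is $\sigma$-invariant by construction; hence $m_{\tilde\chi}$ and $\tilde\Sigma:=\{m_{\tilde\chi}>\frac\varepsilon{a-1}\}$ are symmetric across $\{x_n=\frac12\}$.

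\emph{Connectedness.} Over the (nonempty, by the previous step) class of symmetric, monotone, optimal patterns I would minimise a secondary functional penalising off-network travel, e.g.\ $R(\chi)=\int_\reSpace\hdone\!\left(\chi_p(I)\setminus\{m_\chi>\frac\varepsilon{a-1}\}\right)\de\reMeasure(p)$; lower semicontinuity together with the compactness underlying the existence theory of \cite{BW15} yields a minimiser, which can be kept symmetric by applying the symmetrisation of the first step. Call it $\chiOpt$ and set $\SigmaOpt=\{m_{\chiOpt}>\frac\varepsilon{a-1}\}$. Suppose, for a contradiction, that $\chiOpt_p(I)\cap\SigmaOpt$ is disconnected for all $p$ in a set $P$ with $\reMeasure(P)>0$; by monotonicity of the fibres this means each such fibre enters $\SigmaOpt$, leaves it into the thin set $\{m_{\chiOpt}\le\frac\varepsilon{a-1}\}$, and re-enters $\SigmaOpt$. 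Replacing this excursion (measurably, for the whole bundle $P$ at once) by the straight segment between its endpoints gives an admissible $\chi'$: the excursion ran at cost density $\equiv a$ (the maximal value, forced by $m_{\chiOpt}\le\frac\varepsilon{a-1}$) and is at least as long as the segment, along which the density is $\le a$, so the energy contribution from $P$ does not grow; moreover the multiplicity is altered only inside $\{m\le\frac\varepsilon{a-1}\}$, where the density was already maximal, so no other fibre's cost increases. Thus $\chi'$ is again optimal but has $R(\chi')<R(\chiOpt)$ — contradiction. (In the degenerate subcase where the straight segment itself runs off-network with the same length, one instead reroutes along $\SigmaOpt$, using its connectedness — provable by a similar exchange — to again decrease $R$ strictly.)

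\emph{Main obstacle.} The crux is this last exchange: one must carry it out measurably for a positive-measure bundle of fibres and control the fact that modifying $\chiOpt$ on the thin set $\{m_{\chiOpt}\le\frac\varepsilon{a-1}\}$ perturbs $\SigmaOpt=\{m_{\chiOpt}>\frac\varepsilon{a-1}\}$ itself; together with the auxiliary monotone-fibre reduction, these are the steps requiring genuine care, whereas the lower semicontinuity of $R$ and the gluing estimates of Remark \ref{rem:cellOpProps} are routine.
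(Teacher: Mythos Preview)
Your symmetry argument is in the same spirit as the paper's (which also reflects one half across $\{x_n=\tfrac12\}$), though the paper does not introduce the auxiliary monotone-fibre reduction: it simply reparametrises so that $\chiOpt_p(\tfrac12)\in\{x_n=\tfrac12\}$ and reflects the lower time-half to the upper.

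The connectedness argument is where you diverge substantially. The paper uses no secondary minimisation and no exchange argument. Instead it invokes the \emph{single path property} of optimal urban-planning patterns (\cite[Proposition~3.4.1]{BW15}): any two particles passing through the same point follow the same trajectory between that point and any later common point. With this in hand the proof is a one-paragraph contradiction. Suppose $\chiOpt_p(I)\cap\SigmaOpt$ is disconnected, so there are $s_1<s_2$ with $\chiOpt_p((s_1,s_2))\cap\SigmaOpt=\emptyset$ but $x:=\chiOpt_p(s_1^-)\in\SigmaOpt$, say in $\{x_n<\tfrac12\}$. Since $x\in\SigmaOpt$ one has $m_{\chiOpt}(x)>\tfrac\varepsilon{a-1}$. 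By the single path property, every particle in $[x]_{\chiOpt}$ follows $p$'s path from $x$ onward, in particular through the alleged gap and---because each fibre has been made symmetric---all the way to the reflection $\bar x$ of $x$ across $\{x_n=\tfrac12\}$. Hence $m_{\chiOpt}\geq m_{\chiOpt}(x)>\tfrac\varepsilon{a-1}$ along this entire arc, so the arc lies in $\SigmaOpt$, contradicting $\chiOpt_p((s_1,s_2))\subset\R^n\setminus\SigmaOpt$.

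Your route is not unreasonable in spirit, but the obstacles you flag are genuine and not merely cosmetic: lower semicontinuity of $R$ is far from routine since the excluded set $\{m_\chi>\tfrac\varepsilon{a-1}\}$ itself moves with $\chi$; the exchange perturbs $m_\chi$ and hence $\SigmaOpt$, so the bookkeeping for both the energy and $R$ is delicate; and your degenerate subcase defers to connectedness of $\SigmaOpt$ ``provable by a similar exchange'', which is an unproved lemma of the same flavour. The single path property sidesteps all of this in one stroke and is the ingredient you are missing.
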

\begin{proof}
We may assume $\chiOpt$ to have the single path property (see \cite[Proposition 3.4.1]{BW15}).
Furthermore, due to the problem symmetry we may assume $\chiOpt_p(I)$ to be symmetric across $\{x_n=\frac12\}$ for all $p\in\reSpace$.
Indeed, suppose without loss of generality and potentially reparameterising the pattern that $\chiOpt_p(\frac12)\in\{x_n=\frac12\}$ for all $p\in\reSpace$.
We now consider a pattern with the property that the position of a particle $p$ at time $\frac{1}{2}+t$ is the same as at time $\frac{1}{2}-t$, but reflected across $\{x_n=\frac12\}$. As an explicit formula,
\begin{displaymath}
 \chi(p,t) = \begin{cases}
              \chiOpt(p,t) & t \in [0,\frac{1}{2}],\\
              \left((\chiOpt(p,1-t))_1,\ldots,(\chiOpt(p,1-t))_{n-1},1-(\chiOpt(p,1-t))_n\right) & t \in [\frac{1}{2},1].
             \end{cases}
\end{displaymath}
If $\chiOpt$ is optimal, then this new symmetric pattern must also be optimal with the same cost.
The corresponding $\SigmaOpt$ is also symmetric.

Now assume $\chiOpt_p(I)\cap\SigmaOpt$ to be not connected.
Then there are $s_1,s_2,\delta>0$ with $\chiOpt_p(t)\notin\SigmaOpt$ for $t\in(s_1,s_2)$ and $\chiOpt_p(t)\in\SigmaOpt$ for $t\in(s_1-\delta,s_1)\cup(s_2,s_2+\delta)$.
Let us abbreviate $\hat\Sigma=\chiOpt_p((s_1-\delta,s_1))\subset\SigmaOpt$.
Without loss of generality, assume $\hat\Sigma\subset\{x_n<\frac12\}$ and $\hdone(\hat\Sigma)>0$.

Let $[x]_{\chiOpt}\subset\reSpace$ with $\reMeasure([x]_{\chiOpt})=m_{\chiOpt}(x)$ denote the set of particles flowing through $x\in\hat\Sigma$.
Note $m_{\chiOpt}(x)>\frac\varepsilon{a-1}$ by the relation between $\chiOpt$ and $\SigmaOpt$ from Section\,\ref{sec:introUrbPlan}.
Due to the single path property, all particles $q\in[x]_{\chiOpt}$ follow the same path between $x$ and its reflection $\bar x=\chiOpt_p(s_3)$ across $\{x_n=\frac12\}$,
in other words, $\chiOpt_p((s_1,s_3))\subset\chiOpt_q(I)$ for all $q\in[x]_{\chiOpt}$.
Thus, $m_{\chiOpt}(x)>\tfrac\varepsilon{a-1}$ for all $x\in\chiOpt_p((s_1,s_3))$
so that $\chiOpt_p((s_1,s_3))\subset\SigmaOpt$.
This however contradicts $\chiOpt_p((s_1,s_2))\subset\R^n\setminus\SigmaOpt$.
\end{proof}

\begin{proposition}[Boundary contribution]
In $n\geq3$ dimensions the contribution to the excess energy of particles travelling from the initial distribution $\mu_0$ to the optimal network $\SigmaOpt$ can be estimated as
\begin{displaymath}
 \Delta\urbPlEn^{\varepsilon,a,A}
\gtrsim\hdcodimone(A)\min\left\{a-1,\sqrt{a^2-1}^{\frac{n-2}{n-1}}\varepsilon^{\frac1{n-1}}\right\}\,.
\end{displaymath}
\end{proposition}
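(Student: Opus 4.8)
The plan is to bound from below the energy that particles waste while moving \emph{off} the network, i.e.\ between $\mu_0$ and $\SigmaOpt$, exploiting that near the boundary $\SigmaOpt$ must be sparse (few pipes, small length) whenever $\Delta\urbPlEn^{\varepsilon,a,A}$ is small, so that particles are forced into long transversal detours. First I would fix an optimal pair $\chiOpt,\SigmaOpt$ as in Lemma~\ref{thm:connPath}, so that $\SigmaOpt$ is symmetric across $\{x_n=\tfrac12\}$ and $\chiOpt_p(I)\cap\SigmaOpt$ is connected for a.e.\ $p$; by the symmetry each such particle enters $\SigmaOpt$ at a single height $h_p\le\tfrac12$. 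Writing $y_p$ for the first point of $\chiOpt_p(I)$ in $\SigmaOpt$ and $d_p=|(\chiOpt_p(0))'-(y_p)'|$ for the transversal displacement to reach it, I would keep the two terms $\int_I|\dot\chiOpt_p|\ge\sqrt{d_p^2+h_p^2}+(1-h_p)$ and $\hdone(\chiOpt_p(I)\setminus\SigmaOpt)\ge\sqrt{d_p^2+h_p^2}$ in \eqref{eq:inequality_one}, integrate over $\reSpace$, invoke the Fubini identity \eqref{eq:inequality_two} and the symmetry bound $\hdone(\SigmaOpt)\ge2\hdone(\SigmaOpt\cap\{x_n\le s\})$ for $s<\tfrac12$. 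This yields, for every $s\in(0,\tfrac12)$,
\[
\Delta\urbPlEn^{\varepsilon,a,A}\ \ge\ \int_{\reSpace_s}\bigl(a\sqrt{d_p^2+h_p^2}-h_p\bigr)\de\reMeasure(p)\ +\ 2\varepsilon\,\hdone(\SigmaOpt\cap\{x_n\le s\})\,.
\]
The crucial elementary fact is that $E(d,h):=a\sqrt{d^2+h^2}-h$ has $\min_{h\ge0}E(d,h)=d\sqrt{a^2-1}$, attained at $h=d/\sqrt{a^2-1}$; accordingly I set $\phi_T(d):=\min_{0\le h\le T}E(d,h)$, which is convex, increasing, vanishes at $0$, and equals $d\sqrt{a^2-1}$ for $d\le T\sqrt{a^2-1}$.

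Next I would fix a slab height $T\in(0,\tfrac12)$. If $\Delta\urbPlEn^{\varepsilon,a,A}\ge\tfrac12\hdcodimone(A)(a-1)T$ there is nothing more to do for that $T$. Otherwise I pick, via the generic cross-section lemma generalising Lemma~\ref{thm:crossSection2D}, a generic $s\in(0,T)$ with $N_s\le\Delta\urbPlEn^{\varepsilon,a,A}/(\varepsilon T)$ and $\Delta F_s\le\Delta\urbPlEn^{\varepsilon,a,A}/((a-1)T)$, so that $\reMeasure(\reSpace_s)\ge\tfrac12\hdcodimone(A)$. Every $p\in\reSpace_s$ enters $\SigmaOpt$ at height $h_p\le s\le T$, and — using the connectedness of $\chiOpt_p(I)\cap\SigmaOpt$ — its entry point $y_p$ lies in a connected component of $\SigmaOpt\cap\{x_n\le s\}$ that meets $\{x_n=s\}$; there are at most $N_s$ such components and their total length is at most $L:=\hdone(\SigmaOpt\cap\{x_n\le s\})\le\Delta\urbPlEn^{\varepsilon,a,A}/(2\varepsilon)$. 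Hence $d_p\ge\dist\bigl((\chiOpt_p(0))',S'\bigr)$ where $S'\subset\R^{n-1}$ is the projection of this sparse set. Since the starting points of $\reSpace_s$ cover an $(n-1)$-volume $\ge\tfrac12\hdcodimone(A)$ and cannot be piled up (as $\mu_0$ has density $1$), estimating the $\lebesgue^{n-1}$-measure of the tubular neighbourhoods of $S'$ and applying the $(n-1)$-dimensional form of Lemma~\ref{thm:avDist} componentwise gives
\[
\int_{\reSpace_s}\phi_T(d_p)\de\reMeasure(p)\ \gtrsim\ \hdcodimone(A)\,\phi_T(\rho_0)\,,\qquad \rho_0\ \gtrsim\ \min\Bigl\{\bigl(\tfrac{\hdcodimone(A)}{L}\bigr)^{\frac1{n-2}},\bigl(\tfrac{\hdcodimone(A)}{N_s}\bigr)^{\frac1{n-1}}\Bigr\}\,.
\]

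Finally I would choose $T$ of the order forced by the bypass threshold so that $\rho_0\le T\sqrt{a^2-1}$, turning the bound into $\Delta\urbPlEn^{\varepsilon,a,A}\gtrsim 2\varepsilon L+\hdcodimone(A)\sqrt{a^2-1}\,(\hdcodimone(A)/L)^{1/(n-2)}$ once the first branch of $\rho_0$ is the active one (checked for this $T$ from the bounds on $L,N_s$). Here $L$ is at the optimiser's disposal; minimising the right-hand side over $L>0$ forces $L\sim(\sqrt{a^2-1}/\varepsilon)^{(n-2)/(n-1)}\hdcodimone(A)$ and hence $\Delta\urbPlEn^{\varepsilon,a,A}\gtrsim\hdcodimone(A)\,\varepsilon^{1/(n-1)}\sqrt{a^2-1}^{(n-2)/(n-1)}$. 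Combining this with the bypass case (which gives $\gtrsim\hdcodimone(A)(a-1)T$) and with the crude geometric bound $E(d,h)\ge(a-1)\sqrt{d_p^2+h_p^2}\ge0$, which covers the regime of large $a-1$, produces $\Delta\urbPlEn^{\varepsilon,a,A}\gtrsim\hdcodimone(A)\min\{a-1,\sqrt{a^2-1}^{(n-2)/(n-1)}\varepsilon^{1/(n-1)}\}$; the hypotheses $n\ge3$, $a>1$ and the implicit $\varepsilon<\hdcodimone(A)^{(n+1)/(n-1)}$ are exactly what is needed to keep $T<\tfrac12$ and to make the chosen branch of $\rho_0$ active.

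The main obstacle is Step~two: converting ``$\SigmaOpt$ is short and has few pipes near the boundary'' into a quantitative detour cost. This needs (i) the component count $\le N_s$, which rests on Lemma~\ref{thm:connPath} together with the symmetry to ensure every particle of $\reSpace_s$ meets a component reaching height $s$, plus a small surgery or limiting argument discarding components of $\SigmaOpt\cap\{x_n\le s\}$ that never reach $\{x_n=s\}$; and (ii) adapting Lemma~\ref{thm:avDist}, stated for a bulk set in $\R^n$, to a set concentrated in a thin slab, handled by projecting to $\R^{n-1}$ and summing over components. The remaining difficulty is purely computational: the multi-parameter optimisation over the slab height $T$ and the free network length $L$, together with the case split $a-1\lessgtr1$.
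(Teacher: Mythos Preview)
Your approach is essentially the same as the paper's: both arguments use Lemma~\ref{thm:connPath} to get connected fibre–network intersections, extract the pointwise excess $a\sqrt{d_p^2+h_p^2}-h_p\ge\min\{d_p\sqrt{a^2-1},a-1\}$, bound the number of relevant connected components of $\SigmaOpt$ and their total length via $\Delta\urbPlEn^{\varepsilon,a,A}/\varepsilon$, and then apply Lemma~\ref{thm:avDist} in dimension $n-1$ componentwise to convert sparsity of the projected network into a lower bound on the averaged transversal displacement.

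The differences are organisational rather than mathematical. The paper does not introduce a slab height $T$: it simply uses the cross-section at $x_n=\tfrac12$ (via Lemma~\ref{thm:crossSection2D}) to count components and splits particles according to whether $r(p)\le\sqrt{(a-1)/(a+1)}$, which immediately separates the $\sqrt{a^2-1}\,r(p)$ and the $a-1$ contributions without needing your function $\phi_T$. Your slab parameter $T$ is harmless but redundant; one can take $T=\tfrac12$ throughout. Second, where you minimise the two-term bound over the single free parameter $L$, the paper carries out the full convex minimisation over the component volumes $V_i$ and lengths $l_i$ (with $\sum V_i=\reMeasure(\tilde\reSpace)$, $\sum l_i\le\hdone(\SigmaOpt)$, and at most $N$ summands), reducing it explicitly to the two-branch expression you call $\rho_0$ before the final case distinction. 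Your shortcut of minimising over $L$ is valid because the sum $2\varepsilon L+\hdcodimone(A)\sqrt{a^2-1}(\hdcodimone(A)/L)^{1/(n-2)}$ is a genuine lower bound for the actual (unknown) $L$, but you should note that the display $\int_{\reSpace_s}\phi_T(d_p)\,\de\reMeasure\gtrsim\hdcodimone(A)\,\phi_T(\rho_0)$ hides precisely this componentwise optimisation step; it does not follow from a single application of Lemma~\ref{thm:avDist} and Jensen, and the paper's explicit treatment of the $V_i,l_i$ minimisation is what justifies it.
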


\begin{proof}
By Lemma\,\ref{thm:connPath} we may assume that the path $\chiOpt_p(I)$ of any particle $p\in\reSpace$ can be subdivided into three connected segments,
one outside $\SigmaOpt$, one within, and again one outside.
Let
\begin{align*}
t(p)&=\inf\{t\in I\,:\,\chiOpt_p(t)\in\SigmaOpt\}\,,\\
x(p)&=\chiOpt_p(t(p))\,,\\
r(p)&=|(x(p)-\chiOpt_p(0))'|
\end{align*}
denote the time and position at which particle $p\in\reSpace$ reaches $\SigmaOpt$ for the first time, as well as the accumulated horizontal motion until then.
We have
\begin{multline*}
\int_Ir^{\varepsilon,a}_{\chiOpt}(\chiOpt(p,t))|\dot\chiOpt(p,t)|\,\de t
\geq\int_0^{t(p)}a|\dot\chiOpt(p,t)|\,\de t+\int_{t(p)}^1|\dot\chiOpt(p,t)|\,\de t\\
\geq a\sqrt{(r(p))^2+((x(p))_n)^2}+(1-(x(p))_n)
\geq1+\min\{\sqrt{a^2-1}r(p),a-1\}
\end{multline*}
upon optimising for $(x(p))_n\in[0,1]$, which would yield $(x(p))_n=\min\{\frac{r(p)}{\sqrt{a^2-1}},1\}$.
Let $\tilde\reSpace\subset\reSpace$ denote the particles for which $r(p)\leq\sqrt{\frac{a-1}{a+1}}$.
The excess energy can now be estimated below by
\begin{align*}
\Delta \urbPlEn^{\varepsilon,a,A}
&=\int_\reSpace\int_Ir^{\varepsilon,a}_{\chiOpt}(\chiOpt(p,t))|\dot\chiOpt(p,t)|\,\de t\,\de\reMeasure(p)-\reMeasure(\reSpace)\\
&\geq\int_{\tilde\reSpace}1+\sqrt{a^2-1}r(p)\,\de\reMeasure(p)
+\int_{\reSpace\setminus\tilde\reSpace}1+a-1\,\de\reMeasure(p)-\reMeasure(\reSpace)\\
&=\sqrt{a^2-1}r\reMeasure(\tilde\reSpace)+(a-1)\reMeasure(\reSpace\setminus\tilde\reSpace)\,,
\end{align*}
where $r=\frac1{\reMeasure(\tilde\reSpace)}\int_{\tilde\reSpace}r(p)\,\de\reMeasure(p)$ denotes the average horizontal motion of particles in $\tilde\reSpace$ before they reach $\SigmaOpt$.
Now combining Lemmas\,\ref{thm:crossSection2D} and \ref{thm:connPath} we know that $\SigmaOpt$ intersects $\{x_n=\frac12\}$ in at most $N=\frac{\Delta \urbPlEn^{\varepsilon,a,A}}\varepsilon$ points $x_1,\ldots,x_N$.
All particles in $\tilde\reSpace$ flow through such a point,
and $\tilde\reSpace\subset\bigcup_{i=1}^N[x_i]_{\chiOpt}$ using the notation from Definition\,\ref{def:solidarity_classes}.
Denote by $\Sigma_i\subset\SigmaOpt$ the connected component of $\SigmaOpt$ containing $x_i$ (by the single path property of the minimiser from Lemma\,\ref{thm:connPath}, the $\Sigma_i$ are disjoint).
If $\Sigma_i'$ denotes the orthogonal projection of $\Sigma_i$ onto $\{x_n=0\}$, we have
\begin{equation*}
r\geq\tfrac1{\reMeasure(\tilde\reSpace)}\sum_{i=1}^N\int_{[x_i]_{\chiOpt}\cap\tilde\reSpace}\dist(\chiOpt_p(0),\Sigma_i')\,\de \reMeasure(p)\,.
\end{equation*}
Using Lemma\,\ref{thm:avDist} in dimension $n-1$ and for $S=\Sigma_i'$ and $K=\{\chiOpt_p(0)\ :\ p\in[x_i]_{\chiOpt}\cap\tilde\reSpace\}$ we thus obtain
\begin{align*}
r&\geq\tfrac C{\reMeasure(\tilde\reSpace)}\sum_{i=1}^N\reMeasure([x_i]_{\chiOpt}\cap\tilde\reSpace)\min\left\{\reMeasure([x_i]_{\chiOpt}\cap\tilde\reSpace)^{\frac1{n-1}},\left(\tfrac{\reMeasure([x_i]_{\chiOpt}\cap\tilde\reSpace)}{\hdone(\Sigma_i)}\right)^{\frac1{n-2}}\right\}\\
&\geq\tfrac C{\reMeasure(\tilde\reSpace)}\inf_{\substack{V_1,\ldots,V_N\geq0\,,\ V_1+\ldots+V_N=\reMeasure(\tilde\reSpace)\\l_1,\ldots,l_N\geq0\,,\ l_1+\ldots+l_N\leq\hdone(\SigmaOpt)}}
\sum_{i=1}^NV_i\min\left\{V_i^{\frac1{n-1}},\left(\tfrac{V_i}{l_i}\right)^{\frac1{n-2}}\right\}\,.
\end{align*}
Without loss of generality assume $V_i^{\frac1{n-1}}\leq\left(\tfrac{V_i}{l_i}\right)^{\frac1{n-2}}$ for $i=1,\ldots,M$, then
\begin{align*}
r&\geq\tfrac C{\reMeasure(\tilde\reSpace)}\inf_{\substack{V_1,\ldots,V_N\geq0\,,\ V_1+\ldots+V_N=\reMeasure(\tilde\reSpace)\\l_1,\ldots,l_N\geq0\,,\ l_1+\ldots+l_N\leq\hdone(\SigmaOpt)}}
\sum_{i=1}^M\left(V_i^{\frac n{n-1}}\right)+\sum_{i=M+1}^NV_i\left(\tfrac{V_i}{l_i}\right)^{\frac1{n-2}}\\
&=\tfrac C{\reMeasure(\tilde\reSpace)}\inf_{\tilde V,\hat V\geq0\,,\ M\tilde V+(N-M)\hat V=\reMeasure(\tilde\reSpace)}
M\tilde V^{\frac n{n-1}}+[(N-M)\hat V]^{\frac{n-1}{n-2}}\hdone(\SigmaOpt)^{\frac{-1}{n-2}}\,,
\end{align*}
where we have used that the optimisation is convex in the $V_i$ and $l_i$ so that $V_1=\ldots=V_M=\tilde V$, $V_{M+1}=\ldots=V_N=\hat V$, and $l_{M+1}=\ldots=l_N=\tfrac{\hdone(\SigmaOpt)}{N-M}$ are optimal.
Substituting $W=M\tilde V$ and replacing $(N-M)\hat V$ by $\reMeasure(\tilde\reSpace)-W$, the above estimate turns into
\begin{align*}
r&\geq\tfrac C{\reMeasure(\tilde\reSpace)}\inf_{0\leq W\leq \reMeasure(\tilde\reSpace)\,,\ \tilde V\geq\frac WN}
W\tilde V^{\frac1{n-1}}+[\reMeasure(\tilde\reSpace)-W]^{\frac{n-1}{n-2}}\hdone(\SigmaOpt)^{\frac{-1}{n-2}}\\
&\geq\tfrac C{\reMeasure(\tilde\reSpace)}\inf_{0\leq W\leq \reMeasure(\tilde\reSpace)}
W^{\frac n{n-1}}N^{\frac{-1}{n-1}}+[\reMeasure(\tilde\reSpace)-W]^{\frac{n-1}{n-2}}\hdone(\SigmaOpt)^{\frac{-1}{n-2}}\\
&\geq\tfrac{C2^{\frac{1-n}{n-2}}}{\reMeasure(\tilde\reSpace)}\min\left\{\reMeasure(\tilde\reSpace)^{\frac n{n-1}}N^{\frac{-1}{n-1}},\reMeasure(\tilde\reSpace)^{\frac{n-1}{n-2}}\hdone(\SigmaOpt)^{\frac{-1}{n-2}}\right\}\,,
\end{align*}
where the last step follows from distinguishing the two cases $W>\frac{\reMeasure(\tilde\reSpace)}2$ and $W\leq\frac{\reMeasure(\tilde\reSpace)}2$.
Inserting this estimate into the lower bound on $\Delta \urbPlEn^{\varepsilon,a,A}$ and using the fact $N\leq\frac{\Delta \urbPlEn^{\varepsilon,a,A}}\varepsilon$ and $\hdone(\SigmaOpt)\leq\frac{\Delta \urbPlEn^{\varepsilon,a,A}}\varepsilon$, we obtain
\begin{align*}
\Delta \urbPlEn^{\varepsilon,a,A}
&\geq C2^{\frac{1-n}{n-2}}\sqrt{a^2-1}\min\left\{\reMeasure(\tilde\reSpace)^{\frac n{n-1}}N^{\frac{-1}{n-1}},\reMeasure(\tilde\reSpace)^{\frac{n-1}{n-2}}\hdone(\SigmaOpt)^{\frac{-1}{n-2}}\right\}+(a-1)\reMeasure(\reSpace\setminus\tilde\reSpace)\\
&\geq C2^{\frac{1-n}{n-2}}\sqrt{a^2-1}\reMeasure(\tilde\reSpace)
\min\left\{\left(\varepsilon\tfrac{\reMeasure(\tilde\reSpace)}{\Delta \urbPlEn^{\varepsilon,a,A}}\right)^{\frac1{n-1}},\left(\varepsilon\tfrac{\reMeasure(\tilde\reSpace)}{\Delta \urbPlEn^{\varepsilon,a,A}}\right)^{\frac1{n-2}}\right\}
+(a-1)\reMeasure(\reSpace\setminus\tilde\reSpace)\,.
\end{align*}
Now we distinguish three cases.
\begin{itemize}
\item If $\reMeasure(\tilde\reSpace)<\frac12\hdcodimone(A)$, we have $$\Delta \urbPlEn^{\varepsilon,a,A}\geq(a-1)\reMeasure(\reSpace\setminus\tilde\reSpace)\geq\tfrac12\hdcodimone(A)(a-1)\,.$$
\item If $\reMeasure(\tilde\reSpace)\geq\frac12\hdcodimone(A)$ and $\varepsilon\tfrac{\reMeasure(\tilde\reSpace)}{\Delta \urbPlEn^{\varepsilon,a,A}}<1$, we have
$$\Delta \urbPlEn^{\varepsilon,a,A}\geq C2^{\frac{1-n}{n-2}}\sqrt{a^2-1}\tfrac{\hdcodimone(A)}2
\left(\varepsilon\tfrac{\hdcodimone(A)}{2\Delta \urbPlEn^{\varepsilon,a,A}}\right)^{\frac1{n-2}}\,,$$
which can be solved to yield $\Delta \urbPlEn^{\varepsilon,a,A}\gtrsim\hdcodimone(A)\sqrt{a^2-1}^{\frac{n-2}{n-1}}\varepsilon^{\frac1{n-1}}$.
\item If $\reMeasure(\tilde\reSpace)\geq\frac12\hdcodimone(A)$ and $\varepsilon\tfrac{\reMeasure(\tilde\reSpace)}{\Delta \urbPlEn^{\varepsilon,a,A}}\geq1$, we have
$$\Delta \urbPlEn^{\varepsilon,a,A}\geq C2^{\frac{1-n}{n-2}}\sqrt{a^2-1}\tfrac{\hdcodimone(A)}2
\left(\varepsilon\tfrac{\hdcodimone(A)}{2\Delta \urbPlEn^{\varepsilon,a,A}}\right)^{\frac1{n-1}}\,,$$
which can be solved to yield $$\Delta \urbPlEn^{\varepsilon,a,A}\gtrsim\hdcodimone(A)\sqrt{a^2-1}^{\frac{n-1}{n}}\varepsilon^{\frac1{n}}
\geq\hdcodimone(A)\min\left\{a-1,\sqrt{a^2-1}^{\frac{n-2}{n-1}}\varepsilon^{\frac1{n-1}}\right\}\,.$$
\end{itemize}
In summary,
$$\Delta\urbPlEn^{\varepsilon,a,A}
\gtrsim\hdcodimone(A)\min\left\{a-1,\sqrt{a^2-1}^{\frac{n-2}{n-1}}\varepsilon^{\frac1{n-1}}\right\}\,.$$
\end{proof}

\subsection{Lower bound for $\brTptEn^{\varepsilon,A}$}\label{sec:lwBndBrnchTrpt}
To prove the lower bound of Theorem\,\ref{thm:scalingBrnchTrpt},
it is actually easier to use a different non-dimensionalisation of the energy $\brTptEn^{\varepsilon,\m,A,L}$ introduced in Section\,\ref{sec:energyScaling}.
In fact, we shall prove a lower bound using the energy
\begin{equation*}
\tilde\brTptEn^{\varepsilon,A}\equiv \brTptEn^{\varepsilon,\m,A,1}\quad\text{with }\m=\min\left\{1,\tfrac1{\hdcodimone(A)}\right\}\,,\qquad
\tilde\brTptEn^{*,A}\equiv \inf_{\chi}\tilde\brTptEn^{0,A}[\chi]=\mathfrak m\,,
\end{equation*}
where $\mathfrak m$ denotes the total transported mass,
\begin{equation*}
\mathfrak m=\|\mu_0\|_\fbm=\m\hdcodimone(A)=\min\{1,\hdcodimone(A)\}\,.
\end{equation*}
Since optimal irrigation patterns $\chiOpt$ are known to be simple
(that is, the patterns contain no loops, see \cite[Proposition 4.6]{BeCaMo09}) and $\mathfrak m\leq1$,
the maximum possible mass flux $m_{\chiOpt}(x)$ through any point $x$ is no larger than one.
This will allow us to bound the transport costs below by the costs for $\varepsilon=0$
since $m_{\chiOpt}(x)\leq1$ implies $s_{1-\varepsilon}^{\chiOpt}(x)=m_{\chiOpt}(x)^{-\varepsilon}\geq1$ (see Definition\,\ref{eqn:costDensityBrTpt}).
Indeed, let $\chiOpt$ be an optimal irrigation pattern between a given source $\mu_+$ and sink $\mu_-$ of equal mass no larger than one.
For almost all $p\in\reSpace$ we have
\begin{equation*}
\int_{I} s_{1-\varepsilon}^{\chiOpt}(\chiOpt(p,t)) |\dot\chiOpt(p,t)|\,\de t
=\int_{\chiOpt_p(I)} s_{1-\varepsilon}^{\chiOpt}(x)\,\de\hdone(x)
\geq\int_{\chiOpt_p(I)}\,\de\hdone(x)
\geq|\chiOpt_p(1)-\chiOpt_p(0)|\,.
\end{equation*}
Thus,
\begin{multline*}
\brTptEn^{\varepsilon,\mu_+,\mu_-}[\chiOpt]
= \int_\reSpace\int_{I} s_{1-\varepsilon}^{\chiOpt}(\chiOpt(p,t)) |\dot\chiOpt(p,t)|\,\de t\,\de\reMeasure(p)\\
\geq\int_\reSpace|\chiOpt_p(1)-\chiOpt_p(0)|\,\de\reMeasure(p)
=\int_{\R^n\times\R^n}|x-y|\,\de\mu(x,y)
\geq\Wdone(\mu_+,\mu_-)
\end{multline*}
for $\mu=\pushforward{(\chiOpt(\cdot,0),\chiOpt(\cdot,1))}{\reMeasure}$,
whose marginals are the irrigating and irrigated measures $\mu_+$ and $\mu_-$.

Based on this relaxation result we will show the following.

\begin{theorem}\label{thm:scalingBrnchTrpt2}
There is a constant $\tilde C_1\equiv\tilde C_1(n)$such that for $\varepsilon<\tilde C_1$
\begin{equation*}
\min_\chi\tilde\brTptEn^{\varepsilon,A}[\chi]-\tilde\brTptEn^{*,A}\geq\tilde C_1\mathfrak m\varepsilon\max\{|\log\varepsilon|,\log\hdcodimone(A)\}\,.
\end{equation*}
\end{theorem}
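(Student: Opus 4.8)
The plan is to bound $\Delta\tilde\brTptEn^{\varepsilon,A}:=\min_\chi\tilde\brTptEn^{\varepsilon,A}[\chi]-\mathfrak m$ from below by extracting from an optimal pattern $\chiOpt$ two \emph{additive} contributions obtained by cutting at a horizontal cross-section: one from the branched-transport cost density $m^{-\varepsilon}$, one from the relaxation estimate $\tilde\brTptEn^{\varepsilon,\mu_+,\mu_-}[\chiOpt]\ge\Wdone(\mu_+,\mu_-)$ proved above. First I note that finiteness of the cost forces $m_{\chiOpt}>0$ along almost every fibre, so each cross-section measure $\mu_s=\pushforward{\xiOpt(\cdot,s)}{\reMeasure}$ is purely atomic, $\mu_s=\sum_i m_i(s)\delta_{\xi_i(s)}$ with $\sum_i m_i(s)=\mathfrak m$ and $m_i(s)\le\mathfrak m\le1$. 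Splitting the fibres of $\chiOpt$ at their first crossing of $\{x_n=s\}$, applying the relaxation bound on $[0,s]$ and on $[s,1]$, and using $\Wdone(\mu_s,\mu_1)\ge(1-s)\mathfrak m$ together with $m^{-\varepsilon}\ge1+\varepsilon|\log m|$ on $(0,1]$ and the coarea-type identity of the excerpt, gives for every $s$ the estimate $\Delta\tilde\brTptEn^{\varepsilon,A}\ge\big[\Wdone(\mu_0,\mu_s)-s\mathfrak m\big]+\varepsilon\int_0^s\sum_i m_i(t)\,|\log m_i(t)|\,\de t$.

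For the logarithmic factor I would use the elementary bound $\sum_i m_i|\log m_i|\ge\mathfrak m\log(1/M_s)$ with $M_s:=\max_i m_i(s)$ (valid since $m_i\le M_s\le1$); a sharper substitute is $\sum_i m_i|\log m_i|\ge\tfrac{n-1}{2}\mathfrak m\log(\mathfrak m/\sigma_s)$ with $\sigma_s:=\sum_i m_i^{(n+1)/(n-1)}$, which follows from $\sum_i p_i e^{-cu_i}\ge e^{-c\sum_i p_i u_i}$ applied to $u_i=-\log m_i$, $p_i=m_i/\mathfrak m$, $c=\tfrac2{n-1}$. For the transport term, I fix $s\in[\tfrac14,\tfrac12]$ and retain only the contribution of the largest atom $\xi_{i_0}$: the mass $M_s$ reaching it is transported from a subset of $\mu_0$ of Lebesgue measure $M_s/\m$, so by rearrangement (minimising over balls) $\Wdone(\mu_0,\mu_s)-s\mathfrak m\gtrsim\psi(M_s)$, where $\psi(m)=m^{(n+1)/(n-1)}\m^{-2/(n-1)}$ for $m\lesssim\m$ and $\psi(m)=m^{n/(n-1)}\m^{-1/(n-1)}$ for $m\gtrsim\m$ (a harmless truncation separating the quadratic and linear transport regimes). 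Hence $M_s\lesssim\psi^{-1}(C\Delta\tilde\brTptEn^{\varepsilon,A})$, and since $\psi^{-1}(D)\lesssim\max\{D^{(n-1)/(n+1)}\m^{2/(n+1)},D\}$ this yields $\log(1/M_s)\gtrsim\log(1/\Delta\tilde\brTptEn^{\varepsilon,A})+|\log\m|$ for every $s\in[\tfrac14,\tfrac12]$, as long as $\Delta\tilde\brTptEn^{\varepsilon,A}\le\mathfrak m$.

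Plugging this into the first estimate over the interval $[\tfrac14,\tfrac12]$ (which has fixed length) produces the implicit inequality $\Delta\tilde\brTptEn^{\varepsilon,A}\gtrsim\varepsilon\mathfrak m\big[\log(1/\Delta\tilde\brTptEn^{\varepsilon,A})+|\log\m|\big]$. Using $\log(1/\Delta\tilde\brTptEn^{\varepsilon,A})=\log(\mathfrak m/\Delta\tilde\brTptEn^{\varepsilon,A})+|\log\mathfrak m|$ and $|\log\mathfrak m|+|\log\m|=|\log\hdcodimone(A)|$, the term $|\log\hdcodimone(A)|$ alone forces $\Delta\tilde\brTptEn^{\varepsilon,A}\gtrsim\varepsilon\mathfrak m\log\hdcodimone(A)$ whenever $\hdcodimone(A)\ge1$, while bootstrapping $\Delta\tilde\brTptEn^{\varepsilon,A}\gtrsim\varepsilon\mathfrak m\log(\mathfrak m/\Delta\tilde\brTptEn^{\varepsilon,A})$ (first $\gtrsim\varepsilon\mathfrak m$, then reinsert) gives $\Delta\tilde\brTptEn^{\varepsilon,A}\gtrsim\varepsilon\mathfrak m|\log\varepsilon|$; taking the larger of the two yields $\Delta\tilde\brTptEn^{\varepsilon,A}\gtrsim\mathfrak m\varepsilon\max\{|\log\varepsilon|,\log\hdcodimone(A)\}$, which is the assertion. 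The degenerate regimes — when $\Delta\tilde\brTptEn^{\varepsilon,A}$ is already comparable to $\mathfrak m$, and when $\hdcodimone(A)$ is so large that the target exceeds $\mathfrak m$ — are handled separately, the latter via the non-dimensionalisation identity $\min_\chi\tilde\brTptEn^{\varepsilon,A}[\chi]=\m^{1-\varepsilon}\min_\chi\brTptEn^{\varepsilon,A}[\chi]$ and the crude bound $\min_\chi\brTptEn^{\varepsilon,A}[\chi]\ge\hdcodimone(A)^{1-\varepsilon}$ (any minimiser being simple, hence of flux $\le\hdcodimone(A)$).

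I expect the main work to lie in the transport estimate of the second step: the atom-by-atom rearrangement with the quadratic/linear truncation, and the preliminary removal of the (few, by the length-excess argument used for $\Lambda_k$ in the three-dimensional urban-planning proof) particles whose horizontal displacement between heights $0$ and $s$ exceeds a fixed multiple of $s$. These are soft but delicate, exactly as the corresponding step of the three-dimensional urban-planning lower bound; everything else is the same Kohn--M\"uller bookkeeping, only with the economies-of-scale factor $m^{-\varepsilon}$ in place of the urban-planning cost increment $\varepsilon/m$, which is precisely why the single logarithm $|\log\varepsilon|$ replaces the three-dimensional $\sqrt\varepsilon$.
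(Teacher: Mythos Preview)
Your approach is correct and arrives at essentially the same implicit inequality as the paper, but the route is genuinely different in how the two pieces of information are combined. The paper records the full mass-flux distribution $N_s$ at a single generic cross-section, extracts the two constraints
\[
\int_0^{\mathfrak m}c^{\frac{n+1}{n-1}}\,\de N_s(c)\ \lesssim\ \m^{\frac1{n-1}}\,\Delta\tilde\brTptEn^{\varepsilon,A}
\qquad\text{and}\qquad
\int_0^{\mathfrak m}-c\log c\,\de N_s(c)\ \le\ \tfrac1\varepsilon\,\Delta\tilde\brTptEn^{\varepsilon,A}\,,
\]
and then solves the resulting linear programme via Fenchel--Rockafellar duality (their Lemma~\ref{thm:convProg}); the optimiser of that programme is a single Dirac mass, which is precisely your $M_s$. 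Your argument bypasses the linear programme by going straight to the maximum atom: the single-atom rearrangement gives $\psi(M_s)\lesssim\Delta\tilde\brTptEn^{\varepsilon,A}$, and the elementary inequality $\sum_i m_i|\log m_i|\ge\mathfrak m\log(1/M_s)$ replaces the entropy constraint. The implicit inequality you reach, $\Delta\tilde\brTptEn^{\varepsilon,A}\gtrsim\varepsilon\mathfrak m\log\big(1/\psi^{-1}(\Delta\tilde\brTptEn^{\varepsilon,A})\big)$, is equivalent to the paper's $\Delta\tilde\brTptEn^{\varepsilon,A}\gtrsim\m^{-1/(n-1)}\mathfrak m\exp\big(-\tfrac{2}{(n-1)\mathfrak m\varepsilon}\Delta\tilde\brTptEn^{\varepsilon,A}\big)$ after taking logarithms. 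So your shortcut is the dual certificate of their convex programme in disguise: more elementary and avoiding Lemma~\ref{thm:convProg} entirely, at the cost of a bit more hands-on bookkeeping.

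Two minor remarks. First, the ``additive'' decomposition is not really exploited as such: you use the transport excess and the entropy separately as upper bounds for $\Delta\tilde\brTptEn^{\varepsilon,A}$, which is exactly what the paper does; nothing is gained by keeping the sum. Second, the preliminary removal of particles with large horizontal displacement that you anticipate (imported from the 3D urban-planning proof) is unnecessary here: the single-atom rearrangement bound $\int_{[x_i]_{\chiOpt}}\big[\sqrt{r_p^2+s^2}-s\big]\,\de\reMeasure\gtrsim\psi(M_s)$ holds for every coupling without any restriction on $r_p$, so no fast particles need to be excised. (A small imprecision: the quantity you bound below by $\psi(M_s)$ is the pattern's own transport cost $\int_\reSpace|\chiOpt_p(0)-\xiOpt(p,s)|\,\de\reMeasure-s\mathfrak m$, which dominates $\Wdone(\mu_0,\mu_s)-s\mathfrak m$; writing $\Wdone$ there is harmless but not what the argument actually uses.)
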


If $\hdcodimone(A)\leq1$, then $\tilde\brTptEn^{\varepsilon,A}=\brTptEn^{\varepsilon,A}$ so that the lower bound in Theorem\,\ref{thm:scalingBrnchTrpt} reduces to Theorem\,\ref{thm:scalingBrnchTrpt2}.
If $\hdcodimone(A)>1$, then the relation
\begin{equation*}
\brTptEn^{\varepsilon,A}[\chi]=\hdcodimone(A)^{1-\varepsilon}\tilde\brTptEn^{\varepsilon,A}[\chi]\,,\qquad
\brTptEn^{*,A}=\hdcodimone(A)\tilde\brTptEn^{*,A}\,,
\end{equation*}
immediately implies
\begin{equation*}
\min_\chi \brTptEn^{\varepsilon,A}[\chi]-\brTptEn^{*,A}\geq
\hdcodimone(A)\left(\frac{1+\tilde C_1\varepsilon\max\left\{|\log\varepsilon|,\log\hdcodimone(A)\right\}}{\hdcodimone(A)^\varepsilon}-1\right)\,,
\end{equation*}
from which the lower bound in Theorem\,\ref{thm:scalingBrnchTrpt} follows
via $\hdcodimone(A)^{-\varepsilon}\geq1-\varepsilon\log\hdcodimone(A)$.

As usual, we would like to start by characterising a generic cross-section
in terms of how many network pipes it intersects.
However, unlike for the urban planning model,
the maximum number of intersecting pipes cannot be deduced directly from the energy
since the excess cost per pipe depends on the mass flux through it.
Thus, we will for each flux separately treat the number of pipes with that flux.
Analogously to before let $\chiOpt$ denote an optimal irrigation pattern of $\tilde\brTptEn^{\varepsilon,A}$ with corresponding optimal network $\SigmaOpt$,
abbreviate $$\Delta\tilde\brTptEn^{\varepsilon,A}=\tilde\brTptEn^{\varepsilon,A}[\chiOpt]-\tilde\brTptEn^{*,A}\,,$$ and recall that
\begin{align*}
t_p(s)&=\min\left\{t\in I\,:\,\chiOpt_p(t)\in\{x_n=s\}\right\}\,,\\
\xiOpt(p,s)&=\chiOpt_p(t_p(s))
\end{align*}
for $s\in(0,1)$, $p\in\reSpace$ denote the time and position of particle $p$ in cross-section $\{x_n=s\}$.
For an easier notation, let us also introduce the corresponding solidarity classes and mass fluxes through $x\in\R^n$,
\begin{align*}
[x]_{\xiOpt} &= \{q \in \reSpace \ : \ x \in \xiOpt_q(I)\}\,,\\
m_{\xiOpt}(x) &= \reMeasure([x]_{\xiOpt})\,.
\end{align*}
For each cross-section $\{x_n=s\}$ we now introduce the number of intersecting pipes with mass flux $c$
as well as a corresponding measure (discrete and with integer multiplicity) on the interval of possible mass fluxes,
\begin{equation*}
\tilde N_s(c)=\hd^0\left(\left\{x\in\{x_n=s\}\,:\,m_{\xiOpt}(x)=c\right\}\right)\,,\qquad
N_s=\tilde N_s\hd^0\restr(0,\mathfrak m]\,.
\end{equation*}
It is known that $\SigmaOpt=\{x\in\R^n\,:\,m_{\chiOpt}(x)>0\}$ is rectifiable and represents a finite graph away from $x_n=0$ and $x_n=1$ (see \cite[Proposition 4.6]{BeCaMo09} or \cite[Proposition 6.6]{Bernot-Caselles-Morel-Traffic-Plans}).
Thus, for almost all $s\in(0,1)$, $\SigmaOpt\cap\{x_n=s\}$ is finite.
Let us abbreviate
\begin{equation*}
I_0=\{s\in(0,1)\,:\,\SigmaOpt\cap\{x_n=s\}\text{ is finite}\}\,.
\end{equation*}
Clearly, for almost all $s\in I_0$ the total flux through $\{x_n=s\}$ is given by $\int_{\{x_n=s\}}m_{\xiOpt}(x)\,\de\hd^0(x)=\reMeasure(\reSpace)=\|\mu_0\|_\fbm=\mathfrak m$, thus
\begin{equation*}
\int_0^{\mathfrak m}c\,\de N_s(c)=\mathfrak m\,.
\end{equation*}
The following lemma is the desired characterisation of a generic cross-section.

\begin{lemma}
There is a generic $s\in(0,1)$ such that $N_s(c)$ satisfies
\begin{equation*}
\int_0^{\mathfrak m}-c\log c\,\de N_s(c)\leq\frac{\Delta\tilde\brTptEn^{\varepsilon,A}}\varepsilon\,.
\end{equation*}
\end{lemma}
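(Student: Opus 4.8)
The plan is to mimic the structure of Lemma \ref{thm:crossSection2D}: argue by contradiction, assume that almost every cross-section violates the claimed inequality, integrate the per-cross-section excess over $s\in(0,1)$, and recover a contradiction with the definition of $\Delta\tilde\brTptEn^{\varepsilon,A}$. The key input is a \emph{pointwise-in-$s$} lower bound for the branched transport energy in terms of the quantity $\int_0^{\mathfrak m}-c\log c\,\de N_s(c)$.

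First I would establish the per-cross-section bound. Fix $s\in I_0$ and consider a single pipe of $\SigmaOpt$ crossing $\{x_n=s\}$ at a point $x$ with mass flux $m_{\xiOpt}(x)=c$. Since optimal patterns are simple and $\mathfrak m\le 1$, we have $c\le 1$, hence the cost density satisfies $s_{1-\varepsilon}^{\chiOpt}(x)=c^{-\varepsilon}=e^{-\varepsilon\log c}\ge 1-\varepsilon\log c$ (using $\log c\le 0$ and $e^t\ge 1+t$). Writing the branched transport energy as an integral over $\SigmaOpt$ against $\hdone$, and using a co-area-type decomposition $\hdone\restr\SigmaOpt$ into the cross-sectional sums over $s$ (as already exploited in \eqref{eq:inequality_two}), one gets
\begin{displaymath}
\MMSEn^{\varepsilon}(\chiOpt)\ge\int_\SigmaOpt s_{1-\varepsilon}^{\chiOpt}(x)\,\de\hdone(x)
\ge\int_0^1\int_{\{x_n=s\}}\bigl(1-\varepsilon\log m_{\xiOpt}(x)\bigr)\,\de\hd^0(x)\,\de s\,,
\end{displaymath}
and the inner integral over a fixed section equals $\mathfrak m+\varepsilon\int_0^{\mathfrak m}-c\log c\,\de N_s(c)$, since $\int_0^{\mathfrak m}c\,\de N_s(c)=\mathfrak m$. (Here I am being slightly cavalier: the genuine lower bound on $\MMSEn^\varepsilon(\chiOpt)$ also contains the length/transport contribution, which reproduces the $\tilde\brTptEn^{*,A}=\mathfrak m$ baseline; the clean way is to subtract the $\Wdone$-type lower bound $\tilde\brTptEn^{*,A}$ from both sides, exactly as in the preamble to this subsection, and keep only the extra $\varepsilon$-term coming from $s_{1-\varepsilon}^{\chiOpt}-1\ge-\varepsilon\log m_{\xiOpt}$.) The upshot is
\begin{displaymath}
\Delta\tilde\brTptEn^{\varepsilon,A}\ge\varepsilon\int_0^1\left(\int_0^{\mathfrak m}-c\log c\,\de N_s(c)\right)\de s\,.
\end{displaymath}

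Then I would run the contradiction argument exactly as in Lemma \ref{thm:crossSection2D}: if for almost every $s\in(0,1)$ we had $\int_0^{\mathfrak m}-c\log c\,\de N_s(c)>\Delta\tilde\brTptEn^{\varepsilon,A}/\varepsilon$, integrating over $s$ would give $\Delta\tilde\brTptEn^{\varepsilon,A}>\Delta\tilde\brTptEn^{\varepsilon,A}$, which is absurd. Hence a generic $s$ with the desired property exists (one also restricts to $s\in I_0$, a full-measure set, so that $N_s$ is a genuine finite discrete measure and all the integrals above are finite sums).

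The main obstacle — and the step that needs the most care — is the bookkeeping in the co-area decomposition of $\hdone\restr\SigmaOpt$ and making sure the $\mathfrak m$ baseline is subtracted correctly so that what remains is precisely the entropy-type term with the right sign; everything else is a verbatim repetition of the earlier cross-section lemmas. In particular one must check that $s_{1-\varepsilon}^{\chiOpt}(x)-1\ge-\varepsilon\log m_{\xiOpt}(x)\ge 0$ holds wherever $m_{\xiOpt}(x)>0$, which is where simplicity of $\chiOpt$ and $\mathfrak m\le 1$ enter, and that for the part of the energy not captured by $\SigmaOpt\cap\{x_n=s\}$ one still has the $\Wdone$ lower bound accounting for $\tilde\brTptEn^{*,A}$.
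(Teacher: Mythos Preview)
Your approach is exactly the paper's: coarea over cross-sections, the linearisation $c^{-\varepsilon}\ge 1-\varepsilon\log c$ for $c\le1$, and the same contradiction argument as in Lemma~\ref{thm:crossSection2D}.

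There is one genuine slip, though, precisely at the step you flag as needing care. When you pass from the particle integral to an $\hdone$-integral over $\SigmaOpt$, the integrand is \emph{not} the bare cost density $s_\varepsilon^{\chiOpt}(x)=m_{\chiOpt}(x)^{-\varepsilon}$ but the mass-weighted quantity $m_{\chiOpt}(x)\cdot m_{\chiOpt}(x)^{-\varepsilon}=m_{\chiOpt}(x)^{1-\varepsilon}$ (the factor $m_{\chiOpt}(x)$ comes from integrating the indicator $\setchar{\chiOpt_p(I)}(x)$ over $p\in\reSpace$, exactly as in \eqref{eq:inequality_two}). So the correct chain is
\[
\tilde\brTptEn^{\varepsilon,A}[\chiOpt]
=\int_{\SigmaOpt}m_{\chiOpt}(x)^{1-\varepsilon}\,\de\hdone(x)
\ge\int_{I_0}\int_0^{\mathfrak m}c^{1-\varepsilon}\,\de N_s(c)\,\de s\,,
\]
using $m_{\chiOpt}\ge m_{\xiOpt}$ and coarea. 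Since $\int_0^{\mathfrak m}c\,\de N_s(c)=\mathfrak m$, the inner integral satisfies
\[
\int_0^{\mathfrak m}c^{1-\varepsilon}\,\de N_s(c)-\mathfrak m
=\int_0^{\mathfrak m}c(c^{-\varepsilon}-1)\,\de N_s(c)
\ge\varepsilon\int_0^{\mathfrak m}(-c\log c)\,\de N_s(c)\,.
\]
This is exactly the inequality you state, and the $\mathfrak m$ baseline falls out automatically --- there is no separate $\Wdone$ subtraction to perform. Your displayed line, as written, integrates $(1-\varepsilon\log m_{\xiOpt}(x))$ against $\hd^0$ on the section, which gives $\sum_i(1-\varepsilon\log c_i)$ rather than $\sum_i c_i(1-\varepsilon\log c_i)$; that does not equal $\mathfrak m+\varepsilon\int(-c\log c)\,\de N_s$. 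With the missing mass factor restored, your contradiction argument goes through verbatim.
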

\begin{proof}
As mentioned above, $\SigmaOpt=\{x\in\R^n\,:\,m_{\chiOpt}(x)>0\}$ is rectifiable and represents a finite graph away from $x_n=0$ and $x_n=1$ so that we can calculate
\begin{multline*}
\tilde\brTptEn^{\varepsilon,A}[\chiOpt]
=\int_\reSpace\int_Is_{1-\varepsilon}^{\chiOpt}(\chiOpt(p,t))|\dot\chiOpt(p,t)|\,\de t\,\de\reMeasure
=\int_\reSpace\int_{\chiOpt_p(I)\cap\SigmaOpt}m_{\chiOpt}(x)^{-\varepsilon}\,\de\hdone(x)\,\de\reMeasure\\
=\int_{\SigmaOpt}\int_\reSpace m_{\chiOpt}(x)^{-\varepsilon}\setchar{\chiOpt_p(I)}(x)\,\de\reMeasure\,\de\hdone(x)
=\int_{\SigmaOpt}m_{\chiOpt}(x)^{1-\varepsilon}\,\de\hdone(x)
\geq\int_{\SigmaOpt}m_{\xiOpt}(x)^{1-\varepsilon}\,\de\hdone(x)\\
\geq\int_{I_0}\int_{\SigmaOpt\cap\{x_n=s\}}m_{\xiOpt}(x)^{1-\varepsilon}\,\de\hd^0(x)\,\de s
=\int_{I_0}\int_0^{\mathfrak m} c^{1-\varepsilon}\,\de N_s(c)\,\de s\,.
\end{multline*}
Assume by contradiction that $\int_0^{\mathfrak m}-c\log c\,\de N_s(c)>\frac{\Delta\tilde\brTptEn^{\varepsilon,A}}\varepsilon$ for almost every $s\in(0,1)$. Then
\begin{multline*}
\Delta\tilde\brTptEn^{\varepsilon,A}
=\tilde\brTptEn^{\varepsilon,A}[\chiOpt]-\mathfrak m
\geq\int_{I_0}\int_0^{\mathfrak m}c^{1-\varepsilon}\,\de N_s(c)\,\de s-\mathfrak m
=\int_{I_0}\int_0^{\mathfrak m}(c^{1-\varepsilon}-c)\,\de N_s(c)\,\de s\\
=\int_{I_0}\int_0^{\mathfrak m}c(c^{-\varepsilon}-1)\,\de N_s(c)\,\de s
\geq\int_{I_0}\int_0^{\mathfrak m}-c\varepsilon\log c\,\de N_s(c)\,\de s
>\Delta\tilde\brTptEn^{\varepsilon,A}\,,
\end{multline*}
where we linearised in $\varepsilon$ in the second last step.
This yields a contradiction.
\end{proof}

To show Theorem\,\ref{thm:scalingBrnchTrpt2}, as earlier we bound the energy below
by the transport costs for $\varepsilon=0$, using the additional information from the previous lemma.

\begin{proof}[Proof of Theorem\,\ref{thm:scalingBrnchTrpt2}.]
Let $\mu_s=\pushforward{\xiOpt(\cdot,s)}{\reMeasure}$ for the generic cross-section $s$ from the previous lemma.
By Remark \ref{rem:cellOpProps} we have
\begin{equation*}
\Delta\tilde\brTptEn^{\varepsilon,A}
=\tilde\brTptEn^{\varepsilon,A}[\chiOpt]-\tilde\brTptEn^{*,A}
=\inf_\chi\brTptEn^{\varepsilon,A,\mu_0,\mu_s}[\chi]+\inf_\chi\brTptEn^{\varepsilon,A,\mu_s,\mu_1}[\chi]-\mathfrak m
\,.
\end{equation*}
Now $\mu_s$ is a linear combination of delta-distributions,
\begin{equation*}
\mu_s=\sum_{\substack{c\in(0,\mathfrak m]\\\tilde N_s(c)>0}}\sum_{\substack{x\in\SigmaOpt\cap\{x_n=s\}\\m_{\xiOpt}(x)=c}}c\delta_x =: \sum_{\substack{c\in(0,\mathfrak m]\\\tilde N_s(c)>0}} \nu_c\,.
\end{equation*}
Let $\reSpace_c$ be the particles flowing through $\nu_c$, that is, $\reSpace_c$ is defined by the relation $\nu_c=\pushforward{\xiOpt(\cdot,s)}{\reMeasure\restr\reSpace_c}$,
and let $\mu_{0,c}$ and $\mu_{1,c}$ be initial and final distribution of these particles,
\begin{equation*}
\mu_{0,c} = \pushforward{\xiOpt(\cdot,0)}{\reMeasure\restr\reSpace_c}\,,\qquad
\mu_{1,c} = \pushforward{\xiOpt(\cdot,1)}{\reMeasure\restr\reSpace_c}\,.
\end{equation*}
Lemma\,\ref{thm:WoneBound} together with $\sum_{c\in(0,\mathfrak m],\,\tilde N_s(c)>0}c\tilde N_s(c)=\mathfrak m$ then implies
\begin{align*}
\inf_\chi\brTptEn^{\varepsilon,A,\mu_0,\mu_s}[\chi]+\inf_\chi\brTptEn^{\varepsilon,A,\mu_s,\mu_1}[\chi]
&\geq \sum_{\substack{c\in(0,\mathfrak m]\\\tilde N_s(c)>0}} \Wdone(\mu_{0,c},\nu_c)+\Wdone(\nu_c,\mu_{1,c})\\
&\geq\sum_{\substack{c\in(0,\mathfrak m]\\\tilde N_s(c)>0}}
c\tilde N_s(c)\left[s+C\min\left\{\big(\tfrac c{\m\omega_{n-1}}\big)^{\frac2{n-1}}/s,\big(\tfrac c{\m\omega_{n-1}}\big)^{\frac1{n-1}}\right\}\right]\\
&\quad+\!\!\!\!\sum_{\substack{c\in(0,\mathfrak m]\\\tilde N_s(c)>0}}\!\!\!\!
c\tilde N_s(c)\left[1-s+C\min\left\{\big(\tfrac c{\m\omega_{n-1}}\big)^{\frac2{n-1}}/(1-s),\big(\tfrac c{\m\omega_{n-1}}\big)^{\frac1{n-1}}\right\}\right]\\
&\geq\mathfrak m+\int_0^{\mathfrak m}c\tilde C\min\left\{\big(\tfrac c{\m}\big)^{\frac2{n-1}},\big(\tfrac c{\m}\big)^{\frac1{n-1}}\right\}\,\de N_s(c)
\end{align*}
for some constant $\tilde C\equiv \tilde C(n)>0$.
Consequently, exploiting $\m,\mathfrak m\leq1$,
\begin{multline*}
\Delta\tilde\brTptEn^{\varepsilon,A}
\geq \tilde C\int_0^{\mathfrak m}c\min\left\{\big(\tfrac c{\m}\big)^{\frac2{n-1}},\big(\tfrac c{\m}\big)^{\frac1{n-1}}\right\}\,\de N_s(c)\\
\geq \frac{\tilde C}{\m^{\frac1{n-1}}}\int_0^{\mathfrak m}c\min\left\{c^{\frac2{n-1}},c^{\frac1{n-1}}\right\}\,\de N_s(c)
\geq\frac{\tilde C}{\m^{\frac1{n-1}}}\int_0^{\mathfrak m}c^{\frac{n+1}{n-1}}\,\de N_s(c)\,.
\end{multline*}
Together with the previous lemma and $\int_0^{\mathfrak m}c\,\de N_s(c)=\mathfrak m$ this implies
\begin{equation*}
\Delta\tilde\brTptEn^{\varepsilon,A}
\geq\frac{\tilde C}{\m^{\frac1{n-1}}}\inf_{\substack{
N\in\fbm((0,\mathfrak m])\\
\int_0^{\mathfrak m}c\,\de N(c)=\mathfrak m\\
\int_0^{\mathfrak m}-c\log c\,\de N(c)\leq\frac{\Delta\tilde\brTptEn^{\varepsilon,A}}\varepsilon}}
\int_0^{\mathfrak m}c^{\frac{n+1}{n-1}}\,\de N(c)\,,
\end{equation*}
where the right-hand side represents a linear program.
The next lemma now implies
\begin{equation*}
\Delta\tilde\brTptEn^{\varepsilon,A}\geq\frac{\tilde C}{\m^{\frac1{n-1}}}\mathfrak m\exp\left(-\frac{2\Delta\tilde\brTptEn^{\varepsilon,A}}{(n-1)\mathfrak m\varepsilon}\right)
\end{equation*}
and thus $-\log\frac{\Delta\tilde\brTptEn^{\varepsilon,A}}{\tilde C\mathfrak m}-\frac{\log\m}{n-1}\leq\frac{\tilde C}{n-1}\frac1\varepsilon\frac{2\Delta\tilde\brTptEn^{\varepsilon,A}}{\tilde C\mathfrak m}$.
From the upper bound we already know $\Delta\tilde\brTptEn^{\varepsilon,A}\leq C_2\mathfrak m\varepsilon|\log\varepsilon|$ for a constant $C_2\equiv C_2(n)>0$,
hence $\log\frac{\Delta\tilde\brTptEn^{\varepsilon,A}}{\tilde C\mathfrak m}\leq0$ for $\varepsilon$ small enough such that $\varepsilon|\log\varepsilon|\leq\frac{\tilde C}{C_2}$.
Thus, the above inequality implies $\Delta\tilde\brTptEn^{\varepsilon,A}\geq\frac12\mathfrak m\varepsilon|\log\m|$
as well as $\left|\log\frac{\Delta\tilde\brTptEn^{\varepsilon,A}}{\tilde C\mathfrak m}\right|\leq\frac{\tilde C}{n-1}\frac1\varepsilon\frac{2\Delta\tilde\brTptEn^{\varepsilon,A}}{\tilde C\mathfrak m}$
from which we infer $\frac{\Delta\tilde\brTptEn^{\varepsilon,A}}{\mathfrak m}\geq\tilde C_1\varepsilon|\log\varepsilon|$ for a constant $\tilde C_1\equiv\tilde C_1(n)$.
\end{proof}

Obviously, the proof of Theorem\,\ref{thm:scalingBrnchTrpt2} reduces the energy estimate to a convex optimisation problem.
The role of the following lemma is simply to provide the necessary lower bound on that problem via convex duality.

\begin{lemma}\label{thm:convProg}
For any $\mathfrak m,D>0$ we have
\begin{equation*}
\inf_{\substack{
N\in\fbm([0,\mathfrak m])\\
\int_0^{\mathfrak m}c\,\de N(c)=\mathfrak m\\
\int_0^{\mathfrak m}-c\log c\,\de N(c)\leq D}}
\int_0^{\mathfrak m}c^{\frac{n+1}{n-1}}\,\de N(c)
\geq\mathfrak m\exp\left(-\frac{2D}{(n-1)\mathfrak m}\right)\,.
\end{equation*}
\end{lemma}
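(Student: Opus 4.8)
The plan is to solve the linear program by convex (Lagrangian) duality, which is legitimate because the objective and both constraints are linear in the measure $N\in\fbm([0,\mathfrak m])$. Introduce a multiplier $\lambda\ge0$ for the entropy-type inequality constraint $\int_0^{\mathfrak m}-c\log c\,\de N(c)\le D$ and a multiplier $\eta\in\R$ for the equality constraint $\int_0^{\mathfrak m}c\,\de N(c)=\mathfrak m$. Then for every feasible $N$,
\begin{equation*}
\int_0^{\mathfrak m}c^{\frac{n+1}{n-1}}\,\de N(c)
\ge\int_0^{\mathfrak m}c^{\frac{n+1}{n-1}}\,\de N(c)
+\lambda\Bigl(D-\int_0^{\mathfrak m}-c\log c\,\de N(c)\Bigr)
+\eta\Bigl(\int_0^{\mathfrak m}c\,\de N(c)-\mathfrak m\Bigr).
\end{equation*}
Collecting the integrand, the right-hand side equals $\lambda D-\eta\mathfrak m+\int_0^{\mathfrak m}\bigl(c^{\frac{n+1}{n-1}}+\lambda c\log c+\eta c\bigr)\,\de N(c)$, and since $N\ge0$ this is bounded below by $\lambda D-\eta\mathfrak m$ as soon as the pointwise feasibility condition
\begin{equation*}
g_{\lambda,\eta}(c):=c^{\frac{n+1}{n-1}}+\lambda c\log c+\eta c\ge0\qquad\text{for all }c\in(0,\mathfrak m]
\end{equation*}
holds. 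So the task reduces to choosing $\lambda\ge0$ and $\eta$ such that $g_{\lambda,\eta}\ge0$ on $(0,\mathfrak m]$ and $\lambda D-\eta\mathfrak m$ is as large — in particular at least $\mathfrak m\exp(-\tfrac{2D}{(n-1)\mathfrak m})$ — as possible.

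Next I would make an explicit choice of multipliers. Dividing $g_{\lambda,\eta}(c)\ge0$ by $c>0$, the condition is $c^{\frac{2}{n-1}}\ge -\lambda\log c-\eta$ for all $c\in(0,\mathfrak m]$. It is natural to pick $\lambda=\tfrac{n-1}{2}\,\mathfrak m^{\frac{2}{n-1}}$ (so that $\lambda$ matches the derivative structure of $c^{2/(n-1)}$) and then choose $\eta$ to be the smallest value making the inequality hold. Writing $h(c)=c^{\frac2{n-1}}+\lambda\log c$, the constraint becomes $\eta\ge-\min_{c\in(0,\mathfrak m]}h(c)$; since $h'(c)=\tfrac{2}{n-1}c^{\frac2{n-1}-1}+\lambda/c=\tfrac1c\bigl(\tfrac2{n-1}c^{\frac2{n-1}}+\lambda\bigr)>0$, $h$ is increasing, so the minimum over $(0,\mathfrak m]$ is the infimum as $c\to0^+$, which is $-\infty$ — this tells me $\lambda$ must be taken smaller, in fact $\lambda$ must be chosen so that $c^{2/(n-1)}$ still dominates $-\lambda\log c$ near $0$, which it does for any fixed $\lambda\ge0$ since $c^{2/(n-1)}\to0$ faster than $-\log c\to+\infty$... no: $-\log c\to+\infty$, so one needs the bound only up to $c=\mathfrak m$ and the worst point is where $c^{2/(n-1)}+\lambda\log c$ is minimal. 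I therefore set $\lambda=\tfrac{n-1}{2}e^{-2D/((n-1)\mathfrak m)}$ — more precisely, I would reverse-engineer $\lambda,\eta$ from the claimed bound by trying the rank-one optimal primal $N=\tfrac{\mathfrak m}{c_*}\delta_{c_*}$ with $c_*=\mathfrak m\,e^{-2D/((n-1)\mathfrak m)}$ (this $N$ is feasible: $\int c\,\de N=\mathfrak m$, and $\int -c\log c\,\de N=\mathfrak m(-\log c_*)=\mathfrak m(-\log\mathfrak m+\tfrac{2D}{(n-1)\mathfrak m})\le D$ provided $\mathfrak m\le1$ makes $-\log\mathfrak m\ge0$; if $\mathfrak m>1$ a small adjustment is needed, but in our application $\mathfrak m\le1$), whose objective value is $\tfrac{\mathfrak m}{c_*}c_*^{\frac{n+1}{n-1}}=\mathfrak m\,c_*^{\frac2{n-1}}=\mathfrak m^{\frac{n+1}{n-1}}e^{-\frac{4D}{(n-1)^2\mathfrak m}}$. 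Since this gives an upper estimate of the infimum that is $\mathfrak m^{\frac{n+1}{n-1}}e^{-4D/((n-1)^2\mathfrak m)}$, whereas the claimed lower bound is $\mathfrak m e^{-2D/((n-1)\mathfrak m)}$, I conclude the claimed inequality is a clean lower bound (not tight) and I do not need the optimal $c_*$ — a crude feasible dual pair suffices. Concretely I would take $\lambda=\tfrac{n-1}{2}$, and $\eta=-\min_{c\in(0,\mathfrak m]}\bigl(c^{\frac2{n-1}}+\tfrac{n-1}{2}\log c\bigr)$.

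With that choice I would verify, by the calculus above, that $g_{\lambda,\eta}\ge0$ on $(0,\mathfrak m]$ by construction, hence $\inf\ge\lambda D-\eta\mathfrak m$, and then show $\lambda D-\eta\mathfrak m\ge\mathfrak m\exp(-\tfrac{2D}{(n-1)\mathfrak m})$. The latter is a one-variable inequality: substituting the explicit $\eta$ and using the monotone substitution $c=\mathfrak m u$, $u\in(0,1]$, it reduces to an elementary estimate of the form $\tfrac{n-1}{2}D+\mathfrak m\min_u(\cdots)\ge\mathfrak m e^{-2D/((n-1)\mathfrak m)}$, which follows from $e^{-x}\le 1-x+\tfrac{x^2}2$ together with $\log(1+y)\le y$; alternatively, I would just bound $c^{\frac{2}{n-1}}\ge\mathfrak m^{\frac2{n-1}}+\tfrac2{n-1}\mathfrak m^{\frac2{n-1}-1}(c-\mathfrak m)$ is false (wrong convexity direction for $n>3$), so instead I use $c^{\frac2{n-1}}\ge c\cdot\mathfrak m^{\frac2{n-1}-1}$ for $c\le\mathfrak m$ when $\tfrac2{n-1}\le1$, i.e.\ $n\ge3$, reducing the integrand lower bound to $\int_0^{\mathfrak m}c^{\frac{n+1}{n-1}}\,\de N\ge\mathfrak m^{\frac2{n-1}-1}\int_0^{\mathfrak m}c^2\,\de N$ and then applying Jensen's inequality to the probability measure $\tfrac{c}{\mathfrak m}\de N(c)$: $\int \tfrac c{\mathfrak m}\cdot c\,\de N\ge\exp\bigl(\int\tfrac c{\mathfrak m}\log c\,\de N\bigr)=\exp\bigl(-\tfrac1{\mathfrak m}\int -c\log c\,\de N\bigr)\ge\exp(-\tfrac D{\mathfrak m})$, giving $\int c^2\,\de N\ge\mathfrak m\exp(-D/\mathfrak m)$ — but I want the exponent $2D/((n-1)\mathfrak m)$, so the Jensen step should be applied directly to the target exponent rather than to $c^2$.

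The cleanest route, and the one I would actually write, bypasses the dual entirely: apply Jensen's inequality to the probability measure $\de P(c)=\tfrac{c}{\mathfrak m}\,\de N(c)$ (a probability measure because $\int c\,\de N=\mathfrak m$) and the convex function $\phi(c)=c^{\frac2{n-1}}$. This gives $\int_0^{\mathfrak m}c^{\frac{n+1}{n-1}}\,\de N=\mathfrak m\int_0^{\mathfrak m}c^{\frac2{n-1}}\,\de P(c)$. Now $c^{\frac{2}{n-1}}=\exp\bigl(\tfrac2{n-1}\log c\bigr)$ and $\exp$ is convex, so by Jensen applied to the convex exponential (not to $\phi$): $\int\exp\bigl(\tfrac2{n-1}\log c\bigr)\,\de P(c)\ge\exp\bigl(\tfrac2{n-1}\int\log c\,\de P(c)\bigr)=\exp\bigl(\tfrac2{(n-1)\mathfrak m}\int c\log c\,\de N(c)\bigr)\ge\exp\bigl(-\tfrac2{(n-1)\mathfrak m}D\bigr)$, using $\int -c\log c\,\de N\le D$. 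Multiplying by $\mathfrak m$ yields exactly $\mathfrak m\exp\bigl(-\tfrac{2D}{(n-1)\mathfrak m}\bigr)$, and since this holds for every feasible $N$ it bounds the infimum. \textbf{The main obstacle} is essentially bookkeeping: making sure the substitution to the probability measure $P$ is valid (it requires $N$ to be finite and to actually satisfy $\int c\,\de N=\mathfrak m$, which is exactly the equality constraint), and handling the boundary case where $N$ might charge $c=0$ (there the integrand $c\log c$ is extended by $0$ and contributes nothing, and $\phi(0)=0$, so it is harmless). No hard estimate is needed — the whole lemma is one application of Jensen's inequality to $\exp\circ(\tfrac2{n-1}\log)$ against the natural normalising measure, and I would present it in two or three lines.
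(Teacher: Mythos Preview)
Your final argument via Jensen's inequality is correct and is genuinely different from the paper's proof. The paper proceeds by Fenchel--Rockafellar duality: it rewrites the linear program as $\inf_N G^*(N)+H^*(N)$ for explicit conjugates $G^*,H^*$, passes to the dual $\sup_\phi -G(-\phi)-H(\phi)$, and then chooses dual variables $(\lambda,\kappa)$ by making the affine function $c\mapsto\lambda-\kappa\log c$ tangent to $c^{2/(n-1)}$ at the point $c^*=\exp(-D/\mathfrak m)$, which after optimisation yields exactly $\mathfrak m\exp(-\tfrac{2D}{(n-1)\mathfrak m})$. Your route instead introduces the probability measure $\de P(c)=\tfrac{c}{\mathfrak m}\,\de N(c)$, rewrites the objective as $\mathfrak m\int c^{2/(n-1)}\,\de P$, and applies Jensen to $x\mapsto e^x$ with $x=\tfrac2{n-1}\log c$; the entropy constraint then bounds $\int\log c\,\de P\ge-D/\mathfrak m$ directly. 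This is shorter and avoids the duality machinery entirely, and it makes transparent why the bound is sharp (Jensen becomes equality when $P$ is a Dirac, which is exactly the paper's remark that $N=\tfrac{\mathfrak m}{c^*}\delta_{c^*}$ attains the infimum). The paper's approach, by contrast, is the more systematic template and would adapt to related linear programs where no single convexity identity is available.

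Two minor points worth tidying: first, the long exploratory discussion before the final paragraph (the aborted choices of $\lambda$, the digression on $c_*$, the wrong-direction convexity remark) should be deleted---only the last paragraph is the proof. Second, for the Jensen step to be fully justified you should note that $\int|\log c|\,\de P<\infty$: the positive part of $\log c$ is bounded by $\max(0,\log\mathfrak m)$, and the negative part has $\int(-\log c)_+\,\de P\le\tfrac1{\mathfrak m}\int(-c\log c)_+\,\de N$, which is finite by the constraint together with the bound on the positive part. Mass of $N$ at $c=0$ is harmless, as you observe, since it contributes neither to $P$ nor to any of the three integrals.
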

\begin{proof}
The optimisation problem is equivalent to finding
\begin{equation*}
\rho:=\inf_{N\in\rca((0,\mathfrak m])}G^*(N)+H^*(N)
\end{equation*}
for the Legendre--Fenchel duals
\begin{align*}
G^*:\rca([0,\mathfrak m])\to\R,\;&G^*(N)=\int_0^{\mathfrak m}c^{\frac{n+1}{n-1}}\,\de N+I_{\{N\geq0\}}(N)\\
H^*:\rca([0,\mathfrak m])\to\R,\;&H^*(N)=I_{\{\int_0^{\mathfrak m}c\,\de N=\mathfrak m\}}(N)+I_{\{\int_0^{\mathfrak m}-c\log c\,\de N\leq D\}}(N)
\end{align*}
of
\begin{align*}
G:\cont([0,\mathfrak m])\to\R,\;G(\phi)&=I_{\{\phi\leq0\}}\left(c\mapsto\phi(c)-c^{\frac{n+1}{n-1}}\right)\\
H:\cont([0,\mathfrak m])\to\R,\;H(\phi)&=\begin{cases}\lambda \mathfrak m-\kappa D&\text{if }\phi(c)=\lambda c+\kappa c\log c\text{ for }\lambda\in\R,\kappa\leq0,\\
\infty&\text{else,}\end{cases}
\end{align*}
where $\cont$ denotes the set of continuous functions, $I_S$ denotes the indicator function of a set $S$, that is, $I_S(x)=0$ if $x\in S$ and $I_S(x)=\infty$ else.
By Fenchel--Rockafellar duality,
\begin{equation*}
\rho\geq\sup_{\phi\in\cont([0,\mathfrak m])}-G(-\phi)-H(\phi)
=\sup_{\substack{\lambda\in\R,\kappa\leq0,\\\lambda c+\kappa c\log c\geq-c^{\frac{n+1}{n-1}}}}-\lambda \mathfrak m+\kappa D
=\sup_{\substack{\lambda\in\R,\kappa\leq0,\\\lambda-\kappa\log c\leq c^{\frac2{n-1}}}}\lambda \mathfrak m+\kappa D\,.
\end{equation*}
Any admissible choice of $\lambda,\kappa$ yields a lower bound.
We choose $\lambda,\kappa$ such that $\lambda-\kappa\log c$ actually touches the curve $c^{\frac2{n-1}}$ in a point $c^*$ tangentially,
that is, the function values and derivatives of both curves coincide,
\begin{equation*}
\lambda-\kappa\log c^*=(c^*)^{\frac2{n-1}}\,,\qquad
-\tfrac\kappa{c^*}=\tfrac2{n-1}(c^*)^{\frac{3-n}{n-1}}\,.
\end{equation*}
In particular, $\kappa=-\frac2{n-1}(c^*)^{\frac2{n-1}}$ and $\lambda=(c^*)^{\frac2{n-1}}(1-\frac2{n-1}\log c^*)$.
A maximisation of $\lambda \mathfrak m+\kappa D$ for $c^*$ now yields $c^*=\exp(-\frac D{\mathfrak m})$ and thus
\begin{equation*}
\lambda=\exp\left(-\tfrac{2D}{\mathfrak m(n-1)}\right)\left(\tfrac{2D}{\mathfrak m(n-1)}+1\right)\,,\qquad\kappa=-\tfrac2{n-1}\exp\left(-\tfrac{2D}{\mathfrak m(n-1)}\right)\,,
\end{equation*}
which produces
\begin{equation*}
\rho\geq\lambda \mathfrak m+\kappa D=\mathfrak m\exp(-\tfrac{2D}{\mathfrak m(n-1)})\,.
\end{equation*}
\end{proof}

\begin{remark}
The test function $N(c)=\frac{\mathfrak m}{c^*}\delta(c-c^*)$ for a properly chosen $c^*$ actually shows that the inequality in Lemma\,\ref{thm:convProg} is an equality.
\end{remark}

\section{Discussion}\label{sec:discussion}
In the following we briefly discuss different geometric settings as well as the relation of network optimisation to other pattern formation problems.

\subsection{Geometry variations}
It is out of the scope of this paper to rigorously prove the scaling laws also for variations of our geometric setting,
however, we would like to provide a little discussion of the dependence on geometry.
First of all notice that all lower bound proofs are still valid if the hypersquare $A$ is replaced by an arbitrary subset of $\R^{n-1}$.
Likewise, the upper bound constructions can easily be adapted to non-square geometries of $A$,
one only has to take care that the width of the tree-like structures does not extend beyond $A$ and that the construction has to be adapted to properly follow $\partial A$.
It seems also rather clear that the analysis can be adapted to a non-uniform mass distribution on $A\times\{0\}$ and $A\times\{L\}$
as long as the density is uniformly bounded and strictly bounded away from $0$.
Another geometric setting which may be relevant for various applications (thinking for instance of plants or blood vessel systems)
is where $\mu_0$ is replaced by a Dirac measure,
\begin{equation*}
\mu_0=\hdcodimone(A)\delta_0\,,\qquad\mu_1=\hdcodimone\restr(A\times\{1\})\,.
\end{equation*}
Here, the upper bound construction can be obtained in a similar manner as for our usual geometry:
One can choose elementary cells which are mapped into the standard rectangular elementary cells after applying the transformation
\begin{equation*}
(x_1,\ldots,x_n)\mapsto(\tfrac{x_1}{x_n},\ldots,\tfrac{x_{n-1}}{x_n},x_n)\,,
\end{equation*}
which is affine on each cross-section $\{x_n=t\}$
and which maps the cone $\{t(A\times\{1\})\,:\,t\in[0,1]\}$ onto the rectangle $A\times[0,1]$.
Such constructions provide the same energy scaling as for the geometry actually considered in this article.
For the lower bound, for instance on the branched transport model, we may again consider the mass flux $\mu_t$ on a generic cross-section $t$
and then bound the energy from below by the transport costs from $\mu_0$ to $\mu_t$,
$\inf_\chi\brTptEn^{\varepsilon,\mu_0,\mu_t}[\chi]\geq\Wdone(\mu_0,\mu_t)$,
and from $\mu_t$ to $\mu_1$,
$\inf_\chi\brTptEn^{\varepsilon,\mu_t,\mu_1}[\chi]\geq\Wdone(\mu_t,\mu_1)$.
Now, however, it is no longer possible to separately estimate $\Wdone(\mu_0,\mu_t)$ and $\Wdone(\mu_t,\mu_1)$.
Instead, a variation of Lemma\,\ref{thm:WoneBound} yields the necessary bound on
$\Wdone(\mu_0,\mu_t)+\Wdone(\mu_t,\mu_1)-\Wdone(\mu_0,\mu_1)$,
resulting in the same energy scaling.

\subsection{Properties characteristic for network optimisation}
Here we would like to briefly recapitulate the observed phenomena and relate network optimisation to other pattern formation problems.
Both urban planning and branched transport lead to branched pipe networks.
However, one can observe a difference between both models:
While for branched transport the excess energy over the limit case $\varepsilon=0$ is more or less evenly distributed across all branching levels,
this is true for urban planning only in three dimensions.
In two dimensions, the major contribution to the urban planning excess energy stems from the coarsest network structures,
while in dimensions higher than three the dominant energetic effect stems from the even distribution of mass at the boundary.

The urban planning model is closer in structure to classical pattern formation models
(such as martensite--austenite transformations \cite{KM92} or intermediate states in type-I superconductors \cite{ChCoKo08})
than branched transport.
Indeed, similarly to urban planning, the interfaces in many classical pattern formation models are directly penalised with cost $\varepsilon$,
while in branched transport the preference for shorter and fewer pipes is rather implicitly encoded in the mass flux cost.
The urban planning model is particularly related to patterns observed in intermediate states in type-I superconductors \cite{ChCoKo08} or compliance minimisation \cite{KoWi14}
in which also a structure is sought that has to conduct a flux (in one case a magnetic, in the other a force flux).
Urban planning is special, though, in that the conducting structures are really one-dimensional, while in the other mentioned problems they exhibit a finite width.
Intuitively, this also implies that the other problems have more design freedom so that their analysis should be a little more complicated.
And indeed, in our lower bound proofs it was relatively straightforward to explicitly characterise cross-sections through the network,
which is much more difficult in problems without one-dimensional structures.

\section{Acknowledgements}
This work was supported by the Deutsche Forschungsgemeinschaft (DFG), Cells-in-Motion Cluster of Excellence (EXC 1003-CiM), University of M\"unster, Germany.
B.W.'s research was supported by the Alfried Krupp Prize for Young University Teachers awarded by the Alfried Krupp von Bohlen und Halbach-Stiftung.

\bibliographystyle{alpha}
\bibliography{BrWi14}

\end{document}